\colorlet{cite}{red}
\tikzset{ 
  baseline=-2.3pt,
  text height=1.5ex, text depth=0.25ex,
  >=stealth,
  node distance=2cm,
  mid/.style={fill=white,inner sep=2.5pt},
}
\newtheoremstyle{mydef}
  {}		
  {}		
  {}		
  {}		
  {\scshape}	
  {. }		
  { }		
  {\thmname{#1}\thmnumber{ #2}\thmnote{ #3}}	
\newtheorem{theorem}{Theorem}[section]
\newtheorem*{theorem*}{Theorem}
\newtheorem{proposition}[theorem]{Proposition}
\newtheorem*{proposition*}{Proposition}
\newtheorem{lemma}[theorem]{Lemma}
\newtheorem*{lemma*}{Lemma}
\newtheorem{corollary}[theorem]{Corollary}
\newtheorem*{corollary*}{Corollary}
\theoremstyle{definition}
\newtheorem{definition}[theorem]{Definition}
\newtheorem{example}[theorem]{Example}
\newtheorem{assumption}[theorem]{Assumption}
\theoremstyle{remark}
\newtheorem{remark}[theorem]{Remark}
\newtheorem*{conjecture*}{Conjecture}
\newcommand{\rr}{\rightrightarrows}
\author{Cristian Ortiz and Fabricio Valencia}
\subjclass[2020]{22A22, 58H05, 57R70, 37D15}
\address{}
\date{\today}
\address{C. Ortiz and F. Valencia - Instituto de Matem\'atica e Estat\'istica, Universidade de S\~ao Paulo, Rua do Mat\~ao 1010, Cidade Universit\'aria, 05508-090 S\~ao Paulo - Brasil. \newline  
      \phantom{xx}
  cortiz@ime.usp.br, fabricio.valencia@ime.usp.br}
\title{Morse theory on Lie groupoids }
\begin{document}
\maketitle

\begin{abstract}

In this paper we introduce Morse Lie groupoid morphisms and study their main properties. We show that this notion is Morita invariant which gives rise to a well defined notion of Morse function on differentiable stacks. We show a groupoid version of the Morse lemma which is used to describe the topological behavior of the critical subgroupoid levels of a Morse Lie groupoid morphism around its nondegenerate critical orbits. We also prove Morse type inequalities for certain separated differentiable stacks and construct a Morse double complex whose total cohomology is isomorphic to the Bott-Shulman-Stasheff cohomology of the underlying Lie groupoid. We provide several examples and applications.

\end{abstract}

\tableofcontents
\section{Introduction}

This is a paper devoted to the study of Morse theory on Lie groupoids and their differentiable stacks. Classical Morse theory is a powerful tool which allows us to extract both geometric and topological information of a manifold equipped with a Morse function, that is, a smooth real valued function on the manifold all of whose critical points are non-degenerate.  Morse functions admit a local model around any critical point, this is the content of the Morse Lemma, which establishes that on a neighborhood of any critical point $x\in M$ a Morse function $f:M\to \mathbb{R}$ looks like a quadratic form $Q_f(x)=-(x^2_1+\cdots+x^2_{\lambda})+x^2_{\lambda+1}+\cdots+x^2_n$. The integer $\lambda$ is called the \textbf{index} of the critical point $x$ of $f:M\to \mathbb{R}$. As a consequence of the Morse Lemma one concludes that non-degenerate critical points are isolated. Some of the main results of Morse theory are: the fundamental theorem of Morse theory which describes how the topology of critical sub-levels changes after crossing a critical level; the Morse inequalities give a relation between the Betti numbers of the manifold and the alternate sum of the number of critical points grouped by their index; the Morse complex which computes the homology of the manifold. Morse theory has led to several interesting geometric results, including: the existence of closed geodesics on a Riemannian manifold as well as an infinite dimensional version of the Morse complex which led to Floer homology.

When dealing with manifolds equipped with a Lie group action, classical Morse theory is no longer the right setting to extract topological information out of an invariant Morse function. This is due to the fact that in equivariant Morse theory, critical points come in orbits hence they are no longer non-degenerate. The right framework to study an invariant Morse function on a manifold is given by Morse-Bott theory. Here, critical points are arranged in families of submanifolds which are non-degenerate in the sense that the normal Hessian is non-degenerate. Among the several applications of Morse-Bott theory one finds: the computation of the cohomology of complex Grassmannians, Bott periodicity theorem or the Atiyah-Guillemin-Sternberg theorem on the convexity of the image of moment maps for torus actions on symplectic manifolds.

In many situations a manifold comes with a Morse-Bott function whose critical submanifolds are not necessarily given by the orbits of a Lie group action but they still come from certain symmetries of the manifold.  In this work we are interested in the case of Morse-Bott functions on a manifold whose critical submanifolds are given by the orbits of a \emph{Lie groupoid}. 

Morse theory on singular spaces given by the orbit space of a Lie groupoid has been studied by several authors. In this regard, in \cite{LT} Lerman and Tolman  study torus actions on symplectic orbifolds, for which they proved some results on Morse-Bott theory on orbifolds. Similarly, in \cite{H} Hepworth studies Morse theory on differentiable Deligne-Mumford stacks, showing for instance the Morse inequalities for orbifolds. Also, Cho and Hong introduce the Morse-Smale-Witten complex for orbifolds \cite{ChoHong}. Another approach known in the literature suitable for studying Morse theory on certain singular spaces is provided by the stratified Morse theory of Goresky and MacPherson in \cite{GM}.

Recently, Lie groupoids equipped with geometric structures suitably compatible with Morita equivalence have been object of intense research. This is due to the fact that such structures descend to the quotient stack of a Lie groupoid. For instance, in \cite{dho} the notion of Morita equivalence of VB-groupoids plays a role to define vector bundles over differentiable stacks; Riemannian metrics on Lie groupoids were introduced in \cite{dHF} showing that they behave well with respect to Morita equivalence, hence inducing a Riemannian metric on the associated quotient stack \cite{dHF2}; Lie algebroids over stacks are modeled by LA-groupoids \cite{waldron}, the space of multiplicative sections of an LA-groupoid  \cite{OW} has the structure of a Lie 2-algebra which is Morita invariant and, in particular, the space of vector fields on a differentiable stack has a natural structure of Lie 2-algebra \cite{bel,OW} as conjectured in \cite{H2}. Also, invariants of differentiable stacks can be defined in terms of their groupoid counterparts. As an example, the equivariant cohomology of a Lie group acting on a differentiable stack was introduced in \cite{barbosaneumann} by looking at the Bott-Shulmann cohomology of a certain action groupoid encoded by an equivariant atlas of a $G$-stack. Another invariant of a differentiable stack is introduced in \cite{dhos} by means of the cohomology of a Lie groupoid with coefficients in a representation up to homotopy of Lie groupoids.

This paper is concerned with Morse theory on differentiable stacks by looking at its groupoid counterpart. For that, we introduce Morse theory for Lie groupoid morphisms $F:G\to \mathbb{R}$ with values in the unital Lie groupoid $\mathbb{R}\rr \mathbb{R}$. Such morphisms are completely determined by basic functions $f\in C^{\infty}(M)^G$, that is, functions satisfying $s^*f=t^*f$ and hence constant along the groupoid orbits. Note that $C^{\infty}(M)^G$ is clearly Morita invariant since it coincides with the $0^{th}$-degree groupoid cohomology $H^0_{diff}(G)$. Hence one thinks of either a Lie groupoid morphism $G\to \mathbb{R}$ or its corresponding basic function, as a function on the quotient stack $[M/G]$.

The paper is organized as follows. In Section \ref{S:2} we review the basics on Morse-Bott theory, Lie groupoids and Riemannian metrics on Lie groupoids as introduced in \cite{dHF}. In Section \ref{S:3} we introduce the main objects of study of the paper, namely Morse Lie groupoid morphisms. We study their main properties and we show Proposition \ref{MoritaInvariance} which establishes that the property of being a Morse Lie groupoid morphism is Morita invariant. In Section \ref{S:4} we give several examples of Morse Lie groupoid morphisms and we show an existence result for proper Lie groupoids whose canonical projection onto their orbit space is proper, this is the content of Theorem \ref{ExistenceProper}. Then we recall the notion of Hamiltonian actions of Lie 2-groups on $0$-symplectic groupoids \cite{hsz} and we show that in the case of a foliation Lie 2-group whose base is a torus the components of the moment map are Morse--Bott Lie groupoid morphisms. In Section \ref{S:5} by using Morita invariance and the equivariant Morse lemma we show Theorem \ref{LieGroupoidMorseLemma} which is our version of the Morse Lemma in the Lie groupoid setting. It is worth mentioning that such a result can also be obtained by means of tubular neighborhoods given by Euler-like vector fields \cite{BLM}. In particular, we introduce a notion of index of a non-degenerate critical subgroupoid as well as their positive and negative normal bundles. We verify that the Hessian of a basic function along an orbit is invariant by the normal representation, thus obtaining canonical group actions of the isotropies on the negative normal bundles as well as on their corresponding disk and sphere bundles when a $2$-metric in the sense of \cite{dHF} comes into the picture, see Proposition \ref{NormalActionFeatures}. In Section \ref{S:6} we study gradient vector fields of real valued Lie groupoid morphisms with respect to a 2-metric. We show Proposition \ref{Multiplicative-Gradient} which says the such gradient vector fields are multiplicative. We also prove Proposition \ref{AttachingDiskProposition} which gives an attaching construction of Lie groupoids whose outcome is a topological groupoid. As a consequence, we show Proposition \ref{NonCriticalLevel} and Theorem  \ref{CriticalLevel} which describe critical sub-levels of Morse Lie groupoid morphisms in terms of attaching groupoids. In Section \ref{S:7} we extend all the results mentioned above to the differentiable stack context. It is worth mentioning that the fact that our notion of Morse Lie groupoid morphism is Morita invariant makes the passage clearer. As an interesting consequence, we get Morse-like inequalities for certain separated differentiable stacks, see Theorem \ref{MorseInequalities}. We also establish a bridge between our approach to study Morse theory over the orbit space of a proper Lie groupoid and the approach provided by stratified Morse theory, thus exposing why our focus becomes more suitable, natural and cleaner for our purposes. This is the content of Subsection \ref{S:7.2}. In Section \ref{S:8} we extend the classical results of Morse-Smale dynamics to the framework of Lie groupoids. We show Theorem \ref{Stable/unstableGroupoidTheorem} which can be thought of as the groupoid version of the so-called stable/unstable manifold theorem and Proposition \ref{ModuliGroupoid} which says that the moduli space of gradient flow lines has a natural structure of Lie groupoid. In Section \ref{S:9} we construct a double complex which is the Lie groupoid analogue of the Morse-Bott complex defined by Austin and Braam in \cite{AB}. The main result of this section is Theorem \ref{MorseCohomology} which shows that the total cohomology of the groupoid double Morse complex is isomorphic to the Bott-Shulman-Stasheff cohomology of the underlying Lie groupoid. In particular, the cohomology of the double groupoid Morse complex is Morita invariant. We finish this section by introducing an equivariant groupoid Morse double complex with respect to an action of a Lie 2-group. We prove Proposition \ref{EquivariantMorseCohomology} which establishes that the equivariant cohomology of a Lie 2-group action as defined in \cite{OBT} can be computed by means of the cohomology of the equivariant groupoid Morse double complex. We use this result to describe the equivariant cohomology of toric symplectic stacks by means of groupoid Morse theoretical tools. 

\vspace{.2cm}
{\bf Acknowledgments:} The authors have benefited from several conversations with Matias del Hoyo, Mateus de Melo, Jo\~ao Nuno Mestre and Luca Vitagliano, we are very thankful for all their comments, corrections and suggestions that improved the first version of this work. We are indebted to M. del Hoyo for pointing out a mistake and some inaccuracies in the first version of Theorem \ref{ExistenceProper} and for suggesting a different proof of Theorem \ref{LieGroupoidMorseLemma}. We also would like to thank Rui Fernandes and Maarten Mol for useful comments. Ortiz was partially supported by National Council of Research and Development CNPq-Brazil, Bolsa de Produtividade em Pesquisa Grant 315502/2020-7 and by Grant 2022/09234-3 Sao Paulo Research Foundation - FAPESP. Valencia was supported by Grants 2020/07704-7 and 2022/11994-6 Sao Paulo Research Foundation - FAPESP.

\section{Preliminaries}\label{S:2}

In this section we briefly introduce the basic notions and results on both Morse--Bott theory and Lie groupoids which will be used throughout this paper. Much of the classical results about Morse theory may be found for instance in \cite{AB,Bo,F,Mi,Ni}. For the general notions regarding Lie groupoids, Morita equivalences and Riemannian groupoids we follow \cite{dH,dHF,MM} closely.

\subsection{Morse--Bott functions}

Let $f:M\to \mathbb{R}$ be a smooth function such that $\textnormal{Crit}(f):=\lbrace x\in M:\ df(x)=0\rbrace$ contains a submanifold $C$ of positive dimension. The choice of a Riemannian metric on $M$ yields a decomposition

$$TM|_C=TC\oplus \nu(C),$$
where $TC$ and $\nu(C)$ are the tangent bundle and the normal bundle of $C$, respectively. Let $\mathcal{H}_x(f)$ be the Hessian of $f$ at $x\in C$, then $T_xC\subseteq \ker(\mathcal{H}_x(f))$. Indeed, if $v,w\in T_xC$ and $\tilde{w}\in\mathfrak{X}(M)$ is any extension of $w$, then

$$\mathcal{H}_x(f)(v,w)=v(\widetilde{w}\cdot f)=0,$$
since $df(\widetilde{w})|_C=0$ because $C\subseteq \textnormal{Crit}(f)$. Therefore, the Hessian $\mathcal{H}_x(f)$ induces a well defined symmetric bilinear form on $\nu_x(C)$ referred to as the \textbf{normal Hessian} of $f$ at $x\in C$ and which we shall denote again as $\mathcal{H}_x(f)$ only if there is no risk of confusion. A critical submanifold $C\subseteq M$ of $f$ is called \textbf{non-degenerate} if the normal Hessian at every $x\in C$ is non-degenerate. This is equivalent to asking $\ker(\mathcal{H}_x(f))=T_xC$ for every $x\in C$.

\begin{definition}
A smooth function $f:M\to \mathbb{R}$ is said to be \textbf{Morse--Bott} if $\textnormal{Crit}(f)$ is a disjoint union of connected submanifolds which are non-degenerate.
\end{definition}

Examples of Morse--Bott functions include: usual Morse functions \cite{Mi,Ni}, invariant smooth functions by the action of a compact Lie group which have non-degenerate critical orbits \cite{W}, component functions of moment maps associated to Hamiltonian torus actions on (pre)symplectic manifolds \cite[s. 3.5]{Ni}, \cite{LS}, among others.

If $C$ is a connected non-degenerate critical submanifold for $f$ then we may define a function $Q_f:\nu(C)\to \mathbb{R}$ which is quadratic along the fibers. Namely,
\begin{equation}\label{eq:quadratic}
Q_f(v)=\dfrac{1}{2}\mathcal{H}_{\pi(v)}(f)(v,v),\qquad v\in \nu(C),
\end{equation}
where $\pi:\nu(C)\to C$ is the bundle projection. The Morse-Bott lemma establishes that there exist neighborhoods $U\subseteq M$ of $C$ and $V\subseteq \nu(C)$ of the zero section $C$, together with a diffeomorphism $\phi:V\to U$ with 

\begin{equation}\label{eq:MBlemma}
\phi|_C=\textnormal{id}\quad \text{ and }\quad  \phi^*f=Q_f.
\end{equation}

There is a splitting  $\nu(C)=\nu_+(C)\oplus \nu_{-}(C)$ where $\nu_+(C)$ and $\nu_{-}(C)$ are spanned by the eigenvectors of $H_f$ with positive and negative eigenvalues, respectively. The \textbf{index} of the critical submanifold $C\subseteq M$ is defined as 

\begin{equation}\label{eq:indexMB}
\lambda(C,f):=\mathrm{rk}(\nu_{-}(C)),
\end{equation}
the rank of $\nu_{-}(C)$.

\begin{remark}
One of the advantages of working with Morse--Bott functions is that they allow us to get similar results to those obtained with usual Morse functions but without assuming that their critical point set is formed by isolated points. For instance: we have for them a similar local linear representation provided by the so-called Morse--Bott lemma \cite{BH1}, they describe very well the topological behavior of a manifold around a non-degenerate critical submanifold \cite{Bo}, and using Morse--Bott--Smale dynamics it is possible to recover the de Rham cohomology of a compact oriented manifold by means of a cochain complex which is defined in terms of the de Rham complex of the critical point sets and gradient flow line spaces \cite{AB}. The latter fact yields a way to obtain the Morse--Bott inequalities.
\end{remark}
\begin{remark}\label{rmk:Hessianpullback} For future purposes it is worth mentioning that if $\pi:M\to N$ is a surjective submersion and $f:N\to \mathbb{R}$ is a Morse--Bott function then $f\circ \pi:M\to \mathbb{R}$ is also a Morse--Bott function since the formula
	\begin{equation}\label{eq:Hessianpullback}
		\mathcal{H}_{x}(f\circ \pi)=d\pi(x)^T\cdot \mathcal{H}_{\pi(x)}(f)\cdot d\pi(x), 
	\end{equation}
	is satisfied at every critical point $x$ of $f\circ \pi$.
\end{remark}

\subsection{Lie groupoids}

 A \textbf{Lie groupoid} $G\rightrightarrows M$ consists of a manifold $M$ of objects and a manifold $G$ of arrows, two surjective submersions $s,t:G\to M$ respectively indicating the source and the target of the arrows, and a smooth associative composition $m:G^{(2)}\to G$ over the set of composable arrows $G^{(2)}=G\times_M G$, admitting unit $u:M\to G$ and inverse $i:G\to G$, subject
to the usual groupoid axioms. The collection of maps mentioned above are called \textbf{structural maps} of the Lie groupoid.

Special instances of Lie groupoids are given by manifolds, Lie groups, Lie group actions, surjective submersions, foliations, pseudogroups, principal bundles, vector bundles, among others. For specific details the reader is recommend to visit the references \cite{dH, mackbook,MM}.

Let us now describe some features concerning the structure of a Lie groupoid. Let $G\rightrightarrows M$ be a Lie groupoid. For each $x\in M$, its \textbf{isotropy group} $G_x:=s^{-1}(x)\cap t^{-1}(x)$ is a Lie group and an embedded submanifold in $G$. There is an equivalence relation on $M$ defined by $x\sim y$ if there exists $g\in G$ with $s(g)=x$ and $t(g)=y$. The corresponding equivalence class of $x\in M$ is denoted by $\mathcal{O}_x\subseteq M$ and called the \textbf{orbit} of $x$. The previous equivalence relation defines a quotient space $M/G$ called the \textbf{orbit space} of $G\rightrightarrows M$.  This space equipped with the quotient topology is in general a \emph{singular space}, that is, it does not carry a differentiable structure making the quotient projection $M\to M/G$ a surjective submersion.

Given a Lie groupoid $G\rr M$, its \textbf{tangent groupoid} is the Lie groupoid $TG\rightrightarrows TM$ obtained by applying the tangent functor to each of its structural maps. If $S \subset M$ is a saturated submanifold, i.e. it is given by the union of orbits, then we can restrict the groupoid structure to $G_{S}=s^{-1}(S)=t^{-1}(S)$, thus obtaining a Lie subgroupoid $G_S\rightrightarrows S$ of $G\rightrightarrows M$. Furthermore, the Lie groupoid structure of $TG\rightrightarrows TM$ induces a Lie groupoid $\nu(G_S)\rightrightarrows \nu(S)$ on the normal bundles, having the property that all of its structural maps are fiberwise isomorphisms. In particular, if $S=\mathcal{O}$ is any orbit, the source map of the normal Lie groupoid $\nu(G_{\mathcal{O}})\rr \nu(\mathcal{O})$ yields a vector bundle isomorphism $\overline{ds}:\nu(G_{\mathcal{O}})\to s^*\nu(\mathcal{O})$. In particular, 

\begin{equation}\label{eq:normalrep}
\overline{dt}\circ \overline{ds}^{-1}:s^*\nu(\mathcal{O})\to t^*\nu(\mathcal{O}),
\end{equation}

\noindent defines a representation of $G_{\mathcal{O}}\rr \mathcal{O}$ on the normal bundle $\nu(\mathcal{O})$. As a consequence, for every $x \in M$ the isotropy group $G_x$ has a canonical representation on the normal fiber $\nu_x:=\nu_x(\mathcal{O}_x)$, called the \textbf{normal representation} of $G_x$ on the normal direction.

\begin{definition}
A \textbf{Lie groupoid morphism} between $G\rightrightarrows M$ and $G'\rightrightarrows M'$ is a pair $\phi:=(\phi^1,\phi^0)$ where $\phi^1:G\to G'$ and $\phi^0:M\to M'$ are smooth functions commuting with both source and target maps and preserving the composition maps. 
\end{definition}

A \textbf{Morita map} is a groupoid morphism $\phi:(G\rightrightarrows M) \to (G'\rightrightarrows M')$ which is \textbf{fully faithful} and \textbf{essentially surjective}, in the sense that the source/target maps define a fibred product of manifolds $G \cong (M \times M) \times_{ (M'\times M')} G'$ and that the map $G' \times _{M'}M \to M$ sending $(\phi^0(x)\to y)\mapsto y$ is a surjective submersion, see \cite{dH, MM}. An important fact shown in \cite{dH} is that a Lie groupoid morphism is a Morita map if and only if it yields an isomorphism between \emph{transversal data}. That is, the morphism must induce: a homeomorphism between the orbit spaces, a Lie group isomorphism $G_x\cong G'_{\phi^0(x)}$ between the isotropies and isomorphisms between the normal representations $G_x\curvearrowright \nu_x\to G'_{\phi^0(x)}\curvearrowright \nu'_{\phi^0(x)}$.

\begin{definition}
Let $G\rightrightarrows M$ and $G'\rightrightarrows M'$ be Lie groupoids. We say that $G$ and $G'$ are \textbf{Morita equivalent} if there exists a third Lie groupoid $H\rightrightarrows N$ with Morita maps $H\to G$ and $H\to G'$.
\end{definition}

It is well known that a Morita equivalence can be always realized by \textbf{Morita fibrations}, that is, Morita maps covering a surjective submersion on objects. For more details see \cite{dH, MM}.

\subsection{Riemannian groupoids}\label{sec:riemanniangroupoids}

Here we briefly recall the notion of Riemannian metric on a Lie groupoid introduced in \cite{dHF}. Such a notion of Riemannian metric is compatible with the groupoid composition so that it plays an important role in our work. We start by recalling that a submersion $\pi:(E,\eta^E)\to B$ with $(E,\eta^E)$ a Riemannian manifold is said to be \textbf{Riemannian} if the fibers of it are equidistant (transverse condition). In this case the base $B$ inherits a metric $\eta^B$ for which the linear map $d\pi(e):(\textnormal{ker}(d\pi(e)))^\perp\to T_{\pi(e)}B$ is an isometry for all $e\in E$. If $(\eta^{E})^\ast$ denotes the dual metric associated to $\eta^{E}$ then the condition for a Riemannian submersion can be rephrased as follows. For all $e\in E$ the map  $d\pi(e)^\ast: T_{\pi(e)}^\ast B \to \textnormal{ker}(d\pi(e))^\circ$ is an isometry, where $\textnormal{ker}(d\pi(e))^\circ$ denotes the annihilator of the vectors tangent to the fiber. If $\pi:E\to B$ is a surjective submersion then a Riemannian metric $\eta^E$ on $E$ is said to be \textbf{transverse} to $\pi$ if for all $x\in B$ and all $e_1,e_2\in \pi^{-1}(x)$ we have that the map
$$d\pi(e_1)^\ast\circ (d\pi(e_2)^\ast)^{-1}:\textnormal{ker}(d\pi(e_2))^\circ\to T_x^\ast B\to  \textnormal{ker}(d\pi(e_1))^\circ,$$
is a linear isometry. In this case, there exists a unique metric $\eta^B$ on $B$ such that $\pi$ becomes a Riemannian submersion. Such a metric is defined by the expression $\eta^B(d\pi(v), d\pi(w)) := \eta^E(v, w)$ for $v, w \in \textnormal{ker}(d\pi)^\perp$ and is called the \textbf{push-forward metric}. The notation we shall be using for the previous Riemannian metric is $\eta^B:=\pi_\ast \eta^E$.

It is well known that given a Lie groupoid $G\rightrightarrows M$ every pair of composable arrows in $G^{(2)}$ may be identified with an element in the space of commutative triangles so that it admits an action of the symmetric group $S_3$ determined by permuting the vertices of such triangles. In these terms, a \textbf{Riemannian groupoid} is a pair $(G\rightrightarrows M,\eta)$ where $G\rightrightarrows M$ is a Lie groupoid and $\eta=\eta^{(2)}$ is a Riemannian metric on $G^{(2)}$ that is invariant by the $S_3$-action and transverse to the composition map $m:G^{(2)}\to G$. The metric $\eta$ induces metrics $\eta^{(1)}=(\pi_2)_\ast \eta^{(2)}=m_\ast \eta^{(2)}=(\pi_1)_\ast \eta^{(2)}$ on $G$ and $\eta^{(0)}=s_\ast \eta^{(1)}=t_\ast \eta^{(1)}$ on $M$ such that $\pi_2, m, \pi_1:G^{(2)}\to G$ and $s,t:G\to M$ are Riemannian submersions and $i:G\to G$ is an isometry. This is because the $S_3$-action permutes these face maps.

The metric $\eta^{(j)}$, for $j=2,1,0$, is called a $j$-\textbf{metric}. It is important to mention that every proper groupoid can be endowed with a $2$-metric (more generally, an $n$-metric as defined below) and if a Lie groupoid admits a $2$-metric then it is weakly linearizable. For more details visit \cite{dHF,CS}.

We finish this section by giving a quick observation that will be very useful when working with the nerve of a Lie groupoid in Section \ref{S:9}.
\begin{remark}
The notion of \textbf{$n$-metric} on Lie groupoids for $n\geq 3$ was introduced in  \cite{dHF}. This is just a Riemannian metric on the set of $n$-composable arrows $G^{(n)}$ that is invariant by the canonical $S_{n+1}$-action on $G^{(n)}$ and transverse to one (hence to all) face map $G^{(n)}\to G^{(n-1)}$. We can push this $n$-metric forward with the different face maps $G^{(n)}\to G^{(n-1)}$ to define an $(n-1)$-metric on $G^{(n-1)}$ in such a way these face maps become Riemannian submersions. One can use this process to obtain $r$-metrics $\eta^{(r)}$ on $G^{(r)}$ for all $0\leq r\leq n-1$ so that we get Riemannian submersions $(G^{(r)},\eta^{(r)})\to (G^{(r-1)},\eta^{(r-1)})$.
\end{remark}

\section{Morse Lie groupoid morphisms}\label{S:3}
Let $G\rightrightarrows M$ be a Lie groupoid. The space of \textbf{basic functions} on $M$ is defined by

$$C^{\infty}(M)^G:=\{f\in C^{\infty}(M): s^*f=t^*f\}.$$
In other words, a basic function $f:M\to \mathbb{R}$ is just a function which is constant along the groupoid orbits.  On the one hand, the space of basic functions is Morita invariant since

$$H^0_{\mathrm{diff}}(G)=C^{\infty}(M)^G,$$
where $H^0_{\mathrm{diff}}(G)$ denotes $0^{th}$-degree groupoid cohomology, which is well-known to be Morita invariant \cite{crainic}. Hence, one can think of $H^0_{\mathrm{diff}}(G)=C^{\infty}(M)^G$ as the space of smooth functions on the quotient stack $[M/G]$. On the other hand, any basic function $f\in C^{\infty}(M)^G$ induces a Lie groupoid morphism $F:(G\rr M)\to (\mathbb{R}\rr \mathbb{R})$ given either by $F=s^*f$ or $F=t^*f$. It is clear that every Lie groupoid morphism $F:G\to \mathbb{R}$ has this form. Hence one can identify the space of real valued Lie groupoids morphisms with that of basic functions. 

Let us see some elementary examples of real valued Lie groupoid morphisms.

\begin{example} If $M$ is a smooth manifold and $M\rightrightarrows M$ is its underlying unit Lie groupoid then $C^{\infty}(M)^M=C^{\infty}(M)$.
	
\end{example}

\begin{example} If $G$ is a Lie group acting on a smooth manifold $M$ and $G\ltimes M \rightrightarrows M$ is the corresponding action groupoid then $C^{\infty}(M)^{G\times M}$ is given by the set of $G$-invariant smooth function on $M$. 
\end{example}

\begin{example} Suppose that $\pi:M\to N$ is a surjective submersion with corresponding submersion groupoid $M\times_N M\rightrightarrows M$. In this case, $C^{\infty}(M)^{M\times_N M}$ equals the set of smooth functions on $M$ that are constant on the fibers of $\pi$.
\end{example}

\begin{example}\label{KeyExample} Let $G\rightrightarrows M$ be a proper Lie groupoid with proper Haar measure system $\lbrace \mu^x\rbrace$. For any smooth function $f:M\to \mathbb{R}$ it follows that the averaging 
		$$f^\mu(x):=\int_{g\in s^{-1}(x)}(f\circ t)(g)\mu^{x}(g),\qquad x\in M,$$
defines a basic function on $M$. Indeed, the properness of the Haar system $\lbrace \mu^x\rbrace$ ensures that the integral defining $f^\mu$ is finite, the smoothness tells us that $f^\mu$ is also smooth and, moreover, the right-invariance and the identity $t\circ m=t\circ \pi_1$ imply that for all $h\in G$
		\begin{eqnarray*}
			f^\mu(t(h)) & =& \int_{g\in s^{-1}(t(h))}(f\circ t)(g)\mu^{t(h)}(g)= \int_{g\in s^{-1}(s(h))}(f\circ t)(gk)\mu^{s(h)}(g)\\
			&=& \int_{g\in s^{-1}(s(h))}(f\circ t)(g)\mu^{s(h)}(g)=f^\mu(s(h)).
		\end{eqnarray*}
\end{example}

\begin{remark}
A smooth function $F:G\to \mathbb{R}$ is said to be \textbf{multiplicative} if and only if it satisfies $F(gh)=F(g)+F(h)$ for all $(g,h)\in G^{(2)}$. These kinds of functions are in one-to-one correspondence with Lie groupoid morphisms $F:(G\rr M)\to (\mathbb{R}\rr \lbrace\ast\rbrace)$. Observe that multiplicative functions don not allow us to establish a well behaved notion of smooth function on the stacky quotient $[M/G]$ since, for instance, in the most elementary case of the unit groupoid $M\rr M$ we get that those functions turn out to be trivial. That is, we can not even recover the usual smooth functions on $M$ in a natural way. Same inconvenient appears when analyzing other elementary examples.
\end{remark}

%
 
Let $F:G\to \mathbb{R}$ be a Lie groupoid morphism covering $f:M\to \mathbb{R}$. Note that if $x$ is a critical point of $f$ then its orbit $\mathcal{O}_x$ is a critical submanifold of $f$. Hence the critical point set $\textnormal{Crit}(f)\subset M$ is saturated.

\begin{lemma}\label{StructureLemma}
There exists a natural topological groupoid structure $\textnormal{Crit}(F)\rightrightarrows \textnormal{Crit}(f)$.
\end{lemma}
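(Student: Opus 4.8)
The plan is to construct the groupoid structure on $\textnormal{Crit}(F)$ directly from that of $G\rr M$ by restriction, and then verify that the restricted structural maps are well defined and continuous. First I would define $\textnormal{Crit}(F):=s^{-1}(\textnormal{Crit}(f))=t^{-1}(\textnormal{Crit}(f))\subseteq G$, where the equality of these two preimages is exactly the content of the observation preceding the lemma: since $\textnormal{Crit}(f)$ is saturated, an arrow $g$ has its source in $\textnormal{Crit}(f)$ if and only if its target does. With this definition the candidate object space is $\textnormal{Crit}(f)$ and the candidate arrow space is $\textnormal{Crit}(F)$, and the restrictions of $s,t$ map $\textnormal{Crit}(F)$ into $\textnormal{Crit}(f)$ by construction.

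Next I would check that each structural map of $G\rr M$ restricts appropriately. For the multiplication $m:G^{(2)}\to G$, if $g,h$ are composable arrows both lying in $\textnormal{Crit}(F)$, then $s(gh)=s(h)$ and $t(gh)=t(g)$ both lie in $\textnormal{Crit}(f)$, so $gh\in\textnormal{Crit}(F)$; hence $m$ restricts to the composable pairs of $\textnormal{Crit}(F)$. Similarly, the unit map $u:M\to G$ restricts to $\textnormal{Crit}(f)\to\textnormal{Crit}(F)$ because $s(u(x))=t(u(x))=x$, and the inverse $i$ preserves $\textnormal{Crit}(F)$ since inverting swaps source and target, both of which stay in $\textnormal{Crit}(f)$. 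The groupoid axioms (associativity, unit and inverse identities) are inherited verbatim from $G\rr M$, since they are identities among the ambient structural maps and $\textnormal{Crit}(F)$ is closed under all of them.

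The subtlety — and the reason the statement asserts only a \emph{topological} groupoid rather than a Lie groupoid — is that $\textnormal{Crit}(f)$ need not be a submanifold of $M$, so one cannot smooth out the structure in general. Accordingly I would equip $\textnormal{Crit}(F)$ and $\textnormal{Crit}(f)$ with the subspace topologies inherited from $G$ and $M$, and observe that the restrictions of the continuous maps $s,t,m,u,i$ to these closed subspaces remain continuous. The point requiring care is that $\textnormal{Crit}(f)$ is genuinely saturated, which is what makes $s^{-1}(\textnormal{Crit}(f))=t^{-1}(\textnormal{Crit}(f))$ and thereby guarantees that the restricted source and target maps have the common object space $\textnormal{Crit}(f)$; without saturation the two restrictions would land in different sets and no groupoid would result. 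I expect this verification of saturation, together with the closedness of $\textnormal{Crit}(F)$ under composition and inversion, to be the only real content, the remaining axioms being automatic.

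Once these points are established, the tuple $(\textnormal{Crit}(F),\textnormal{Crit}(f),s,t,m,u,i)$ satisfies all the axioms of a topological groupoid, giving the desired structure $\textnormal{Crit}(F)\rr\textnormal{Crit}(f)$, and I would note in passing that when the critical set happens to be a saturated submanifold this is in fact a Lie subgroupoid of $G\rr M$.
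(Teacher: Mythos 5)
Your overall strategy --- restrict the structural maps of $G\rightrightarrows M$, check closure, inherit the axioms and the subspace topology --- is the same as the paper's, and the closure verifications you give are fine. But there is one step you have \emph{defined} away rather than proved, and it is precisely the step carrying the analytic content of the lemma. The statement concerns $\textnormal{Crit}(F)$, which by the paper's standing convention (fixed at the start of Section 2) means the critical locus $\lbrace g\in G:\ dF(g)=0\rbrace$ of the smooth function $F:G\to \mathbb{R}$. You instead set $\textnormal{Crit}(F):=s^{-1}(\textnormal{Crit}(f))$ by definition, so what you actually prove is that $s^{-1}(\textnormal{Crit}(f))\rightrightarrows \textnormal{Crit}(f)$ is a topological groupoid; to obtain the lemma as stated you still owe the identification $s^{-1}(\textnormal{Crit}(f))=\lbrace g\in G:\ dF(g)=0\rbrace$. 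This is exactly where the paper uses that $s$ and $t$ are surjective \emph{submersions}: writing $F=s^{*}f$, the chain rule gives $dF(g)=df(s(g))\circ ds(g)$, and surjectivity of $ds(g)$ shows that $dF(g)=0$ if and only if $df(s(g))=0$. A telltale sign of the omission is that your argument never invokes submersivity of $s$ or $t$ at all --- only saturation of $\textnormal{Crit}(f)$ --- whereas this is the only analytic input in the paper's proof.

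Once that one line is added, your proof is complete and essentially coincides with the paper's. The paper verifies the same closure properties (inverses and units directly, composition via the simplicial identities $s\circ m=s\circ \pi_2$ and $t\circ m=t\circ \pi_1$), with one refinement worth noting: it shows that if \emph{either} factor of a composable pair is a critical arrow then the product is critical, and it is this stronger form (applied to $g=g\cdot 1_{s(g)}$) that yields the inclusion $s^{-1}(\textnormal{Crit}(f))\subseteq \textnormal{Crit}(F)$ and hence the identity $s^{-1}\textnormal{Crit}(f)=t^{-1}\textnormal{Crit}(f)=\textnormal{Crit}(F)$ recorded right after the lemma --- the identity you took as your starting definition.
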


\begin{proof}
Let us suppose that $g\in G$ is a critical arrow of $F$. It is simple to see that both $s(g)$ and $t(g)$ are critical points of $f$ since both $s$ and $t$ are surjective submersions. This in turn implies that $g^{-1}$ is critical arrow of $F$ as well. Also, if $x\in \mathrm{Crit}(f)$ one easily sees that $1_x\in\mathrm{Crit}(F)$. Finally, using the identities $s\circ m=s\circ \pi_2$ and $t\circ m=t\circ \pi_1$, one concludes that if $(g,h)\in G^{(2)}$ with either $g$ or $h$ a critical arrow of $F$, then the composition $gh$ is also a critical arrow of $F$.
\end{proof}

It follows from the previous lemma that $s^{-1}\textnormal{Crit}(f)=t^{-1}\textnormal{Crit}(f)=\textnormal{Crit}(F)$. In particular, if $\mathcal{O}\subseteq M$ is a critical orbit of $f$ then $G_{\mathcal{O}}$ is a critical submanifold for $F$ so that the restricted Lie groupoid $G_{\mathcal{O}}\rr \mathcal{O}$ is a critical Lie subgroupoid of $G\rr M$. Furthermore, we have that $\mathcal{O}\subseteq M$ is a non-degenerate critical orbit for $f$ if and only if $G_{\mathcal{O}}\subseteq G$ is a non-degenerate critical submanifold for $F$. This follows from the fact that $\overline{ds}:\nu(G_{\mathcal{O}})\to \nu(\mathcal{O})$ is a fiberwise isomorphism and Formula \eqref{eq:Hessianpullback} holds true.

\begin{definition}

Let $F:G\to \mathbb{R}$ be a Lie groupoid morphism covering $f:M\to \mathbb{R}$.  We say that $F$ is a \textbf{Morse Lie groupoid morphism} if every critical orbit $\mathcal{O}\subseteq M$ of $f$ is non-degenerate. 
\end{definition}

It is worth noticing that $F:G\to \mathbb{R}$ is a Morse Lie groupoid morphism if and only if every critical subgroupoid $G_{\mathcal{O}}\rightrightarrows \mathcal{O}$ is non-degenerate in the sense that both $\mathcal{O}\subset M$  and $G_{\mathcal{O}}\subseteq G$ are nondegenerate critical submanifolds.

We show now that the notion of Morse Lie groupoid morphism is Morita invariant. Recall that a Morita equivalence between $G\rr M$ and $G'\rr M'$ yields an isomorphism between zero degree groupoid cohomology, that is, an isomorphism between the corresponding spaces of basic functions.  More precisely, if $K\rr N$ is a Lie groupoid together with Morita fibrations $\phi:K\to G$ and $\psi:K\to G'$, then $\psi^*f'\in C^{\infty}(N)^K$ for every $f'\in C^{\infty}(M')^{G'}$. Also, there exists a unique $f\in C^{\infty}(M)^G$ with $\phi^*f=\psi^*f'$. This defines an isomorphism

\begin{equation}\label{eq:moritabasic}
C^{\infty}(M')^{G'}\to C^{\infty}(M)^G; f'\mapsto f.
\end{equation} 

In particular, the isomorphism \eqref{eq:moritabasic} yields an isomorphism

\begin{equation}\label{eq:moritagroupoidmorphism}
\mathrm{Hom}_{Gpds}(G',\mathbb{R})\to \mathrm{Hom}_{Gpds}(G,\mathbb{R}); F'=s^*{f'}\mapsto F=s^*f,
\end{equation}
where $f'\in C^{\infty}(M')^{G'}$ and $f\in C^{\infty}(M)^G$ are related by \eqref{eq:moritabasic}. Our main goal now is to show that \eqref{eq:moritabasic} gives rise to an isomorphism between basic Morse functions, hence between Morse Lie groupoid morphisms. 

\begin{proposition}\label{MoritaInvariance}

Let $G\leftarrow K \rightarrow G'$ be a Morita equivalence covering surjective submersions at the level of objects. The isomorphism \eqref{eq:moritagroupoidmorphism} preserves Morse Lie groupoid morphisms.

\end{proposition}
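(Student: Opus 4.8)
The plan is to factor the problem through the common pullback of the two basic functions to the intermediate groupoid and then to reduce Morita invariance to a single statement about how non-degenerate critical orbits behave under a surjective submersion, for which Remark \ref{rmk:Hessianpullback} is tailor-made. Write $K\rr N$ for the groupoid realizing the equivalence, with Morita fibrations $\phi:K\to G$ and $\psi:K\to G'$ covering the surjective submersions $\phi^0:N\to M$ and $\psi^0:N\to M'$. Given $F'=s^*f'$ with $f'\in C^{\infty}(M')^{G'}$ and its partner $F=s^*f$ determined by \eqref{eq:moritabasic}, I would set $h\ce \phi^{0*}f=\psi^{0*}f'\in C^{\infty}(N)^K$; this is exactly the common basic function appearing in the construction of \eqref{eq:moritabasic}, and it defines a $K$-morphism $H=s^*h$ which corresponds to both $F$ and $F'$ under \eqref{eq:moritagroupoidmorphism}. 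By the symmetry of the roles of $\phi$ and $\psi$, it suffices to prove that, for a single Morita fibration covering a surjective submersion on objects, $F$ is Morse if and only if $H$ is Morse; applying this to $\phi$ and to $\psi$ then yields that $F$ is Morse iff $H$ is iff $F'$ is.

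The heart of the argument is the comparison between $f$ on $M$ and $h=\phi^{0*}f$ on $N$. First I would identify the critical sets: since $\phi^0$ is a submersion we have $dh(n)=d\phi^0(n)^T\, df(\phi^0(n))$ with $d\phi^0(n)$ surjective, so $n\in\textnormal{Crit}(h)$ if and only if $\phi^0(n)\in\textnormal{Crit}(f)$, that is $\textnormal{Crit}(h)=(\phi^0)^{-1}(\textnormal{Crit}(f))$. Next, because $\phi$ is a Morita map it induces a homeomorphism $N/K\to M/G$ of orbit spaces, whence the preimage $\widetilde{\mathcal{O}}\ce(\phi^0)^{-1}(\mathcal{O})$ of a critical orbit $\mathcal{O}$ of $f$ is a single critical orbit of $h$, and every critical orbit of $h$ arises in this way. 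Finally I would compare non-degeneracy: the restriction $\phi^0|_{\widetilde{\mathcal{O}}}:\widetilde{\mathcal{O}}\to\mathcal{O}$ is again a surjective submersion, and since $T_n\widetilde{\mathcal{O}}=(d\phi^0(n))^{-1}(T_{\phi^0(n)}\mathcal{O})$ the differential $d\phi^0$ descends to a fiberwise isomorphism $\nu(\widetilde{\mathcal{O}})\xrightarrow{\cong}(\phi^0)^*\nu(\mathcal{O})$ on normal bundles. Formula \eqref{eq:Hessianpullback} then exhibits the normal Hessian of $h$ along $\widetilde{\mathcal{O}}$ as the conjugate of the normal Hessian of $f$ along $\mathcal{O}$ by this isomorphism, so one normal Hessian is non-degenerate exactly when the other is.

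Combining these three observations, every critical orbit of $f$ is non-degenerate if and only if every critical orbit of $h$ is non-degenerate, i.e. $F$ is Morse if and only if $H$ is Morse; running the same reasoning through $\psi$ gives $F'$ Morse if and only if $H$ Morse, and the proposition follows.

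I expect the main obstacle to be the last step of the comparison: verifying that $d\phi^0$ genuinely restricts to a fiberwise isomorphism between the normal bundles of $\widetilde{\mathcal{O}}$ and $\mathcal{O}$ (these orbits having different dimensions but isomorphic normal bundles), and that the full-Hessian identity \eqref{eq:Hessianpullback} descends correctly to the induced symmetric forms on the normal fibers. Once this is secured, the remaining content is the bookkeeping of the Morita-induced orbit correspondence together with the elementary behaviour of critical points under a submersion.
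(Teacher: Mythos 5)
Your proposal is correct and follows essentially the same route as the paper's proof: factor through the common pullback $h=\phi^{0*}f=\psi^{0*}f'$ on the intermediate groupoid $K\rr N$, match critical points/orbits using that $\phi^0$ is a surjective submersion, and transfer non-degeneracy of the normal Hessian via \eqref{eq:Hessianpullback} together with the fact that a Morita map induces fiberwise isomorphisms on the normal bundles of corresponding orbits. The only (minor) differences are that you derive this normal-bundle isomorphism from the orbit-space homeomorphism plus elementary submersion facts where the paper cites \cite{dH} directly, and that you state the two-directional equivalence explicitly where the paper details only the pullback direction and leaves the descent along $\psi$ implicit.
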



\begin{proof}
Suppose that $f\in C^{\infty}(M)^G$ allows us to define a Morse Lie groupoid morphism. To prove that $f'\in C^{\infty}(M')^{G'}$ induces another Morse Lie groupoid morphism it suffices to show that $\phi^\ast F$ is a Morse Lie groupoid morphism since $\phi^*f=\psi^*f'$ and both $\phi^0$ and $\psi^0$ are surjective submersions. Indeed, the Lie groupoid morphism $\phi^\ast F:(K\rightrightarrows N)\to(\mathbb{R}\rightrightarrows \mathbb{R})$ is given by the pair $(F\circ \phi^1,f\circ \phi^0)$. Note that if $x\in M$ is a critical point of $f\circ \phi^0$, then $\phi^0(x)$ is a critical point of $f$ since $\phi^0$ is a surjective submersion. Thus, from Identity \eqref{eq:Hessianpullback} we get at $x$ that
$$\mathcal{H}_x(f\circ \phi^0)=d\phi^0(x)^T\cdot \mathcal{H}_{\phi^0(x)}(f)\cdot d\phi^0(x).$$
From \cite{dH} we know that if $\phi$ is a Morita map then $\overline{d\phi^0}:\nu(\mathcal{O}_x)\to \nu(\mathcal{O}'_{\phi^0(x)})$ is a fiberwise isomorphism. Thus, as $\mathcal{H}_{\phi^0(x)}(f)$ is non-degenerate when restricted to $\nu_{\phi^0(x)}(\mathcal{O}'_{\phi^0(x)})$ we conclude that $\mathcal{H}_x(f\circ \phi^0)$ is non-degenerate when restricted to $\nu_x(\mathcal{O}_x)$, as desired.
\end{proof}

\section{Examples and existence of Morse Lie groupoid morphisms}\label{S:4}

In this short section we mention some examples of Morse Lie groupoid morphisms. Elementary examples are the following:

\begin{example} If $M$ is a smooth manifold then every Morse function $f:M\to \mathbb{R}$ induces a Lie groupoid Morphism on the unit groupoid $M\rightrightarrows M$.
\end{example}

\begin{example}  A \textbf{Lie group bundle} is a Lie groupoid $G\rightrightarrows M$ such that $s=t$. Therefore, any Morse function $f:M\to \mathbb{R}$ induces a Morse Lie groupoid morphism on $G\rightrightarrows M$ since its orbits are points. In particular, on a Lie group $G\rr \{*\}$ every Lie groupoid morphism $G\to \mathbb{R}$ is necessarily constant. Hence, there are no interesting examples of Morse Lie groupoid morphisms on Lie groups. Combining this with Proposition \ref{MoritaInvariance} we conclude the same for any transitive Lie groupoid $G\rr M$ since transitive Lie groupoids are always Morita equivalent to Lie groups.

\end{example}

\begin{example} Let $G$ be a compact Lie group acting on a smooth manifold $M$ and consider the action groupoid $G \ltimes M \rightrightarrows M$. Wasserman showed in \cite{W} that the set of $G$-invariant Morse functions on $M$ is dense in the set of $G$-invariant functions. Hence, there always exist Morse Lie groupoid morphisms on $G \ltimes M \rightrightarrows M$. 
\end{example}

\begin{example} Let $(M,\mathcal{F})$ be a complete transverse parallel foliated connected manifold (see for instance \cite[s. 4.5]{Mo} or \cite[s. 4.1.2]{MM}). Let $p:\widetilde{M}\to M$ denote the universal covering map and consider the induced foliation $\widetilde{\mathcal{F}}$ on $\widetilde{M}$. This foliation is simple so that we have that $X=\widetilde{M}/\widetilde{\mathcal{F}}$ is a Hausdorff manifold and the canonical projection $\pi_{\textnormal{bas}}:\widetilde{M}\to X$ is a surjective submersion. It turns out that with this data it is possible to obtain a natural structure of principal groupoid bi-bundle $(\widetilde{M},p,\pi_{\textnormal{bas}}):\textnormal{Hol}(M,\mathcal{F})\dotarrow{} \pi_1(M)\ltimes X$ between the holonomy groupoid $\textnormal{Hol}(M,\mathcal{F})\rightrightarrows M$ and the action groupoid $\pi_1(M)\ltimes X \rightrightarrows X$, that is, a Morita equivalence. Therefore, if $M$ has finite fundamental group then as a consequence of Wasserman's result and Proposition \ref{MoritaInvariance} we get that there exist Morse Lie groupoid morphisms on $\textnormal{Hol}(M,\mathcal{F})\rightrightarrows M$.
\end{example}

\begin{example} Suppose that $G\rightrightarrows M$ is a Lie groupoid for which the orbit space $M/G$ admits a structure of smooth manifold such that the canonical projection $\pi:M\to M/G$ is a surjective submersion. Any Morse function $\tilde{f}:M/G\to \mathbb{R}$ induces a basic function $f:=\pi^*\tilde{f}:M\to \mathbb{R}$. The fact that every orbit $\mathcal{O}\subseteq M$ is non-degenerate follows by applying \eqref{eq:Hessianpullback} to the surjective submersion $\pi:M\to M/G$. In particular, $G$ has Morse groupoid morphisms.
\end{example}

\begin{example}
Consider the action of $\mathbb{Z}$ on $S^1$ given by $n\cdot e^{i\theta}= e^{i(\theta+2\pi n\alpha)}$ where $\alpha \in [0,1]$ is some irrational number. This action is free but not proper. Moreover, every orbit of such an action is dense in $S^1$. Therefore, if there is a $\mathbb{Z}$-invariant function on $S^1$ then it must be constant. As a consequence of Morita invariance, the foliation groupoid on the 2-torus $\mathbb{T}^2$ associated to the Kronecker foliation admits no Morse Lie groupoid morphisms different from the constant functions. 
\end{example}


\subsection {The proper groupoid case.}

It is well known that the set of Morse functions over a compact manifold $M$ form a open and dense subset of $C^\infty(M)$ with respect to the strong topology. As we mentioned before, Wasserman showed in \cite{W} a similar result for the set of $G$-invariant Morse functions placed inside the set of all $G$-invariant functions when the Lie group $G$ acting on $M$ is compact.  For the case of orbifolds, Hepworth introduced in \cite{H} a modified strong topology on the set of functions over an orbifold and proved that the subset of those that are Morse is also open and dense with respect such a topology \cite[s. 6]{H}. 

\begin{remark}\label{NonCompactDiscrete}
If $G$ is a non-compact Lie group acting properly on a smooth manifold $M$ then the strong topology induced on the set of all $G$-invariant smooth functions on $M$ becomes discrete, see \cite[Prop. 4.7]{IK}. As consequence, the set of $G$-invariant Morse functions on $M$ can not be dense with respect to the strong topology induced on $C^\infty(M)^G$.
\end{remark}

A simple example in which the previous phenomena occurs is the following.

\begin{example}\label{ExampleNonProperOrbitMap}
Consider the usual free and proper action of $\mathbb{Z}$ on $\mathbb{R}$ given by translations. In this case the quotient map (exponential) $\pi:\mathbb{R}\to S^1$ is clearly not proper. It follows that the set of $\mathbb{Z}$-invariant Morse functions on $\mathbb{R}$ can not be dense in $C^\infty(\mathbb{R})^{\mathbb{Z}}$ with respect to the strong topology.
\end{example}

It is well known that if $G$ is a compact Lie group acting on a smooth manifold $M$ then the canonical orbit projection $M\to M/G$ is a proper map. Let us consider a proper Lie groupoid $G\rightrightarrows M$. Motivated by Remark \ref{NonCompactDiscrete} and Example \ref{ExampleNonProperOrbitMap} it seems reasonable to further assume that the canonical projection $\pi:M\to M/G$ is a proper map in order to show that Morse Lie groupoid morphisms are dense in $C^\infty(M)^G$ with respect to the induced strong topology. Our aim now is to prove that this is in fact the case. Moreover, if $M/G$ is compact then we show that they actually form an open subset. Such a result clearly recovers both the classical and the equivariant cases and partially recovers the case of orbifolds. This is because the modified strong topology defined by Hepworth only agrees with the classical strong topology on $C^\infty(M)^G$ when $\pi:M\to M/G$ is proper \cite[Prop. 6.5]{H}. Note that if $G$ is a proper Lie groupoid over a compact manifold $M$ then the previous requirements are clearly fulfilled. In particular, an interesting example to have in mind is given by the holonomy groupoid $\textnormal{Hol}(M,\mathcal{F})\rightrightarrows M$ induced by a regular Riemannian foliation $\mathcal{F}$ over a compact manifold $M$.


\begin{lemma}
The set $C^\infty(M)^G$ is a Baire space in $C^\infty(M)$ with respect to the strong topology.
\end{lemma}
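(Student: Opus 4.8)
The plan is to realize $C^\infty(M)^G$ as the image of $C^\infty(M)$ under a continuous, open, additive surjection and then invoke the classical fact that an open continuous image of a Baire space is again a Baire space. Recall that $C^\infty(M)$ endowed with the strong topology is a Baire space, and that it is a topological abelian group under addition (even though it fails to be a topological vector space when $M$ is noncompact, since scalar multiplication is not jointly continuous). Hence it suffices to produce a continuous idempotent additive map $A\colon C^\infty(M)\to C^\infty(M)$ whose image is exactly $C^\infty(M)^G$, for then $A$ will automatically be open by the splitting argument below.

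First I would construct $A$ as a cutoff-weighted averaging operator, in the spirit of Example \ref{KeyExample}. Since $G\rightrightarrows M$ is proper it admits a Haar measure system together with a cutoff function $c\in C^\infty(M)$, $c\geq 0$, satisfying $\int_{g\in s^{-1}(x)}(c\circ t)(g)\,\mu^x(g)=1$ for every $x\in M$; I would set $A(f)(x)=\int_{g\in s^{-1}(x)}(c\circ t)(g)(f\circ t)(g)\,\mu^x(g)$. A minor variant of the computation in Example \ref{KeyExample} shows that $A(f)\in C^\infty(M)^G$ for every $f$, while for basic $f$ the identity $(f\circ t)(g)=f(x)$ on $s^{-1}(x)$ together with the normalization of $c$ gives $A(f)=f$; thus $A$ is additive, idempotent, and has image exactly $C^\infty(M)^G$. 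The essential analytic point, and the main obstacle, is the continuity of $A$ in the strong topology. Here the hypothesis that $\pi\colon M\to M/G$ is proper is crucial: it guarantees that the saturation $\pi^{-1}(\pi(K))$ of any compact set $K\subseteq M$ is again compact, so that a basic strong neighborhood constraint on $A(f)$ over $K$, which only sees the values of $f$ along the orbits meeting $K$, pulls back to a strong neighborhood constraint on $f$ over the compact saturation of $K$. Arranging these compacta into a locally finite family then yields the required continuity.

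Granting that $A$ is a continuous additive idempotent, I would finish by observing that it splits $C^\infty(M)$ as a topological abelian group. Indeed the map $\Phi\colon C^\infty(M)\to C^\infty(M)^G\times\ker(A)$, $f\mapsto(A(f),\,f-A(f))$, is a continuous bijection with continuous inverse $(g,h)\mapsto g+h$, hence a homeomorphism onto the product. Since the projection $\mathrm{pr}_1$ onto the first factor is open and $A=\mathrm{pr}_1\circ\Phi$, the operator $A$ is an open continuous surjection from the Baire space $C^\infty(M)$ onto $C^\infty(M)^G$, so the latter is Baire as well. The only delicate step is the strong continuity of the averaging operator, where the properness of $\pi$ cannot be dropped, as the example of $\mathbb{Z}$ acting on $\mathbb{R}$ by translations already illustrates.
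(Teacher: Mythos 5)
Your proposal is correct, and its skeleton coincides with the paper's proof: both realize $C^\infty(M)^G$ as the image of the Haar-system averaging projection of Example \ref{KeyExample} (your cutoff-weighted operator $A$ is just an explicit version of the paper's $P^\mu$, whose normalization is implicit in the phrase ``proper Haar measure system''), both invoke Mather's theorem that $C^\infty(M)$ is Baire in the strong topology, and both finish with the classical fact that a continuous open surjective image of a Baire space is Baire. The genuine divergence is in how openness of the projection is obtained. The paper appeals to $C^\infty(M)$ being a Fr\'echet space and invokes the Banach--Schauder (open mapping) theorem, whereas you derive openness from the elementary splitting $f\mapsto (A(f),\,f-A(f))$, which exhibits $C^\infty(M)$ as the topological-group product $C^\infty(M)^G\times\ker(A)$ and makes $A$ the first projection. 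Your route buys real robustness here: for noncompact $M$ the strong topology is neither metrizable nor even a vector topology (as you note, scalar multiplication is not jointly continuous), so the open mapping theorem does not apply directly to the topology in which the Baire property is actually asserted; your splitting argument uses only continuity of addition and hence works verbatim for the strong topology. A second difference is that you isolate the strong continuity of the averaging operator as the essential analytic step and identify precisely where properness of $\pi\colon M\to M/G$ enters (compact saturations of compacta transfer locally finite control from $A(f)$ back to $f$), a point the paper asserts without comment even though it is exactly where the hypothesis is used --- indeed, for $\mathbb{Z}$ acting on $\mathbb{R}$ by translations the averaging operator fails to be strongly continuous, consistent with your closing remark. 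Your sketch of that continuity step is no less detailed than the paper's, so nothing essential is missing.
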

\begin{proof}
Let us consider a proper Haar measure system $\lbrace \mu^x\rbrace$ for $G\rightrightarrows M$. By taking average with respect to $\lbrace \mu^x\rbrace$ we can define a surjective linear continuous operator $P^\mu:C^{\infty}(M)\to C^{\infty}(M)^G$ by sending $f$ to $f^\mu$ as defined in Example \ref{KeyExample}. This is consequence of having that $C^{\infty}(M)$ is a Fréchet space and the average operator $P^\mu$ is a projection, i.e. $(P^\mu)^2=P^\mu$. In particular, by the Banach-Schauder Theorem it follows that $P^\mu$ is open. Mather proved in \cite[Prop. 3.1]{Ma} that $C^{\infty}(M)$ is a Baire space so that the previous facts imply that the space of basic functions $C^{\infty}(M)^G$ is also a Baire space in $C^\infty(M)$, as claimed.
\end{proof}

Although the proof of next result uses some terminology to be introduced later on we state it here because of our purposes in this subsection.

\begin{lemma}\label{OpenNoDegenerate}
Let $G_U\rr U$ be an open subgroupoid of $G\rr M$ and $K\subset U/G_U \subset M/G$ be a compact subset. Then the set of basic functions $f:M\to \mathbb{R}$ such that $f|U$ has no degenerate critical orbits inside the compact $\pi^{-1}(K)$ is an open subset in $C^{\infty}(M)^G$.
\end{lemma}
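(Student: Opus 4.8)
The plan is to fix a basic function $f_0$ belonging to the set in question and to exhibit an explicit strong-topology neighborhood of $f_0$ in $C^\infty(M)^G$ all of whose members retain the property. The engine is the usual compactness-plus-continuity mechanism. First I would record that, since $\pi$ is proper and $K$ is compact, $\pi^{-1}(K)\subset U$ is a compact saturated set, and that the degenerate critical points of $f|_U$ inside $\pi^{-1}(K)$ are exactly the points of the degenerate critical orbits of $f$ meeting $\pi^{-1}(K)$. The goal is then to manufacture a continuous nonnegative quantity $\Delta_f$ on $\pi^{-1}(K)$, built from a fixed auxiliary metric and connection (so that the covariant Hessian $\nabla df$ is defined everywhere, not only on the critical set), which vanishes precisely at the degenerate critical points of $f$ and which depends continuously on the $2$-jet of $f$. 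Since $\Delta_{f_0}>0$ on the compact set $\pi^{-1}(K)$ it attains a positive minimum $\varepsilon$, and since $\Delta_f$ is governed by the $2$-jet of $f$, any basic function $f$ lying in the strong-topology neighborhood of $f_0$ that controls $f$ up to second order over $\pi^{-1}(K)$ satisfies $\Delta_f>\varepsilon/2>0$ there, whence $f$ has no degenerate critical points in $\pi^{-1}(K)$ either.

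To construct $\Delta_f$ I would pass to local models. By properness $G\rr M$ admits a $2$-metric and is therefore weakly linearizable around every orbit, so I can cover the compact set $\pi^{-1}(K)$ by finitely many saturated slice neighborhoods $W_1,\dots,W_r$ on which the restricted groupoid is isomorphic to an action groupoid $G_{x_i}\ltimes S_i$ of the compact isotropy $G_{x_i}$ on a slice $S_i$. Under this identification a basic function restricts to a $G_{x_i}$-invariant function on $S_i$, its critical points are the zeros of the equivariant differential, and by Formula \eqref{eq:Hessianpullback} together with the fiberwise isomorphism $\overline{ds}$ the normal Hessian along a critical orbit coincides with the transverse Hessian of the invariant function. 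On each $W_i$ the property of being a non-degenerate critical orbit then reads as transversality of the section $df$ to the zero section inside the annihilator bundle of the orbit distribution; equivalently, after fixing $\eta^{(0)}$ and a trivialisation adapted to $S_i$, as the nonvanishing of a determinant assembled from $\nabla f$ and the transverse part of $\nabla df$. Defining $\Delta_f$ piecewise through these local determinants and patching with a fixed partition of unity subordinate to $\{W_i\}$ yields the required $2$-jet-continuous function, and openness of the global condition becomes the finite intersection of the open conditions attached to the individual $W_i$.

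The main obstacle is that the orbit dimension, hence the rank of the normal bundle on which the Hessian must be non-degenerate, is not constant along $\pi^{-1}(K)$: the isotropy jumps up on the lower strata, so there is no single global determinant and a naive sequential-limit argument breaks down exactly where orbit types change. This is precisely what the slice reduction is designed to absorb: properness forces only finitely many orbit types to occur over the compact set $\pi^{-1}(K)$, and inside each slice $G_{x_i}\ltimes S_i$ the transverse nondegeneracy is a stratumwise open condition that can be organised by induction on the finite poset of orbit types of the compact group $G_{x_i}$, the inductive step being itself a slice one dimension lower. Carefully reconciling the transverse Hessians across adjacent strata, rather than the routine bookkeeping of the determinant trick, is where the real work lies; the Morita invariance of the whole set-up recorded in Proposition \ref{MoritaInvariance} is what guarantees that the slicewise reformulation faithfully captures the original nondegeneracy condition.
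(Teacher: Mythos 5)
Your high-level mechanism (a $2$-jet-continuous nondegeneracy measure, positive on the compact set, plus a minimum argument) is also the shape of the paper's argument, which simply adapts the classical openness lemma \cite[Lem. 5.32]{BH3} with the differential and Hessian replaced by the coarse differential $dF_{[x]}$ and coarse Hessian $\mathcal{H}_{[x]}(F)$ of Section \ref{S:7}. But your proposal has a genuine gap at its center: the object carrying the entire proof, the function $\Delta_f$, is never constructed. Passing to slice models $G_{x_i}\ltimes S_i$ does not absorb the difficulty you identify, because inside a single slice the isotropy of the $G_{x_i}$-action still jumps from point to point, so the ``determinant assembled from $\nabla f$ and the transverse part of $\nabla df$'' is no better defined, and no more continuous, on $W_i$ than it was globally; a partition of unity can only patch functions that already exist and are continuous. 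You concede this (``reconciling the transverse Hessians across adjacent strata \dots is where the real work lies'') and replace it by an induction over the orbit-type poset that is not carried out. Since the only way this lemma differs from its classical counterpart is precisely the presence of orbit-type jumps inside $\pi^{-1}(K)$, what you have deferred is not residual bookkeeping but the entire content of the statement.

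Moreover, the premise motivating the extra machinery is false: the ``naive sequential-limit argument'' does not break down where orbit types change, and seeing why it survives gives a complete proof. Sets of the form $\{f\in C^\infty(M)^G:\ \sup_{\pi^{-1}(K)}\|j^2(f-f_0)\|<\varepsilon\}$ are open in the strong topology, so it suffices to find one $\varepsilon$ that works. If none does, pick basic functions $f_n$ with $\sup_{\pi^{-1}(K)}\|j^2(f_n-f_0)\|<1/n$ having degenerate critical points $x_n\in\pi^{-1}(K)$, and pass to a subsequence $x_n\to x\in\pi^{-1}(K)$. Then $df_0(x)=\lim df_n(x_n)=0$, and, computing in a fixed chart around $x$, $\mathcal{H}_{x_n}(f_n)\to\mathcal{H}_x(f_0)$. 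Since the critical set of a basic function is saturated, the full Hessian of $f_n$ at $x_n$ annihilates $T_{x_n}\mathcal{O}_{x_n}$, so degeneracy of the critical orbit reads $\dim\ker\mathcal{H}_{x_n}(f_n)\geq\dim\mathcal{O}_{x_n}+1$. Now the two relevant quantities are semicontinuous in mutually helpful directions: the nullity of a symmetric bilinear form can only increase in a limit, while the orbit dimension (the rank of the anchor) can only decrease. Hence
\[
\dim\ker\mathcal{H}_{x}(f_0)\ \geq\ \limsup_{n}\,\dim\ker\mathcal{H}_{x_n}(f_n)\ \geq\ \liminf_{n}\,\dim\mathcal{O}_{x_n}+1\ \geq\ \dim\mathcal{O}_{x}+1,
\]
and since $T_x\mathcal{O}_x\subseteq\ker\mathcal{H}_x(f_0)$, the normal (coarse) Hessian at $x$ is degenerate, i.e.\ $f_0$ has a degenerate critical point in $\pi^{-1}(K)$ --- a contradiction. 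Orbit-type jumps are therefore harmless: the kernel can only grow while the orbit can only shrink, which is exactly the favorable configuration. This is what makes the paper's one-line proof --- the adaptation of \cite[Lem. 5.32]{BH3} phrased with the coarse differential and coarse Hessian --- legitimate, and it is the argument your proposal was designed to avoid.
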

\begin{proof}
The proof of this result is a straightforward adaptation of the proof of \cite[Lem. 5.32]{BH3} by considering instead the coarse differential $dF_{[x]}:T_{[x]}[M/G]\to \mathbb{R}$ and the coarse Hessian $H_{[x]}(F):T_{[x]}[M/G]\times T_{[x]}[M/G]\to \mathbb{R}$ as defined in Section \ref{S:7}.
\end{proof}

\begin{theorem}\label{ExistenceProper}
Suppose that $G\rightrightarrows M$ is a proper Lie groupoid such that the canonical projection $\pi:M\to M/G$ is a proper map. Then Morse Lie groupoid morphisms on $G$ are dense in the space of all Lie groupoid morphisms $G\to \mathbb{R}$. Moreover, if $M/G$ is compact then they form an open subset.
\end{theorem}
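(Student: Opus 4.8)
The plan is to reduce the statement to the two preparatory lemmas (the Baire property of $C^\infty(M)^G$ and the openness Lemma \ref{OpenNoDegenerate}) combined with a countable cover of the orbit space by compact sets. First I would exploit properness of $\pi:M\to M/G$ together with properness of the groupoid to obtain good local structure: by the linearization theorem for proper groupoids (available since admitting a $2$-metric implies weak linearizability, as recalled in Subsection \ref{sec:riemanniangroupoids}), each orbit has a saturated tube, so one can cover $M/G$ by countably many open sets $U_i/G_{U_i}$ arising from open subgroupoids $G_{U_i}\rr U_i$, chosen so that the closures $\overline{U_i/G_{U_i}}$ are compact and contained in a slightly larger chart. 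Here properness of $\pi$ is exactly what guarantees that $\pi^{-1}(K_i)$ is compact for compact $K_i\subset M/G$, which is the hypothesis Lemma \ref{OpenNoDegenerate} needs.

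Next I would set, for each index $i$, the subset
\begin{equation*}
\mathcal{D}_i=\{f\in C^\infty(M)^G:\ f|_{U_i}\text{ has no degenerate critical points in }\pi^{-1}(K_i)\},
\end{equation*}
where $K_i$ is a compact exhaustion piece. By Lemma \ref{OpenNoDegenerate} each $\mathcal{D}_i$ is open in $C^\infty(M)^G$. The key density claim is that each $\mathcal{D}_i$ is also dense: given an arbitrary basic function $f$, one works inside the linearizing chart where $G_{U_i}\rr U_i$ looks like an action groupoid $H\ltimes S$ for the compact isotropy $H$ acting on a slice $S$; Wasserman's theorem (cited in the excerpt) provides $H$-invariant Morse perturbations on the slice, and averaging via the Haar system (Example \ref{KeyExample}) together with a cutoff supported near $K_i$ promotes such a perturbation to a genuine basic function close to $f$ that lies in $\mathcal{D}_i$. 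Since $C^\infty(M)^G$ is Baire, the countable intersection $\bigcap_i \mathcal{D}_i$ is dense; but this intersection is exactly the set of basic functions with no degenerate critical points anywhere, i.e. the Morse Lie groupoid morphisms. This establishes density.

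For the openness statement when $M/G$ is compact, I would observe that compactness allows the cover to be taken finite, say $i=1,\dots,N$, with the $K_i$ covering all of $M/G$. Then the Morse locus equals the finite intersection $\bigcap_{i=1}^N \mathcal{D}_i$, and a finite intersection of open sets is open. I expect the main obstacle to be the density step: one must carefully control the support of the perturbation so that modifying $f$ over one chart to kill degenerate critical points does not destroy the Morse property already achieved on overlapping charts, and one must verify that the averaging and cutoff operations keep the perturbation both basic and strongly close to $f$. The linearization-plus-slice reduction to Wasserman's equivariant result, and the bookkeeping that the cutoff stays within the next chart so that Lemma \ref{OpenNoDegenerate} applies on the correct compact set, are where the real care is needed; the Baire and finite-intersection arguments are then formal.
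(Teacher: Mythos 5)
Your proposal is correct and follows essentially the same route as the paper: cover $M/G$ by countably many linearized charts with compact pieces $K_\alpha$, show each set $\mathcal{D}_i$ (the paper's $\mathcal{M}_\alpha(G)$) is open via Lemma \ref{OpenNoDegenerate} and dense via Morita invariance (Proposition \ref{MoritaInvariance}) plus Wasserman's theorem together with a Haar-averaged cutoff, then conclude by the Baire property of $C^\infty(M)^G$, with a finite cover giving openness when $M/G$ is compact. One remark: the ``obstacle'' you flag---that perturbing $f$ on one chart might destroy the Morse property on overlapping charts---is not actually an issue, since the Baire argument only requires each $\mathcal{D}_i$ to be separately open and dense in $C^\infty(M)^G$, so no coordination between charts is ever needed.
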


\begin{proof}

It is well known that since our groupoid is proper any $x\in M$ has an open neighborhood $U_\alpha$ in $M$ such that $G_{U_{\alpha}}\rightrightarrows U_\alpha$ is Morita equivalent to the action groupoid $G_x \ltimes \nu_x(\mathcal{O}_x)\rightrightarrows \nu_x(\mathcal{O}_x)$, see for instance \cite{CS}. Therefore, by Proposition \ref{MoritaInvariance} combined with the Wasserman's density result we have that there exist Morse Lie groupoid morphisms on $G_{U_\alpha}\rightrightarrows U_\alpha$ and they are actually dense. Recall that the canonical projection $\pi:M\to M/G$ is an open map. Thus, we may assume that the open cover $\lbrace U_\alpha/G_\alpha \rbrace$ of $M/G$ we obtain from above is countable and it satisfies that for every $\alpha$ there exists a compact subset $K_\alpha  \subset U_\alpha/G_\alpha$ such that $\lbrace K_\alpha \rbrace$ also covers $M/G$ since this is Hausdorff and paracompact.

Let $\mathcal{M}_\alpha(G)$ denote the subset formed by basic functions $f\in C^{\infty}(M)^G$ for which $f|_{U_\alpha}$ has no degenerate critical orbits inside the compact set $\pi^{-1}(K_\alpha)$. Note that the set $\mathcal{M}(G)$ of all basic functions defining Morse Lie groupoid morphisms on $G\rightrightarrows M$ agrees with $\bigcap_\alpha \mathcal{M}_\alpha(G)$ since $\lbrace \pi^{-1}(K_\alpha) \rbrace$ covers $M$. If we show that $\mathcal{M}_\alpha(G)$ is open and dense for all $\alpha$ then $\mathcal{M}(G)$ will be dense since $C^\infty(M)^G$ is a Baire space. On the one hand, the fact that $\mathcal{M}_\alpha(G)$ is open follows from Lemma \ref{OpenNoDegenerate}. On the other hand, pick $f\in C^\infty(M)^G$ and let $\mathcal{N}$ be an open neighborhood of $f$. If we prove that $\mathcal{N}\cap \mathcal{M}_\alpha(G)$ is nonempty then we will have that $\mathcal{M}_\alpha(G)$ is dense. Consider the restriction $f|_{U_\alpha}$ of $f$ on $U_\alpha$. By taking average with respect to $\lbrace\mu^x \rbrace$ it follows that given the closed subset $\pi^{-1}(K_\alpha)$ and some open neighborhood $W_\alpha\subseteq U_\alpha$ of $\pi^{-1}(K_\alpha)$ there exists a basic function $\phi_\alpha: U_\alpha\to \mathbb{R}$ such that $\phi_\alpha\equiv 1$ in a small neighborhood of $\pi^{-1}(K_\alpha)$ and such that its support is compact and contained in $W_\alpha$, use \cite[Prop. 9]{CM} with \cite[Lem. 3.12]{H}. Also, since $\phi_\alpha$ has compact support it follows that by using again an average process we can consider the map $C^\infty(U_\alpha)^{G_{U_\alpha}}\to C^{\infty}(M)^G$ given by $g\mapsto \widetilde{\phi_\alpha g}$, where the symbol $\ \widetilde{}\ $ denotes smooth $G$-invariant extension by zero, compare with \cite[Lem 3.11 and Lem. 6.12]{H}. This map is continuous so that there exists a small enough open neighborhood $\mathcal{N}'$ in $C^\infty(U_\alpha)^{G_{U_\alpha}}$ such that $f(1-\widetilde{\phi_\alpha})+\widetilde{\phi_\alpha g}\in \mathcal{N}$ for all $g\in \mathcal{N}'$. We already know that Morse Lie groupoid morphisms on $G_{U_\alpha}\rightrightarrows U_\alpha$ are dense so that we may assume that there exists $g\in \mathcal{N}'$ defining a Morse Lie groupoid morphism such that $f(1-\widetilde{\phi_\alpha})+\widetilde{\phi_\alpha g}\in \mathcal{N}$. Recall that by construction $\phi_\alpha\equiv 1$ in a neighborhood of $\pi^{-1}(K_\alpha)\subset U_\alpha$ for which  $f(1-\widetilde{\phi_\alpha})+\widetilde{\phi_\alpha g}$ restricts to $g$ over such a neighborhood, meaning that $f(1-\widetilde{\phi_\alpha})+\widetilde{\phi_\alpha g}\in \mathcal{N}\cap \mathcal{M}_\alpha(G)$. That is, $\mathcal{N}\cap \mathcal{M}_\alpha(G)$ is nonempty as claimed.

Finally, if $M/G$ is compact then the intersection $\mathcal{M}(G)=\bigcap_\alpha \mathcal{M}_\alpha(G)$ becomes finite. This completes the proof.
\end{proof}

\begin{remark}
Instead of considering the Morita equivalence between $G_{U_{\alpha}}\rightrightarrows U_\alpha$ and $G_x \ltimes \nu_x(\mathcal{O}_x)\rightrightarrows \nu_x(\mathcal{O}_x)$ as we did above we could also have considered the following identification of $G_{U_\alpha}$ which appears in \cite[Cor. 3.11]{PPT}. For a proper Lie groupoid $G\rr M$ there is an open neighborhood $U_\alpha$ of $x$ in $M$ diffeomorphic to $O\times V_x$ where $O$ is an open ball in the orbit $\mathcal{O}_x$ centered at $x$ and $V_x$ is a $G_x$-invariant open ball in $\nu_x(\mathcal{O}_x)$ centered at the origin. Under this diffeomorphism $G_{U_\alpha}\rr U_\alpha$ is isomorphic to the product of the pair groupoid $O\times O\rr O$ and the action groupoid  $G_x \ltimes V_x\rightrightarrows V_x$. The pair groupoid admits Morse Lie groupoid morphisms since this is Morita equivalent to a manifold and the action groupoid admits Morse Lie groupoid morphisms as consequence of Wasserman's result.
\end{remark}


\subsection {Moment maps on $0$-symplectic groupoids.}\label{S:4.2}

 Our goal now is to show that moment maps for Hamiltonian $2$-actions on $0$-symplectic groupoids in the sense of \cite{hsz} induce Morse--Bott Lie groupoid morphisms. This will be consequence of the results proved in \cite{LS} for Hamiltonian actions on presymplectic manifolds. We start by briefly introducing some necessary terminology which can be found in \cite{hsz}.

A \textbf{foliation groupoid} is a Lie groupoid $G \rightrightarrows M$ whose space of objects $M$ is Hausdorff and whose isotropy groups $G_x$ are discrete for all $x\in M$. For instance, every \'etale  Lie groupoid with Hausdorff objects manifold is a foliation groupoid. The converse is not true, however every foliation groupoid is Morita equivalent to an \'etale groupoid. As shown in \cite{crainic,CraMoer}, being a foliation groupoid is equivalent to the associated Lie algebroid anchor map $\rho:A\to TM$ being injective. As a consequence, the manifold $M$ comes with a regular foliation $\mathcal{F}$ tangent to the leaves of $\mathrm{im}(\rho)\subseteq TM$. Note that if $G \rightrightarrows M$ is source-connected the leaves of $\mathrm{im}(\rho)\subseteq TM$ coincide with the groupoid orbits. 

A \textbf{basic} $2$-form on a foliation groupoid $G\rr M$ is given by a pair of 2-forms $\omega=(\omega_1,\omega_0)$ with $\omega_1\in \Omega^2(G)$, $\omega_0\in\Omega^2(M)$ satisfying $s^*\omega_0=\omega_1=t^*\omega_0$. We say that $\omega$ is \textbf{non-degenerate} if $\ker(\omega_0)=\mathrm{im}(\rho)\subseteq TM$. A basic 2-form $\omega=(\omega_1,\omega_0)$ is \textbf{closed} if $\omega_0$ is closed.


\begin{definition}\label{0-symplectic}\cite{hsz}
A $0$-\textbf{symplectic groupoid} is a foliation groupoid equipped with a closed and non-degenerate basic $2$-form.
\end{definition}
It is important to point out that this notion of symplectic groupoid differs from that of Weinstein introduced in \cite{We}, since $\omega_1\in\Omega^2(G)$ is not necessarily non-degenerate nor multiplicative. 

It follows immediately from Definition \ref{0-symplectic} that $(M,\omega_0)$ is a pre-symplectic manifold with $\textnormal{ker}(\omega_0)=T\mathcal{F}$ in the sense of \cite{LS,RatiZung}. Additionally, there is a left action of the product groupoid $G\times G \rightrightarrows M\times M$ on $G$ along $(s,s)$ given by $(g,h)f=gfh^{-1}$. The components of the orbits of this action define a regular foliation $\mathcal{F}_1$ of $G$ satisfying $T\mathcal{F}_1=\textnormal{ker}(ds)+\textnormal{ker}(dt)$. In particular, $\omega$ in Definition \ref{0-symplectic} is non-degenerate if and only if $\textnormal{ker}(\omega_1)=\textnormal{ker}(ds)+\textnormal{ker}(dt)$. As a consequence, $(G,\omega_1)$ is also a pre-symplectic manifold with $\textnormal{ker}(\omega_1)=T\mathcal{F}_1$.

To define Hamiltonian $2$-actions in this context we need to introduce the notion of Lie $2$-group. A \textbf{Lie 2-group} is a Lie groupoid $K^{(1)} \rightrightarrows K^{(0)}$ where both $K^{(1)}$ and $K^{(0)}$ are Lie groups and all the structural maps are Lie group morphisms. A \textbf{$2$-action} of $K^{(1)} \rightrightarrows K^{(0)}$ on $G \rightrightarrows M$ is Lie groupoid morphism from the product groupoid $K^{(1)}\times G \rightrightarrows K^{(0)}\times M$ to $G \rightrightarrows M$ whose component maps are Lie group actions in the usual sense. If we apply the Lie functor to $K^{(1)} \rightrightarrows K^{(0)}$ then we obtain a \textbf{Lie $2$-algebra} $\mathfrak{k}^{(1)} \rightrightarrows \mathfrak{k}^{(0)}$, i.e. a Lie groupoid where $\mathfrak{k}^{(1)}$ and $\mathfrak{k}^{(0)}$ are Lie algebras and all the structural maps are Lie algebra morphisms. 

It is well known that there exists a bijective correspondence between Lie 2-groups and \textbf{crossed modules of Lie groups}; see \cite{BS}. By a crossed module of Lie groups we mean a quadruple $(K,H,\partial,\alpha)$ where $K$ and $H$ are Lie groups, $\partial:H\to K$ is a Lie group morphism, and $\alpha:K\to  \textnormal{Aut}(H)$ is an action of $K$ on $H$ subject to the requirements $\partial(\alpha(g)(h)) = g\partial(h)g^{-1}$ and $\alpha(\partial(h))(h') = hh'h^{-1}$ for all $g\in K$
and $h, h'\in H$. In particular, if $K^{(1)} \rightrightarrows K^{(0)}$ is Lie $2$-group then its associated crossed module is determined by the data $K=K^{(0)}$, $H=\textnormal{ker}(s)$, $\partial=t|_H$, and $\alpha$ is the conjugation action of $K^{(1)}$ on $H$ composed with the identity bisection map $u:K^{(0)}\to K^{(1)}$.

Suppose that $K^{(1)} \rightrightarrows K^{(0)}$ is a foliation Lie $2$-group with associated crossed module of Lie groups $(K,H,\partial,\alpha)$ and Lie $2$-algebra $\mathfrak{k}^{(1)} \rightrightarrows \mathfrak{k}^{(0)}$. In this case we have that $\textnormal{Lie}(\partial):\mathfrak{h}\to \mathfrak{k}=\mathfrak{k}^{(0)}$ is injective. As the Lie algebra $\textnormal{Lie}(\partial)(\mathfrak{h})\cong \mathfrak{h}$ is an ideal in $\mathfrak{k}$ then we may consider the quotient Lie algebra $\mathfrak{k}/\mathfrak{h}$. Let us denote by $\pi:\mathfrak{k}\to \mathfrak{k}/\mathfrak{h}$ the quotient map.

\begin{lemma}\cite{hsz}
If $K^{(1)} \rightrightarrows K^{(0)}$ is a foliation Lie $2$-group then the pair $(\pi\circ \textnormal{Lie}(t),\pi):(\mathfrak{k}^{(1)} \rightrightarrows \mathfrak{k}^{(0)})\to (\mathfrak{k}/\mathfrak{h}\rightrightarrows \mathfrak{k}/\mathfrak{h})$ is a Morita morphism of Lie $2$-algebras. 
\end{lemma}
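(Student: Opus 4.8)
The plan is to read the statement through the equivalent description of a Lie $2$-algebra as a groupoid object internal to the category of Lie algebras, so that a \emph{Morita morphism} means exactly the infinitesimal analogue of the \textbf{fully faithful} plus \textbf{essentially surjective} conditions recalled for Lie groupoids (equivalently, a morphism of crossed modules of Lie algebras inducing isomorphisms on $\pi_1=\ker(\textnormal{Lie}(\partial))$ and $\pi_0=\mathrm{coker}(\textnormal{Lie}(\partial))$). First I would fix the unit splitting $\mathfrak{k}_1=\mathfrak{h}\oplus\mathfrak{k}_0$ induced by $u\colon K_0\to K_1$, under which $\mathfrak{h}=\ker(\textnormal{Lie}(s))$ and $\mathfrak{k}_0\cong\mathrm{im}(\textnormal{Lie}(u))$, and record the two structure maps on a generic element $(h,X)$:
\[
\textnormal{Lie}(s)(h,X)=X,\qquad \textnormal{Lie}(t)(h,X)=\textnormal{Lie}(\partial)(h)+X,
\]
using $s\circ u=t\circ u=\mathrm{id}$ together with $t|_H=\partial$. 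The decisive observation is that the anchor of the Lie algebroid of $K_1\rightrightarrows K_0$ is precisely $\textnormal{Lie}(t)|_{\mathfrak{h}}=\textnormal{Lie}(\partial)$, so that the hypothesis that $K_1\rightrightarrows K_0$ is a foliation groupoid is equivalent to $\textnormal{Lie}(\partial)\colon\mathfrak{h}\to\mathfrak{k}$ being injective; in particular $\mathfrak{h}$ may be regarded as an ideal of $\mathfrak{k}$ and the quotient $\mathfrak{k}/\mathfrak{h}$ is genuinely a Lie algebra.

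Next I would check that $(\pi\circ\textnormal{Lie}(t),\pi)$ is indeed a morphism of Lie $2$-algebras onto the unit Lie $2$-algebra $\mathfrak{k}/\mathfrak{h}\rightrightarrows\mathfrak{k}/\mathfrak{h}$. The only nonformal point is compatibility with source and target: since the target is a unit groupoid this amounts to $\pi\circ\textnormal{Lie}(s)=\pi\circ\textnormal{Lie}(t)$, which holds because by the formulas above the two maps differ by $\textnormal{Lie}(\partial)(h)\in\mathfrak{h}=\ker\pi$. Compatibility with units and multiplication is then automatic from $\pi$ and $\textnormal{Lie}(t)$ being Lie algebra morphisms together with the groupoid identities $s\circ m=s\circ\mathrm{pr}_2$ and $t\circ m=t\circ\mathrm{pr}_1$ already exploited in Lemma \ref{StructureLemma}.

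The heart of the argument is fully faithfulness, and this is where I expect the main (though mild) obstacle to lie, since it is exactly here that the foliation hypothesis is consumed. Computing the fibered product in the category of Lie algebras,
\[
(\mathfrak{k}_0\times\mathfrak{k}_0)\times_{(\mathfrak{k}/\mathfrak{h})\times(\mathfrak{k}/\mathfrak{h})}(\mathfrak{k}/\mathfrak{h})\;\cong\;\{(X,Y)\in\mathfrak{k}_0\times\mathfrak{k}_0:\ Y-X\in\mathfrak{h}\},
\]
I would show that $(\textnormal{Lie}(s),\textnormal{Lie}(t))\colon\mathfrak{k}_1\to\mathfrak{k}_0\times\mathfrak{k}_0$, which sends $(h,X)\mapsto(X,\textnormal{Lie}(\partial)(h)+X)$, is a linear isomorphism onto this fibered product: surjectivity is clear since every difference $Y-X\in\mathfrak{h}=\mathrm{im}(\textnormal{Lie}(\partial))$ is attained, while injectivity follows precisely from injectivity of $\textnormal{Lie}(\partial)$, i.e. from the foliation assumption. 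This is exactly the fully faithful condition.

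Finally, essential surjectivity is immediate: against a unit target the condition reduces to surjectivity of the base map $\phi^0=\pi\colon\mathfrak{k}_0\to\mathfrak{k}/\mathfrak{h}$, which holds because $\pi$ is a quotient projection. Equivalently, in the $\pi_0/\pi_1$ language, the morphism induces the identity on $\pi_0=\mathfrak{k}/\mathfrak{h}$ and the trivial isomorphism $0\to 0$ on $\pi_1$, both kernels vanishing by the foliation hypothesis. Assembling fully faithfulness and essential surjectivity yields that $(\pi\circ\textnormal{Lie}(t),\pi)$ is a Morita morphism of Lie $2$-algebras, as claimed.
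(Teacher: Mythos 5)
The paper itself offers no proof to compare against: this lemma is imported verbatim from \cite{hsz}, which is why the statement carries that citation. Judged on its own, your argument is correct and essentially complete, and it is the natural way to prove the result. The crossed-module normal form $\mathfrak{k}_1\cong\mathfrak{h}\oplus\mathfrak{k}_0$ with $\textnormal{Lie}(s)(h,X)=X$ and $\textnormal{Lie}(t)(h,X)=\textnormal{Lie}(\partial)(h)+X$ is the right starting point; translating the foliation hypothesis into injectivity of $\textnormal{Lie}(\partial)$ is exactly where that hypothesis is consumed, and it is what makes $(\textnormal{Lie}(s),\textnormal{Lie}(t))$ a bijection onto the fibered product $\{(X,Y)\in\mathfrak{k}_0\times\mathfrak{k}_0:\ Y-X\in\mathfrak{h}\}$, i.e.\ fully faithfulness; and essential surjectivity against a unit groupoid does degenerate to surjectivity of the quotient map $\pi$, as you say. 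Your closing remark that the map induces isomorphisms on $\pi_0=\mathrm{coker}(\textnormal{Lie}(\partial))$ and $\pi_1=\ker(\textnormal{Lie}(\partial))$ also matches the standard quasi-isomorphism characterization, so the proof is robust with respect to which formulation of ``Morita morphism of Lie $2$-algebras'' one adopts from \cite{hsz}.

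Two details are worth tightening. First, your identification of the Lie algebroid anchor of $K_1\rightrightarrows K_0$ with $\textnormal{Lie}(\partial)$ is an identification at the unit $e\in K_0$ only; to invoke the characterization of foliation groupoids by injectivity of the anchor (recalled from \cite{crainic} in Subsection \ref{S:4.2}) you should add that for a Lie $2$-group the anchor at an arbitrary $x\in K_0$ is conjugate to the anchor at $e$ under right translation by $u(x)$, so injectivity at $e$ already gives injectivity everywhere --- or, alternatively, note that the isotropy group of $K_1\rightrightarrows K_0$ at $e$ is $\ker(\partial)$, whose discreteness is equivalent to $\ker(\textnormal{Lie}(\partial))=0$. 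Second, fully faithfulness asks for an isomorphism internal to the category of Lie algebras, so after proving that $(\textnormal{Lie}(s),\textnormal{Lie}(t))$ is a linear bijection onto the fibered product you should remark that it is also a Lie algebra morphism (both components are), hence an isomorphism of Lie algebras; this is immediate, but it is the statement actually required. Neither point is a genuine gap.
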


As a consequence of the previous result, the Lie groupoid morphism $\textnormal{Ad}:(K^{(1)}\times \mathfrak{k}^{(1)} \rightrightarrows K^{(0)}\times \mathfrak{k}^{(0)})\to (\mathfrak{k}^{(1)} \rightrightarrows \mathfrak{k}^{(0)})$, which is formed by the adjoint actions $\textnormal{Ad}_j$ of $K^{(j)}$ on $\mathfrak{k}^{(j)}$ (for $j=0,1$), descends to a well defined $2$-action of $K^{(1)} \rightrightarrows K^{(0)}$ on $\mathfrak{k}/\mathfrak{h}\rightrightarrows \mathfrak{k}/\mathfrak{h}$. By abuse of language, we will call this induced $2$-action as the \textbf{adjoint action} and denote it by $\textnormal{Ad}$ as well. Accordingly, this notion of adjoint action allows us to speak about the \textbf{coadjoint action} $\textnormal{Ad}^\ast:(K^{(1)}\times (\mathfrak{k}/\mathfrak{h})^\ast\rightrightarrows K^{(0)}\times (\mathfrak{k}/\mathfrak{h})^\ast)\to ((\mathfrak{k}/\mathfrak{h})^\ast \rightrightarrows (\mathfrak{k}/\mathfrak{h})^\ast)$ which is nothing but the $2$-action whose component maps are the coadjoint actions $\textnormal{Ad}^\ast_j$ of $K^{(j)}$ on $(\mathfrak{k}/\mathfrak{h})^\ast$ induced by the identification we mentioned above. 

The final ingredient necessary to define Hamiltonian 2-actions is given by the notion of fundamental vector field associated to a $2$-action. Namely, consider a $2$-action of  $K^{(1)} \rightrightarrows K^{(0)}$ on $G \rightrightarrows M$. It is simple to check that for every $\xi\in \mathfrak{k}=\mathfrak{k}^{(0)}$ the pair $(\textnormal{Lie}(u)(\xi)_{G},\xi_{M})$, formed by the fundamental vector fields of the respective Lie group actions, determines a multiplicative vector field on $G \rightrightarrows M$. Therefore, if $K^{(1)} \rightrightarrows K^{(0)}$ is a foliation Lie $2$-group and $G \rightrightarrows M$ is a foliation groupoid then the \textbf{fundamental vector field} associated to the $2$-action above is by definition the basic vector field on $G \rightrightarrows M$ determined by the pair $(\textnormal{Lie}(u)(\xi)_{G},\xi_{M})$. By applying this procedure it is possible to show that there exists a Lie algebra anti-morphism from $\mathfrak{k}/\mathfrak{h}$ to the Lie algebra of basic vector fields on $G \rightrightarrows M$; see \cite[Pro. 6.9.2]{hsz} for further details.

\begin{definition}\cite{hsz}
Let $(G \rightrightarrows M,\omega)$ be a $0$-symplectic groupoid and let $K^{(1)} \rightrightarrows K^{(0)}$ be a foliation Lie $2$-group with associated crossed module $(K,H,\partial,\alpha)$. A $2$-action of $K^{(1)} \rightrightarrows K^{(0)}$ on $(G \rightrightarrows M,\omega)$ is said to be \textbf{Hamiltonian} if the following conditions hold:
\begin{enumerate}
\item the action of $K^{(0)}$ on $M$ is presymplectic, and
\item there is a morphism of Lie groupoids called \textbf{moment map}
$$\mu=(\mu_1,\mu_0):(G \rightrightarrows M)\to ((\mathfrak{k}/\mathfrak{h})^\ast \rightrightarrows (\mathfrak{k}/\mathfrak{h})^\ast),$$
verifying 
\begin{itemize}
\item[i)] for all $\xi\in \mathfrak{k}/\mathfrak{h}$ it satisfies $d\mu^\xi_0=\iota_{\xi_{M}}\omega_0$, and 
\item[ii)] $\mu$ is equivariant with respect to the 2-action of $K^{(1)} \rightrightarrows K^{(0)}$ on $(G \rightrightarrows M,\omega)$ and the coadjoint action of $K^{(1)} \rightrightarrows K^{(0)}$ on $(\mathfrak{k}/\mathfrak{h})^\ast \rightrightarrows (\mathfrak{k}/\mathfrak{h})^\ast$.
\end{itemize}
\end{enumerate}
If all these conditions are satisfied then we say that $(G \rightrightarrows M,\omega)$ is a \textbf{Hamiltonian $(K^{(1)} \rightrightarrows K^{(0)})$-groupoid} with moment map $\mu$.
\end{definition}
Some observations about the previous definition come in order. First, as we are working with a $2$-action and $\omega$ is basic then we immediately get that the action of $K^{(1)}$ on $G$ is also presymplectic. Additionally, $d\mu^{\xi}_1=\iota_{\textnormal{Lie}(u)(\xi)_{G}}\omega_1$. This follows from the fact that $(\textnormal{Lie}(u)(\xi)_{G},\xi_{M})$ is a multiplicative vector field, $\omega$ is basic, and either $\mu_0\circ s=\mu_1$ or $\mu_0\circ t=\mu_1$. One also observes that
$$s^\ast(\mu^\xi_0)(x)=\mu_0(s(x))(\xi)=\mu_1(x)(\xi)=\mu_0(t(x))(\xi)=t^\ast(\mu^\xi_0)(x).$$
Therefore, for each $\xi\in \mathfrak{k}/\mathfrak{h}$ we have a well defined Lie groupoid morphism $\mu^\xi:(G \rightrightarrows M)\to (\mathbb{R} \rightrightarrows \mathbb{R})$ given either by $s^\ast(\mu^\xi_0)$ or $t^\ast(\mu^\xi_0)$. The required condition which will allow us to ensure that $\mu^\xi$ is a Morse Lie groupoid morphism is determined in terms of the notion of ``cleanness'' introduced in \cite{LS}.

Consider a left action of a connected Lie group $K$ on a presymplectic manifold $(M,\omega)$ with foliation $\mathcal{F}$ and set $\mathfrak{n}(\mathcal{F})=\lbrace \xi\in\mathfrak{k}:\ (\xi_M)(x)\in T_x\mathcal{F}\ \textnormal{for all}\ x\in M\rbrace$. This space is an ideal in $\mathfrak{k}$. Let $N(\mathcal{F})$ be the connected immersed Lie subgroup in $K$ with Lie algebra $\mathfrak{n}(\mathcal{F})$. 

\begin{definition}\cite{LS}
The action of $K$ on $M$ is \textbf{clean} if 
$$T_x(\mathcal{O}_{N(\mathcal{F})}(x))=T_x(\mathcal{O}_{K}(x))\cap T_x\mathcal{F},$$
for all $x\in M$.
\end{definition}

\begin{remark}\label{EquiRemark}
Suppose that $F:G\to \mathbb{R}$ is a Lie groupoid morphism covering a basic $K^{(0)}$-invariant function $f:M\to \mathbb{R}$. Note that the action of $K^{(0)}$ on $M$ imposes additional symmetries in $M$ different from the ones we already had associated to the Lie groupoid structure. This in particular can make the dimension of the connected components of $\textnormal{Crit}(f)$ to increase since they may content more than one groupoid orbit. If this is the case then we say that $F:G\to \mathbb{R}$ is a \textbf{Morse--Bott Lie groupoid morphism} if $f$ is a Morse--Bott function in the usual sense.
\end{remark}

Summing up, we are in conditions to state:

\begin{proposition}\label{MomentMapsMorse}
Let $(G \rightrightarrows M,\omega)$ be a Hamiltonian $(K^{(1)} \rightrightarrows K^{(0)})$-groupoid with moment map $\mu:G\to (\mathfrak{k}/\mathfrak{h})^*$. Suppose that $K^{(0)}$ is a torus and the action of $K^{(0)}$ on $M$ is clean. Then, for every $\xi\in \mathfrak{k}/\mathfrak{h}$ the map $\mu^\xi:(G \rightrightarrows M)\to (\mathbb{R} \rightrightarrows \mathbb{R})$ is a Morse--Bott Lie groupoid morphism with even index at every non-degenerate critical submanifold.
\end{proposition}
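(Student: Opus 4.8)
The plan is to reduce the statement to the corresponding result for Hamiltonian torus actions on presymplectic manifolds established by Lin and Sjamaar in \cite{LS}, and then to transfer it to the groupoid morphism through Remark \ref{EquiRemark}. To this end I would fix $\xi\in\mathfrak{k}/\mathfrak{h}$ and concentrate on the base component $f:=\mu^\xi_0\in C^\infty(M)$. Condition i) in the definition of the moment map gives the Hamiltonian relation $df=\iota_{\xi_M}\omega_0$, so that $(M,\omega_0)$ together with the $K_0$-action is precisely a Hamiltonian presymplectic manifold in the sense of \cite{LS}, with $\ker(\omega_0)=T\mathcal{F}$. Since $K_0$ is a torus, both its adjoint and coadjoint actions are trivial, so the equivariance condition ii) forces $\mu_0$, and therefore $f$, to be $K_0$-invariant; consequently $\textnormal{Crit}(f)=\lbrace x\in M:\ \xi_M(x)\in \ker(\omega_0)_x=T_x\mathcal{F}\rbrace$ is saturated both for $\mathcal{F}$ and for the $K_0$-action.

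Next I would bring in the cleanness hypothesis, which is exactly the condition under which \cite{LS} prove that each component $\mu^\xi_0$ of a presymplectic moment map for a torus action is a Morse--Bott function whose connected critical submanifolds have an even-dimensional negative normal bundle. The mechanism behind the even index is the familiar one: along a connected critical submanifold $C$ the closed subtorus of $K_0$ generated by $\xi$ fixes $C$ pointwise and acts linearly on $\nu(C)$, and compactness together with commutativity splits $\nu(C)$ into two-dimensional weight spaces, on each of which the normal Hessian of $f$ is definite with a sign governed by the pairing of the corresponding weight with $\xi$. Hence $\nu_-(C)$ is a direct sum of two-dimensional pieces and the index \eqref{eq:indexMB} is even. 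The cleanness condition is precisely what guarantees that the degenerate directions $T\mathcal{F}=\ker(\omega_0)$ are absorbed into $TC$, so that the normal Hessian on $\nu(C)$ is non-degenerate and $f$ is genuinely Morse--Bott.

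It remains to transfer the conclusion to the groupoid morphism. Since $\mu^\xi:(G\rightrightarrows M)\to(\mathbb{R}\rightrightarrows \mathbb{R})$ is the Lie groupoid morphism covering the basic $K_0$-invariant function $f$, and $f$ has just been shown to be Morse--Bott in the usual sense, Remark \ref{EquiRemark} yields at once that $\mu^\xi$ is a Morse--Bott Lie groupoid morphism. Its index at each non-degenerate critical submanifold equals, by \eqref{eq:indexMB}, the rank of $\nu_-(C)$ computed for $f$ on $M$, which we have just seen to be even.

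The step I expect to be the main obstacle is the reduction in the first two paragraphs: carefully matching the groupoid-theoretic Hamiltonian data with the presymplectic framework of \cite{LS}. One must check that the ideal $\mathfrak{n}(\mathcal{F})$ and the subgroup $N(\mathcal{F})$ entering the cleanness condition, together with the foliation $\mathcal{F}$ whose tangent distribution equals $\ker(\omega_0)$, agree with the data induced by $G\rightrightarrows M$, so that the Morse--Bott-with-even-index theorem of \cite{LS} genuinely applies to $f=\mu^\xi_0$. Once this identification is secured, both the Morse--Bott property and the parity of the index pass directly to $\mu^\xi$.
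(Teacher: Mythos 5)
Your proposal is correct and takes essentially the same route as the paper: the paper's proof likewise fixes $\xi$, observes that $\textnormal{Crit}(\mu_0^\xi)$ is saturated because $\mu^\xi$ is a Lie groupoid morphism, and then invokes the Lin--Sjamaar theorem \cite[Thm. 3.4.5]{LS} under the cleanness hypothesis to get that every critical component is a non-degenerate saturated submanifold of even index, concluding via the notion of Morse--Bott Lie groupoid morphism from Remark \ref{EquiRemark}. The additional details you supply (the $K_0$-invariance of $\mu_0^\xi$ from triviality of the coadjoint action of a torus, and the weight-space mechanism behind the even index) are consistent with, and implicit in, that citation.
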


\begin{proof}
Let $\xi\in \mathfrak{k}/\mathfrak{h}$ be fixed. The fact that $\mu^\xi$ is a Lie groupoid morphism implies that the critical point set of $\mu_0^\xi$ is saturated in $M$. Thus, when applying \cite[Thm. 3.4.5]{LS} to our situation we get that every critical component in $\textnormal{Crit}(\mu_0^\xi)$ is a non-degenerate saturated submanifold of even index since the action of $K^{(0)}$ on $M$ is clean. Therefore, $\mu^\xi$ is a Morse--Bott Lie groupoid morphism with the required property.
\end{proof}

In particular, this result recovers the case of toric actions on symplectic orbifolds studied for instance in \cite{HolmMatsu}.


\section{The Morse lemma}\label{S:5}

The goal of this section is to state a version of the Morse lemma in the Lie groupoid setting and to describe some features of the negative normal groupoid over a nondegenerate critical orbit. Let us start by recalling that given a Morse-Bott function $f:M\to \mathbb{R}$ and a non-degenerate critical submanifold $C\subseteq M$, the Morse-Bott Lemma gives a local normal form for $f$ around $C$. Namely, on a suitable neighborhood of $C$ the function $f$ looks like the quadratic form $Q_f:\nu(C)\to \mathbb{R}$ defined in \eqref{eq:quadratic} up to a constant, see for instance \cite{BH1}.

We start by considering a Lie groupoid morphism $F:(G\rr M)\to (\mathbb{R}\rr \mathbb{R})$ associated to a basic function $f\in C^{\infty}(M)^G$. Suppose that $\mathcal{O}\subseteq M$ is a non-degenerate critical orbit of $f$ and consider the restricted Lie groupoid $G_{\mathcal{O}}\rr \mathcal{O}$. As $G_{\mathcal{O}}\subseteq G$ is a non-degenerate critical submanifold for $F$ the construction of \eqref{eq:quadratic} applies to both $f$ and $F$ yielding quadratic forms $Q_F:\nu(G_{\mathcal{O}})\to \mathbb{R}$ and $Q_f:\nu(\mathcal{O})\to \mathbb{R}$.

\begin{lemma}\label{QuadraticLemma}
	The pair $(Q_F,Q_f):(\nu(G_{\mathcal{O}})\rightrightarrows \nu(\mathcal{O})) \to (\mathbb{R}\rightrightarrows \mathbb{R})$ is a Lie groupoid morphism.
\end{lemma}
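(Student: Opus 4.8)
The plan is to verify the two defining conditions of a Lie groupoid morphism into the \emph{unital} groupoid $\mathbb{R}\rr\mathbb{R}$. Since the source and target of $\mathbb{R}\rr\mathbb{R}$ are both the identity, a pair $(\phi^1,\phi^0)$ is a morphism precisely when $\phi^1=\phi^0\circ \overline{ds}=\phi^0\circ\overline{dt}$, where $\overline{ds},\overline{dt}:\nu(G_{\mathcal O})\to\nu(\mathcal O)$ are the source and target of the normal groupoid $\nu(G_{\mathcal O})\rr\nu(\mathcal O)$; once these hold, compatibility with the composition is automatic, because a composable pair $(w,w')$ satisfies $\overline{ds}(w)=\overline{dt}(w')$, forcing $Q_F(w)=Q_F(w')$ and $Q_F(w\cdot w')=Q_F(w')$ in $\mathbb{R}\rr\mathbb{R}$. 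Thus it suffices to establish
$$Q_F=Q_f\circ\overline{ds}\qquad\text{and}\qquad Q_F=Q_f\circ\overline{dt}.$$

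First I would recall that $F=s^*f$, so that $F=f\circ s$ for the surjective submersion $s:G\to M$, and that $s$ restricts to a submersion $G_{\mathcal O}\to\mathcal O$ carrying the critical submanifold to the critical orbit. Applying the Hessian pullback formula \eqref{eq:Hessianpullback} to $\pi=s$ at a critical arrow $g\in G_{\mathcal O}$ gives, on the full tangent spaces,
$$\mathcal{H}_g(F)(w,w)=\mathcal{H}_{s(g)}(f)\bigl(ds(g)w,\,ds(g)w\bigr),\qquad w\in T_gG.$$
Because $ds(g)$ maps the kernel directions $T_gG_{\mathcal O}\subseteq\ker\mathcal{H}_g(F)$ into $T_{s(g)}\mathcal O\subseteq\ker\mathcal{H}_{s(g)}(f)$, both sides descend consistently to the normal bundles, and the induced map on the quotients is exactly the normal derivative $\overline{ds}$. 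Dividing by two and using the definition \eqref{eq:quadratic} yields $Q_F(w)=Q_f(\overline{ds}(w))$.

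Repeating the argument verbatim with the alternative description $F=t^*f$ and the submersion $t:G\to M$ produces $Q_F=Q_f\circ\overline{dt}$, completing the two required identities and hence the morphism property. The smoothness of $Q_F$ and $Q_f$ is immediate, since they are fiberwise quadratic forms built from the smoothly varying normal Hessians.

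The only genuinely delicate point is the descent of \eqref{eq:Hessianpullback} from the ambient tangent spaces to the normal bundles: one must check that $ds(g)$ sends $T_gG_{\mathcal O}$ into $T_{s(g)}\mathcal O$ so that the full Hessian identity passes to the \emph{normal} Hessians, and that the resulting quotient map agrees with the groupoid source $\overline{ds}$ coming from \eqref{eq:normalrep} (and likewise $\overline{dt}$ for $t$). This is where the non-degeneracy of $\mathcal O$ and the fact that $\overline{ds}$ is a fiberwise isomorphism enter; everything else is a routine transcription of the pullback formula.
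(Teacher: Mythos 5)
Your proof is correct and takes essentially the same route as the paper: both apply the Hessian pullback formula \eqref{eq:Hessianpullback} to the surjective submersions $s$ and $t$ at a critical arrow, use $s^*f=t^*f$ to obtain the two identities $Q_F=Q_f\circ\overline{ds}=Q_f\circ\overline{dt}$, and your extra observations (that morphism compatibility with composition is automatic for a unital target, and that the full Hessian identity descends to the normal bundles) are just explicit versions of what the paper leaves implicit. One small quibble: non-degeneracy of $\mathcal{O}$ and the fiberwise-isomorphism property of $\overline{ds}$ are not actually needed for this descent—only the inclusion of the critical submanifolds' tangent spaces in the Hessian kernels is—so your closing remark overstates what enters the argument, but this does not affect correctness.
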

\begin{proof}

Let $g\in G_{\mathcal{O}}$ be a critical arrow. Then, from identity \eqref{eq:Hessianpullback} we get that 
	$$\mathcal{H}_{g}(f\circ s)=\overline{ds}(g)^T\cdot \mathcal{H}_{s(g)}(f)\cdot \overline{ds}(g).$$
	
\noindent Given that $f:M\to \mathbb{R}$ is basic we actually have $\overline{ds}(g)^T\cdot \mathcal{H}_{s(g)}(f)\cdot \overline{ds}(g)=\overline{dt}(g)^T\cdot \mathcal{H}_{x}(f)\cdot \overline{dt}(g)$. Thus, it follows from the definition of $Q_F$ and $Q_f$ that the previous two identities immediately imply that $Q_{f\circ s}=Q_f\circ \overline{ds}$ and $Q_{f\circ t}=Q_f\circ \overline{dt}$ as required.	
\end{proof}

\subsection{Normal form around an orbit}

Let $G\rr M$ be a proper Lie groupoid and $\mathcal{O}\subseteq M$ be an orbit. Denote by $G_{\mathcal{O}}\rr \mathcal{O}$ the restricted Lie groupoid. A \textbf{groupoid neighborhood} of $G_{\mathcal{O}}\rr \mathcal{O}$ is given by an open Lie subgroupoid $(\tilde{U}\rr U)\subseteq (G\rr M)$ with $G_{\mathcal{O}}\subset \tilde{U}$ and $\mathcal{O}\subset U$. As our groupoid is proper we can assume that the groupoid neighborhoods are \textbf{full} in the sense that $\tilde{U}=G_U=s^{-1}(U)\cap t^{-1}(U)$. Let $\nu(G_{\mathcal{O}})\rr \nu(\mathcal{O})$ be the normal groupoid associated to an orbit. We say that $G\rr M$ is \textbf{linearizable} around $\mathcal{O}$ if there are full groupoid neighborhoods $(G_U\rr U)\subseteq (G\rr M)$ of $G_{\mathcal{O}}\rr \mathcal{O}$ and $(\nu(G_{\mathcal{O}})_V\rr V)\subseteq (\nu(G_{\mathcal{O}})\rr \nu(\mathcal{O}))$ of $G_{\mathcal{O}}\rr \mathcal{O}$ seen as the zero section, and a Lie groupoid isomorphism

$$\phi:(\nu(G_{\mathcal{O}})_V\rr V)\to (G_U\rr U),$$
which is the identity on $G_{\mathcal{O}}\rr \mathcal{O}$. We refer to $G_U\rr U$ as a \textbf{full groupoid tubular neighborhood} of $G_{\mathcal{O}}\rr \mathcal{O}$. The \emph{linearization theorem} asserts that every proper Lie groupoid is linearizable around any of its orbits. It is worth mentioning that this result generalizes other classical and important results in differential geometry such as the Ehresmann’s theorem for submersions, the local Reeb stability for foliations, and the tube theorem for proper Lie group actions. The linearization problem was first addressed by Weinstein in \cite{We2} for the case of regular proper groupoids by reducing it to the fixed point case. A first complete proof of this result was provided by Zung in \cite{Zung} with the extra assumption of source locally triviality. The latter hypothesis and variants of it were treated later by Crainic and Struchiner in \cite{CS}. Other novel approaches that lead to much more geometric proofs of the linearization theorem are given in  \cite{dHF,dHdM2,Me2}.

Suppose that $G_{\mathcal{O}}\rr \mathcal{O}$ is a non-degenerate critical subgroupoid of a Lie groupoid morphism $F:(G\rr M)\to (\mathbb{R}\rr \mathbb{R})$ and consider a full groupoid tubular neighborhood $G_U\rr U$ as above. The \textbf{local model} of $F$ around $G_\mathcal{O}$ is defined as the Lie groupoid morphism 

\begin{equation}\label{eq:localF}
\tilde{F}:=\phi^*F:(\nu(G_{\mathcal{O}})_V\rr V)\to (\mathbb{R}\rr \mathbb{R}).
\end{equation}
Note that the zero section is a non-degenerate critical subgroupoid of $\tilde{F}$. 

\begin{theorem}[Morse lemma]\label{LieGroupoidMorseLemma}
Let $F:G\to \mathbb{R}$ be a Morse Lie groupoid morphism covering $f:M\to \mathbb{R}$. If $G$ is proper, then around a non-degenerate critical subgroupoid $G_{\mathcal{O}}\rr \mathcal{O}$ there is a full groupoid tubular neighborhood $\phi: (\nu(G_{\mathcal{O}})_V\rr V)\xrightarrow[]{\cong} (G_U\rightrightarrows U)$ such that
$$\tilde{F}=c+Q_F.$$
\end{theorem}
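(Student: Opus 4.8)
The plan is to reduce the groupoid Morse lemma to the classical Morse--Bott lemma applied simultaneously to the base and arrow manifolds, then upgrade the resulting pair of diffeomorphisms to a groupoid isomorphism using the equivariant/fiberwise compatibility encoded in Lemma~\ref{QuadraticLemma}.

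First I would pass to the local model. By properness, $G\rr M$ is linearizable around $\mathcal{O}$, so fix a full groupoid tubular neighborhood $\phi\colon(\nu(G_{\mathcal{O}})_V\rr V)\xrightarrow{\cong}(G_U\rr U)$ which is the identity on $G_{\mathcal{O}}\rr\mathcal{O}$. Replacing $F$ by its local model $\tilde F=\phi^*F$ as in \eqref{eq:localF}, I may assume from the outset that the ambient groupoid \emph{is} $\nu(G_{\mathcal{O}})_V\rr V$ and that $G_{\mathcal{O}}\rr\mathcal{O}$ sits as the zero section, which is a non-degenerate critical subgroupoid. The content of the theorem is then to produce a further groupoid automorphism of a possibly smaller neighborhood carrying $\tilde F$ into $c+Q_F$.

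The key step is to run the Morse--Bott lemma on both levels at once. On the base, $\mathcal{O}\subseteq V\subseteq\nu(\mathcal{O})$ is a non-degenerate critical submanifold of the base function $f$, so the classical Morse--Bott lemma \cite{Bo,BH1} yields a diffeomorphism $\psi_0$ of a neighborhood of $\mathcal{O}$, equal to the identity on $\mathcal{O}$, with $\psi_0^*f=c+Q_f$. On the arrow manifold, $G_{\mathcal{O}}$ is likewise a non-degenerate critical submanifold of $F=f\circ s$, giving $\psi_1$ with $\psi_1^*F=c+Q_F$ (the constants agree because $s$ restricts to a surjection $G_{\mathcal{O}}\to\mathcal{O}$). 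The crucial point is that these may, and must, be chosen \emph{coherently}, i.e.\ so that the pair $\psi=(\psi_1,\psi_0)$ commutes with source, target and multiplication. Here I would invoke the equivariant Morse lemma as indicated in the introduction: both $f$ and $F$ are built from the same normal Hessian data, and Lemma~\ref{QuadraticLemma} says precisely that $Q_F$ and $Q_f$ already assemble into a groupoid morphism with $Q_{f\circ s}=Q_f\circ\overline{ds}$ and $Q_{f\circ t}=Q_f\circ\overline{dt}$. Since the source and target of the normal groupoid are fiberwise isomorphisms intertwining the relevant Hessians, a single equivariant Morse--Bott normalization over $\mathcal{O}$ pulls back along $\overline{ds}$ and $\overline{dt}$ to a normalization over $G_{\mathcal{O}}$, forcing $\psi$ to respect all structure maps. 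Shrinking $V$ to a full groupoid neighborhood keeps everything a Lie subgroupoid.

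I expect the main obstacle to lie exactly in this coherence: the classical Morse--Bott lemma is not canonical, so naive independent choices of $\psi_0$ and $\psi_1$ will not commute with the structure maps. The remedy is to perform the normalization equivariantly with respect to the normal representation of the isotropy (Proposition~\ref{NormalActionFeatures} guarantees the Hessian is invariant under the normal representation), and to transport the base normalization to the arrows via the fiberwise isomorphisms $\overline{ds},\overline{dt}$ rather than choosing it freely on $G_{\mathcal{O}}$. Once the diffeomorphism is built to intertwine source and target, multiplicativity follows automatically from the identities $s\circ m=s\circ\pi_2$, $t\circ m=t\circ\pi_1$ together with the fact that $Q_F$ is already multiplicative. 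As remarked in the introduction, an alternative route avoiding this bookkeeping is to construct the tubular neighborhood directly from an Euler-like vector field in the sense of \cite{BLM}, which produces the groupoid isomorphism $\phi$ intrinsically.
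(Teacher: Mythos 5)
Your overall strategy is sound and genuinely differs in organization from the paper's proof: the paper never works directly on the linearized model, but instead uses the Slice Theorem to replace $G_U\rr U$ by the action groupoid $G_x\ltimes B\rr B$, transfers the Morse Lie groupoid morphism there by Morita invariance (Proposition \ref{MoritaInvariance}), applies the equivariant Morse lemma of \cite{LT,Me2,W} to that compact group action, and then transfers the normal form back using that linearization itself is Morita invariant \cite{CS}. You instead linearize first and then try to normalize inside the linear model; your observation that multiplicativity comes for free once source and target are intertwined is correct, as is your identification of coherence as the crux.

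However, there is a genuine gap exactly at that crux. Writing $\nu(G_{\mathcal{O}})\cong s^*\nu(\mathcal{O})$, so that an arrow is a pair $(g,v)$ with $v\in\nu_{s(g)}(\mathcal{O})$, source $v$ and target $g\cdot v$, your prescription amounts to $\psi_1(g,v)=(g,\psi_0(v))$; this intertwines $s$ automatically, but it intertwines $t$ if and only if $\psi_0(g\cdot v)=g\cdot\psi_0(v)$, i.e.\ iff $\psi_0$ is fiber-preserving and equivariant for the normal representation \eqref{eq:normalrep} of the \emph{whole transitive groupoid} $G_{\mathcal{O}}\rr\mathcal{O}$ on the bundle $\nu(\mathcal{O})\to\mathcal{O}$. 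The ``equivariant Morse lemma'' you invoke does not deliver this: Wasserman's result \cite{W} concerns a compact group acting on a manifold, so it only produces a $G_x$-equivariant normalization of the single fiber $\nu_x(\mathcal{O})$; it says nothing about a normalization of the total space of $\nu(\mathcal{O})$ equivariant under the groupoid action, and Lemma \ref{QuadraticLemma} and Proposition \ref{NormalActionFeatures} only constrain the quadratic data, not the normalizing diffeomorphism. The passage from the fiber to the orbit is precisely the missing step, and it is where properness and transitivity must be used a second time: for instance, one can realize $\nu(\mathcal{O})$ as the associated bundle $s^{-1}(x)\times_{G_x}\nu_x(\mathcal{O})$ and extend the Wasserman normalization $\varphi_x$ by $[p,v]\mapsto[p,\varphi_x(v)]$ --- well defined by $G_x$-equivariance, smooth by local triviality of $t:s^{-1}(x)\to\mathcal{O}$, and compatible with $\tilde{F}$ because $\tilde{f}$ and $Q_f$ are invariant under \eqref{eq:normalrep} (Lemma \ref{NormalInvariance}) --- or else fall back on the Morita-transfer argument the paper uses. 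As written, your phrase ``a single equivariant Morse--Bott normalization over $\mathcal{O}$ pulls back along $\overline{ds}$ and $\overline{dt}$'' presupposes the existence of the very object that has to be constructed.
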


\begin{proof}
		
Let $x\in \mathcal{O}$. By the Slice Theorem for proper Lie groupoids we know that there is a transversal $T$ to the orbit $\mathcal{O}$ such that the
restricted subgroupoid $G_T\rr T$ is isomorphic to the action groupoid $G_x \ltimes B\rr B$ for some
open set $0 \in B \subset \nu_x(\mathcal{O})$, consult for instance \cite{CS,dH,PPT,Zung}. Also, it is known that $G_T\rr T$ is Morita equivalent to $G_U\rr U$ where $U$ is the  open saturation of $T$, see \cite{CS}. Let us consider the restriction $F_U:G_U\to \mathbb{R}$ of the Morse Lie groupoid morphism $F$ to $G_U\rr U$ and transfer it to a Morse Lie groupoid morphism $F_U'$ on $G_x \ltimes W\rr W$ by using Proposition \ref{MoritaInvariance}. Since $G$ is proper, $G_x$ is compact so $G_x \ltimes W\rr W$ is also a proper groupoid. Therefore, by the equivariant version of the Morse lemma \cite{LT,Me2,W}, it follows that there is a full groupoid tubular neighborhood around the corresponding nondegenerate critical orbit $\mathcal{O'}$ on which $\tilde{F_U'}$ agrees up to constant with $Q_{F_U'}$. Hence, as consequence of Proposition \ref{MoritaInvariance} and the fact that linearization is Morita invariant \cite[Prop. 3.7 and Col. 3.9]{CS}, we get that there is a full groupoid tubular neighborhood around $\mathcal{O}$ on which $\tilde{F_U}$ agrees up to constant with $Q_{F_U}$. This completes the proof.
\end{proof}

\begin{remark}\label{MorseLemmaOtherWay}
	
For the sake of completeness we mention that there are other two possible approaches to prove the previous result. The first one relies on the proof of existence of groupoid tubular neighborhoods around an orbit due to Meinrenken in \cite{Me2}, combined with his ideas used to give a proof of the Morse-Bott Lemma. The second one involves the weakly linearization around an orbit provided by the exponential map of a $2$-metric in the sense of del Hoyo and Fernandes in \cite{dHF}, combined with the classical ideas for the proof of the Morse-Bott lemma that can be found for instance in \cite[App.B]{F}. Although the latter approach is less simple that the first one and the one we used above, it has the advantage of not requiring our Lie groupoid to be proper.
\end{remark}

\subsection{Negative normal groupoid}

Let us suppose that our Lie groupoid $G\rightrightarrows M$ can be equipped with a Riemannian $2$-metric $\eta^{(2)}$ on $G^{(2)}$ and consider the induced  $1$-metric $\eta^{(1)}$ on $G$ and $0$-metric $\eta^{(0)}$ on $M$. If $\mathcal{O}$ is a groupoid orbit then we can restrict the $1$-metric $\eta^{(1)}$ to $\nu(G_{\mathcal{O}})$ and the $0$-metric $\eta^{(0)}$ to $\nu(\mathcal{O})$ and use them to identify $\nu(G_{\mathcal{O}})\cong TG_{\mathcal{O}}^\perp$ and $\nu(\mathcal{O})\cong T\mathcal{O}^\perp$. In particular, if
$\mathcal{O}$ is a nondegenerate critical orbit of a Morse Lie groupoid morphism $F:G\to \mathbb{R}$ covering $f:M\to\mathbb{R}$ then $\mathcal{H}_{g}(f\circ s)=\overline{ds}(g)^{-1}\cdot \mathcal{H}_{x}(f)\cdot \overline{ds}(g)$ for all $g\in G_{\mathcal{O}}$, since $s:G\to M$ is a Riemannian submersion. Thus, the indexes $\lambda(G_{\mathcal{O}},F)$ and $\lambda(\mathcal{O},f)$ agree.

\begin{lemma}\label{NormalInvariance}
The Hessian $H(f):\nu({\mathcal{O}})\oplus \nu({\mathcal{O}})\to \mathbb{R}$ and the fiberwise quadratic form $Q_f:\nu({\mathcal{O}})\to \mathbb{R}$ are invariant under the normal representation \eqref{eq:normalrep}.
\end{lemma}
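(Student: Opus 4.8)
The plan is to deduce both invariance statements from the bilinear identity that already underlies the proof of Lemma~\ref{QuadraticLemma}, together with the definition \eqref{eq:normalrep} of the normal representation. Fix an arrow $g\in G_{\mathcal{O}}$ with $s(g)=x$ and $t(g)=y$, and recall that $\overline{ds}(g)\colon \nu_g(G_{\mathcal{O}})\to \nu_x(\mathcal{O})$ and $\overline{dt}(g)\colon \nu_g(G_{\mathcal{O}})\to \nu_y(\mathcal{O})$ are fiberwise isomorphisms. Since $f$ is basic we have $f\circ s=f\circ t$ on $G_{\mathcal{O}}$, hence $\mathcal{H}_g(f\circ s)=\mathcal{H}_g(f\circ t)$. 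Applying the pullback formula \eqref{eq:Hessianpullback} to the submersions $s$ and $t$ and passing to the normal bundles, this equality becomes
$$\overline{ds}(g)^T\cdot \mathcal{H}_x(f)\cdot \overline{ds}(g)=\overline{dt}(g)^T\cdot \mathcal{H}_y(f)\cdot \overline{dt}(g)$$
as symmetric bilinear forms on $\nu_g(G_{\mathcal{O}})$. This is precisely the identity extracted in the proof of Lemma~\ref{QuadraticLemma}, now read as an equality of Hessians rather than of the induced quadratic forms.

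Next I would evaluate this identity on the pair $(\overline{ds}(g)^{-1}v,\overline{ds}(g)^{-1}w)$ for arbitrary $v,w\in \nu_x(\mathcal{O})$. The left-hand side collapses to $\mathcal{H}_x(f)(v,w)$, while the right-hand side becomes $\mathcal{H}_y(f)\big(\overline{dt}(g)\overline{ds}(g)^{-1}v,\,\overline{dt}(g)\overline{ds}(g)^{-1}w\big)$. Writing the normal representation of $g$ as $\rho(g)=\overline{dt}(g)\circ \overline{ds}(g)^{-1}\colon \nu_x(\mathcal{O})\to \nu_y(\mathcal{O})$ from \eqref{eq:normalrep}, this reads
$$\mathcal{H}_y(f)\big(\rho(g)v,\rho(g)w\big)=\mathcal{H}_x(f)(v,w),$$
which is the asserted invariance of the normal Hessian. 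The invariance of $Q_f$ is then immediate: taking $w=v$ and using $Q_f(v)=\tfrac12\mathcal{H}_{\pi(v)}(f)(v,v)$ gives $Q_f(\rho(g)v)=Q_f(v)$; equivalently, it follows at once from $Q_f\circ \overline{ds}=Q_{f\circ s}=Q_{f\circ t}=Q_f\circ \overline{dt}$ already established in Lemma~\ref{QuadraticLemma}.

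I expect no serious obstacle here, since the content is essentially a repackaging of Lemma~\ref{QuadraticLemma}; the only points requiring care are the bookkeeping of which fibers the maps $\overline{ds}(g)$ and $\overline{dt}(g)$ relate, and the convention that conjugating a bilinear form by a linear map $A$ acts as $(A^T\,\mathcal{H}\,A)(u,w)=\mathcal{H}(Au,Aw)$. The $2$-metric $\eta^{(0)}$ enters only to realize the normal Hessian and $Q_f$ as honest forms on $\nu(\mathcal{O})\cong T\mathcal{O}^{\perp}$; since \eqref{eq:normalrep} is itself independent of the metric, the invariance we obtain is an intrinsic consequence of $f$ being basic.
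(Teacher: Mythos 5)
Your proof is correct, and it reaches the lemma by a more direct route than the paper does. Both arguments rest on the same key identity: basicness of $f$ together with \eqref{eq:Hessianpullback} gives $\overline{ds}(g)^T\cdot \mathcal{H}_{s(g)}(f)\cdot \overline{ds}(g)=\overline{dt}(g)^T\cdot \mathcal{H}_{t(g)}(f)\cdot \overline{dt}(g)$ on $\nu_g(G_{\mathcal{O}})$, which the paper records as Identity \eqref{eq:HessianMorphism}. You then simply conjugate by $\overline{ds}(g)^{-1}$ to get $\mathcal{H}_{t(g)}(f)(\rho(g)v,\rho(g)w)=\mathcal{H}_{s(g)}(f)(v,w)$ for $\rho(g)=\overline{dt}(g)\circ\overline{ds}(g)^{-1}$, with invariance of $Q_f$ as an immediate consequence. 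The paper instead fixes $x\in\mathcal{O}$, takes two isotropy elements $g,h\in G_x$, and manipulates the second slot through the multiplication map applied to the pair $(h,h^{-1}g)$, aiming at the stronger claim $\mathcal{H}_x(f)(g\cdot[v_1],h\cdot[v_2])=\mathcal{H}_x(f)([v_1],[v_2])$ with \emph{independent} $g$ and $h$.

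Two things are worth knowing about this difference. First, the stronger two-element claim is false in general: for $\mathbb{Z}_2\ltimes\mathbb{R}^2\rr\mathbb{R}^2$ with $f(p)=\Vert p\Vert^2$, the normal representation of the nontrivial isotropy element $g$ at the critical orbit $\{0\}$ is $-\mathrm{id}$, so $\mathcal{H}_0(f)(g\cdot v_1,v_2)=-\mathcal{H}_0(f)(v_1,v_2)$, which differs from $\mathcal{H}_0(f)(v_1,v_2)$ whenever $\langle v_1,v_2\rangle\neq 0$. Second, and correspondingly, the paper's computation contains a slip: the lift $u=(\overline{ds}(h^{-1}g))^{-1}(\overline{ds}(h)(w_2))$ used there is not tangent to $G^{(2)}$ at $(h,h^{-1}g)$, since with the conventions $s\circ m=s\circ\pi_2$ and $t\circ m=t\circ\pi_1$ tangency requires $\overline{ds}(h)(w_2)=\overline{dt}(h^{-1}g)(u)$, i.e.\ $u=(\overline{dt}(h^{-1}g))^{-1}(\overline{ds}(h)(w_2))$. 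With the correct lift the same manipulation yields $\mathcal{H}_x(f)(g\cdot[v_1],h\cdot[v_2])=\mathcal{H}_x(f)([v_1],\rho(g^{-1}h)[v_2])$, which for $g=h$ is exactly your diagonal invariance and nothing more. So your argument proves the statement in the form in which it is true (and in the form actually needed for the invariance of $Q_f$, the well-definedness of the index, and Proposition \ref{NormalActionFeatures}), while avoiding the $G^{(2)}$ bookkeeping where the paper's version goes astray.
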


\begin{proof}
First of all, similar arguments as those used in Lemma \ref{QuadraticLemma} show that the Hessian $H(F):\nu(G_{\mathcal{O}})\oplus \nu(G_{\mathcal{O}})\to \mathbb{R}$ is a Lie groupoid morphism covering $H(f):\nu({\mathcal{O}})\oplus \nu({\mathcal{O}})\to \mathbb{R}$. In other words,
\begin{equation}\label{eq:HessianMorphism}
H(F)=(\overline{ds}\oplus \overline{ds})^*H(f)=(\overline{dt}\oplus \overline{dt})^*H(f).
\end{equation}
Pick $x\in \mathcal{O}$ and	let $g,h\in G_x$, $w_1\in T_g G$ and $w_2\in T_h G$ be such that $ds(g)(w_1)=v_1$ and $ds(h)(w_2)=v_2$. Therefore, by using the identities $t\circ \pi_1=t\circ m$ and $s\circ \pi_2=s\circ m$ we obtain
\begin{eqnarray*}
	\mathcal{H}_{x}(f)(g\cdot [v_1],h\cdot [v_2]) & = & \mathcal{H}_{x}(f)([dt(g)(w_1)],[dt(h)(w_2)])\\
	& = & \mathcal{H}_{x}(f)([dt(g)(w_1)],[d(t\circ \pi_1)_{(h,h^{-1}g)}(w_2,(ds(h^{-1}g))^{-1}(ds(h)(w_2))])\\
	& = & \mathcal{H}_{x}(f)([dt(g)(w_1)],[dt(g)(dm_{(h,h^{-1}g)}(w_2,(ds(h^{-1}g))^{-1}(ds(h)(w_2))])\\
\textnormal{By Identity}\ \eqref{eq:HessianMorphism}	& = & \mathcal{H}_{x}(f)([ds(g)(w_1)],[ds(g)(dm_{(h,h^{-1}g)}(w_2,(ds(h^{-1}g))^{-1}(ds(h)(w_2))])\\
	& = & \mathcal{H}_{x}(f)([ds(g)(w_1)],[ds(h^{-1}g)((ds(h^{-1}g))^{-1}(ds(h)(w_2))])\\
	& = & \mathcal{H}_{x}(f)([ds(g)(w_1)],[ds(h)(w_2)]) =  \mathcal{H}_{x}(f)([v_1],[v_2]).
\end{eqnarray*}
\end{proof}

It follows immediately from the this lemma that the index of a non-degenerate critical orbit $\mathcal{O}\subseteq M$ is well-defined even if $\mathcal{O}$ is not connected.

\begin{definition}\label{def:nondegarrowindex}
	Let $F:G\to \mathbb{R}$ be a Morse Lie groupoid morphism covering $f:M\to\mathbb{R}$. If $\mathcal{O}\subseteq M$ is a nondegenerate critical orbit then any arrow $g\in G_{\mathcal{O}}$ will be called a \textbf{non-degenerate critical arrow} of $F$ and its \textbf{index} is defined as $\lambda(g,F):=\lambda(\mathcal{O},f)$.
\end{definition}

Let us fix a nondegenerate critical orbit $\mathcal{O}$ of a  Morse Lie groupoid morphism $F:G\to \mathbb{R}$ covering $f:M\to\mathbb{R}$. We can use the Riemannian metrics $\eta^{(1)}$ and $\eta^{(0)}$ respectively to split $\nu(G_{\mathcal{O}})=\nu_+(G_{\mathcal{O}})\oplus \nu_{-}(G_{\mathcal{O}})$ and $\nu(\mathcal{O})=\nu_+(\mathcal{O})\oplus \nu_{-}(\mathcal{O})$ into subbundles which are fiberwise defined by the eigenvectors corresponding to the positive/negative eigenvalues of $\mathcal{H}(F)$ and $\mathcal{H}(f)$. In these terms we have:

\begin{lemma}\label{NegativeNormalLG}
	The Lie groupoid structure of $\nu(G_{\mathcal{O}})\rightrightarrows \nu(\mathcal{O})$ can be restricted to define two new Lie subgroupoids $\nu_-(G_{\mathcal{O}})\rightrightarrows \nu_-(\mathcal{O})$ and $\nu_+(G_{\mathcal{O}})\rightrightarrows \nu_+(\mathcal{O})$.
\end{lemma}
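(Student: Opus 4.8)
The plan is to realize both $\nu_-(G_{\mathcal{O}})$ and $\nu_+(G_{\mathcal{O}})$ as pullbacks of the base subbundles $\nu_-(\mathcal{O})$ and $\nu_+(\mathcal{O})$ along the fiberwise isomorphism $\overline{ds}$, and then to verify that this choice is compatible with all the structural maps of the normal groupoid. Concretely, I would set
$$\nu_\pm(G_{\mathcal{O}}):=\overline{ds}^{-1}\big(s^*\nu_\pm(\mathcal{O})\big)=\{\xi\in \nu(G_{\mathcal{O}}):\ \overline{ds}(\xi)\in \nu_\pm(\mathcal{O})\}.$$
Since $\overline{ds}:\nu(G_{\mathcal{O}})\to s^*\nu(\mathcal{O})$ is a vector bundle isomorphism and $\nu_\pm(\mathcal{O})$ is a smooth subbundle of $\nu(\mathcal{O})$ (the eigenbundles of a fiberwise nondegenerate symmetric form vary smoothly), the sets $\nu_\pm(G_{\mathcal{O}})$ are smooth embedded subbundles of $\nu(G_{\mathcal{O}})\to G_{\mathcal{O}}$ of constant rank.

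The crux of the argument is to show that the normal representation \eqref{eq:normalrep} preserves the eigenspace decomposition, equivalently that the definition above is symmetric in $s$ and $t$, i.e. $\nu_\pm(G_{\mathcal{O}})=\overline{dt}^{-1}(t^*\nu_\pm(\mathcal{O}))$. For this I would combine two facts. First, because $s,t:(G,\eta^{(1)})\to (M,\eta^{(0)})$ are Riemannian submersions, the maps $\overline{ds}(g)$ and $\overline{dt}(g)$ are fiberwise isometries, so the normal representation $\rho_g=\overline{dt}(g)\circ\overline{ds}(g)^{-1}:\nu_x(\mathcal{O})\to \nu_{y}(\mathcal{O})$ is an isometry for every $g:x\to y$ in $G_{\mathcal{O}}$. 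Second, by Lemma \ref{NormalInvariance} the representation $\rho_g$ also preserves the normal Hessian $\mathcal{H}(f)$. Writing $\mathcal{H}_x(f)(v,w)=\eta^{(0)}(A_x v,w)$ for the self-adjoint Hessian endomorphism $A_x$, these two invariances together force $\rho_g\circ A_x=A_y\circ \rho_g$. Hence $\rho_g$ carries eigenspaces of $A_x$ to eigenspaces of $A_y$ with the same eigenvalue, and in particular maps $\nu_{x,\pm}(\mathcal{O})$ isomorphically onto $\nu_{y,\pm}(\mathcal{O})$. This is exactly the invariance of $\nu_\pm(\mathcal{O})$ under the normal representation, which yields the desired symmetry of the definition.

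Granting this invariance, checking that $\nu_\pm(G_{\mathcal{O}})\rr \nu_\pm(\mathcal{O})$ is a Lie subgroupoid becomes formal, since every structural map of $\nu(G_{\mathcal{O}})\rr \nu(\mathcal{O})$ is a fiberwise linear isomorphism expressible through $\overline{ds}$ and $\overline{dt}$. The source and target restrict to $\nu_\pm(\mathcal{O})$ by definition together with the symmetry just established; the units over points of $\nu_\pm(\mathcal{O})$ lie in $\nu_\pm(G_{\mathcal{O}})$ because $\overline{ds}$ sends a unit to its base point; closure under inversion follows from $\overline{ds}(\xi^{-1})=\overline{dt}(\xi)$ and invariance; and closure under multiplication follows from the identities $s\circ m=s\circ \pi_2$ and $t\circ m=t\circ \pi_1$, which descend to $\overline{ds}(\xi\eta)=\overline{ds}(\eta)$ and $\overline{dt}(\xi\eta)=\overline{dt}(\xi)$ on the normal groupoid. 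Running the same argument verbatim with $\nu_+$ in place of $\nu_-$ gives the second subgroupoid. I expect the only genuine obstacle to be the invariance step of the middle paragraph: one must be careful that preserving the Hessian bilinear form alone does not preserve the eigenbundles, and that it is precisely the additional isometry property coming from the Riemannian submersions $s,t$ that rigidifies $\rho_g$ into an intertwiner of the self-adjoint operators $A_x$.
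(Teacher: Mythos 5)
Your proof is correct, but it is organized along a genuinely different route than the paper's. The paper takes $\nu_\pm(G_{\mathcal{O}})$ to be the eigenbundles of $\mathcal{H}(F)$ and verifies closure one structural map at a time by explicit eigenvector computations: the conjugation formula $\mathcal{H}_{g}(f\circ s)=\overline{ds}(g)^{-1}\cdot \mathcal{H}_{s(g)}(f)\cdot \overline{ds}(g)$ handles $\overline{ds}$ and $\overline{dt}$, and then separate calculations using $s\circ m=s\circ\pi_2$, $s\circ u=\mathrm{id}$ and $s\circ i=t$ show that $\overline{dm}$, $\overline{du}$ and $\overline{di}$ send (pairs of) negative eigenvectors to negative eigenvectors. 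You instead characterize $\nu_\pm(G_{\mathcal{O}})$ as the pullback $\overline{ds}^{-1}(s^*\nu_\pm(\mathcal{O}))$, make the pivot of the argument the fact that the normal representation $\rho_g=\overline{dt}(g)\circ\overline{ds}(g)^{-1}$ intertwines the self-adjoint Hessian operators (isometry from the $2$-metric plus the bilinear invariance of Lemma \ref{NormalInvariance}), hence preserves the spectral splitting of $\nu(\mathcal{O})$, and then closure of every structural map is a one-line formal consequence. In effect you prove first the invariance statement that the paper only records afterwards (Proposition \ref{NormalActionFeatures}, deduced there from exactly your two ingredients) and derive the lemma from it; the paper's logical order is the reverse. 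Your version is shorter and isolates the conceptual content; the paper's computations are more pedestrian but self-contained, and the same computational pattern is reused immediately for the disk and sphere bundles in Lemma \ref{NegativeUnitDiskLG}. Two small points to make your write-up airtight: since the paper has already defined $\nu_\pm(G_{\mathcal{O}})$ as eigenbundles of $\mathcal{H}(F)$, you should say explicitly that your pullback definition coincides with that one, which is immediate from the displayed conjugation formula; and your parenthetical about smoothness should refer to the positive and negative \emph{spectral} subbundles (individual eigenbundles need not vary smoothly), which is exactly what nondegeneracy along the critical orbit guarantees.
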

\begin{proof}
We will show why it is possible to restrict the groupoid structure in the first case since the second one is completely analogous. We already know that $\mathcal{H}_{g}(f\circ s)=\overline{ds}(g)^{-1}\cdot \mathcal{H}_{x}(f)\cdot \overline{ds}(g)$. If $v$ is an eigenvector of $\mathcal{H}_{g}(f\circ s)$ with negative eigenvalue $c$, then
$$\mathcal{H}_{x}(f)(\overline{ds}(g)(v))=\overline{ds}(g)(\mathcal{H}_{g}(f\circ s)(v))=\overline{ds}(g)(c\cdot v)=c\cdot \overline{ds}(g)(v).$$
That is, $\overline{ds}(g)(v)$ is an eigenvector of $\mathcal{H}_{x}(f)$ with eigenvalue $c$. Same conclusion may be obtained by arguing with the Riemannian submersion $t$. Let $v$ and $u$ be eigenvectors of $\mathcal{H}_{g}(f\circ s)$ and $\mathcal{H}_{h}(f\circ s)$ with respective negative eigenvalues $c_1$ and $c_2$ such that $\overline{dm}_{(g,h)}(v,u)$ is well defined. Let us say $z\xleftarrow[]{\it g}x\xleftarrow[]{\it h}y$. Thus, by using the formula $s\circ m=s\circ \pi_2$ we get that 
\begin{eqnarray*}
	\mathcal{H}_{gh}(f\circ s)(\overline{dm}_{(g,h)}(v,u)) & = & \overline{ds}(gh)^{-1}(\mathcal{H}_{y}(f)(\overline{ds}(gh)(\overline{dm}_{(g,h)}(v,u))))\\
	& = & \overline{ds}(gh)^{-1}(\mathcal{H}_{y}(f)(\overline{d(s\circ m)}_{(g,h)}(v,u))) =  \overline{ds}(gh)^{-1}(\mathcal{H}_{y}(f)(\overline{ds}(h)(u)))\\
	& = & \overline{ds}(gh)^{-1}(\overline{ds}(h)(\mathcal{H}_{h}(f\circ s)(u))) =  c_2\cdot \overline{ds}(gh)^{-1}(\overline{ds}(h)(u))\\
	& = & c_2\cdot \overline{ds}(gh)^{-1}(\overline{ds}_{\pi_2(g,h)}((\overline{d\pi_2})_{(g,h)}(v,u)))\\
	& = & c_2\cdot \overline{ds}(gh)^{-1}(\overline{d(s\circ m)}_{(g,h)}(v,u)) =  c_2\cdot \overline{dm}_{(g,h)}(v,u).
\end{eqnarray*}
If we assume that $F=t^\ast f$ then by using the identity $t\circ m=s\circ \pi_1$ we conclude that $\mathcal{H}_{gh}(f\circ t)(\overline{dm}_{(g,h)}(v,u))=c_1\cdot \overline{dm}_{(g,h)}(v,u)$. This computation implies that the composition $\overline{dm}:(\nu_{-}(G_{\mathcal{O}}))^{(2)}\to \nu_{-}(G_{\mathcal{O}})$ is well defined when considering $\overline{ds}$ and $\overline{dt}$ restricted to $\nu_{-}(G_{\mathcal{O}})$. The restriction of the unit map as $\overline{du}:\nu_{-}(\mathcal{O})\to \nu_{-}(G_{\mathcal{O}})$ is also well defined since $s\circ u=id$ holds true. Indeed, if $v$ is an eigenvector of $\mathcal{H}_x(f)$ with negative eigenvalue $c$ we have that
\begin{eqnarray*}
	\mathcal{H}_{1_x}(f\circ s)(\overline{du}(x)(v)) & = & \overline{ds}(1_x)^{-1}(\mathcal{H}_{x}(f)(\overline{ds}(1_x)(\overline{du}(x)(v)))) =  \overline{ds}(1_x)^{-1}(\mathcal{H}_{x}(f)(\overline{d(s \circ u)}(x)(v)))\\
	& = & c\cdot \overline{ds}(1_x)^{-1}(v) =  c\cdot \overline{ds}(1_x)^{-1}(\overline{d(s \circ u)}(x)(v) =  c\cdot \overline{du}(x)(v).
\end{eqnarray*}
So, $\overline{du}(x)(v)$ is an eigenvector of $\mathcal{H}_{1_x}(f\circ s)$ with eigenvalue $c$.
 Finally, with similar computations, using the identities $t=s\circ i$, $s=t\circ i$, and $\mathcal{H}(f\circ s)=\mathcal{H}(f\circ t)$, we obtain that if $v$ is an eigenvector of $\mathcal{H}_g(f\circ s)$ with negative eigenvalue $c$ then $\mathcal{H}_{g^{-1}}(f\circ s)(\overline{di}(g)(v))=c\cdot \overline{di}(g)(v)$. Thus, the restriction of the inverse map as $\overline{di}:\nu_{-}(G_{\mathcal{O}})\to \nu_{-}(G_{\mathcal{O}})$ is also well defined. The properties required to be satisfied by the composition, the inverse, and the unit map follow from those of $\nu(G_{\mathcal{O}})\rightrightarrows \nu(\mathcal{O})$.
\end{proof}

Consider the \textbf{negative unit disk bundle} $D_{-}(G_{\mathcal{O}})$ defined by 

$$D_{-}(G_{\mathcal{O}})=\lbrace v\in \nu_-(G_{\mathcal{O}}): \| v \|_1\leq 1\rbrace,$$
where $\| \cdot \|_1$ is the norm on $\nu_-(G_{\mathcal{O}})$ induced by $\eta^{(1)}$. The positive unit disk bundle $D_{+}(G_{\mathcal{O}})$ is defined accordingly. Also, one has unit disk bundles at the level of objects $D_{-}(\mathcal{O})$ and $D_{+}(\mathcal{O})$ defined by the induced metric $\eta^{(0)}$ on $M$. As both the ranks of $\nu_-(G_{\mathcal{O}})$ and $\nu_-(\mathcal{O})$ agree and they are actually the index $\lambda$ of the non-degenerate critical submanifolds, the fibers of the negative unit disk bundles are $\lambda$-dimensional disks. Moreover, the unit disk bundles define topological groupoids of the (negative) normal groupoid. Indeed:

\begin{lemma}\label{NegativeUnitDiskLG}
	The Lie groupoid structure of $\nu_-(G_{\mathcal{O}})\rightrightarrows \nu_-(\mathcal{O})$ restricts to the unit disk bundle yielding a topological subgroupoid $D_-(G_{\mathcal{O}})\rightrightarrows D_-(\mathcal{O})$.
\end{lemma}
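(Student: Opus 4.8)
The plan is to verify that each structural map of the Lie groupoid $\nu_-(G_{\mathcal{O}})\rightrightarrows \nu_-(\mathcal{O})$, whose existence was established in Lemma \ref{NegativeNormalLG}, restricts to the unit disk bundles $D_-(G_{\mathcal{O}})$ and $D_-(\mathcal{O})$, and to observe that the resulting maps are continuous but not necessarily smooth, so that the outcome is only a topological subgroupoid. The crucial input is that $s,t:G\to M$ and the composition $m:G^{(2)}\to G$ are Riemannian submersions for the ambient metrics $\eta^{(1)}$ and $\eta^{(0)}$, which is precisely the defining feature of the $2$-metric recalled in Section \ref{sec:riemanniangroupoids}. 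Recall that for a Riemannian submersion the differential restricts to a fiberwise isometry on the orthogonal complement of the fibers; since we have identified $\nu(G_{\mathcal{O}})\cong TG_{\mathcal{O}}^\perp$ and $\nu(\mathcal{O})\cong T\mathcal{O}^\perp$ using exactly these metrics, the induced vector bundle maps $\overline{ds},\overline{dt}:\nu(G_{\mathcal{O}})\to \nu(\mathcal{O})$ are fiberwise isometries.

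First I would treat the source and target maps. By Lemma \ref{NegativeNormalLG} the maps $\overline{ds},\overline{dt}$ send $\nu_-(G_{\mathcal{O}})$ into $\nu_-(\mathcal{O})$, and because they are fiberwise isometries for the norms $\|\cdot\|_1$ and the induced norm from $\eta^{(0)}$, they satisfy $\|\overline{ds}(g)(v)\|_0=\|v\|_1$ and likewise for $\overline{dt}$. Hence $v\in D_-(G_{\mathcal{O}})$ if and only if $\overline{ds}(g)(v)\in D_-(\mathcal{O})$, so both $\overline{ds}$ and $\overline{dt}$ restrict to surjective maps $D_-(G_{\mathcal{O}})\to D_-(\mathcal{O})$. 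The unit map $\overline{du}:\nu_-(\mathcal{O})\to \nu_-(G_{\mathcal{O}})$ is handled the same way: since $s\circ u=\mathrm{id}$ and the relevant differentials are isometries, $\overline{du}$ is norm-preserving and therefore restricts to $D_-(\mathcal{O})\to D_-(G_{\mathcal{O}})$. The inverse $\overline{di}$ likewise preserves the norm because $i:G\to G$ is an isometry for $\eta^{(1)}$, so it restricts to $D_-(G_{\mathcal{O}})\to D_-(G_{\mathcal{O}})$.

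The main obstacle is the composition. I would argue that for composable $v,u$ with $\|v\|_1\le 1$ and $\|u\|_1\le 1$ one has $\|\overline{dm}_{(g,h)}(v,u)\|_1\le 1$, which does not follow from a single isometry but from the compatibility of the three face maps. Concretely, since $m:G^{(2)}\to G$ is a Riemannian submersion and $\overline{ds}\circ \overline{dm}=\overline{ds}\circ \overline{d\pi_2}$ (from $s\circ m=s\circ \pi_2$), the image $\overline{ds}(gh)\bigl(\overline{dm}_{(g,h)}(v,u)\bigr)=\overline{ds}(h)(u)$ has norm $\|u\|_0=\|u\|_1\le 1$; applying the isometry $\overline{ds}(gh)^{-1}$ gives $\|\overline{dm}_{(g,h)}(v,u)\|_1=\|u\|_1\le 1$. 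Thus $\overline{dm}$ restricts to $\bigl(D_-(G_{\mathcal{O}})\bigr)^{(2)}\to D_-(G_{\mathcal{O}})$. Finally I would point out that the disk bundles are manifolds with boundary and the restricted structural maps, while continuous, need not be smooth in the manifold-with-corners sense across the boundary sphere; hence one only obtains a \emph{topological} groupoid $D_-(G_{\mathcal{O}})\rightrightarrows D_-(\mathcal{O})$. The groupoid axioms (associativity, unitality, invertibility) are inherited directly from those of $\nu_-(G_{\mathcal{O}})\rightrightarrows \nu_-(\mathcal{O})$ established in Lemma \ref{NegativeNormalLG}, since they are identities among set-theoretic maps that continue to hold on the restricted domains.
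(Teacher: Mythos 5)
Your proof is correct and follows essentially the same route as the paper's: both derive the norm-preservation identities for $\overline{ds},\overline{dt},\overline{dm},\overline{du},\overline{di}$ from the fact that $s,t,m,\pi_1,\pi_2$ are Riemannian submersions (and $i$ an isometry for the $2$-metric) together with the structural identities $s\circ m=s\circ\pi_2$, $t\circ m=t\circ\pi_1$, $s\circ u=\mathrm{id}$, and then restrict the groupoid structure of Lemma \ref{NegativeNormalLG} to the unit disk bundles. The only inessential discrepancy is your closing claim that the restricted maps may fail to be smooth: they are smooth, being restrictions of fiberwise-linear bundle maps, and the word ``topological'' in the statement merely reflects that the disk bundles are manifolds with boundary rather than ordinary manifolds.
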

\begin{proof}
	Recall that $s,t:G\to M$ as well as $\pi_1,m,\pi_2: G^{(2)}\to G$ are Riemannian submersions and that the inversion map $i:G\to G$ is an isometry. Therefore, if we consider the norms $\| \cdot \|_1$ and  $\| \cdot \|_0$ with respect to the metrics $\eta^{(1)}$ and $\eta^{(0)}$ restricted to the normal bundles $\nu(G_{\mathcal{O}})$ and $\nu(\mathcal{O})$, respectively, then we get the identities
	$$\| v \|_1=\| \overline{ds}(v) \|_0, \qquad \| v \|_1=\| \overline{dt}(v) \|_0,\qquad \| (v,w) \|_2=\| \overline{dm}(v,w) \|_1=\| \overline{ds}(w) \|_0=\| w \|_1$$
	$$\| (v,w) \|_2=\| \overline{dm}(v,w) \|_1=\| \overline{dt}(v) \|_0=\| v \|_1,\qquad \| \overline{di}(v) \|_1=\| v \|_1,\qquad \| \overline{du}(v) \|_1=\| v \|_0.$$
	To deduce these formulas it is important to have in mind the identities $s\circ m=s\circ \pi_2$, $t\circ m=t\circ \pi_1$, $t=s\circ i$, $s=t\circ i$, and $s\circ u=id$. Hence, by mimicking the steps followed in Lemma \ref{NegativeNormalLG} it is simple to see that $D_-(G_{\mathcal{O}})\rightrightarrows D_-(\mathcal{O})$ is a topological subgroupoid of $\nu_-(G_{\mathcal{O}})\rightrightarrows \nu_-(\mathcal{O})$.
\end{proof}

The groupoid introduced in Lemma \ref{NegativeUnitDiskLG} will be called the \textbf{unit disk groupoid} of $\nu_-(G_{\mathcal{O}})\rightrightarrows \nu_-(\mathcal{O})$ with respect the $2$-metric $\eta^{(2)}$. Note that this can be thought of as a Lie groupoid with ``boundary''. Namely, the boundary of $D_{-}(G_{\mathcal{O}})$ is the unit sphere bundle
$$\partial D_{-}(G_{\mathcal{O}})=\lbrace v\in \nu(G_{\mathcal{O}}): \| v \|_1=1\rbrace,$$
with the natural projection onto $G_{\mathcal{O}}$. The unit sphere bundle $\partial D_{-}(\mathcal{O})$ is similarly defined by using instead the norm $\| \cdot \|_0$ induced by $\eta^{(0)}$. Observe that the fibers of these sphere bundles are indeed $(\lambda-1)$-dimensional spheres. It is simple to check that there is a well defined Lie groupoid $\partial D_-(G_{\mathcal{O}})\rightrightarrows \partial D_-(\mathcal{O})$ whose structural maps are the induced ones. This Lie groupoid will be called the \textbf{unit sphere groupoid} of $\nu_-(G_{\mathcal{O}})\rightrightarrows \nu_-(\mathcal{O})$ with respect the $2$-metric $\eta^{(2)}$ and we shall usually refer to it as the boundary of $D_-(G_{\mathcal{O}})\rightrightarrows  D_-(\mathcal{O})$.

Recall that the action groupoid of the normal representation $G_x\curvearrowright \nu_x(\mathcal{O})$ canonically sits inside the local model $\nu(G_{\mathcal{O}})\rightrightarrows \nu(\mathcal{O})$ and the inclusion $(G_x\ltimes \nu_x(\mathcal{O}) \rightrightarrows \nu_x(\mathcal{O})) \hookrightarrow(\nu(G_{\mathcal{O}})\rightrightarrows \nu(\mathcal{O}))$ is a Morita map. Hence, as consequence of Lemma \ref{NormalInvariance} and the fact that the normal representation acts by isometries when we are equipped with a $2$-metric \cite{dHF}, the following result becomes clear.

\begin{proposition}\label{NormalActionFeatures}
The normal representation \eqref{eq:normalrep} induces a Lie groupoid representation of $G_{\mathcal{O}}\rr \mathcal{O}$ along $\nu_-(\mathcal{O})\to \mathcal{O}$ and a groupoid (resp. Lie groupoid) action of $G_{\mathcal{O}}\rr \mathcal{O}$ along $D_-(\mathcal{O})\to \mathcal{O}$ (resp. $\partial D_-(\mathcal{O})\to \mathcal{O}$). Moreover, the action groupoids of these normal actions
$$G_x\ltimes \nu_-(\mathcal{O})_x\rr \nu_-(\mathcal{O})_x,\qquad G_x\ltimes D_-(\mathcal{O})_x\rr D_-(\mathcal{O})_x,\qquad G_x\ltimes \partial D_-(\mathcal{O})_x\rr \partial D_-(\mathcal{O})_x, $$
canonically sit inside their respective local models and the inclusions are Morita. In particular, there are homeomorphisms between the orbit spaces $D_-(\mathcal{O})_x/G_x\cong D_-(\mathcal{O})/D_-(G_{\mathcal{O}})$ and $\partial D_-(\mathcal{O})_x/G_x\cong \partial D_-(\mathcal{O})/\partial D_-(G_{\mathcal{O}})$.
\end{proposition}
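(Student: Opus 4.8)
The plan is to recognize the three ``local model'' groupoids as action groupoids of the normal representation and then to invoke the standard principle that restriction to a complete transversal yields a Morita map.

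First I would record that, via the fiberwise isomorphism $\overline{ds}:\nu(G_{\mathcal{O}})\to s^*\nu(\mathcal{O})$, the normal groupoid is canonically identified with the action groupoid of the normal representation \eqref{eq:normalrep}, namely $\nu(G_{\mathcal{O}})\cong G_{\mathcal{O}}\ltimes \nu(\mathcal{O})$. Under this identification, Lemma \ref{NegativeNormalLG} says exactly that the subbundle $\nu_-(\mathcal{O})$ is invariant, so the normal representation restricts to a genuine linear representation $G_{\mathcal{O}}\curvearrowright \nu_-(\mathcal{O})$ with $\nu_-(G_{\mathcal{O}})\cong G_{\mathcal{O}}\ltimes \nu_-(\mathcal{O})$. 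Since we are equipped with a $2$-metric, the normal representation acts by isometries for the induced $0$-metric $\eta^{(0)}$ on $\nu(\mathcal{O})$ by \cite{dHF}; hence it preserves the norm $\|\cdot\|_0$ and therefore the unit disk and unit sphere subbundles $D_-(\mathcal{O})$ and $\partial D_-(\mathcal{O})$. This yields the asserted actions: a Lie groupoid action on the smooth bundle $\partial D_-(\mathcal{O})$ and a topological groupoid action on $D_-(\mathcal{O})$, the latter being only a manifold with boundary, with arrow groupoids $\partial D_-(G_{\mathcal{O}})\rr\partial D_-(\mathcal{O})$ and $D_-(G_{\mathcal{O}})\rr D_-(\mathcal{O})$ from Lemmas \ref{NegativeNormalLG} and \ref{NegativeUnitDiskLG}.

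Next I would prove the Morita statements uniformly. Writing $E$ for any of $\nu_-(\mathcal{O})$, $D_-(\mathcal{O})$, $\partial D_-(\mathcal{O})$ and $E_x$ for its fiber over $x$, the local models are the action groupoids $G_{\mathcal{O}}\ltimes E\rr E$. Because $\mathcal{O}$ is a single $G_{\mathcal{O}}$-orbit, every orbit of $G_{\mathcal{O}}\ltimes E$ projects onto all of $\mathcal{O}$ under $E\to\mathcal{O}$ and hence meets the fiber $E_x$; moreover $E_x$ is transverse to the orbits, since at $v\in E_x$ the projection $T_vE\to T_x\mathcal{O}$ has kernel $T_vE_x$ while the orbit tangent directions surject onto $T_x\mathcal{O}$, so $T_v(\text{orbit})+T_vE_x=T_vE$. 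Thus $E_x$ is a complete transversal, the restricted groupoid $(G_{\mathcal{O}}\ltimes E)_{E_x}$ is precisely $G_x\ltimes E_x$ (with $G_x$ acting through the normal representation restricted to the isotropy), and the inclusion of the restriction to a complete transversal is a Morita map. The ``in particular'' clause is then immediate, since a Morita map induces a homeomorphism of orbit spaces, giving $D_-(\mathcal{O})_x/G_x\cong D_-(\mathcal{O})/D_-(G_{\mathcal{O}})$ and $\partial D_-(\mathcal{O})_x/G_x\cong \partial D_-(\mathcal{O})/\partial D_-(G_{\mathcal{O}})$.

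The hard part will be the disk bundle $D_-(\mathcal{O})$, which is only a manifold with boundary, so the smooth ``complete transversal $\Rightarrow$ Morita'' theorem does not apply verbatim. Here I would argue directly in the topological category, verifying that the inclusion $G_x\ltimes D_-(\mathcal{O})_x\hookrightarrow D_-(G_{\mathcal{O}})\rr D_-(\mathcal{O})$ is fully faithful and essentially surjective as topological groupoids, which is all that is needed for the orbit-space homeomorphism; alternatively one bootstraps from the smooth interior and the sphere-bundle boundary, on which the smooth statement already holds. The only other genuine verification is the transversality of $E_x$ together with the smoothness of $(G_{\mathcal{O}}\ltimes E)_{E_x}$ in the two honestly smooth cases, which is routine but should be stated carefully.
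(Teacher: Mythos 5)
Your proposal is correct and follows essentially the same route as the paper, which in fact offers no written proof but merely asserts that the result ``becomes clear'' from Lemma \ref{NormalInvariance} and the fact that the normal representation acts by isometries in the presence of a $2$-metric --- precisely the ingredients you use, with the complete-transversal argument supplying the standard justification for the Morita claim that the paper takes for granted. Your one genuinely delicate point, the orbit-space homeomorphism for the disk bundle (a manifold with boundary), can be closed at once by observing that $D_-(\mathcal{O})_x/G_x$ is compact (continuous image of a compact set, $G_x$ being compact by properness) while $D_-(\mathcal{O})/D_-(G_{\mathcal{O}})$ is Hausdorff (the groupoid being proper), so the continuous bijection induced by your fully faithful, essentially surjective inclusion is automatically a homeomorphism.
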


It is simple to see that same conclusions can be obtained about the groupoids defined in terms of positive eigenvalues. 

\section{Gradient vector field and level subgroupoids}\label{S:6}

After having described some of the local features of a Morse Lie groupoid morphism around a non-degenerate critical orbit, in this section we deal with one of the most important results of Morse theory that, in turn, addresses the topological behavior of a Lie groupoid around a non-degenerate critical Lie subgroupoid. We start by introducing a notion of attaching groupoid and then we study multiplicative gradient vector fields.
 
\subsection{Attaching groupoid} 

In this subsection we quickly explain how to construct topological groupoids by an attaching procedure between Lie groupoids.

\begin{definition}\label{def:attachingdata}
 
Let $G\rightrightarrows M$ be a Lie groupoid. A \textbf{groupoid attaching data} on $G$ consists of: 

 \begin{itemize}
 \item[i)] a Lie groupoid $G'\rr M'$,
 \item[ii)] closed submanifolds $\partial G'\subset G'$ and $\partial M'\subset M'$ such that $\partial G'\rightrightarrows \partial M'$ is a Lie subgroupoid of $G'\rr M'$, and
 \item[iii)] a Lie groupoid morphism $(B,b):(\partial G'\rightrightarrows \partial M')\to (G\rightrightarrows M)$.
 \end{itemize}

 \end{definition}

A groupoid attaching data defines two topological spaces $G\sqcup_B G'$ and $M\sqcup_b M'$ defined by the usual attaching construction. Namely,  the quotient space $G\sqcup_B G'$ is defined by taking the disjoint union $G\sqcup G'$ and then identifying $g'\sim B(g')$ for all $g'\in \partial G'$. The attaching space $M\sqcup_b M'$ is defined in the same way.

One immediately observes that there is a natural topological groupoid $G\sqcup G'\rr M\sqcup M'$ whose structural maps are defined as the disjoint union of the corresponding structural maps of $G$ and $G'$. Our main goal in what follows is to show that this groupoid structure descends to the attaching spaces giving rise to a topological groupoid $G\sqcup_B G'\rr M\sqcup_b M'$. Since $(B,b):(\partial G'\rightrightarrows \partial M')\to (G\rightrightarrows M)$ is a Lie groupoid morphism, it follows that the source and target maps $s\sqcup s', t\sqcup t':G\sqcup G'\to M\sqcup M'$ pass to the quotient, yielding surjective open maps

$$\overline{s},\overline{t}:G\sqcup_B G'\to M\sqcup_b M'.$$
%


As usual, we define the set of composable arrows $(G\sqcup_B G')^{(2)}$ as the fibered product induced by $\overline{s}$ and $\overline{t}$. Therefore, to define the composition map $\overline{m}:(G\sqcup_B G')^{(2)}\to G\sqcup_B G'$ we have to consider the following four cases. Recall that $(1,x')\sim(b(x'),2)$ are the only kinds of elements related in $M\sqcup M'$ so that we set 
$$\overline{m}([(g,2)]_B,[(h,2)]_B):=[(gh,2)]_B,\quad \overline{m}([(1,g')]_B,[(1,h')]_B):=[(1,g'h')]_B,$$

$$\overline{m}([(1,g')]_B,[(g,2)]_B):=[(B(g')g,2)]_B,\quad \overline{m}([(g,2)]_B,[(1,g')]_B):=[(gB(g'),2)]_B.$$


It is simple to check that the well definition of this composition follows from the fact that $\overline{s}$ and $\overline{t}$ are also well defined and $(B,b)$ is a Lie groupoid morphism. The associative property of $\overline{m}$ is satisfied because we have that $m$ and $m'$ are associative in $G$ and $G'$, respectively and $B:\partial G'\to G$ satisfies $B\circ m'=m\circ (B\times B)$. Indeed, we only have to be careful when verifying the following case:
\begin{eqnarray*}
	[(1,g')]_B\cdot ([(1,h')]_B\cdot [(g,2)]_B) & = & [(1,g')]_B\cdot[(B(h')g,2)]_B = [(B(g')(B(h')g),2)]_B\\
	& = & [((B(g')B(h'))g,2)]_B =  [(B(g'h')g,2)]\\
	& = & [(1,g'h')]_B\cdot [(g,2)]_B =  ([(1,g')]_B\cdot [(1,h')]_B)\cdot [(g,2)]_B.
\end{eqnarray*}
The case $[(g,2)]_B\cdot ([(1,g')]_B\cdot [(1,h')]_B)=([(g,2)]_B\cdot[(1,g')]_B)\cdot [(1,h')]_B$ may be verified in a similar fashion and the other ones follow more directly. 

As expected, the unit map $\overline{u}:M\sqcup_b M' \to G\sqcup_B G'$ and the inverse  $\overline{i}:G\sqcup_B G'\to G\sqcup_B G'$ are defined by passing to the quotient $u\sqcup u':M\sqcup M'\to G\sqcup G'$ and $i\sqcup i':G\sqcup G'\to G\sqcup G'$, respectively. It follows easily that both $\overline{u}$ and $\overline{i}$ satisfy the required conditions of the groupoid axioms.

 Summing up, we have obtained:
 \begin{proposition}\label{AttachingDiskProposition}
 	There exists a natural topological groupoid $G\sqcup_B G'\rightrightarrows M\sqcup_b M'$ whose structural maps are given by the ones $(\overline{s},\overline{t},\overline{m},\overline{u},\overline{i})$ defined as above.
 \end{proposition}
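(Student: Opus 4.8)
The plan is to build directly on the construction already carried out in the preceding paragraphs, where the five maps $\overline{s},\overline{t},\overline{m},\overline{u},\overline{i}$ were exhibited, the well-definedness of $\overline{s},\overline{t},\overline{u},\overline{i}$ was observed, and a representative instance of the associativity of $\overline{m}$ was displayed. What remains is threefold: to confirm that the four cases defining $\overline{m}$ exhaust all composable pairs and yield a well-defined map, to upgrade every structural map to a \emph{continuous} map for the quotient topologies, and to complete the groupoid axioms not yet verified. I would organize everything around the two attaching projections $q_G\colon G\sqcup G'\to G\sqcup_B G'$ and $q_M\colon M\sqcup M'\to M\sqcup_b M'$, which are continuous, open, and surjective by construction of the attaching topology, and I would repeatedly invoke the universal property of the quotient topology.

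I would first record why the four formulas for $\overline{m}$ cover every composable pair. Since the gluing identifies points only along $\partial M'$, two arrows lying in different summands can be composable only when the matching object is one of the glued points; at such a point the source, respectively target, of the participating $G'$-arrow lies in $\partial M'$, so that arrow lies in $\partial G'$ and $B$ is defined on it. Well-definedness across the relation $g'\sim B(g')$ then follows from $(B,b)$ being a Lie groupoid morphism, exactly as in the displayed associativity computation: replacing a boundary arrow $g'$ by $B(g')$ and using $B\circ m'=m\circ(B\times B)$ changes nothing. Compatibility of $\overline{s},\overline{t}$ with $\overline{m}$ is then read off from $s\circ m=s\circ\pi_2$ and $t\circ m=t\circ\pi_1$ in $G$, the analogous identities in $G'$, and the intertwining of source and target by $B$; the associativity cases beyond the two already treated are verified verbatim, and the unit and inverse laws are inherited summand-by-summand, reducing on the mixed arrows $[(B(g')g,2)]_B$ and $[(gB(g'),2)]_B$ to the corresponding laws in $G$ via the morphism identities for $B$ and $b$.

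The step I expect to be the main obstacle is the continuity of $\overline{m}$. For the four unary maps continuity is formal: each descends from a smooth map on $G\sqcup G'$ or $M\sqcup M'$ that respects the gluing, so the universal property applies directly. For $\overline{m}$ this is subtler, because the space of composable arrows $(G\sqcup_B G')^{(2)}$ carries the subspace topology of the fibered product over $\overline{s},\overline{t}$, a fibered product of quotient maps need not again be a quotient map, and a composable pair meeting the seam need not lift to a composable pair in $G\sqcup G'$. I would therefore argue continuity locally. Using that $\partial G'\subset G'$ is a closed submanifold, the images $q_G(G)$ and $q_G(G'\setminus\partial G')$ form a closed/open cover of $G\sqcup_B G'$ on which $\overline{m}$ agrees, away from the seam, with the honest compositions $m$ and $m'$; across the seam I would control $\overline{m}$ using the continuity of $B$ together with a collar of $\partial G'$ in $G'$ coming from its boundary structure, and check that the four case-formulas glue to a single continuous assignment. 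Establishing this local continuity at the seam is the genuine content of the argument; once it is in place, the claim that $(\overline{s},\overline{t},\overline{m},\overline{u},\overline{i})$ defines a topological groupoid structure on $G\sqcup_B G'\rightrightarrows M\sqcup_b M'$ follows by assembling the verifications above.
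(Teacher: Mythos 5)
Your algebraic verifications coincide with the paper's entire proof of this proposition: the published argument consists of exactly what you recap, namely descent of $s\sqcup s'$, $t\sqcup t'$, $u\sqcup u'$, $i\sqcup i'$ to the quotient, the four-case definition of $\overline{m}$, and well-definedness plus associativity via $B\circ m'=m\circ (B\times B)$ — nothing more. The two points at which you go beyond the paper are, however, precisely where your proposal has gaps, and it is worth recording that the paper shares the first of them silently.

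First, your assertion that a $G'$-arrow occurring in a mixed composable pair must lie in $\partial G'$ is not a consequence of Definition \ref{def:attachingdata}. That definition only asks $\partial G'\rightrightarrows \partial M'$ to be a Lie subgroupoid with $\partial G'\subset G'$ and $\partial M'\subset M'$ closed; it does not force $s'^{-1}(\partial M')\subseteq \partial G'$. For instance, let $G'$ be the pair groupoid of $\mathbb{R}^{2}$, $\partial M'$ the horizontal axis and $\partial G'$ its pair groupoid: the arrow from $(0,1)$ to $(1,0)$ has target in $\partial M'$ but does not lie in $\partial G'$, so for any choice of $(B,b)$ its class forms composable pairs in the quotient on which $B$, hence $\overline{m}$, is undefined. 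What is actually needed is the extra hypothesis $\partial G'=s'^{-1}(\partial M')=t'^{-1}(\partial M')$, i.e.\ $\partial M'$ is saturated and $\partial G'$ is the full restriction of $G'$ over it; the paper assumes this just as tacitly as you do, and it holds in the intended application (the sphere-bundle groupoid inside the disk-bundle groupoid, where source and target preserve norms), but in a blind proof it must be stated as a hypothesis, not derived. Second, the continuity of $\overline{m}$ — which you correctly identify as non-formal, since composable pairs at the seam do not lift to composable pairs in $G\sqcup G'$ — is never established: you defer exactly the step you yourself call ``the genuine content,'' and the plan you sketch cannot run as written, because a collar of $\partial G'$ presupposes that $\partial G'$ is a boundary, while the definition provides only a closed submanifold. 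Under the saturation hypothesis above, your closed/open decomposition is nevertheless the right starting point and makes the collar unnecessary: composable pairs in $G\sqcup_B G'$ are either pairs of classes of $G$-arrows or pairs of classes of arrows of $G'\setminus\partial G'$ (no other combination matches sources and targets), the map $q|_{G}$ is a closed embedding and $q|_{G'\setminus\partial G'}$ is an open embedding, so $\overline{m}$ is continuous in restriction to each of the two types; what then remains — and what neither your proposal nor the paper proves — is continuity at a pair of the first type that is a limit of pairs of the second type, which is the genuine seam analysis and in full generality requires either an argument exploiting these embeddings quantitatively or a properness-type hypothesis on $B$.
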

 This groupoid will be called the \textbf{attaching groupoid} of $G\rightrightarrows M$ with respect to the Lie groupoid morphism $(B,b)$. A very special case is obtained when taking both $G'=M'=D^\lambda$ a closed $\lambda$-disk and $\partial G'=\partial M'=\partial D^{\lambda}$ its corresponding $(\lambda-1)$-sphere with their underlying structure of unit groupoids. In this case we attach cells of the same dimension at both arrows and objects extending the groupoid structure. 
 
 
\subsection{Level subgroupoids}
 
We start this subsection by defining the gradient vector field of a Lie groupoid morphism  $F:G\to \mathbb{R}$ with respect to a $2$-metric. Let $G\rr M$ be a Lie groupoid equipped with a 2-metric $\eta^{(2)}$ and consider a Lie groupoid morphism $F:G\to \mathbb{R}$ covering a basic function $f:M\to \mathbb{R}$. The \textbf{gradient} of $F$ is defined as the pair $\nabla F=(\nabla(s^\ast f),\nabla f)$, where $\nabla(s^\ast f)$ and $\nabla f$ are the gradient vector fields of $s^\ast f$ and $f$ with respect to the induced $1$-metric $\eta^{(1)}$ on $G$ and the induced $0$-metric $\eta^{(0)}$ on $M$, respectively. We have the following result.

\begin{proposition}\label{Multiplicative-Gradient}
 The gradient $\nabla F$ is a multiplicative vector field on $G$.
\end{proposition}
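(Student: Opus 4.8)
The plan is to verify that the pair $(\nabla(s^\ast f),\nabla f)$ defines a morphism of Lie groupoids from $G\rightrightarrows M$ to the tangent groupoid $TG\rightrightarrows TM$; concretely, that $\nabla(s^\ast f)$ is both $s$- and $t$-related to $\nabla f$ and that $\nabla(s^\ast f)(gh)=dm_{(g,h)}(\nabla(s^\ast f)(g),\nabla(s^\ast f)(h))$ for every composable pair. The single tool I would use throughout is the elementary fact that, for a Riemannian submersion $\pi:(E,\eta^E)\to (B,\eta^B)$ and any smooth $h:B\to\mathbb{R}$, the gradient $\nabla(\pi^\ast h)$ is the horizontal lift of $\nabla h$; in particular $\nabla(\pi^\ast h)$ is orthogonal to the fibers and $d\pi\circ\nabla(\pi^\ast h)=\nabla h\circ\pi$. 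This holds because $\pi^\ast h$ is constant along the fibers, so its gradient is horizontal, and $d\pi$ restricts to a fiberwise isometry on the horizontal distribution.

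First I would establish source and target relatedness. Since $f$ is basic we have $F=s^\ast f=t^\ast f$, hence $\nabla F=\nabla(s^\ast f)=\nabla(t^\ast f)$. Applying the fact above to the Riemannian submersions $s,t:G\to M$ (which are Riemannian for the induced metrics $\eta^{(1)}$ on $G$ and $\eta^{(0)}$ on $M$) gives $ds\circ\nabla(s^\ast f)=\nabla f\circ s$ and $dt\circ\nabla(t^\ast f)=\nabla f\circ t$, so $\nabla F$ is simultaneously $s$-related and $t$-related to $\nabla f$. In particular, for a composable pair $(g,h)$ one gets $ds(\nabla F(g))=\nabla f(s(g))=\nabla f(t(h))=dt(\nabla F(h))$, so the pair $(\nabla F(g),\nabla F(h))$ is tangent to $G^{(2)}$.

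The crux is the multiplicativity condition, which I would derive by transporting everything to $G^{(2)}$. Using that $f$ is basic together with the structural identities $s\circ m=s\circ\pi_2$ and $t\circ m=t\circ\pi_1$, the three pullbacks of $F$ along the face maps coincide: $m^\ast F=\pi_2^\ast F$ and $m^\ast F=\pi_1^\ast F$, so all three equal a single function $\Phi$ on $G^{(2)}$. Since $m,\pi_1,\pi_2:G^{(2)}\to G$ are all Riemannian submersions for $\eta^{(2)}$, applying the fact above three times to this single $\Phi$ produces one vector field $\nabla\Phi$ on $G^{(2)}$ satisfying simultaneously $dm\circ\nabla\Phi=\nabla F\circ m$, $d\pi_1\circ\nabla\Phi=\nabla F\circ\pi_1$, and $d\pi_2\circ\nabla\Phi=\nabla F\circ\pi_2$. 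Under the canonical identification of $T_{(g,h)}G^{(2)}$ with $\lbrace (v,w): ds(v)=dt(w)\rbrace$ via $(d\pi_1,d\pi_2)$, the last two identities say precisely that $\nabla\Phi(g,h)$ corresponds to the pair $(\nabla F(g),\nabla F(h))$, while the first gives $\nabla F(gh)=dm_{(g,h)}(\nabla\Phi(g,h))=dm_{(g,h)}(\nabla F(g),\nabla F(h))$, which is the desired multiplicativity.

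The main obstacle is conceptual rather than computational: one must notice that being basic forces the three face-pullbacks of $F$ to collapse to a single function $\Phi$ on $G^{(2)}$, so that one gradient $\nabla\Phi$ governs all three face maps at once, and the transverse (Riemannian-submersion) compatibility built into the $2$-metric is exactly what turns this coincidence into the algebraic relation $\nabla F(gh)=dm(\nabla F(g),\nabla F(h))$. The remaining groupoid-morphism conditions, compatibility with units and inverses, follow formally from $s$- and $t$-relatedness together with multiplicativity, using $s\circ u=\mathrm{id}$ and the fact that $i$ is an isometry, so no separate argument is needed.
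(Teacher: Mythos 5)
Your proof is correct, and while it shares the paper's overall skeleton---both arguments hinge on identifying the gradient of $m^{*}F$ on $G^{(2)}$ with the pair $(\nabla F(g),\nabla F(h))$ and then applying $dm$---the way you establish that crux identity is genuinely different. The paper proves it by direct computation: it pairs the difference $\nabla(m^{*}F)(g,h)-(\nabla F(g),\nabla F(h))$ against an arbitrary $(v,w)\in T_{(g,h)}G^{(2)}$ using the explicit formula of Remark 2.5 in \cite{dHF} relating $\eta^{(2)}$, $\eta^{(1)}$ and $\eta^{(0)}$, checks that the pairing vanishes, and concludes by non-degeneracy of $\eta^{(2)}$. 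You instead exploit the full simplicial symmetry of the $2$-metric: because $f$ is basic and $s\circ m=s\circ\pi_{2}$, $t\circ m=t\circ\pi_{1}$, the three face maps pull $F$ back to a single function $\Phi$ on $G^{(2)}$, and since $m$, $\pi_{1}$, $\pi_{2}$ are all Riemannian submersions for $\eta^{(2)}$, the one gradient $\nabla\Phi$ is simultaneously the horizontal lift of $\nabla F$ along each of them; the $\pi_{1}$- and $\pi_{2}$-identities pin down $\nabla\Phi(g,h)=(\nabla F(g),\nabla F(h))$ componentwise under the identification $T_{(g,h)}G^{(2)}\cong\{(v,w):\ ds(v)=dt(w)\}$, and the $m$-identity then is exactly multiplicativity. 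What your route buys is economy: you never need the explicit formula relating the three metrics, only the standard horizontal-lift property of gradients under Riemannian submersions, and it makes transparent that the $S_3$-invariance of $\eta^{(2)}$ (all face maps Riemannian) is precisely what drives the proposition. What the paper's computation buys is that it works directly with the bilinear forms, never passing through the fibered-product description of $TG^{(2)}$---though that identification is entirely standard. Your closing observation that units and inverses require no separate verification is also right; a pair of maps compatible with source, target and composition automatically preserves units and inverses, and indeed the paper omits this check for the same reason.
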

 \begin{proof}
 	We know that $s,t:G\to M$ as well as $\pi_1,m,\pi_2: G^{(2)}\to G$ are Riemannian submersions. Thus, the vector fields $\nabla(f\circ s)$ and $\nabla f$ are $s$-related, $\nabla(f\circ t)$ and $\nabla f$ are $t$-related, and $\nabla((f\circ s)\circ m)$ and $\nabla(f\circ s)$ are $m$-related. Here $\nabla((f\circ s)\circ m)$ denotes the gradient vector field of $(f\circ s)\circ m$ with respect to $\eta^{(2)}$ on $G^{(2)}$ and we have that $dm \circ\nabla((f\circ s)\circ m)=\nabla(f\circ s)\circ m$. The crucial point is to prove that for $(g,h)\in G^{(2)}$ the identity
 $$\nabla((f\circ s)\circ m)(g,h)=(\nabla(f\circ s)(g),\nabla(f\circ t)(h)),$$
 holds true as long as $ds(g)(\nabla(f\circ s)(g))=\nabla f(s(g))=\nabla f(t(h))=dt(h)(\nabla(f\circ t)(h))$. However, the vector fields $\nabla((f\circ s)\circ \pi_1)$ and $\nabla(f\circ s)$ are $\pi_1$-related and the vector fields $\nabla((f\circ t)\circ \pi_2)$ and $\nabla(f\circ t)$ are $\pi_2$-related. Therefore, as $(f\circ s)\circ m=(f\circ s)\circ \pi_1=(f\circ t)\circ \pi_2$ since $f$ is basic, we obtain that
 $$ d\pi_1 \circ\nabla((f\circ s)\circ m)=\nabla(f\circ s)\circ \pi_1\quad\textnormal{and}\quad d\pi_2 \circ\nabla((f\circ s)\circ m)=\nabla(f\circ t)\circ \pi_2,$$
 
as desired. This clearly implies that $dm \circ(\nabla(f\circ s)\times \nabla(f\circ s))=\nabla(f\circ s)\circ m$ which completes the proof.
\end{proof}
It is clear that we also may define the gradient vector field as the pair $\nabla F=(\nabla(t^\ast f),\nabla f)$.
 \begin{remark}\label{Multiplicative-Gradient-Remark}
 	To have that $\nabla F$ is a multiplicative vector field it is very important to us since this is equivalent to requiring that the pair of flows $(\Phi^{\nabla(s^\ast f)}_{\tau},\Phi^{\nabla f}_{\tau})$ induces (local) automorphisms on the Lie groupoid; see \cite{MX}. Namely, the following identities hold true
	
 	$$s\circ \Phi^{\nabla(s^\ast f)}_{\tau}=\Phi^{\nabla f}_{\tau}\circ s,\qquad  t\circ \Phi^{\nabla(t^\ast f)}_{\tau}=\Phi^{\nabla f}_{\tau}\circ t, \qquad \Phi^{\nabla (f\circ s)}_{\tau}\circ m=m\circ (\Phi^{\nabla (f\circ s)}_{\tau}\times\Phi^{\nabla (f\circ s)}_{\tau}).$$ 
 \end{remark}

 Let us now define the level subgroupoids of the kind of Lie groupoid morphisms we are working with. Let $F:G\to \mathbb{R}$ be a Lie groupoid morphism covering a basic function $f:M\to \mathbb{R}$. Take $a\in \mathbb{R}$ and consider the level set $M^a=\lbrace x\in M:\ f(x)\leq a\rbrace$. It is simple to check that $s^{-1}(M^a)=t^{-1}(M^a)$ which is equivalent to saying that $M^a$ is a saturated submanifold. This implies that $G^a=G_{M^a}=s^{-1}(M^a)=t^{-1}(M^a)$, with $G^a$ denoting the level set of $F$ below $a$, so that we get a well defined topological subgroupoid $G^a\rightrightarrows M^a$ of $G\rightrightarrows M$ that we call the \textbf{level subgroupoid} of $F$ below $a$. It is important to notice that $G^a\rightrightarrows M^a$ can be thought of as a Lie groupoid with boundary in the sense that if $\partial M^a=\lbrace x\in M:\ f(x)=a\rbrace$ and $\textnormal{int}(M^a)=\lbrace x\in M:\ f(x)<a\rbrace$
 then both $\partial G^a \rightrightarrows \partial M^a$ and $\textnormal{int}(G^a) \rightrightarrows \textnormal{int}(M^a)$ are clearly Lie subgroupoids of $G\rightrightarrows M$.
 
 It is well known that the gradient vector field $X=\nabla f$ of a smooth function $f$ on a Riemannian manifold $(M,\eta^{(0)})$ is actually a \textbf{gradient-like vector field}. That is, it satisfies both $\textnormal{Zeroes}(X)=\textnormal{Crit}(f)$ and $df(X)>0$ on $M\backslash\textnormal{Crit}(f)$. Thus, as consequence of Proposition \ref{Multiplicative-Gradient} we obtain:


 \begin{proposition}\label{NonCriticalLevel}
 Suppose that $G\rr M$ is a proper groupoid, $[a,b]$ is a closed interval which does not contain critical values of $f$ and $f^{-1}[a,b]$ is compact. Then $G^a\rightrightarrows M^a$ and $G^b\rightrightarrows M^b$ are isomorphic groupoids. Furthermore, $G^a\rightrightarrows M^a$ is a deformation retraction of $G^b\rightrightarrows M^b$.
 \end{proposition}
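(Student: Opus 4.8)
The plan is to mimic the classical noncritical-interval argument (see \cite[Thm.~3.1]{Mi}) by flowing along a rescaled gradient, the new feature being that the flow must be realized by \emph{automorphisms of the groupoid}. This is exactly what Proposition \ref{Multiplicative-Gradient} and Remark \ref{Multiplicative-Gradient-Remark} deliver, provided the rescaling is carried out by a \emph{basic} function so that the rescaled field stays multiplicative.

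A preliminary observation I would record is that $\|\nabla f\|^2$ is basic on $M\setminus\textnormal{Crit}(f)$. Since $f$ is basic, the function $f\circ s$ is constant along the fibers of both $s$ and $t$, so $\nabla(f\circ s)=\nabla(f\circ t)$ is horizontal for $s$ and for $t$; as these are Riemannian submersions they preserve horizontal norms, whence $\|\nabla f\|_0(s(g))=\|\nabla(f\circ s)\|_1(g)=\|\nabla f\|_0(t(g))$ for every $g\in G$. Now pick $\beta\colon\mathbb{R}\to[0,1]$ smooth with $\beta\equiv 1$ on $[a,b]$ and support inside $(a-\varepsilon,b+\varepsilon)$ chosen to meet no critical value; then $\rho:=\beta\circ f$ is automatically basic and $\lambda:=\rho\,\|\nabla f\|^{-2}$ is a smooth basic function. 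Set $Y_M:=\lambda\,\nabla f$ and $Y_G:=(s^\ast\lambda)\,\nabla(f\circ s)$, so that $df(Y_M)=\rho$. I would then check that $(Y_G,Y_M)$ is multiplicative: multiplying the multiplicative field $\nabla F$ of Proposition \ref{Multiplicative-Gradient} by the pullback of a basic function preserves $s$- and $t$-relatedness, and since the scalar $s^\ast\lambda=t^\ast\lambda$ takes a common value on $g$, $h$ and $gh$ for each composable pair it is also compatible with $dm$. (Equivalently, $Y_M$ is the gradient of $f$ for the basic conformal rescaling $\lambda^{-1}\eta^{(0)}$, so one may simply reapply Proposition \ref{Multiplicative-Gradient}.)

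By Remark \ref{Multiplicative-Gradient-Remark} the (local) flows $\Phi^{Y_G}_\tau,\Phi^{Y_M}_\tau$ then assemble into groupoid automorphisms. To avoid any completeness discussion I would only flow inside the compact saturated collar $f^{-1}[a,b]$: a downward trajectory of $Y_M$ decreases $f$ at unit rate there and so reaches the level $a$ in time at most $b-a$ while remaining in $f^{-1}[a,b]$, and by properness the arrow collar $s^{-1}(f^{-1}[a,b])=(s,t)^{-1}(f^{-1}[a,b]\times f^{-1}[a,b])$ is compact, so the arrow trajectories stay there too. This produces the usual homeomorphism $h\colon M^b\to M^a$ which is the identity on $M^a$ and sends $x$ with $f(x)>a$ to $\Phi^{Y_M}_{-(f(x)-a)}(x)$. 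Because the flow is multiplicative and the flow-time $F(g)-a$ depends only on $F=s^\ast f=t^\ast f$ (hence agrees for $s(g)$, $t(g)$ and, on composable pairs, for $g$, $h$ and $gh$), the same recipe on arrows, $H(g):=\Phi^{Y_G}_{-(F(g)-a)}(g)$, commutes with the source and target maps and with composition. Thus $(H,h)\colon (G^b\rightrightarrows M^b)\to (G^a\rightrightarrows M^a)$ is an isomorphism of topological groupoids, with inverse obtained by flowing up.

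Finally, scaling the flow-time gives the deformation retraction: set $r_t(x):=\Phi^{Y_M}_{-t(f(x)-a)}(x)$ for $f(x)\geq a$ and $r_t(x):=x$ otherwise, and lift it to $R_t(g):=\Phi^{Y_G}_{-t(F(g)-a)}(g)$. For each $t$ the pair $(R_t,r_t)$ is, by the argument above, a morphism of topological groupoids, with $(R_0,r_0)=\mathrm{id}$, $(R_1,r_1)$ landing in $G^a\rightrightarrows M^a$, and $(R_t,r_t)$ fixing it pointwise; hence $G^a\rightrightarrows M^a$ is a deformation retract of $G^b\rightrightarrows M^b$. I expect the main difficulty to be exactly the point forced in the second and third paragraphs, namely arranging the rescaling to be basic so that the flow is groupoid-valued, and confining the flow to the compact saturated collar so that it is defined for the required time without a global completeness hypothesis; once these are secured the classical collar argument transfers verbatim.
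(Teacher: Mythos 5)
Your strategy is the same as the paper's: rescale the multiplicative gradient of Proposition \ref{Multiplicative-Gradient} by a basic function and run Milnor's collar argument, using Remark \ref{Multiplicative-Gradient-Remark} to make the flow act by groupoid morphisms. However, the isomorphism step is wrong. The map $h(x)=\Phi^{Y_M}_{-(f(x)-a)}(x)$ (identity on $M^a$) is not a homeomorphism $M^b\to M^a$: since $df(Y_M)=1$ on the band, $h$ sends every $x\in f^{-1}((a,b])$ to the point where its trajectory crosses the level $a$, so any two distinct points on the same downward trajectory inside $f^{-1}[a,b]$ have the same image. Concretely, if $f(x)=b$ and $x'=\Phi^{Y_M}_{-(b-a)/2}(x)$, then $h(x)=h(x')$. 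This map is precisely the retraction $r_1$; it collapses each trajectory segment onto $f^{-1}(a)$ and has no inverse "by flowing up", because the flow time $f(x)-a$ is not recoverable from $h(x)$. What does give the isomorphism — and what the paper uses — is the \emph{constant-time} flow $(\widetilde{\Phi}_{b-a},\Phi_{b-a})$ of the rescaled multiplicative field: it is a groupoid automorphism of $G\rr M$ carrying $G^b\rr M^b$ onto $G^a\rr M^a$, with inverse $(\widetilde{\Phi}_{a-b},\Phi_{a-b})$. Your point-dependent-time formula is correct only where you actually use it correctly, namely for the deformation retraction $(R_t,r_t)$.

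This error interacts badly with your two shortcuts. First, the cutoff: an $\varepsilon>0$ such that $(a-\varepsilon,b+\varepsilon)$ meets no critical value need not exist, since critical values of $f$ may accumulate at $a$ or $b$ from outside $[a,b]$ (and $f^{-1}[a-\varepsilon,b+\varepsilon]$ need not be compact even though $f^{-1}[a,b]$ is); so $\lambda=\rho\,\|\nabla f\|^{-2}$ as you define it need not be smooth. Second, confinement to the band cannot replace completeness once the isomorphism is repaired: the constant-time flow $\Phi_{a-b}$ necessarily moves points \emph{below} level $a$ (any homeomorphism $M^b\to M^a$ must), so you need the rescaled multiplicative field to be globally defined and complete, not merely defined along trajectories inside $f^{-1}[a,b]$. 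This is exactly why the paper takes a smooth \emph{basic} cutoff $\mu$ equal to $\Vert \nabla f\Vert^{-2}$ on the band and vanishing outside a compact neighborhood, checks multiplicativity of $(-\widetilde{\mu}\widetilde{X},-\mu X)$, and invokes \cite[Lem. 2.4]{Mi} together with \cite[Lem. 4.4]{CMS} to get completeness before flowing for the fixed time $b-a$. With a correctly constructed basic cutoff and the constant-time flow for the first assertion, your argument becomes the paper's proof; as written, the first assertion of the proposition is not established.
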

 
 \begin{proof}
 
Let us follow some ideas stated in \cite{Mi,Ni} about the proof of this result in the classical case. It is clear that if $f^{-1}[a,b]\cap \textnormal{Crit}(f)=\emptyset$, then $(s^\ast f)^{-1}[a,b]\cap \textnormal{Crit}(F)=\emptyset$. We know that $\widetilde{X}=\nabla(f\circ s)$ and $X=\nabla f$ are gradient-like vector fields on $G$ and $M$, respectively. Thus, consider the smooth function $\mu:M\to [0,\infty)$ which is defined by $\| Xf\|^{-1}$ in $f^{-1}[a,b]$ and that vanishes outside a compact neighborhood of this set. We can similarly construct a smooth function $\widetilde{\mu}:G\to [0,\infty)$ by using instead $\widetilde{X}$ and $(s^\ast f)^{-1}[a,b]$. On the one hand, observe that since $\nabla F=(\widetilde{X},X)$ is a multiplicative vector field we get that
 	\begin{eqnarray*}
 		\mu(s(g))& = & \| (Xf)(s(g))\|^{-1}=\| df(s(g))(X(s(g))\|^{-1}\\
 		& = & \| df(s(g))(ds(g)(\widetilde{X}(g)))\|^{-1}=\| d(f\circ s)(g)(\widetilde{X}(g))\|^{-1}\\
 		& = & \| (\widetilde{X}F)(g)\|^{-1}=\widetilde{\mu}(g),
 	\end{eqnarray*}
 	what means that we have $\widetilde{\mu}=\mu\circ s$. On the other hand, let us consider the vectors fields $-\widetilde{\mu}\widetilde{X}$ and $-\mu X$. From \cite[Lem. 2.4]{Mi} and \cite[Lem. 4.4]{CMS} we obtain that they are complete vector fields since the pair $(-\widetilde{\mu}\widetilde{X},-\mu X)$ defines again a multiplicative vector field. Indeed, as consequence of Proposition \ref{Multiplicative-Gradient} we get
 	$$(ds\circ \widetilde{\mu}\widetilde{X})(g)=\widetilde{\mu}(g)ds(g)(\widetilde{X}(g))=\widetilde{\mu}(g)X(s(g))=\mu(s(g))X(s(g))=(\mu X\circ s)(g).$$
 	Given that $\nabla(f\circ s)=\nabla(f\circ t)$ we actually have that $\widetilde{\mu}=\mu\circ s=\mu\circ t$ and thus we can analogously obtain that $dt\circ \widetilde{\mu}\widetilde{X}=\mu X\circ t$. To prove the identity regarding the composition map observe that the formulas $s\circ m=s\circ \pi_2$ and $t\circ m=t\circ \pi_1$ imply that $\widetilde{\mu}(m(g,h))=\widetilde{\mu}(g)=\widetilde{\mu}(h)$ for all $(g,h)\in G^{(2)}$. Therefore, 
 	\begin{eqnarray*}
 		(dm\circ (\widetilde{\mu}\widetilde{X}\times \widetilde{\mu}\widetilde{X}))(g,h) &=& dm_{(g,h)}(\widetilde{\mu}(g)\widetilde{X}(g), \widetilde{\mu}(h)\widetilde{X}(h))\\
 		&=& \widetilde{\mu}(m(g,h))dm_{(g,h)}(\widetilde{X}(g),\widetilde{X}(h))\\
 		&=& \widetilde{\mu}(m(g,h))\widetilde{X}(m(g,h))\\
 		&=& (\widetilde{\mu}\widetilde{X} \circ m)(g,h).
 	\end{eqnarray*}
 	For all $\tau\in \mathbb{R}$, let $\widetilde{\Phi}_{\tau}:G\to G$ and $\Phi_{\tau}:M\to M$ denote the respective flows generated by the pair $(-\widetilde{\mu}\widetilde{X},-\mu X)$. From \cite{MX} we know that $(\widetilde{\Phi}_{\tau},\Phi_{\tau})$ is a Lie groupoid isomorphism for all $\tau\in \mathbb{R}$. As in the classical case (see \cite{Mi,Ni}) we get diffeomorphisms
 	$$\Phi_{b-a}(M^b)=M^a,\qquad \Phi_{a-b}(M^a)=M^b\quad\textnormal{and}\quad \widetilde{\Phi}_{b-a}(G^b)=G^a,\qquad \widetilde{\Phi}_{a-b}(G^a)=G^b$$
 	so that $(\widetilde{\Phi}_{b-a},\Phi_{b-a}):(G^b\rightrightarrows M^b)\to(G^a\rightrightarrows M^a)$ defines a Lie groupoid isomorphism with obvious inverse $(\widetilde{\Phi}_{a-b},\Phi_{a-b})$.
 	Finally, we can define two deformation retractions $H:[0,1]\times M^b\to M^b$ and $\widetilde{H}:[0,1]\times G^b\to G^b$ respectively as
 	$$H(\tau,x)=\Phi_{\tau\cdot(f(x)-a)^+}(x)\qquad\textnormal{and}\qquad \widetilde{H}(\tau,g)=\widetilde{\Phi}_{\tau\cdot((s^\ast f)(g)-a)^+}(g).$$ 

 	Here $r^+:=\max\lbrace r,0\rbrace$ for every real number $r$. Hence, we obtain again a Lie groupoid morphism which allows us to conclude that $G^a\rightrightarrows M^a$ is a deformation retraction of $G^b\rightrightarrows M^b$.
 \end{proof}

 Let us now analyze the case in which $f^{-1}[a,b]\cap \textnormal{Crit}(f)\neq \emptyset$. The main reference for the basic ideas we will be following in this case is for instance \cite[App. B]{F}. Let $\mathcal{O}$ be a non-degenerate critical orbit of $f$. We will denote by $\xi_0:\nu(\mathcal{O})\to \nu_-(\mathcal{O})$ and $\eta_0:\nu(\mathcal{O})\to \nu_+(\mathcal{O})$ the two mutually complementary projections. They induce bundle morphisms between the bundle projections $\pi:\nu(\mathcal{O})\to \mathcal{O}$ and $\pi_{\pm}:\nu_\pm(\mathcal{O})\to \mathcal{O}$, respectively. Moreover, for every $v\in \nu(\mathcal{O})$ we have that $v=\xi_0(v)+\eta_0(v)$ and the expression
 $$\| v\|_0^2=-Q_f(\xi_0(v))+Q_f(\eta_0(v)),$$
 defines a positive definite quadratic form (i.e. a norm) on $\nu(\mathcal{O})$. As expected, we can define respective mutually complementary projections $\xi_1:\nu(G_{\mathcal{O}})\to \nu_-(G_{\mathcal{O}})$ and $\eta_1:\nu(G_{\mathcal{O}})\to \nu_+(G_{\mathcal{O}})$ which enjoy of similar properties as above. Namely, every $\widetilde{v}\in \nu(G_{\mathcal{O}})$ can be rewritten as  $\widetilde{v}=\xi_1(\widetilde{v})+\eta_1(\widetilde{v})$ and we have the norm $\| \widetilde{v}\|_1^2=-Q_F(\xi_1(\widetilde{v}))+Q_F(\eta_1(\widetilde{v}))$ on $\nu(G_{\mathcal{O}})$. Furthermore:
 
 \begin{lemma}\label{NormalInvariance2}

 	Both $(\xi_1,\xi_0): (\nu(G_{\mathcal{O}})\rightrightarrows \nu(\mathcal{O}))\to (\nu_-(G_{\mathcal{O}})\rightrightarrows \nu_-(\mathcal{O}))$ and $(\eta_1,\eta_0): (\nu(G_{\mathcal{O}})\rightrightarrows \nu(\mathcal{O}))\to (\nu_+(G_{\mathcal{O}})\rightrightarrows \nu_+(\mathcal{O}))$ are Lie groupoid morphisms. Moreover, for all $x\in \mathcal{O}$ it holds that $\xi_0$ and $\eta_0$ are $G_x$-equivariant with respect to the action induced by the normal representation \eqref{eq:normalrep}.
 \end{lemma}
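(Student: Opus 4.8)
The plan is to derive both assertions from the single fact, already proved in Lemma~\ref{NegativeNormalLG}, that every structural map of the normal groupoid $\nu(G_{\mathcal{O}})\rightrightarrows\nu(\mathcal{O})$ respects the eigenspace splitting: $\overline{ds}$ and $\overline{dt}$ send $\nu_{\pm}(G_{\mathcal{O}})$ into $\nu_{\pm}(\mathcal{O})$, the composition $\overline{dm}$ sends the fibered product of $\nu_{\pm}(G_{\mathcal{O}})$ into $\nu_{\pm}(G_{\mathcal{O}})$, and similarly for $\overline{du}$ and $\overline{di}$. The projections $\xi_0,\eta_0$ and $\xi_1,\eta_1$ are smooth vector bundle maps, since $\nu_{\pm}$ are smooth subbundles (the $\pm$-eigenbundles of a fibrewise nondegenerate symmetric form), so only the three compatibility conditions of a Lie groupoid morphism remain to be checked: commutation with source, with target, and with multiplication.

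First I would treat the source. Given $\widetilde{v}\in\nu(G_{\mathcal{O}})$, write $\widetilde{v}=\xi_1(\widetilde{v})+\eta_1(\widetilde{v})$ and apply $\overline{ds}$; since $\overline{ds}$ preserves the splitting, $\overline{ds}(\xi_1(\widetilde{v}))\in\nu_-(\mathcal{O})$ and $\overline{ds}(\eta_1(\widetilde{v}))\in\nu_+(\mathcal{O})$, so by uniqueness of the decomposition of $\overline{ds}(\widetilde{v})$ one reads off $\xi_0\circ\overline{ds}=\overline{ds}\circ\xi_1$, which is exactly source-compatibility of $(\xi_1,\xi_0)$ (the source of the target groupoid is the restriction of $\overline{ds}$). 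The target case is identical using $\overline{dt}$. For the multiplication, the preliminary observation is that $\xi_1\times\xi_1$ carries composable pairs to composable pairs: if $\overline{ds}(\widetilde{v})=\overline{dt}(\widetilde{w})$ then applying $\xi_0$ and the source/target identities just proved gives $\overline{ds}(\xi_1\widetilde{v})=\overline{dt}(\xi_1\widetilde{w})$. Because $\overline{dm}$ is linear on the fibered-product bundle and the decomposition $(\widetilde{v},\widetilde{w})=(\xi_1\widetilde{v},\xi_1\widetilde{w})+(\eta_1\widetilde{v},\eta_1\widetilde{w})$ takes place inside that bundle, linearity of $\overline{dm}$ combined with its splitting-preservation yields $\xi_1\circ\overline{dm}=\overline{dm}\circ(\xi_1\times\xi_1)$ after comparing negative components. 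The three analogous computations with $\nu_+$ show that $(\eta_1,\eta_0)$ is a morphism.

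For the equivariance statement I would recall that the normal representation \eqref{eq:normalrep} acts on $\nu_x=\nu_x(\mathcal{O})$ by $g\cdot[v]=\overline{dt}(g)(\overline{ds}(g)^{-1}[v])$ for $g\in G_x$. Since $\overline{ds}(g)$ and $\overline{dt}(g)$ are fibrewise isomorphisms preserving the splitting, so is $\overline{ds}(g)^{-1}$, and hence the endomorphism $g\cdot(-)$ of $\nu_x$ preserves $\nu_-(\mathcal{O})_x$ and $\nu_+(\mathcal{O})_x$ separately. Decomposing $[v]=\xi_0([v])+\eta_0([v])$ and applying $g\cdot(-)$ therefore produces the decomposition of $g\cdot[v]$, so $\xi_0(g\cdot[v])=g\cdot\xi_0([v])$ and likewise for $\eta_0$, which is precisely $G_x$-equivariance with respect to the full representation on the source and the restricted representation on the target.

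I do not expect a genuine obstacle: the whole content is the splitting-preservation of Lemma~\ref{NegativeNormalLG}, and the remainder is linear algebra made uniform over the fibers. The one point deserving care is the multiplication condition, where one must check that the splitting of a composable pair is again a pair of composable pairs before invoking linearity of $\overline{dm}$; once this is in place there are no cross terms and the identity $\xi_1\circ\overline{dm}=\overline{dm}\circ(\xi_1\times\xi_1)$ closes immediately.
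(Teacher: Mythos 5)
Your proposal is correct, and for the main assertion it is essentially the paper's proof: decompose $\widetilde{v}=\xi_1(\widetilde{v})+\eta_1(\widetilde{v})$, apply the structural map, invoke the splitting-preservation established in Lemma \ref{NegativeNormalLG}, and read off the negative (resp.\ positive) component; your extra remark that $\xi_1\times\xi_1$ carries composable pairs to composable pairs is a point the paper uses implicitly, and it is good that you verified it.

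The one place where you genuinely diverge is the equivariance statement. The paper dispatches it in a single clause by citing Lemma \ref{NormalInvariance} (invariance of the Hessian under the normal representation), whereas you argue directly that $\overline{ds}(g)$ and $\overline{dt}(g)$ are fibrewise isomorphisms preserving the $\nu_{\pm}$ splitting, hence so is $g\cdot(-)=\overline{dt}(g)\circ\overline{ds}(g)^{-1}$, which forces it to commute with $\xi_0$ and $\eta_0$. Your route is more self-contained: invariance of the Hessian alone does not immediately give preservation of the eigenspace splitting, since the eigenbundles $\nu_{\pm}$ are defined through the metric $\eta^{(0)}$; the paper's citation tacitly also uses that the normal representation acts by isometries in the presence of a $2$-metric (a fact from \cite{dHF} which the paper only invokes explicitly just before Proposition \ref{NormalActionFeatures}). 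Your argument bypasses this by leaning only on Lemma \ref{NegativeNormalLG}, i.e.\ on the same splitting-preservation already needed for the morphism part, so nothing new has to be imported. Both arguments are valid; yours makes the logical dependencies slightly cleaner.
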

 
 \begin{proof}
 	If $\widetilde{v}\in \nu(G_{\mathcal{O}_x})$ then we get
 	$$(\xi_0\circ \overline{ds})(\widetilde{v}) = (\xi_0\circ \overline{ds})(\xi_1(\widetilde{v})+\xi_1(\widetilde{v})) = \xi_0(\overline{ds}(\xi_1(\widetilde{v}))+\overline{ds}(\eta_1(\widetilde{v})))=\overline{ds}(\xi_1(\widetilde{v})).$$
 	We can analogously prove that $\xi_0\circ \overline{dt}=\overline{dt}\circ \xi_1$. Now, if $\widetilde{v}, \widetilde{u}\in \nu(G_{\mathcal{O}_x})$ are such that $\overline{dm}(\widetilde{v},\widetilde{u})$ is defined then 
 	\begin{eqnarray*}
 		(\xi_1\circ \overline{dm})(\widetilde{v},\widetilde{u}) &=& (\xi_1\circ \overline{dm})(\xi_1(\widetilde{v})+\eta_1(\widetilde{v}),\xi_1(\widetilde{u})+\eta_1(\widetilde{u}))\\
 		&=& \xi_1(\overline{dm}(\xi_1(\widetilde{v}),\xi_1(\widetilde{u}))+\overline{dm}(\eta_1(\widetilde{v}),\eta_1(\widetilde{u})))=\overline{dm}(\xi_1(\widetilde{v}),\xi_1(\widetilde{u})),
 	\end{eqnarray*}
 	so that $\xi_1 \circ \overline{dm}=\overline{dm}\circ (\xi_1\times \xi_1)$. 
 	
 	The fact that $(\eta_1,\eta_0)$ is a Lie groupoid morphism may be shown in a similar way and the $G_x$-equivariance of $\xi_0$ and $\eta_0$ is consequence of Lemma \ref{NormalInvariance}.
 \end{proof}
 With this in mind we have:
 \begin{theorem}\label{CriticalLevel}
Suppose that $G\rr M$ is a proper groupoid and that $[a,b]$ is a closed interval such that $f^{-1}[a,b]$ is compact and the only non-degenerate critical orbit inside $f^{-1}(a,b)$ is $\mathcal{O}$. Then $G^{a}\cup_{B} D_-(G_{\mathcal{O}})\rightrightarrows M^{a}\cup_{b} D_-(\mathcal{O})$ is a deformation retraction of $G^{b}\rightrightarrows M^{b}$. 
 \end{theorem}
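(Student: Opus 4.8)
The plan is to reduce the global statement to a computation in a groupoid tubular neighborhood of the critical level and then to transcribe the classical Morse--Bott handle attachment, verifying at each stage that every construction respects the groupoid structure. Since $G\rr M$ is proper it admits a $2$-metric $\eta^{(2)}$, which I fix once and for all so that Proposition \ref{Multiplicative-Gradient} applies. Write $c\ce f(\mathcal{O})\in(a,b)$ for the critical value, well defined because $f$ is basic. First I would choose $\epsilon>0$ small enough that $[c-\epsilon,c+\epsilon]\subset(a,b)$ contains no critical value other than $c$. By Proposition \ref{NonCriticalLevel} the subgroupoids $G^{b}\rr M^{b}$ and $G^{c+\epsilon}\rr M^{c+\epsilon}$ are isomorphic with one a deformation retract of the other, and likewise $G^{c-\epsilon}\rr M^{c-\epsilon}$ deformation retracts onto $G^{a}\rr M^{a}$. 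It therefore suffices to realise $G^{c-\epsilon}\cup_{B}D_-(G_{\mathcal{O}})\rr M^{c-\epsilon}\cup_{b}D_-(\mathcal{O})$ as a deformation retract of $G^{c+\epsilon}\rr M^{c+\epsilon}$.

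Next I would pass to the local model. By the Morse lemma (Theorem \ref{LieGroupoidMorseLemma}) there is a full groupoid tubular neighborhood $\phi:(\nu(G_{\mathcal{O}})_V\rr V)\xrightarrow[]{\cong}(G_U\rr U)$ on which $\tilde F=c+Q_F$. Using the projections $\xi_1,\eta_1$ and $\xi_0,\eta_0$ onto the negative and positive normal bundles I can express the fiberwise norms as $\|\tilde v\|_1^2=-Q_F(\xi_1(\tilde v))+Q_F(\eta_1(\tilde v))$ and its analogue on objects, so that in these coordinates the regions $\{F\le c+\epsilon\}$, $\{F\le c-\epsilon\}$ and the negative disk groupoid $D_-(G_{\mathcal{O}})\rr D_-(\mathcal{O})$ have exactly the explicit descriptions used by Milnor and in \cite[App. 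B]{F}. I would then introduce a \emph{basic} cut-off function supported in the saturated tube $G_U\rr U$ and equal to $1$ near $G_{\mathcal{O}}$, and use it to define a modified Morse groupoid morphism $\tilde F'$ that agrees with $\tilde F$ away from the tube and whose sublevel set $\{\tilde F'\le c-\epsilon\}$ is exactly $\{\tilde F\le c-\epsilon\}$ with the negative disk groupoid glued in along its boundary sphere groupoid $\partial D_-(G_{\mathcal{O}})\rr\partial D_-(\mathcal{O})$. This produces the attaching data $(B,b)$ of Definition \ref{def:attachingdata}, the maps $B$ and $b$ being defined by following the (negative, cut-off) gradient trajectories of $\tilde F'$ from the sphere groupoid down to the level $\{F=c-\epsilon\}$, landing in $G^{c-\epsilon}\rr M^{c-\epsilon}$.

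The deformation retraction itself is then assembled from this cut-off gradient flow exactly as in Proposition \ref{NonCriticalLevel}. The point where the groupoid structure enters decisively is that every ingredient must be multiplicative. The cut-off can be chosen basic because both $Q_F$ and the fiberwise norm are invariant under the normal representation \eqref{eq:normalrep} (Lemma \ref{NormalInvariance}); the projections $\xi_\bullet,\eta_\bullet$ are groupoid morphisms and $G_x$-equivariant (Lemma \ref{NormalInvariance2}); and $\nabla F$ is multiplicative (Proposition \ref{Multiplicative-Gradient}), so its flow integrates to a one-parameter family of Lie groupoid isomorphisms as in Remark \ref{Multiplicative-Gradient-Remark}. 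Consequently $(B,b)$ is a Lie groupoid morphism, so by Proposition \ref{AttachingDiskProposition} the glued space is a topological groupoid $G^{c-\epsilon}\cup_{B}D_-(G_{\mathcal{O}})\rr M^{c-\epsilon}\cup_{b}D_-(\mathcal{O})$, and the homotopy $H$ on objects together with its lift $\tilde H$ on arrows assemble into a deformation retraction by groupoid morphisms.

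The main obstacle I anticipate is not the topology, which is classical, but the bookkeeping that guarantees the local constructions are simultaneously compatible with the groupoid composition and match the globally defined flow outside the tube. Concretely one must produce a single cut-off that is basic, saturated-supported and identically $1$ near $G_{\mathcal{O}}$, then check that $\tilde F'$ is still a genuine Lie groupoid morphism whose negative cut-off gradient is complete and multiplicative; the invariance lemmas above are exactly what make this possible. The most delicate verification is that the flow carries $\partial D_-(G_{\mathcal{O}})$ into $G^{c-\epsilon}$ compatibly with $\overline{s},\overline{t}$ and the composition $\overline{m}$ of the attaching groupoid, which is where the multiplicativity of $\nabla F$ and the equivariance of $\xi_\bullet,\eta_\bullet$ must be used in tandem.
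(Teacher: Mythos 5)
Your high-level skeleton does match the paper's proof: reduce to the band $[c-\epsilon,c+\epsilon]$ using Proposition \ref{NonCriticalLevel}, localize via Theorem \ref{LieGroupoidMorseLemma}, modify $F$ inside the tube \`a la Milnor, and certify every construction is a groupoid morphism through Lemmas \ref{NormalInvariance}, \ref{NormalInvariance2}, Proposition \ref{Multiplicative-Gradient} and the attaching construction of Proposition \ref{AttachingDiskProposition}. However, the mechanism you describe for the crucial middle step would fail in two concrete ways. First, no smooth modification $\tilde F'$ can have sublevel set $\{\tilde F'\le c-\epsilon\}$ \emph{equal} to $G^{c-\epsilon}\cup_B D_-(G_{\mathcal{O}})$: since $c-\epsilon$ is a regular value of the modified morphism, that sublevel set is a manifold with boundary, whereas the attaching space is the union of a codimension-zero region and a lower-dimensional disk bundle glued along its sphere bundle, which is not a manifold. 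What Milnor's trick actually produces (and what the paper proves, with $F_1=F-\mu(\Vert\xi_1\Vert_1^2+2\Vert\eta_1\Vert_1^2)$ for a real bump function $\mu$) is a smooth region $F_1^{-1}(-\infty,c-\epsilon]$ that strictly \emph{contains} $G^{c-\epsilon}\cup D_-(G_{\mathcal{O}})$ and deformation retracts onto it. Second, that final retraction is not assembled from any gradient flow, cut off or not: negative gradient trajectories in the handle region do not land on $M^{c-\epsilon}\cup D_-(\mathcal{O})$ unless they start on the stable manifold of $\mathcal{O}$, so "flowing down" cannot replace it. (Relatedly, the sphere groupoid $\partial D_-(G_{\mathcal{O}})$ already lies in the level $F=c-\epsilon$, so the attaching maps $(B,b)$ are just the inclusions/bundle projections; no flow is involved in defining them.)

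The piece your proposal omits is precisely the technical heart of the paper's argument: Milnor's explicit piecewise retraction $r_\tau$, written in the coordinates $\xi_0,\eta_0$ as $r_\tau(v)=\xi_0(v)+\tau\,\eta_0(v)$ on the disk region, as $r_\tau(v)=\xi_0(v)+s_\tau(v)\,\eta_0(v)$ with an explicit scale factor $s_\tau(v)$ on the intermediate region, and as the identity on $M^{c-\epsilon}$, together with its counterpart $\tilde r_\tau$ on arrows built from $\xi_1,\eta_1$. One must then verify by hand that $(\tilde r_\tau,r_\tau)$ is a groupoid morphism — in particular that $s_\tau$ is compatible with $\overline{ds}$, $\overline{dt}$ and $\overline{dm}$ — and this is exactly where the facts that $(Q_F,Q_f)$, $(\xi_1,\xi_0)$, $(\eta_1,\eta_0)$ are Lie groupoid morphisms and that the norms satisfy $\Vert\overline{dm}(\tilde v,\tilde u)\Vert_1=\Vert\tilde u\Vert_1=\Vert\tilde v\Vert_1$ are used. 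Your invariance and multiplicativity lemmas are the right tools, but they must be applied to this non-flow retraction; as written, your proof has no map that actually carries $F_1^{-1}(-\infty,c-\epsilon]$ onto the attaching groupoid.
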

 \begin{proof}
 	Given that $\mathcal{O}\subset f^{-1}(a,b)\subset M$ is closed and embedded we get that $\mathcal{O}$ is also compact. Furthermore, $(s^\ast f)^{-1}[a,b]$ is also compact since $G$ is proper so that $G_{\mathcal{O}}$ is the only compact non-degenerate critical submanifold inside $(s^\ast f)^{-1}(a,b)$.

 	Consider the mutually complementary projections $\xi_1:\nu(G_{\mathcal{O}})\to \nu_-(G_{\mathcal{O}})$, $\eta_1:\nu(G_{\mathcal{O}})\to \nu_+(G_{\mathcal{O}})$ and $\xi_0:\nu(\mathcal{O})\to \nu_-(\mathcal{O})$, $\eta_0:\nu(\mathcal{O})\to \nu_+(\mathcal{O})$ as well as the norms
 	$$\| \widetilde{v}\|_1^2=-Q_F(\xi_1(\widetilde{v}))+Q_F(\eta_1(\widetilde{v}))\qquad \textnormal{and}\qquad\| v\|_0^2=-Q_f(\xi_0(v))+Q_f(\eta_0(v)),$$
 	for all $\widetilde{v}\in \nu(G_{\mathcal{O}})$ and $v\in \nu(\mathcal{O})$. Because of Theorem \ref{LieGroupoidMorseLemma} there is a full Lie groupoid tubular neighborhood $\phi: (\nu(G_{\mathcal{O}})_V\rr V)\xrightarrow[]{\cong} (G_U\rightrightarrows U)$ such that $(\phi_1^\ast F,\phi_0^\ast f)=(c+Q_F,c+Q_f)$,  	where $c=f(\mathcal{O})=F(G_{\mathcal{O}})$ is the common value of $f$ and $F$ on $G_{\mathcal{O}}$ and $\mathcal{O}$, respectively. By identifying $\nu(G_{\mathcal{O}})_V\cong G_{U}$ and $V\cong U$ we may think of $F$ on $G_{U}$ and $f$ on $U$ as respectively given by
 	$$F(\widetilde{v})=c+Q_F(\widetilde{v})=c-\| \xi_1(\widetilde{v})\|_1^2+\| \eta_1(\widetilde{v})\|_1^2,\qquad  f(v)=c+Q_f(v)=c-\| \xi_0(v)\|_0^2+\| \eta_0(v)\|_0^2.$$
 	Let us now follow some ideas stated in \cite{F,Mi,Ni} about the proof of this result in the classical case.  We choose $\epsilon>0$ small enough so that the interval $(c-\epsilon,c+\epsilon)$ is contained in $[a,b]$ and all points $\widetilde{v}\in \nu(G_{\mathcal{O}})$ with $\| \widetilde{v}\|_1^2\leq 2\epsilon$ belong to the neighborhood $ G_{U}$ and $v\in \nu(\mathcal{O})$ with $\| v\|_0^2\leq 2\epsilon$ belong to the neighborhood $U$. Similarly to how it was done in \cite[p. 16-19]{Mi} we construct two smooth functions which are a modification of the pair $(F,f)$. Namely, let $\mu:\mathbb{R}\to \mathbb{R}$ be the smooth function verifying the conditions
 	\begin{eqnarray*}
 		\mu(0)>\epsilon, &  &\\
 		\mu(r)=0 &  & \textnormal{for all}\ r\geq 2\epsilon,\\
 		-1<\mu'(r)\leq 0 &  & \textnormal{for all}\ r.
 	\end{eqnarray*}
 	Consider the functions $F_1:G\to \mathbb{R}$ and $f_1:M\to \mathbb{R}$ which respectively coincide with $F$ and $f$ outside of $ G_{U}$ and $U$ but within those neighborhoods they are given as
 	$$F_1(\widetilde{v})=F(\widetilde{v})-\mu(\| \xi_1(\widetilde{v})\|_1^2+2\| \eta_1(\widetilde{v})\|^2_1)=c-\| \xi_1(\widetilde{v})\|_1^2+\| \eta_1(\widetilde{v})\|_1^2-\mu(\| \xi_1(\widetilde{v})\|_1^2+2\| \eta_1(\widetilde{v})\|^2_1),$$
 	and
 	$$f_1(v)=f(v)-\mu(\| \xi_0(v)\|_0^2+2\| \eta_0(v)\|^2_1)=c-\| \xi_0(v)\|_0^2+\| \eta_0(v)\|_0^2-\mu(\| \xi_0(v)\|_0^2+2\| \eta_0(v)\|^2_1).$$

 	These are well defined smooth functions. Let us prove that $(F_1,f_1):(G\rightrightarrows M)\to (\mathbb{R}\rightrightarrows \mathbb{R})$ is also a Lie groupoid morphism. Since $\phi$ is a Lie groupoid isomorphism and $(F_1,f_1)$ agrees with $(F,f)$ outside $( G_{U},U)$ we only have to see what happens inside $( G_{U},U)$. As consequence of the classical Morse--Bott lemma we may identify the norms $\| \cdot \|_1$ and $\| \cdot \|_0$ with those norms defined by the restrictions of $\eta^{(1)}$ and $\eta^{(0)}$ on $\nu(G_{\mathcal{O}})$ and $\nu(\mathcal{O})$, respectively. As $s,t:G\to M$ are Riemannian submersions we obtain that $\| \widetilde{v} \|_1=\| ds(\widetilde{v}) \|_0$ and $\| \widetilde{v} \|_1=\| dt(\widetilde{v}) \|_0$. It is worth noticing that if we do not identify the norms as we did above we can also get the previous statement by using the fact that $(Q_F,Q_f)$, $(\xi_1,\xi_0)$, and $(\eta_1,\eta_0)$ are Lie groupoid morphisms. Therefore, by using again the latter fact we obtain that
 	\begin{eqnarray*}
 		(\overline{ds}^\ast f_1)(\widetilde{v}) &=& f_1(\overline{ds}(\widetilde{v}))=c+Q_f(\overline{ds}(\widetilde{v}))-\mu(\| \xi_0(\overline{ds}(\widetilde{v}))\|_0^2+2\| \eta_0(\overline{ds}(\widetilde{v})\|^2_0)\\
 		& =& c+(Q_f\circ \overline{ds})(\widetilde{v})-\mu(\| \overline{ds}(\xi_1(\widetilde{v}))\|_0^2+2\| \overline{ds}(\xi_1(\widetilde{v})\|^2_0)\\
 		&= & c+Q_F(\widetilde{v})-\mu(\| \xi_1(\widetilde{v})\|_1^2+2\| \eta_1(\widetilde{v})\|^2_1)=F_1.
 	\end{eqnarray*}
 	We can analogously prove that $\overline{dt}^\ast f_1=F_1$ which implies what we desired.
 	
 	Just as it was proved in \cite[p. 16-19]{Mi} we have the following assertions.
 	\begin{itemize}
 		\item The regions $F_1^{-1}(-\infty,c+\epsilon]$ and $f_1^{-1}(-\infty,c+\epsilon]$ coincide with the regions $G^{c+\epsilon}$ and $M^{c+\epsilon}$, respectively.
 		\item The functions $F$ and $F_1$ have the same critical arrows. Similarly, the functions $f$ and $f_1$ have the same critical points.
 		\item The sets $F_1^{-1}[c-\epsilon,c+\epsilon]$ and $f_1^{-1}[c-\epsilon,c+\epsilon]$ contain no critical points of $F_1$ and $f_1$, respectively.
 	\end{itemize} 
 	Thus, from Proposition \ref{NonCriticalLevel} we conclude that the level subgroupoid $F_1^{-1}(-\infty,c-\epsilon]\rightrightarrows f_1^{-1}(-\infty,c-\epsilon]$ is a deformation retraction of $G^{c+\epsilon}\rightrightarrows M^{c+\epsilon}$.
 	
 	Let us now consider the negative disk bundle $D_-(\mathcal{O})=\lbrace v\in U: \| \xi_0(v)\|_0^2\leq \epsilon,\ \eta_0(v)=0\rbrace$. As the value of $f_1$ on any point of $D_-(\mathcal{O})$ is less than $c-\epsilon$ we have that $M^{c-\epsilon}\cup D_-(\mathcal{O})\subset f_1^{-1}(-\infty,c-\epsilon]$. Moreover, the intersection $D_-(\mathcal{O})\cap M^{c-\epsilon}$ coincides with the negative sphere bundle $\partial D_-(\mathcal{O})=\lbrace v\in U: \| \xi_0(v)\|_0^2=\epsilon,\ \eta_0(v)=0\rbrace$. Therefore, we can consider the attaching space $M^{c-\epsilon}\cup_{b} D_-(\mathcal{O})$ with respect to the bundle projection $b:\partial D_-(\mathcal{O})\to M^{c-\epsilon}$ which actually we may be seen as the union $M^{c-\epsilon}\cup D_-(\mathcal{O})$ inside $f_1^{-1}(-\infty,c-\epsilon]$. As it was argued in \cite[p. 18-19]{Mi}, we have that the union $M^{c-\epsilon}\cup D_-(\mathcal{O})$ is a deformation retract of $f_1^{-1}(-\infty,c-\epsilon]$. The deformation retraction $r_\tau:f_1^{-1}(-\infty,c-\epsilon]\to f_1^{-1}(-\infty,c-\epsilon]$ is identical outside $U$ but within $U$ it acts as follows.
 	\begin{itemize} 
 		\item In the domain $\| \xi_0(v)\|^2_0\leq \epsilon$ (i.e. in $D_-(\mathcal{O})$) the deformation $r_t$ is given by the formula
 		$$r_\tau(v)=\xi_0(v)+\tau\eta_0(v).$$
 		\item In the domain $\epsilon \leq \| \xi_0(v)\|^2_0\leq \|\eta_0(v)\|_0^2+\epsilon$ we define $r_\tau$ by 
 		$$r_\tau(v)=\xi_0(v)+s_\tau(v)\eta_0(v),$$
 		where the number $s_\tau(v)\in [0,1]$ is defined by $\displaystyle s_\tau(v)=\tau+(1-\tau)\sqrt{\dfrac{\| \xi_0(v)\|^2_0-\epsilon}{\| \eta_0(v)\|^2_0}}$. The map $r_0$ takes values in $f^{-1}(c-\epsilon)$.
 		\item Within the domain $\|\eta_0(v)\|_0^2+\epsilon\leq\|\xi_0(v)\|_0^2$ (i.e. in $M^{c-\epsilon}$) we set $r_\tau$ to be the identity map, $\tau\in [0,1]$.
 	
 	\end{itemize}
 
 	The expressions above agree on the intersection of the three domains and thus they define a continuous map $r_\tau$ so that $r_1$ is the identity map and $r_0$ is a retraction of $f_1^{-1}(-\infty,c-\epsilon]$ onto $M^{c-\epsilon}\cap D_-(\mathcal{O})$. As expected, we can similarly define the attaching space $G^{c-\epsilon}\cup_{B} D_-(G_{\mathcal{O}})$ by using the bundle projection $B:\partial D_-(G_{\mathcal{O}})\to G^{c-\epsilon}$ as well as construct a deformation retraction $\widetilde{r_\tau}:F_1^{-1}(-\infty,c-\epsilon]\to F_1^{-1}(-\infty,c-\epsilon]$ by using the mutually complementary projections $\xi_1$, $\eta_1$ and the norm $\| \cdot \|_1$ to produce same formulas as we did above.
 	
 	The key point now is to prove that $(\widetilde{r_\tau},r_\tau)$ defines a Lie groupoid morphism. As $\widetilde{r_\tau}$ and $r_\tau$ are the identity outside $ G_{U}$ and $U$, respectively, we only have to check this on the three special cases mentioned above. We will use again the fact that $(\xi_1,\xi_0)$ and $(\eta_1,\eta_0)$ are Lie groupoid morphisms. In the first domain we have that
 	$$(r_\tau\circ \overline{ds})(\widetilde{v})=\xi_0(\overline{ds}(\widetilde{v}))+\tau\eta_0(\overline{ds}(\widetilde{v}))=\overline{ds}(\xi_1(\widetilde{v}))+\tau\overline{ds}(\eta_1(\widetilde{v}))=(\overline{ds}\circ \widetilde{r_\tau})(\widetilde{v}).$$
 	We can analogously prove that $r_\tau\circ \overline{dt}=\overline{dt}\circ \widetilde{r_\tau}$. Now, if $\widetilde{v}$ and $\widetilde{u}$ are such that $dm(\widetilde{v},\widetilde{u})$ is defined then
 	\begin{eqnarray*}
 		\widetilde{r_\tau}(\overline{dm}(\widetilde{v},\widetilde{u})) &=& \xi_1(\overline{dm}(\widetilde{v},\widetilde{u}))+\tau\eta_1(\overline{dm}(\widetilde{v},\widetilde{u}))=\overline{dm}(\xi_1(\widetilde{v}),\xi_1(\widetilde{u}))+\tau\overline{dm}(\eta_1(\widetilde{v}),\eta_1(\widetilde{u}))\\
 		&=& \overline{dm}((\xi_1(\widetilde{v}),\xi_1(\widetilde{u}))+\tau(\eta_1(\widetilde{v}),\eta_1(\widetilde{u})))= \overline{dm}(\xi_1(\widetilde{v})+\tau\eta_1(\widetilde{v}),\xi_1(\widetilde{u})+\tau\eta_1(\widetilde{u}))\\
 		&=& \overline{dm}(\widetilde{r_\tau}(\widetilde{v}),\widetilde{r_\tau}(\widetilde{u})),
 	\end{eqnarray*}
 	what means that $\widetilde{r_\tau}\circ \overline{dm}=\overline{dm}\circ (\widetilde{r_\tau}\times \widetilde{r_\tau})$. To verify the assertion in the second domain we only have to check that $s_\tau(\overline{ds}(\widetilde{v}))=\widetilde{s_\tau}(\widetilde{v})$, $s_\tau(\overline{dt}(\widetilde{v}))=\widetilde{s_\tau}(\widetilde{v})$ and $\widetilde{s_\tau}(\widetilde{v})=\widetilde{s_\tau}(\overline{dm}(\widetilde{v},\widetilde{u}))=\widetilde{s_\tau}(\widetilde{u})$. These formulas follows from the fact that $(\xi_1,\xi_0)$ and $(\eta_1,\eta_0)$ are Lie groupoid morphisms plus the identities
 	$$\| \overline{dm}(\widetilde{v},\widetilde{u}) \|_1=\| \overline{ds}(\widetilde{u}) \|_0=\| \widetilde{u} \|_1\qquad\textnormal{and}\qquad \| \overline{dm}(\widetilde{v},\widetilde{u}) \|_1=\| \overline{dt}(\widetilde{v}) \|_0=\| \widetilde{v} \|_1$$
 	which can be obtained by either the fact that $s$, $t$, $m$, $\pi_1$ and $\pi_2$ are Riemannian submersions or $(Q_F,Q_f)$, $(\xi_1,\xi_0)$, $(\eta_1,\eta_0)$ are Lie groupoid morphisms. The remaining computations are similar to those done when looking at the first domain. Finally, in the third domain the assertion directly follows since $\widetilde{r_\tau}$ and $r_\tau$ are the identity maps over there.
 	
 	In conclusion, the topological groupoid $G^{c-\epsilon}\cup_{B} D_-(G_{\mathcal{O}})\rightrightarrows M^{c-\epsilon}\cup_{b} D_-(\mathcal{O})$ is a deformation retraction of the level subgroupoid $F_1^{-1}(-\infty,c-\epsilon]\rightrightarrows f_1^{-1}(-\infty,c-\epsilon]$ which in turn, as consequence of Proposition \ref{NonCriticalLevel}, is a deformation retraction of the level subgroupoid $G^{c+\epsilon}\rightrightarrows M^{c+\epsilon}$. Hence, $G^{c-\epsilon}\cup_{B} D_-(G_{\mathcal{O}})\rightrightarrows M^{c-\epsilon}\cup_{b} D_-(\mathcal{O})$ is a deformation retraction of $G^{c+\epsilon}\rightrightarrows M^{c+\epsilon}$.
 \end{proof}

\begin{remark}
It is important to point out that the assumption from Proposition \ref{NonCriticalLevel} and Theorem \ref{CriticalLevel} asking for $f^{-1}[a,b]$ to be compact in $M$ may be relaxed just by requiring $\overline{f}^{-1}[a,b]$ to be compact in the orbit space $M/G$. Here $\overline{f}:M/G\to \mathbb{R}$ denotes the underlying continuous function defined through the basic function $f:M\to \mathbb{R}$. This fact will be clarified in Section \ref{S:7} where we will study some Morse theoretical features over differentiable stacks.
\end{remark}

It is simple to deduce from the proof of Theorem \ref{CriticalLevel} that Theorem \ref{LieGroupoidMorseLemma} and Lemmas \ref{NormalInvariance} and \ref{NormalInvariance2} imply that:
\begin{corollary}\label{QuotientLevelLemma}
	
Around the local model to the orbit $\mathcal{O}$ the deformation retraction $r_\tau$ is $G_x$-equivariant with respect to the action induced by the normal representation \eqref{eq:normalrep}.
\end{corollary}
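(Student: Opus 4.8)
The plan is to verify $G_x$-equivariance directly on each of the three domains into which $r_\tau$ was split in the proof of Theorem \ref{CriticalLevel}, exploiting that all the ingredients defining $r_\tau$ interact well with the normal representation \eqref{eq:normalrep}. First I would record the two facts that do all the work: by Lemma \ref{NormalInvariance2} the projections $\xi_0:\nu(\mathcal{O})\to\nu_-(\mathcal{O})$ and $\eta_0:\nu(\mathcal{O})\to\nu_+(\mathcal{O})$ are $G_x$-equivariant, so that $\xi_0(g\cdot v)=g\cdot\xi_0(v)$ and $\eta_0(g\cdot v)=g\cdot\eta_0(v)$ for all $g\in G_x$ and $v\in\nu_x(\mathcal{O})$; and, since a $2$-metric is in place, by Proposition \ref{NormalActionFeatures} the normal representation acts by isometries, so $\|g\cdot w\|_0=\|w\|_0$. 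Combining these gives $\|\xi_0(g\cdot v)\|_0=\|\xi_0(v)\|_0$ and $\|\eta_0(g\cdot v)\|_0=\|\eta_0(v)\|_0$.

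From here everything would follow almost formally. The $G_x$-invariance of these two norms shows that $g\cdot v$ lands in exactly the same one of the three domains as $v$, so the domain decomposition is $G_x$-invariant and it suffices to check equivariance domain by domain. In the first domain, using that the action on fibers is linear, I would compute
$$r_\tau(g\cdot v)=\xi_0(g\cdot v)+\tau\eta_0(g\cdot v)=g\cdot\xi_0(v)+\tau\,g\cdot\eta_0(v)=g\cdot r_\tau(v),$$
and on the third domain, as well as outside the local model $U$, the map $r_\tau$ is the identity, so equivariance is immediate.

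The only step requiring a small observation is the intermediate domain, where I would need the scaling factor $s_\tau$ to be $G_x$-invariant. This is the closest thing to an obstacle, but it dissolves at once: the quantity $s_\tau(v)=\tau+(1-\tau)\sqrt{(\|\xi_0(v)\|_0^2-\epsilon)/\|\eta_0(v)\|_0^2}$ depends on $v$ only through $\|\xi_0(v)\|_0$ and $\|\eta_0(v)\|_0$, which I have just shown to be $G_x$-invariant, whence $s_\tau(g\cdot v)=s_\tau(v)$ and the same computation as above applies verbatim. Since the three domains cover the local model and the formulas agree on overlaps, this establishes that $r_\tau$ is $G_x$-equivariant. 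The entire content is carried by Lemma \ref{NormalInvariance2} together with the isometric nature of the normal representation from Proposition \ref{NormalActionFeatures}, so no genuinely new difficulty arises.
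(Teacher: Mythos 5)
Your proof is correct and is essentially the argument the paper intends: the paper states the corollary as an immediate consequence of Theorem \ref{LieGroupoidMorseLemma}, Lemmas \ref{NormalInvariance} and \ref{NormalInvariance2}, and the proof of Theorem \ref{CriticalLevel}, and your domain-by-domain check (equivariance of $\xi_0,\eta_0$, $G_x$-invariance of the two norms, hence invariance of the three domains and of the scaling factor $s_\tau$) is exactly that deduction made explicit. The only cosmetic difference is that you justify norm-invariance via the isometric normal representation, whereas the norm used in the proof of Theorem \ref{CriticalLevel} is $\| v\|_0^2=-Q_f(\xi_0(v))+Q_f(\eta_0(v))$, so its invariance also follows directly from Lemma \ref{NormalInvariance}; the two justifications agree under the identification of norms made in that proof.
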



\section{The stacky perspective}\label{S:7}

A stack can be thought of as a generalization of the notion of manifold which allows us to study higher symmetries and singular geometric features. The aim of this section is to adapt some of the Morse theory results obtained for Lie groupoids to the setting of differentiable stacks. It is worth mentioning that the fact that our notion of Morse Lie groupoid morphism is Morita invariant makes the passage clearer. Furthermore, as an interesting consequence, we will get Morse-like inequalities for certain separated differentiable stacks. Let $G\rightrightarrows M$ and $G'\rightrightarrows M'$ be two Lie groupoids. A \textbf{fraction} $\psi/\phi:G \to G'$ is by definition a pair of maps $(G\rightrightarrows M)\xleftarrow[\sim]{\it \phi}(H\rightrightarrows N)\xrightarrow[]{\it \psi} (G'\rightrightarrows M')$ where $\phi$ is a Morita map. Recall that $G\rightrightarrows M$ and $G'\rightrightarrows M'$ are said to be \textbf{Morita equivalent} if there exists a fraction $\psi/\phi:G\to G'$ where both $\phi$ and $\psi$ are Morita maps. It is well known that Morita equivalence is in fact an equivalence relation and that we can always assume that both $\phi$ and $\psi$ are Morita  fibrations \cite{dH,MM}. Accordingly, a \textbf{differentiable stack} is defined to be a Lie groupoid up to Morita equivalence and we write $[M/G]$ for the differentiable stack presented by the Lie groupoid $G\rightrightarrows M$. Two fractions $\psi_1/\phi_1$ and $\psi_2/\phi_2$ are \textbf{equivalent} if there are Morita maps $\alpha_1$ and $\alpha_2$ such that $\psi_1\circ \alpha_1$ is naturally isomorphic to $\psi_2\circ \alpha_2$ and $\phi_1\circ \alpha_1$ is naturally isomorphic to $\phi_2\circ \alpha_2$. This is also a equivalence relation. A class of fractions $[\psi/\phi]:[M/G]\to [M'/G']$ is named to be a \textbf{generalized} or \textbf{stacky map}. 

We are interested in studying Morse theory for stacky functions. More precisely, stacky maps from $[M/G]$ to $\mathbb{R}$ where we think of $\mathbb{R}$ as a differentiable stack presented by the unit groupoid $\mathbb{R}\rightrightarrows \mathbb{R}$. Since $\mathbb{R}$ has trivial isotropies every fraction $(G\rightrightarrows M)\xleftarrow[\sim]{\it \phi}(H\rightrightarrows N)\xrightarrow[]{\it \psi} (\mathbb{R}\rightrightarrows \mathbb{R})$ sends arrows over identities so that it descends to a usual Lie groupoid morphism. In other words, every stacky function $[M/G]\to \mathbb{R}$ is completely determined by a Lie groupoid morphism $F:G\to \mathbb{R}$ and, in turn, by a basic function $f:M\to \mathbb{R}$. This provides us with a simple way to establish a notion of Morse stacky map over $[M/G]$ as below. 

It is well known that Morita equivalences are completely characterized by the transversal information they carry with, see \cite[Thm. 3.4.1]{dH}. Such a characterization gives us some geometric intuition about the notion of stack $[M/G]$: it is an enhanced version of the orbit space $M/G$ endowed with certain smooth information which is encoded by the normal representations $G_x\curvearrowright \nu_x$. This perspective allows us to think of points in $[M/G]$ as elements lying inside $M/G$ (i.e. orbits), so that their tangent spaces can be modeled as follows. The \textbf{coarse tangent space} of the differentiable stack $[M/G]$ at $[x]=\mathcal{O}$ is by definition the coarse orbit space $\nu_x(\mathcal{O})/G_x\cong  \nu(\mathcal{O})/G_{\mathcal{O}}$ of the action groupoid determined by the normal representation on the orbit $\mathcal{O}$ through $x\in M$ (compare \cite{dho}). This will be denoted by $T_{[x]}[M/G]$. Therefore, if $F:[M/G]\to \mathbb{R}$ is a stacky function presented by a basic function $f:M\to \mathbb{R}$ then the \textbf{coarse differential} at $[x]\in [M/G]$ is the map $dF_{[x]}: T_{[x]}[M/G] \to \mathbb{R}$ defined by $dF_{[x]}([v]):=df(x)(v)$. This is well
defined in the sense that for $g\in G_x$ and $w\in T_xM$ such that $ds(g)(w)=v$ we have

$$dF_{[x]}(g\cdot [v])=df(x)(dt(g)(w))=df(x)(ds(g)(w))=dF_{[x]}([v]),$$
since $f$ is basic. In consequence, we say that $[x]\in [M/G]$ is a \textbf{critical point} of $F:[M/G]\to \mathbb{R}$ if $d_{[x]}F([v])=0$ for all $[v]\in T_{[x]}[M/G]$. This is clearly equivalent to requiring that $\mathcal{O}$ is a critical submanifold of $f:M\to \mathbb{R}$. We also define the \textbf{stacky Hessian} of $F:[M/G]\to \mathbb{R}$ at a critical point $[x]\in [M/G]$ as the pairing $\mathcal{H}_{[x]}(F):T_{[x]}[M/G]\times T_{[x]}[M/G]\to \mathbb{R}$ given by
$$
\mathcal{H}_{[x]}(F)([v_1],[v_2])=\overline{\mathcal{H}_{x}(f)}([v_1],[v_2]),
$$
where $\overline{\mathcal{H}_{x}(f)}$ denotes the restriction of the Hessian $\mathcal{H}_x(f)$ to the normal direction of $\nu_x(\mathcal{O})$. From Lemma \ref{NormalInvariance} it follows that the stacky Hessian $\mathcal{H}_{[x]}(F)$ is a well defined ``form'' over $T_{[x]}[M/G]$. Furthermore, it is simple to check that if $y\in \mathcal{O}$ then for $g\in G$ such that $t(g)=y$ it holds
\begin{equation}\label{StackyHessianRelation}
	\overline{\mathcal{H}_{y}(f)}= (\overline{ds(g)}\circ \overline{dt(g)}^{-1})^T\cdot \overline{\mathcal{H}_{x}(f)}\cdot (\overline{ds(g)}\circ \overline{dt(g)}^{-1}),
\end{equation}
so that $\mathcal{H}_x(f)$ is nondegenerate if and only if $\mathcal{H}_y(f)$ is nondegenerate. This justifies the following definition.
\begin{definition}
	A critical point $[x]\in [M/G]$ of a stacky function $F:[M/G]\to \mathbb{R}$ is said to be \textbf{nondegenerate} if $\mathcal{H}_{[x]}(F)$ nondegenerate. Accordingly, a \textbf{Morse stacky function} is a stacky function for which all of its critical points are nondegenerate.
\end{definition}

In other words, a stacky function $F:[M/G]\to \mathbb{R}$ is Morse if and only if it is presented by a Morse Lie groupoid morphism $G\to \mathbb{R}$.

Let us assume from now on that $[M/G]$ is separated (i.e. it is presented by a proper groupoid). Consider the quadratic form $Q_{[x]}:T_{[x]}[M/G]\to \mathbb{R}$ which is defined by the expression
$$Q_{[x]}(F)([v])=\frac{1}{2}\mathcal{H}_{[x]}(F)([v],[v]).$$ Using the stacky terminology introduced in \cite[s. 6]{dHF2} we can state a stacky version of the Morse lemma as follows.
\begin{proposition}[Stacky Morse lemma]\label{StackyMorseLemma}
	Let $[x]\in [M/G]$ be a nondegenerate critical point of a stacky function $F:[M/G]\to \mathbb{R}$. Then there are stacky neighborhoods $[V/\nu(G_{\mathcal{O}})_V]$ and $[U/G_U]$ of $[x]$ in $[\nu(\mathcal{O})/\nu(G_\mathcal{O})]$ and $[M/G]$, respectively, and a stacky isomorphism $\varphi: [V/\nu(G_{\mathcal{O}})_V]\to [U/G_U]$ fixing $[x]$ such that
	$$\varphi^\ast F=c+ Q_{[x]}(F).$$
\end{proposition}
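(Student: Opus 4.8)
The plan is to translate the groupoid Morse lemma, Theorem \ref{LieGroupoidMorseLemma}, into the language of differentiable stacks, exploiting that a stacky function is nothing but a Morse Lie groupoid morphism and that both the morphism and the relevant normal data are Morita invariant. First I would fix a Morse Lie groupoid morphism $F:(G\rr M)\to(\mathbb{R}\rr\mathbb{R})$ presenting the given stacky function, covering a basic Morse function $f\in C^{\infty}(M)^G$; since $[M/G]$ is separated, $G$ is proper. The point $[x]$ corresponds to the orbit $\mathcal{O}=\mathcal{O}_x$, which by assumption is a nondegenerate critical orbit of $f$, so that $G_{\mathcal{O}}\rr\mathcal{O}$ is a nondegenerate critical subgroupoid of $F$ and all the constructions of Section \ref{S:5} apply.

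Next I would invoke Theorem \ref{LieGroupoidMorseLemma} to produce a full groupoid tubular neighborhood $\phi:(\nu(G_{\mathcal{O}})_V\rr V)\xrightarrow[]{\cong}(G_U\rr U)$, which is the identity on $G_{\mathcal{O}}\rr\mathcal{O}$ and satisfies $\phi^{\ast}F=c+Q_F$; equivalently $(\phi_1^{\ast}F,\phi_0^{\ast}f)=(c+Q_F,\,c+Q_f)$, where $c=f(\mathcal{O})$ is the common value of $f$ and $F$ on $\mathcal{O}$ and $G_{\mathcal{O}}$. Being a Lie groupoid isomorphism, $\phi$ induces a stacky isomorphism $\varphi:[V/\nu(G_{\mathcal{O}})_V]\to[U/G_U]$ between open substacks of $[\nu(\mathcal{O})/\nu(G_{\mathcal{O}})]$ and $[M/G]$, both containing $[x]$ and with $\varphi([x])=[x]$. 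These are the required stacky neighborhoods.

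It then remains to check that the groupoid identity $\phi^{\ast}F=c+Q_F$ descends to the asserted stacky identity $\varphi^{\ast}F=c+Q_{[x]}(F)$. On the one hand, under the identification of stacky functions with basic functions furnished by \eqref{eq:moritabasic}, the pullback $\varphi^{\ast}F$ is presented precisely by $\phi_0^{\ast}f=c+Q_f$. On the other hand, by Lemma \ref{QuadraticLemma} the pair $(Q_F,Q_f)$ is a Lie groupoid morphism, so $Q_f$ is a basic function on $\nu(\mathcal{O})$; by Lemma \ref{NormalInvariance} it is invariant under the normal representation \eqref{eq:normalrep}, hence descends to a well-defined function on the coarse orbit space $\nu(\mathcal{O})/\nu(G_{\mathcal{O}})\cong\nu_x(\mathcal{O})/G_x=T_{[x]}[M/G]$. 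Comparing with the definition $Q_{[x]}(F)([v])=\frac{1}{2}\mathcal{H}_{[x]}(F)([v],[v])=\frac{1}{2}\overline{\mathcal{H}_x(f)}([v],[v])$, this descent is exactly $Q_{[x]}(F)$, the compatibility of the fiber identifications as $x$ varies inside $\mathcal{O}$ being precisely what relation \eqref{StackyHessianRelation} guarantees. Combining the two observations yields $\varphi^{\ast}F=c+Q_{[x]}(F)$.

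I expect the main obstacle to lie in the bookkeeping of this last paragraph: matching the stacky objects (the coarse tangent space, the stacky Hessian, and the stacky pullback) with their groupoid presentations, and verifying that the local model stack $[\nu(\mathcal{O})/\nu(G_{\mathcal{O}})]$ near $[x]$ is genuinely identified, via the stacky exponential of a $2$-metric as in \cite[s. 6]{dHF2}, with a neighborhood of the origin in $T_{[x]}[M/G]$ on which the descended $Q_f$ is the quadratic form $Q_{[x]}(F)$. The entire geometric content is already contained in Theorem \ref{LieGroupoidMorseLemma} and Lemma \ref{NormalInvariance}; what remains is to confirm that each ingredient is Morita invariant and therefore well defined at the stacky level, which is exactly what Proposition \ref{MoritaInvariance} and the Morita invariance of the normal data provide.
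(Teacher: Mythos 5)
Your proposal is correct and takes essentially the same route as the paper: the paper's entire proof is a one-sentence appeal to Theorem \ref{LieGroupoidMorseLemma}, asserting that the full groupoid tubular neighborhood on which $f=c+Q_f$ ``induces the desired data.'' Your extra work (the descent of $Q_f$ to $Q_{[x]}(F)$ via Lemmas \ref{QuadraticLemma} and \ref{NormalInvariance} and relation \eqref{StackyHessianRelation}) simply makes explicit the verification the paper leaves implicit.
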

\begin{proof}
	The Lie groupoid tubular neighborhood from Theorem \ref{LieGroupoidMorseLemma} which is constructed around the nondegenerate critical orbit $\mathcal{O}=[x]$ in such a way $f$ equals $c+Q_f$ near $\mathcal{O}$ induces the desired data. 
\end{proof}

Observe that after shrinking if necessary the stacky neighborhoods mentioned in the previous proposition we have that $[0]\in T_{[x]}[M/G]$ is the only critical point of $\varphi^\ast F$ inside $[V/\nu(G_{\mathcal{O}})_V]$ so that we get:
\begin{corollary}\label{IsolatedProperty}
	The nondegenerate critical points of a Morse stacky function are isolated in $M/G$.
\end{corollary}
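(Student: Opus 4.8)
The plan is to deduce the statement directly from the Stacky Morse lemma (Proposition \ref{StackyMorseLemma}), which supplies a local normal form. Fix a nondegenerate critical point $[x]\in[M/G]$ of the Morse stacky function $F$, with underlying nondegenerate critical orbit $\mathcal{O}=[x]$. Proposition \ref{StackyMorseLemma} gives stacky neighborhoods $[V/\nu(G_{\mathcal{O}})_V]$ of $[x]$ in $[\nu(\mathcal{O})/\nu(G_{\mathcal{O}})]$ and $[U/G_U]$ of $[x]$ in $[M/G]$, together with a stacky isomorphism $\varphi\colon[V/\nu(G_{\mathcal{O}})_V]\to[U/G_U]$ fixing $[x]$ and satisfying $\varphi^\ast F=c+Q_{[x]}(F)$. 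Since a stacky isomorphism carries critical points to critical points, it suffices to prove that $[0]\in T_{[x]}[M/G]$ is the unique critical point of $c+Q_{[x]}(F)$ inside $[V/\nu(G_{\mathcal{O}})_V]$.

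The key step is to identify the critical set of the model. At the groupoid level the normal form is presented by the fiberwise quadratic form $Q_f\colon\nu(\mathcal{O})\to\mathbb{R}$ of \eqref{eq:quadratic}, lifted to the morphism $(Q_F,Q_f)$ of Lemma \ref{QuadraticLemma}. First I would record that, because $\mathcal{O}$ is nondegenerate, the manifold critical set of $c+Q_f$ is exactly the zero section $\mathcal{O}$: restricted to a fiber $\nu_p(\mathcal{O})$ the differential of $Q_f$ at $v$ is $w\mapsto\mathcal{H}_p(f)(v,w)$, which vanishes identically in $w$ precisely when $v=0$, by nondegeneracy of the normal Hessian. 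Consequently $\mathrm{Crit}(c+Q_f)=\mathcal{O}$.

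It remains to translate this into the coarse orbit space. Critical points of the stacky model $c+Q_{[x]}(F)$ correspond to critical orbits of $c+Q_f$ for $\nu(G_{\mathcal{O}})_V\rr V$, and by the previous paragraph every such orbit is contained in the zero section $\mathcal{O}$. But the restriction of $\nu(G_{\mathcal{O}})\rr\nu(\mathcal{O})$ to the zero section is the transitive groupoid $G_{\mathcal{O}}\rr\mathcal{O}$, so the whole zero section is a single orbit, namely the point $[0]=[x]$ of $T_{[x]}[M/G]$. Hence $[0]$ is the only critical point of $\varphi^\ast F$, and transporting through $\varphi$ shows $[x]$ is the sole critical point of $F$ in $[U/G_U]$. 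I do not anticipate a real obstacle, as this is an immediate corollary of the normal form; the only point requiring care is the bookkeeping identifying ``one orbit of $\nu(G_{\mathcal{O}})$ on the zero section'' with ``one point of the coarse tangent space'', for which one invokes Lemma \ref{NormalInvariance} to know that $Q_{[x]}(F)$ is a well-defined form on $T_{[x]}[M/G]$ whose vanishing locus is forced to be $\{[0]\}$ by nondegeneracy.
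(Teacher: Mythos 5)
Your proof is correct and takes essentially the same route as the paper, which deduces the corollary as an immediate observation following Proposition \ref{StackyMorseLemma}: namely, that $[0]\in T_{[x]}[M/G]$ is the only critical point of $\varphi^\ast F$ inside $[V/\nu(G_{\mathcal{O}})_V]$. Your write-up merely makes explicit the two points the paper leaves implicit—that nondegeneracy of the normal Hessian forces $\mathrm{Crit}(c+Q_f)$ to equal the zero section, and that the zero section is a single orbit of $\nu(G_{\mathcal{O}})_V\rr V$ and hence a single point of the coarse space.
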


Furthermore, just as in both the classical and the equivariant cases (compare \cite[Prop. 4,2]{W}), from the previous fact it follows that:

\begin{corollary}\label{FiniteStackyCritical}
If $M/G$ is compact then any stacky Morse function $F:[M/G]\to \mathbb{R}$ has a finite amount of nondegenerate critical points.
\end{corollary}

We define the \textbf{index data} of a nondegenerate critical point $[x]$ of a Morse stacky function $F:[M/G]\to \mathbb{R}$ as the pair $\lambda(F,[x]):=(\lambda(f,\mathcal{O}),G_x)$ where $\lambda(f,\mathcal{O})$ is the integer number $\mathrm{rk}(\nu_-(\mathcal{O}))$ for any basic function $f:M\to \mathbb{R}$ presenting $F$ and $G_x$ is the isotropy group at $x$. This is well defined in the sense that if $y\in \mathcal{O}$ then the normal representations $G_x\curvearrowright \nu_x(\mathcal{O})$ and $G_y\curvearrowright \nu_y(\mathcal{O})$ are isomorphic and the Identity \eqref{StackyHessianRelation} holds. Accordingly, based on Proposition \ref{NormalActionFeatures}, the \textbf{index} of $[x]$ will be defined as 
$$\dim \nu_-(\mathcal{O}_{x})_x/G_{x}=2\dim \nu_-(\mathcal{O}_{x})_x-\dim \nu_-(\mathcal{O}_{x})_x\times G_{x}=\lambda(f,\mathcal{O}_x)-\dim G_{x}.$$

Let $F:[M/G]\to \mathbb{R}$ be a stacky function and fix $a\in \mathbb{R}$. We define the \textbf{stacky level} of $F$ below $a$ as the set $[M/G]^a=\lbrace [x]\in [M/G]: F([x])\leq a\rbrace$. One can describe $[M/G]^a$ as a substack with boundary in the following sense. Suppose that $H\rr N$ is a Lie groupoid together with Morita fibrations $\phi:H\to G$ and $\psi:H\to G'$. Recall that for every $f'\in C^{\infty}(M')^{G'}$ we have that $\psi^*f'\in C^{\infty}(N)^H$. Also, there exists a unique $f\in C^{\infty}(M)^G$ with $\phi^*f=\psi^*f'$. This defines an isomorphism $C^{\infty}(M')^{G'}\to C^{\infty}(M)^G$ by sending $f'\mapsto f$, which actually preserves our Morse--Bott type condition along critical orbits (see Proposition \ref{MoritaInvariance}). Consider the level set $M^a=\lbrace x\in M: f(x)\leq a\rbrace$ with its boundary $\partial M^a=\lbrace x\in M: f(x)=a\rbrace$. It follows that $M^a$ is saturated so that we get a level subgroupoid $G^a$ of $G$. We can analogously define level subgroupoids $H^a$ and $G'^a$ of $H$ and $G'$, respectively, where $H^a$ is defined by either $\phi^*f$ or $\psi^*f'$. Note that we have well defined Morita fractions 

$$\textnormal{int}(G^a)\xleftarrow[\sim]{\it \phi}\textnormal{int}(H^a)\xrightarrow[\sim]{\it \psi} \textnormal{int}(G'^a)\quad\textnormal{and}\quad \partial G^a\xleftarrow[\sim]{\it \phi}\partial H^a\xrightarrow[\sim]{\it \psi} \partial G'^a,$$

so that we may think of $G^a\xleftarrow[\sim]{\it \phi}H^a\xrightarrow[\sim]{\it \psi} G'^a$ as a Morita fraction preserving boundaries. Hence, if $F:[M/G]\to \mathbb{R}$ is a stacky function presented by a basic function $f:M\to \mathbb{R}$ then the stacky level of $F$ below $a\in\mathbb{R}$ equals
$$[M^a/G^a]=[\textnormal{int}(M^a)/\textnormal{int}(G^a)]\cup [\partial M^a/\partial G^a].$$

Two Riemannian metrics $\eta_1$ and $\eta_2$ on $G\rightrightarrows M$ are said to be \textbf{equivalent} if they induce the same inner products on the normal vector spaces over the groupoid orbits \cite{dHF2}. More generally, we define a \textbf{Riemannian Morita map} (resp. \textbf{fibration}) $\phi: (H \rightrightarrows N) \to (G\rightrightarrows M)$ as a Morita map between Riemannian groupoids that induces isometries on the normal vector spaces to the groupoid orbits $\nu_z(\mathcal{O}^H)\to \nu_{\phi(z)}(\mathcal{O}^G)$ (resp. Riemannian submersion at the level of objects). By using this terminology we have that $\eta_1$ and $\eta_2$ are equivalent if and only if the identity $\textnormal{id}: (G \rightrightarrows M,\eta_1) \to (G\rightrightarrows M,\eta_2)$ is a Riemannian Morita map. Let us consider again the Morita fraction $G\xleftarrow[\sim]{\it \phi} H\xrightarrow[\sim]{\it \psi} G'$. From \cite[Prop. 6.3.1]{dHF2} we know that if $\eta^G$ is a Riemannian metric on $G$ then there exists a Riemannian metric $\eta^H$ on $H$ that makes the fibration $\phi:H\to G$ Riemannian. We can slightly modify $\eta^H$ by a cotangent averaging procedure so that we get another Riemannian metric $\tilde{\eta}^H$ on $H$ which descends to $G'$ defining a Riemannian metric $\eta^{G'}$ making of the fibration $\psi:H\to G'$ Riemannian. It turns out that these pullback and pushforward constructions are well-defined and mutually inverse modulo equivalence of metrics. This is because $\eta^H$ and $\tilde{\eta}^H$ turn out to be equivalent, see the proof of Theorem 6.3.3 in \cite{dHF2}. In this case we refer to $(G,\eta^G)$ and $(G',\eta^{G'})$ as being \textbf{Morita equivalent Riemannian groupoids}. It suggests a definition for Riemannian metrics over differentiable stacks. Namely, a \textbf{stacky metric} on the orbit stack $[M/G]$ presented by a Lie groupoid $G\rightrightarrows M$ is defined to be an equivalence class $[\eta]$ of a Riemannian metric $\eta$ on $G$. For further details the reader is recommended to visit \cite{dHF2}.

The \textbf{tangent stack} of $[M/G]$ is by definition the differentiable stack $T[M/G]:=[TM/TG]$ that is presented by the tangent groupoid $TG\rr TM$, see \cite{H2}. The multiplicative vector field defined by the gradient of $F:G\to\mathbb{R}$ with respect to the groupoid metric $\eta$ defines, by means of the Dictionary Lemmas from \cite[s. 2.6]{BX}, a stacky vector field on $[M/G]$ in the sense of \cite[Def. 4.14]{H2}. By using the identification of the coarse tangent space $T_{[x]}[M/G]$ with the orbit space $\nu_x(\mathcal{O})/G_x\cong  \nu(\mathcal{O})/G_{\mathcal{O}}$ we may think of this stacky vector field as a stacky map $\nabla F: [M/G]\to T[M/G]$ given by $[x]\mapsto [\nabla f(x)]$, which is clearly well defined. We shall refer to it as the \textbf{stacky gradient vector field} of $F:[M/G]\to \mathbb{R}$ with respect to $[\eta]$. Observe that $\nabla F$ satisfies
$$
[\eta](\nabla F[x],[v])  =  [\eta]([\nabla f(x)],[v])=\eta(\nabla f(x),v)=df(x)(v)=dF_{[x]}([v]).
$$

\begin{remark}
From the point of view of Lie groupoids, Theorem 4.15 in \cite{H2} says that the category of multiplicative vector fields on $G$ depends, up to equivalence, only on the Morita equivalence class of $G$. If we think of the differentiable stack $[M/G]$ as the equivalence class of $G$ in the enlarged 2-category of Lie groupoids, principal bi-bundle and isomorphisms then the category of stacky vector field on $[M/G]$ is equivalent to category of multiplicative vector fields on $G$.
\end{remark}
Let us consider the underlying continuous map $\overline{f}:M/G\to \mathbb{R}$.

\begin{proposition}\label{StackyNonCriticalLevel}
	Let $[M/G]$ be a separated stack and $[a,b]\subset \mathbb{R}$ be a real interval such that $\overline{f}^{-1}[a,b]$ is compact in $M/G$. If $\overline{f}^{-1}[a,b]$ has no critical points of $F$ then $[M/G]^a$ and $[M/G]^b$ are stacky isomorphic. Furthermore, $[M/G]^a$ is a stacky deformation retraction of $[M/G]^b$.
\end{proposition}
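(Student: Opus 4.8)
The plan is to present both stacky levels by their level subgroupoids and to transport the groupoid-level constructions of Proposition~\ref{NonCriticalLevel} to the stacky setting, the one genuinely new point being that the weaker compactness hypothesis suffices. Recall that $[M/G]^a$ is presented by the level subgroupoid $G^a\rightrightarrows M^a$ and $[M/G]^b$ by $G^b\rightrightarrows M^b$, and that the stacky gradient $\nabla F$ is presented by the multiplicative gradient vector field $(\nabla(s^\ast f),\nabla f)$. First I would normalize it, exactly as in the proof of Proposition~\ref{NonCriticalLevel}, to a multiplicative vector field $(-\widetilde{\mu}\widetilde{X},-\mu X)$ where $\mu$ is a \emph{basic} function equal to $\|Xf\|^{-1}$ on $f^{-1}[a,b]$; using an averaging argument as in Example~\ref{KeyExample} together with the compactness of $\overline{f}^{-1}[a,b]$, one arranges that the support of $\mu$ has compact image in $M/G$.

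The heart of the argument is to upgrade completeness of this flow from the classical hypothesis ($f^{-1}[a,b]$ compact in $M$) to the stacky one ($\overline{f}^{-1}[a,b]$ compact in $M/G$). Since $G$ is proper, every isotropy $G_x$ is compact, so by the Slice Theorem each $[y]\in\overline{f}^{-1}[a,b]$ admits a saturated neighborhood Morita equivalent to an action groupoid $G_y\ltimes\nu_y(\mathcal{O}_y)\rightrightarrows\nu_y(\mathcal{O}_y)$ whose orbit projection $\nu_y(\mathcal{O}_y)\to\nu_y(\mathcal{O}_y)/G_y$ is \emph{proper}. Hence in each slice model the preimage of the relevant compact subset of the orbit space is compact, and there the gradient flow is complete by the classical (equivariant) version of Proposition~\ref{NonCriticalLevel}. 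Covering the compact set $\overline{f}^{-1}[a,b]$ by finitely many such slices and invoking the multiplicativity of $-\mu X$, which forces its flow to descend to the orbit space and to respect the normal representations (Remark~\ref{Multiplicative-Gradient-Remark}), I would conclude that the flow $(\widetilde{\Phi}_\tau,\Phi_\tau)$ is defined for all $\tau$ on the saturated region of interest.

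With completeness established, the computation in Proposition~\ref{NonCriticalLevel} applies verbatim and yields a Lie groupoid isomorphism $(\widetilde{\Phi}_{b-a},\Phi_{b-a})\colon (G^b\rightrightarrows M^b)\to(G^a\rightrightarrows M^a)$ together with a groupoid deformation retraction of $G^b\rightrightarrows M^b$ onto $G^a\rightrightarrows M^a$, both realized by multiplicative maps and multiplicative homotopies. Because these constructions are built from the descended flow and respect the transversal data, they are compatible with Morita morphisms and therefore present a stacky isomorphism $[M/G]^b\cong[M/G]^a$ and a stacky deformation retraction of $[M/G]^b$ onto $[M/G]^a$, which is the assertion.

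I expect the second paragraph to be the main obstacle. The subtlety is that the projection $\pi\colon M\to M/G$ need not be proper, which is precisely the hypothesis that had to be added in Theorem~\ref{ExistenceProper}, so $f^{-1}[a,b]$ may genuinely fail to be compact and the naive completeness argument does not apply directly. The mechanism that saves the argument is that properness of $G$ makes the isotropies compact, so the orbit projection becomes proper in the local slice models, where compactness (and hence completeness) is recovered; the multiplicative structure is then what lets these local complete flows be assembled into the single global multiplicative flow that the stacky statement requires.
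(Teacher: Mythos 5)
There is a genuine gap, and it sits exactly where you predicted: the completeness step in your second paragraph fails, and with it your conclusion that $(\widetilde{\Phi}_{b-a},\Phi_{b-a})$ is a Lie groupoid isomorphism $(G^b\rr M^b)\to(G^a\rr M^a)$. The Slice Theorem gives a \emph{Morita equivalence} between $G_U\rr U$ and $G_y\ltimes B\rr B$, not an isomorphism, and completeness of flows is not transversal data: the slice model records each orbit only through its intersection with a transversal, so an integral curve of $-\mu X$ in $M$ can escape along the (possibly non-compact, metrically incomplete) saturation in finite time while its image in $M/G$ never leaves a compact set. Properness of the orbit map in the linear slice model (true, since $G_y$ is compact) says nothing about $\pi:M\to M/G$; properness of $\pi$ is precisely the non-Morita-invariant hypothesis that had to be added separately in Theorem \ref{ExistenceProper}, and if your local-to-global assembly worked it would prove that every proper groupoid whose orbit space is compact carries complete basic gradient flows on its total space, which is false.

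A concrete counterexample: let $M=(S^1\times\mathbb{R})\setminus\{(1,0)\}$, let $p:M\to S^1$ be the projection, let $G=M\times_{S^1}M\rr M$ be the submersion groupoid (proper, with the product metric arising as the $\eta^{(0)}$ of a natural $2$-metric), and take $f=p^\ast g$ with $g(e^{i\theta})=\sin\theta$ and $[a,b]=[-1/4,1/4]$. All hypotheses of Proposition \ref{StackyNonCriticalLevel} hold, yet the integral curve of $-\mu\nabla f$ through $(e^{i\epsilon},0)$ reaches the puncture in finite time, so the normalized multiplicative gradient field has no global flow on this presentation; worse, $M^a$ is a contractible closed band while $M^b$ is a closed band minus an interior point (homotopy equivalent to $S^1$), so $G^a\rr M^a$ and $G^b\rr M^b$ are not isomorphic as Lie groupoids. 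Thus your intermediate claim is genuinely false, even though the stacky statement survives: both level groupoids are Morita equivalent, via $p$, to unit groupoids of closed intervals. This shows that any correct proof must produce the isomorphism and the retraction at the \emph{stack} level, allowing a change of presentation, which is what the paper does: it builds a stacky cutoff function with compact support using stacky partitions of unity (available because $[M/G]$ is separated) and then invokes Hepworth's theorem \cite[Thm. 5.12]{H2} that compactly supported stacky vector fields on such stacks integrate to global stacky flows; that theorem is the real technical input replacing your slice-assembly step, and the Milnor-type argument together with Proposition \ref{NonCriticalLevel} is run only downstream of it.
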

\begin{proof}
We shall follow the proof of \cite[Thm. 7.5]{H} closely and apply some of the results proved in \cite{H2} for stacky vector fields and flows. On the one hand, a simple computation shows that the smooth function $\Vert \nabla f \Vert^2:M\to \mathbb{R}$ is basic since $f$ is so and the gradient vector field $\nabla F$ on $G$ is horizontal with respect to both Riemannian submersions $s$ and $t$. On the other hand, since $[M/G]$ is a separated stack we may construct ``stacky'' partitions of unity for $[M/G]$ (see \cite[Def. 2.13 \& Prop. 2.14]{H2}). In consequence, from \cite[Lem. 3.11 \& Lem. 3.12]{H} we may find a stacky function $\rho:[M/G]\to \mathbb{R}$ with compact support in $M/G$ and with $\overline{\rho}=1/\overline{\Vert \nabla f \Vert^2}$ in $\overline{f}^{-1}[a,b]$. Therefore, we can form the stacky vector field $\tilde{X}=\rho \nabla F$ on $[M/G]$, which has compact support in $M/G$, and then take its stacky flow $\Phi: [M/G]\times \mathbb{R}\to [M/G]$ by using \cite[Thm. 5.12]{H2}. 

The vector field $\tilde{X}$ can, by \cite[Thm. 4.15]{H2} and the proof of \cite[Prop. 4.17]{H2}, be presented by a multiplicative vector field $X=(X_1,X_0)$ on $G\rr M$ that is not equal to the zero section only on a subgroupoid of $G$ whose image in $M/G$ has compact closure. That is to say, the vector fields $X_1$ and $X_0$ are compactly-supported in $F^{-1}[a,b]\subset G$ and $f^{-1}[a,b]\subset M$, respectively. Thus, motivated by the proof of Proposition \ref{NonCriticalLevel}, we may clearly identify $X_1=\tilde{\rho} \nabla F$ and $X_0=\rho \nabla f$ where $\rho:M\to \mathbb{R}$ is the compactly-supported smooth function on $M$ with $\rho=1/\Vert \nabla f \Vert^2$ inside $f^{-1}[a,b]$. This is clearly basic and  $\tilde{\rho}:G\to \mathbb{R}$ is given by either $s^\ast \rho$ or $t^\ast \rho$. Observe that $\tilde{\rho}$ is also compactly-supported in $F^{-1}[a,b]$.

Hence, the arguments above allow us to think of the stacky vector field $\tilde{X}:[M/G]\to T[M/G]$ as being determined by the assignment $[x]\mapsto [X_0(x)]$. Also, by using the Dictionary Lemmas from \cite[s. 2.6]{BX} as in the proof of \cite[Prop. 6.2]{H2}, the stacky flow $\Phi: [M/G]\times \mathbb{R}\to [M/G]$ of $\tilde{X}$ is determined by $\Phi([x],\tau)=[\varphi^0_\tau(x)]$ where $(\varphi^1_\tau,\varphi^0_\tau)$, for all $\tau\in \mathbb{R}$, is the $1$-parameter family of Lie groupoid automorphisms on $G$ determined by the flow of the multiplicative vector field $X$. In particular, by using these identifications it holds that if $\Phi([x],\tau)\in \overline{f}^{-1}[a,b]$ then $\tilde{X}\cdot F(\Phi([x],\tau))=1$ so that the result follows either by proceeding  without any chance as in the proof of \cite[Thm. 3.1]{Mi} or by using Proposition \ref{NonCriticalLevel} directly. 
\end{proof}

Let us pick an orbit $\mathcal{O}$ of $G$ and denote by $\mathcal{O}^H=\phi^{-1}(\mathcal{O})$ and $\mathcal{O}'=\psi(\mathcal{O}^H)$ the respective orbits of $H$ and $G'$. From \cite[Prop. 6.4.1]{dHF2} it follows that there is an induced Morita fraction
$$\nu(G_{\mathcal{O}})\xleftarrow[\sim]{\it \overline{d\phi}} \nu(H_{\mathcal{O}^H})\xrightarrow[\sim]{\it \overline{d\psi}} \nu(G'_{\mathcal{O}'}).$$

Observe that if in addition $\mathcal{O}$ is a nondegenerate critical orbit then the previous fraction induces a Morita fraction between the negative normal groupoids $\nu_{-}(G_{\mathcal{O}})\xleftarrow[\sim]{\it \overline{d\phi}} \nu_{-}(H_{\mathcal{O}^H})\xrightarrow[\sim]{\it \overline{d\psi}} \nu_{-}(G'_{\mathcal{O}'})$ (see Lemma \ref{NegativeNormalLG}). More importantly, if $G$ and $G'$ are the Morita equivalent Riemannian groupoids as we described above then there are Morita fractions between the unit and sphere groupoids:
$$D_{-}(G_{\mathcal{O}})\xleftarrow[\sim]{\it \overline{d\phi}} D_{-}(H_{\mathcal{O}^H})\xrightarrow[\sim]{\it \overline{d\psi}} D_{-}(G'_{\mathcal{O}'})\quad\textnormal{and}\quad \partial D_{-}(G_{\mathcal{O}})\xleftarrow[\sim]{\it \overline{d\phi}} \partial D_{-}(H_{\mathcal{O}^H})\xrightarrow[\sim]{\it \overline{d\psi}} \partial D_{-}(G'_{\mathcal{O}'}).$$ 

The stack $e^\lambda_{[x]}:=[D_{-}(\mathcal{O})/D_{-}(G_{\mathcal{O}})]$ will be called \textbf{stacky $\lambda$-cell} at $[x]$ and its boundary $\partial e^\lambda_{[x]}:=[\partial D_{-}(\mathcal{O})/\partial D_{-}(G_{\mathcal{O}})]$ will be called \textbf{stacky $(\lambda-1)$-sphere} at $[x]$. 

\begin{remark}
We may think of the attaching space $[M/G]^a\cup_{\partial e^\lambda_{[x]}} e^\lambda_{[x]}$ as being a topological stack presented by the attaching groupoid $G^{a}\cup_{B} D_-(G_{\mathcal{O}})$. This is because we can consider the topological Morita fraction between the attaching groupoids	$$G^a\cup_B D_{-}(G_{\mathcal{O}})\xleftarrow[\sim]{\it \phi\cup \overline{d\phi}} H^a\cup_B D_{-}(H_{\mathcal{O}^H})\xrightarrow[\sim]{\it \psi\cup \overline{d\psi}} G'^a\cup_B D_{-}(G'_{\mathcal{O}'}).$$
\end{remark}

Whit this notation we have:

\begin{proposition}\label{StakyCriticalLevel}
	Let $[M/G]$ be a separated stack and $[a,b]\subset \mathbb{R}$ be a real interval such that $\overline{f}^{-1}[a,b]$ is compact in $M/G$. If $\overline{f}^{-1}[a,b]$ contains no critical points besides $[x]$ of index data $\lambda(F,[x])$ then $[M/G]^b$ is stacky homotopy equivalent to $[M/G]^a\cup_{\partial e^\lambda_{[x]}} e^\lambda_{[x]}$.
\end{proposition}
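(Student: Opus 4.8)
The plan is to reduce the statement to its groupoid counterpart, Theorem \ref{CriticalLevel}, and then to descend the resulting groupoid-level deformation retraction to the stacky setting via Morita invariance. Since $[M/G]$ is separated, I present it by a proper Lie groupoid $G \rightrightarrows M$ carrying a $2$-metric, and present $F$ by a Morse Lie groupoid morphism covering a basic function $f \in C^\infty(M)^G$. The nondegenerate critical point $[x]$ then corresponds to a single nondegenerate critical orbit $\mathcal{O} \subset M$ lying inside $f^{-1}(a,b)$, with index data $\lambda(F,[x]) = (\lambda(f,\mathcal{O}), G_x)$. Although Theorem \ref{CriticalLevel} is stated assuming that $f^{-1}[a,b]$ be compact in $M$, the remark following its proof weakens this to the compactness of $\overline{f}^{-1}[a,b]$ in $M/G$, which is exactly our hypothesis; hence Theorem \ref{CriticalLevel} applies.

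Applying Theorem \ref{CriticalLevel}, the attaching groupoid $G^a \cup_B D_-(G_{\mathcal{O}}) \rightrightarrows M^a \cup_b D_-(\mathcal{O})$ is a deformation retract of the level subgroupoid $G^b \rightrightarrows M^b$, via a deformation retraction $(\widetilde{r_\tau}, r_\tau)$ which is, at each time $\tau$, a (topological) groupoid morphism. By construction $G^b \rightrightarrows M^b$ presents the stacky level $[M/G]^b = [M^b/G^b]$, and by the remark immediately preceding the statement the attaching groupoid presents the attaching space $[M/G]^a \cup_{\partial e^\lambda_{[x]}} e^\lambda_{[x]}$. Thus at the level of a single presentation we already have a homotopy equivalence, and it remains to upgrade it to a stacky homotopy equivalence.

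The main step is therefore to verify that the groupoid-level homotopy descends, that is, that it is compatible with the Morita structure. Given a Morita fraction $G \xleftarrow{\phi} H \xrightarrow{\psi} G'$ realizing $[M/G]$ (with the Morse property transported by Proposition \ref{MoritaInvariance}), the level subgroupoids fit into $G^b \xleftarrow{\phi} H^b \xrightarrow{\psi} G'^b$, while the attaching groupoids fit into the topological Morita fraction displayed in the preceding remark. What must be checked is that $(\widetilde{r_\tau}, r_\tau)$ intertwines with these fractions, so that it defines a single homotopy class of stacky maps rather than a presentation-dependent one. This is precisely what Corollary \ref{QuotientLevelLemma} provides: near the local model of $\mathcal{O}$ the deformation retraction $r_\tau$ is $G_x$-equivariant with respect to the normal representation \eqref{eq:normalrep}, hence it respects the transversal data preserved by Morita maps and transports consistently along $\phi$ and $\psi$. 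Combining this with the descent of groupoid homotopies and flows to stacky ones established in \cite{H2} (as already exploited in Proposition \ref{StackyNonCriticalLevel}), the equivalence $[M/G]^b \simeq [M/G]^a \cup_{\partial e^\lambda_{[x]}} e^\lambda_{[x]}$ becomes stacky.

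The hard part will be this last descent: making rigorous that a deformation retraction defined on one proper presentation yields a well-defined stacky homotopy equivalence. The subtlety is that homotopies of stacky maps live only up to the $2$-categorical structure of bibundles, so one must check that $(\widetilde{r_\tau}, r_\tau)$ transports across the Morita fraction up to natural isomorphism, uniformly in $\tau$. The $G_x$-equivariance from Corollary \ref{QuotientLevelLemma} controls the behavior transverse to $\mathcal{O}$, where all of the nontrivial retraction occurs, whereas away from the local model the retraction is the identity and transports trivially; reconciling these two regimes coherently is the essential verification.
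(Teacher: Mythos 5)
Your overall strategy (establish everything at the groupoid level via Theorem \ref{CriticalLevel}, then descend to the stack) is close in spirit to what the paper does, and the descent ingredients you name --- Corollary \ref{QuotientLevelLemma}, the Dictionary Lemmas of \cite{BX}, the stacky flows of \cite{H2} --- are the right ones. However, there is a genuine gap at your very first step: you apply Theorem \ref{CriticalLevel} under the hypothesis that only $\overline{f}^{-1}[a,b]$ is compact in $M/G$, justifying this by the remark following that theorem. That remark merely \emph{asserts} that the compactness assumption may be relaxed, and explicitly defers the justification to Section \ref{S:7} --- that is, to Proposition \ref{StackyNonCriticalLevel} and to the very proposition you are proving; citing it here is circular. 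Moreover, the relaxation is not automatic: compactness of $\overline{f}^{-1}[a,b]$ in $M/G$ does not imply compactness of $f^{-1}[a,b]$ in $M$, nor even of the critical orbit $\mathcal{O}$ (consider $\mathbb{R}$ acting on $\mathbb{R}^2$ by translation in the first factor, with $f(x,y)=y^2$). The proofs of Proposition \ref{NonCriticalLevel} and Theorem \ref{CriticalLevel} genuinely use compactness in $M$: to build the rescaling function $\mu$ supported in a compact neighborhood of $f^{-1}[a,b]$ so that the resulting multiplicative vector field is complete, and to choose a uniform $\epsilon$ so that the $2\epsilon$-disk bundle of $\nu(G_{\mathcal{O}})$ sits inside the groupoid tubular neighborhood. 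Under your hypothesis neither step goes through as stated.

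This is precisely why the paper organizes the proof differently. It works at the stack level: it splits the coarse tangent space $T_{[x]}[M/G]$ using Lemmas \ref{NormalInvariance} and \ref{NormalInvariance2} and Proposition \ref{NormalActionFeatures}, invokes the stacky Morse lemma (Proposition \ref{StackyMorseLemma}) to write $F=c-\Vert \xi_0([v])\Vert^2+\Vert \eta_0([v])\Vert^2$ near $[x]$, passes to the stacky function $F_1$ induced by the modified basic function $f_1$ constructed in Theorem \ref{CriticalLevel}, and then repeats Hepworth's argument \cite[Thm. 7.6]{H}: the part away from the critical fiber is handled by Proposition \ref{StackyNonCriticalLevel}, whose proof is exactly where the relaxed compactness is dealt with (via stacky partitions of unity and the compactly supported stacky flows of \cite{H2}), while the part near $\mathcal{O}$ uses the local constructions of Theorem \ref{CriticalLevel} together with Corollary \ref{QuotientLevelLemma} and the Dictionary Lemmas of \cite{BX}. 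To repair your proof you would have to do the same: replace the global appeal to Theorem \ref{CriticalLevel} by its local content near $\mathcal{O}$, and run the flow and retraction arguments with data compactly supported in $M/G$ rather than in $M$ --- in effect re-proving, not citing, the relaxed-compactness version.
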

\begin{proof}
We start by noting that by Lemma \ref{NormalInvariance} and Proposition \ref{NormalActionFeatures} we may split the coarse tangent space $T_{[x]}[M/G]=T_{[x]}^-[M/G]\oplus T_{[x]}^+[M/G]$ where $T_{[x]}^-[M/G]=\nu_-(\mathcal{O})_x/G_x$ and $T_{[x]}^+[M/G]=\nu_+(\mathcal{O})_x/G_x$. In particular, by Lemma \ref{NormalInvariance2} it follows that the two mutually complementary projections $\xi_0:\nu(\mathcal{O})\to \nu_-(\mathcal{O})$ and $\eta_0:\nu(\mathcal{O})\to \nu_+(\mathcal{O})$ descend to define coordinates over $T_{[x]}^-[M/G]$ and $T_{[x]}^+[M/G]$, respectively, and a norm on $T_{[x]}[M/G]$ as:
$$\| [v]\|^2=-Q_{[x]}(F)(\xi_0([v]))+Q_{[x]}(F)(\eta_0([v])),$$
which agrees with the norm induced by $[\eta]$. Now, by Proposition \ref{StackyMorseLemma} we have that there are stacky neighborhoods $[V/\nu(G_{\mathcal{O}})_V]$ and $[U/G_U]$ of $[x]$ in $[\nu(\mathcal{O})/\nu(G_\mathcal{O})]$ and $[M/G]$, respectively, and a stacky isomorphism $\varphi: [V/\nu(G_{\mathcal{O}})_V]\to [U/G_U]$ fixing $[x]$ such that $\varphi^\ast F=c+ Q_{[x]}(F)$. Under the identification $[V/\nu(G_{\mathcal{O}})_V]\cong [U/G_U]$ we may think of $F$ on $[U/G_U]$ as given respectively by
$$F([v])=c+Q_{[x]}(F)([v])=c-\| \xi_0([v])\|^2+\| \eta_0([v])\|^2.$$

Let us consider the stacky function $F_1:[M/G]\to \mathbb{R}$ determined by the basic function $f_1:M\to \mathbb{R}$ constructed in Theorem \ref{CriticalLevel}. Such a stacky function can be described in the way we just did above with $F$. Therefore, the result follows by following the same steps in \cite[Thm. 7.6]{H} after using Proposition \ref{StackyNonCriticalLevel}, Theorem \ref{CriticalLevel} together the Dictionary Lemmas from \cite[s. 2.6]{BX}, and Corollary \ref{QuotientLevelLemma}.
\end{proof}

Recall that as consequence of Proposition \ref{NormalActionFeatures} we know that the orbit spaces associated to $e^\lambda_{[x]}$ and $\partial e^\lambda_{[x]}$ are respectively given by $D_-(\mathcal{O})_x/G_x$ and $\partial D_-(\mathcal{O})_x/G_x$. Thus:

\begin{corollary}\label{OrbitDecomposition}
The orbit space $(M/G)^{b}$ has the homotopy type of $(M/G)^{a}$ with a copy of $D_-(\mathcal{O})_x/G_x$ attached along $\partial D_-(\mathcal{O})_x/G_x$.
\end{corollary}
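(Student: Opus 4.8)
The plan is to pass to the underlying orbit space (coarse moduli space) of the stacky homotopy equivalence provided by Proposition \ref{StakyCriticalLevel}, and to check that this coarse-space operation carries the stacky attaching construction to the ordinary topological attaching of the corresponding orbit spaces. Concretely, the assertion will follow once we know that taking orbit spaces is a functor sending stacky homotopy equivalences to homotopy equivalences of topological spaces, and that it commutes with the attaching along a (sphere) substack.

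First I would recall that the assignment sending a topological stack $[N/H]$ to its orbit space $N/H$ is Morita invariant: a Morita map induces a homeomorphism of orbit spaces (as noted in the characterization of Morita maps via transversal data in Section \ref{S:3}), so the assignment descends to stacks and carries a stacky homotopy equivalence to a genuine homotopy equivalence of topological spaces. Applying this to the stacky homotopy equivalence $[M/G]^b\simeq [M/G]^a\cup_{\partial e^\lambda_{[x]}} e^\lambda_{[x]}$ furnished by Proposition \ref{StakyCriticalLevel}, the orbit space $(M/G)^b$ inherits the homotopy type of the orbit space of the right-hand attaching stack.

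Next I would identify that orbit space. By the Remark preceding this corollary, the attaching stack $[M/G]^a\cup_{\partial e^\lambda_{[x]}} e^\lambda_{[x]}$ is presented by the attaching groupoid $G^a\cup_B D_-(G_{\mathcal{O}})\rightrightarrows M^a\cup_b D_-(\mathcal{O})$ of Proposition \ref{AttachingDiskProposition}. Its orbit space is obtained by gluing $M^a/G^a=(M/G)^a$ to $D_-(\mathcal{O})/D_-(G_{\mathcal{O}})$ along the boundary orbit space $\partial D_-(\mathcal{O})/\partial D_-(G_{\mathcal{O}})$, because the orbits of the attaching groupoid are exactly the orbits of $G^a$ together with those of $D_-(G_{\mathcal{O}})$ identified through the gluing morphism $(B,b)$. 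Finally, invoking the homeomorphisms $D_-(\mathcal{O})_x/G_x\cong D_-(\mathcal{O})/D_-(G_{\mathcal{O}})$ and $\partial D_-(\mathcal{O})_x/G_x\cong \partial D_-(\mathcal{O})/\partial D_-(G_{\mathcal{O}})$ from Proposition \ref{NormalActionFeatures}, this orbit space is precisely $(M/G)^a$ with a copy of $D_-(\mathcal{O})/G_x$ attached along $\partial D_-(\mathcal{O})/G_x$, which is the claim.

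The main obstacle, and the only step requiring genuine verification rather than bookkeeping, is the commutation of the coarse-space functor with the pushout: I must confirm that the orbit space of $G^a\cup_B D_-(G_{\mathcal{O}})$ really is the topological attaching space $(M/G)^a\cup_{\partial D_-(\mathcal{O})/\partial D_-(G_{\mathcal{O}})} D_-(\mathcal{O})/D_-(G_{\mathcal{O}})$ and not a finer or coarser quotient. This reduces to checking that the quotient topology on $\bigl(M^a\cup_b D_-(\mathcal{O})\bigr)/\bigl(G^a\cup_B D_-(G_{\mathcal{O}})\bigr)$ agrees with the pushout topology, which follows from the openness of $\overline{s},\overline{t}$ established in the construction preceding Proposition \ref{AttachingDiskProposition} together with the fact that the gluing is along the closed saturated sphere subgroupoid $\partial D_-(G_{\mathcal{O}})\rightrightarrows \partial D_-(\mathcal{O})$.
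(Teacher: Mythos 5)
Your proposal is correct and follows essentially the same route as the paper: the paper deduces this corollary directly from Proposition \ref{StakyCriticalLevel} by passing to orbit spaces and using the identifications of Proposition \ref{NormalActionFeatures}, exactly as you do. The extra details you supply (Morita invariance of the coarse-space assignment, and the compatibility of the coarse space with the attaching construction via the openness of $\overline{s},\overline{t}$) are precisely the points the paper leaves implicit in its ``Thus:'' derivation, so your write-up is a faithful, slightly more careful version of the intended argument.
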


\subsection{Morse inequalities for the orbit space} 

By following similar arguments as those used by Hepworth in \cite{H} it is possible to prove Morse-like inequalities for certain separated stacks. We shall follow \cite[Sec. 7.3]{H} closely. Suppose that $[M/G]$ is a separated differentiable stack with $M/G$ compact and let $F:[M/G]\to \mathbb{R}$ be a stacky Morse function presented by a basic function $f:M\to \mathbb{R}$. From Corollary \ref{IsolatedProperty} it follows that the critical points of $F$ are isolated so that we may take a finite sequence $q_0<q_1<q_2<\cdots <q_r\in \mathbb{R}$ such that each interval $(q_j,q_{j+1})$ contains only one critical value of $F$ and such that all critical values lie inside such intervals. Let us denote by $[x^j_1],\cdots,[x^j_{k_j}]$ the critical points inside $\overline{f}^{-1}(q_j,q_{j+1})$. Therefore, by an inductive process it follows from Corollary \ref{OrbitDecomposition} that:
\begin{corollary}\label{OrbitCellDecomposition}
There is a decomposition $M/G=\bigcup_{j=1}^r(M/G)^{q_j}$ where each $(M/G)^{q_{j+1}}$ has the homotopy type of $(M/G)^{q_j}$ with copies of $D_-(\mathcal{O}_{x^j_{l}})_{x^j_{l}}/G_{x^j_{l}}$ attached along $\partial D_-(\mathcal{O}_{x^j_{l}})_{x^j_{l}}/G_{x^j_{l}}$ for $l=1,\cdots, k_j$.
\end{corollary}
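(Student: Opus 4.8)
The plan is to run an induction on $j$ whose inductive engine is Corollary \ref{OrbitDecomposition}, the one subtlety being that several critical points may share a single critical value and so must be attached simultaneously. First I would set up the exhaustion: since $M/G$ is compact, $\overline{f}$ attains a minimum and a maximum, so choosing $q_0$ strictly below every critical value gives $(M/G)^{q_0}=\overline{f}^{-1}(-\infty,q_0]=\emptyset$, while choosing $q_r$ above every critical value gives $(M/G)^{q_r}=M/G$; hence $M/G=\bigcup_{j=1}^{r}(M/G)^{q_j}$ and it remains only to compare consecutive terms. On each interval $[q_j,q_{j+1}]$ the preimage $\overline{f}^{-1}[q_j,q_{j+1}]$ is compact, so Proposition \ref{StakyCriticalLevel} and Proposition \ref{StackyNonCriticalLevel} are applicable.

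For the inductive step, fix $j$ and recall that $(q_j,q_{j+1})$ carries a single critical value $c$ with critical points $[x^j_1],\dots,[x^j_{k_j}]$. By Corollary \ref{IsolatedProperty} these are isolated, hence correspond to pairwise distinct critical orbits $\mathcal{O}_{x^j_1},\dots,\mathcal{O}_{x^j_{k_j}}$ of $f$ in $M$. The groupoid Morse Lemma (Theorem \ref{LieGroupoidMorseLemma}) produces a full groupoid tubular neighborhood around each critical subgroupoid $G_{\mathcal{O}_{x^j_l}}$ on which $F$ equals a constant plus $Q_F$; since the orbits are closed and disjoint, after shrinking these neighborhoods may be taken pairwise disjoint. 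Consequently the modification of $(F,f)$ used in the proof of Theorem \ref{CriticalLevel}, together with the $G_x$-equivariant deformation retraction of Corollary \ref{QuotientLevelLemma}, can be performed independently inside each neighborhood, while Proposition \ref{StackyNonCriticalLevel} handles the retraction away from all the critical orbits. This is the simultaneous form of Corollary \ref{OrbitDecomposition}: $(M/G)^{q_{j+1}}$ has the homotopy type of $(M/G)^{q_j}$ with a copy of $D_-(\mathcal{O}_{x^j_l})/G_{x^j_l}$ attached along $\partial D_-(\mathcal{O}_{x^j_l})/G_{x^j_l}$ for $l=1,\dots,k_j$, where Proposition \ref{NormalActionFeatures} supplies the orbit-space identifications $e^{\lambda}_{[x]}\leftrightarrow D_-(\mathcal{O}_x)/G_x$ and $\partial e^{\lambda}_{[x]}\leftrightarrow \partial D_-(\mathcal{O}_x)/G_x$.

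Concatenating these homotopy equivalences for $j=1,\dots,r$ yields the claimed decomposition. The only point demanding care beyond the routine induction is the simultaneous attachment of several cells at a common critical level; I would justify it precisely as above, from the isolatedness of the critical points (Corollary \ref{IsolatedProperty}) and the resulting pairwise disjointness of the groupoid tubular neighborhoods furnished by the Morse Lemma.
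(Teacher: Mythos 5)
Your proposal is correct and follows essentially the same route as the paper: an induction over the non-critical levels $q_j$ whose engine is Corollary \ref{OrbitDecomposition} (itself resting on Propositions \ref{StackyNonCriticalLevel}, \ref{StakyCriticalLevel} and the orbit-space identifications of Proposition \ref{NormalActionFeatures}). The paper compresses all of this into the phrase ``by an inductive process it follows from Corollary \ref{OrbitDecomposition}''; the one point you spell out that the paper leaves tacit --- the simultaneous attachment of cells for the several critical points sharing the single critical value in $(q_j,q_{j+1})$, justified by the isolatedness of critical points and pairwise disjoint groupoid tubular neighborhoods from Theorem \ref{LieGroupoidMorseLemma} --- is a correct amplification of the same argument, not a different method.
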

Recall that the \textbf{Betti numbers} of $M/G$ are by definition $b_j=\dim H_j(M/G, \mathbb{R})$ and the \textbf{Poincaré polynomial} is given by
$$\mathcal{P}_\tau(M/G)=\sum b_j\tau^j.$$
\begin{definition}
A critical point $[x]$ of a stacky Morse function $F:[M/G]\to \mathbb{R}$ is said to be \textbf{orientable} if the action of $G_x$ on $\nu_-(\mathcal{O})_x$ from Proposition \ref{NormalActionFeatures} is orientation-preserving. The \textbf{Morse polynomial} of $F$ is defined as
$$\mathcal{M}_\tau(M/G)=\sum_{[x]\in \textnormal{Crit}(f)_{o-p}}\tau^{\dim \nu_-(\mathcal{O}_{x})_x/G_{x}},$$
where $\textnormal{Crit}(f)_{o-p}$ denotes the set of orientable critical points of $F$.
\end{definition}
We are now in conditions to state that:
\begin{theorem}\label{MorseInequalities}
There is a polynomial $\mathcal{R}_\tau(M/G)$ with non-negative integer coefficients such that
$$\mathcal{M}_\tau(M/G)=\mathcal{P}_\tau(M/G)+(1+\tau)\mathcal{R}_\tau(M/G).$$
In particular, if $\mathcal{M}_\tau(M/G)$ has no consecutive powers of $\tau$ then $\mathcal{M}_\tau(M/G)=\mathcal{P}_\tau(M/G)$.
\end{theorem}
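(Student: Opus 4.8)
The plan is to run the standard algebraic derivation of Morse-type inequalities from a filtration by sublevel sets, in the form used by Milnor and Bott and adapted to the orbifold case by Hepworth in \cite[Sec. 7.3]{H}. Since $M/G$ is compact and, by Corollary \ref{IsolatedProperty}, the critical points of $F$ are isolated, the values $q_0<\cdots<q_r$ give a finite filtration $X_j:=(M/G)^{q_j}$ with $X_0=\emptyset$ (choose $q_0$ below every critical value) and $X_r=M/G$. By Corollary \ref{OrbitCellDecomposition}, passing from $X_j$ to $X_{j+1}$ is, up to homotopy, the attaching of the orbit cells $D_-(\mathcal{O}_{x^j_l})/G_{x^j_l}$ along the spheres $\partial D_-(\mathcal{O}_{x^j_l})/G_{x^j_l}$, one for each critical point $[x^j_l]$ in $\overline{f}^{-1}(q_j,q_{j+1})$.

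The first substantial step is to compute the relative homology of each stage. By excision and the attaching description of Corollary \ref{OrbitCellDecomposition}, $H_k(X_{j+1},X_j;\mathbb{R})$ decomposes as the direct sum, over the critical points $[x^j_l]$, of the relative groups $H_k\bigl(D_-(\mathcal{O}_{x^j_l})/G_{x^j_l},\,\partial D_-(\mathcal{O}_{x^j_l})/G_{x^j_l};\mathbb{R}\bigr)$. I would evaluate each summand via the normal $G_x$-action of Proposition \ref{NormalActionFeatures} together with a transfer (averaging) argument with real coefficients: this identifies the relative homology of the quotient cell with the $G_x$-invariant part of $H_*(D_-,\partial D_-;\mathbb{R})$, so that the orientability hypothesis forces it to be $\mathbb{R}$ concentrated in the single degree equal to the index $\dim \nu_-(\mathcal{O}_x)/G_x$ for orientable $[x]$, and to vanish otherwise. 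Summing over $j$ then yields
\[
\sum_{j}\mathcal{P}_\tau(X_{j+1},X_j)=\mathcal{M}_\tau(M/G),
\]
where $\mathcal{P}_\tau(-,-)$ denotes the relative Poincar\'e polynomial.

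The second step is purely homological algebra. For any triple $X_j\subset X_{j+1}\subset X_{j+2}$, the long exact sequence of the triple gives, after comparing dimensions degree by degree,
\[
\mathcal{P}_\tau(X_{j+2},X_{j+1})+\mathcal{P}_\tau(X_{j+1},X_j)-\mathcal{P}_\tau(X_{j+2},X_j)=(1+\tau)\,Q^{(j)}_\tau,
\]
where $Q^{(j)}_\tau=\sum_k q^{(j)}_{k+1}\tau^k$ records the ranks of the connecting homomorphisms and hence has non-negative integer coefficients. Iterating this identity along the filtration, and using $X_0=\emptyset$ so that $\mathcal{P}_\tau(X_r,X_0)=\mathcal{P}_\tau(M/G)$, I obtain
\[
\mathcal{M}_\tau(M/G)=\sum_j\mathcal{P}_\tau(X_{j+1},X_j)=\mathcal{P}_\tau(M/G)+(1+\tau)\,\mathcal{R}_\tau(M/G),
\]
with $\mathcal{R}_\tau(M/G):=\sum_j Q^{(j)}_\tau$ of non-negative integer coefficients, which is the asserted relation. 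For the final clause, suppose $\mathcal{M}_\tau(M/G)$ has no two consecutive powers of $\tau$; if $\mathcal{R}_\tau\neq 0$, writing $m$ for its lowest nonvanishing degree, the product $(1+\tau)\mathcal{R}_\tau$ has strictly positive coefficients in degrees $m$ and $m+1$, which cannot be cancelled after adding the non-negative coefficients of $\mathcal{P}_\tau$, contradicting the hypothesis; hence $\mathcal{R}_\tau=0$ and $\mathcal{M}_\tau(M/G)=\mathcal{P}_\tau(M/G)$.

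I expect the main obstacle to be the relative-homology computation of the second paragraph, namely showing that for an orientable critical point the pair $\bigl(D_-(\mathcal{O}_x)/G_x,\partial D_-(\mathcal{O}_x)/G_x\bigr)$ carries homology concentrated in the single degree $\dim\nu_-(\mathcal{O}_x)/G_x$. For finite isotropy this is exactly Hepworth's transfer argument, but when $G_x$ is positive-dimensional the quotient disk is no longer a topological cell, so the transfer and the role of the orientation character of the normal representation must be handled with care; this is the genuine technical heart of the argument, everything else being the formal filtration bookkeeping.
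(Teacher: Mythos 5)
Your proposal is correct, and its geometric core coincides with the paper's proof: the same finite filtration $(M/G)^{q_0}\subset\cdots\subset(M/G)^{q_r}$ built from Corollaries \ref{IsolatedProperty} and \ref{OrbitCellDecomposition}, the same use of excision to split $H_*\bigl((M/G)^{q_{j+1}},(M/G)^{q_j};\mathbb{R}\bigr)$ into the cell-pair homologies $H_*\bigl(D_-(\mathcal{O}_x)/G_x,\partial D_-(\mathcal{O}_x)/G_x;\mathbb{R}\bigr)$, and the same evaluation of those pairs ($\mathbb{R}$ concentrated in degree $\dim\nu_-(\mathcal{O}_x)/G_x$ for orientable $[x]$, zero otherwise). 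Where you genuinely diverge is the algebraic finish. The paper works with the truncated alternating sums $S_j(X,Y)$ of Betti numbers, invokes their subadditivity to get $S_j(M/G,\emptyset)\le A_j-A_{j-1}+\cdots\pm A_0$, and then quotes \cite[Lem. 3.43]{BH3} for the equivalence of this family of inequalities with the stated polynomial identity; you instead extract the identity directly, via the long exact sequence of each triple, $\mathcal{P}_\tau(X_{j+2},X_{j+1})+\mathcal{P}_\tau(X_{j+1},X_j)-\mathcal{P}_\tau(X_{j+2},X_j)=(1+\tau)Q^{(j)}_\tau$ with $Q^{(j)}_\tau$ recording ranks of connecting maps, iterate along the filtration, and prove the lacunary clause by the lowest-degree argument. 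Both finishes are standard and mathematically equivalent bookkeeping; yours is more self-contained (no external lemma, and the final clause is proved rather than inherited), while the paper's is shorter on the page. Finally, the obstacle you flag in your last paragraph is real, and it is equally a gap in the paper's own proof: the concentration of $H_*\bigl(D_-(\mathcal{O}_x)/G_x,\partial D_-(\mathcal{O}_x)/G_x;\mathbb{R}\bigr)$ in the single degree $\dim\nu_-(\mathcal{O}_x)/G_x$ is simply asserted there, and the transfer argument of \cite{H} that it implicitly rests on is only valid for finite isotropy; when $\dim G_x>0$ the quotient is no longer a cell pair and this evaluation needs a separate justification (indeed the claimed degree $\dim\nu_-(\mathcal{O}_x)-\dim G_x$ presupposes the normal $G_x$-action is almost free), so your closing caveat identifies the weakest point shared by both arguments rather than a defect of yours alone.
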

\begin{proof}
The proof of this result is similar to \cite[Thm. 7.11]{H}. We shall sketch its main ideas here for the sake of completeness. It is well known that the function
\begin{equation}\label{eq:MorseInequality1}
S_j(X,Y)=\dim H_j(X,Y; \mathbb{R})-\dim H_{j-1}(X,Y; \mathbb{R})+\cdots\pm \dim H_0(X,Y; \mathbb{R}),
\end{equation}
is subadditive in the sense that if $Z\subset Y\subset X$ then we get $S_j(X,Z)\leq S_j(X,Y)+S_j(Y,Z)$. Thus, from Corollary \ref{OrbitCellDecomposition} it follows that 
$$S_j(M/G,\emptyset)=S_j((M/G)^{q_r},(M/G)^{q_0})\leq \sum_{v} S_j((M/G)^{q_v},(M/G)^{q_{v-1}}).$$
Using again Corollary \ref{OrbitCellDecomposition} and the excision theorem for relative homology we obtain
\begin{eqnarray*}
H_j((M/G)^{q_v},(M/G)^{q_{v-1}};\mathbb{R}) &=&H_j\left((M/G)^{q_{v-1}}\cup \bigcup_{l} D_-(\mathcal{O}_{x^v_{l}})_{x^v_{l}}/G_{x^v_{l}},(M/G)^{q_{v-1}};\mathbb{R} \right)\\
&=& \bigoplus_{l}H_j\left(D_-(\mathcal{O}_{x^v_{l}})_{x^v_{l}}/G_{x^v_{l}},\partial D_-(\mathcal{O}_{x^v_{l}})_{x^v_{l}}/G_{x^v_{l}};\mathbb{R} \right)
\end{eqnarray*}

where the homology $H_j\left(D_-(\mathcal{O}_{x^v_{l}})_{x^v_{l}}/G_{x^v_{l}},\partial D_-(\mathcal{O}_{x^v_{l}})_{x^v_{l}}/G_{x^v_{l}};\mathbb{R} \right)$ equals $\mathbb{R}$ if $[x^v_{l}]$ is orientable and $j= \dim \nu_-(\mathcal{O}_{x^v_{l}})_{x^v_{l}}/G_{x^v_{l}}$. Otherwise, it equals $0$. Hence, the sum \eqref{eq:MorseInequality1} becomes
\begin{equation}\label{eq:MorseInequality2}
S_j(M/G,\emptyset)\leq A_j-A_{j-1}+\cdots\pm A_0,
\end{equation}
where $A_l$ denotes the number of orientable critical points $[x]$ of $F$ with $l= \dim \nu_-(\mathcal{O}_{x})_x/G_{x}$. However, as consequence of \cite[Lem. 3.43]{BH3} it holds that Inequality \eqref{eq:MorseInequality2} is actually equivalent to the claim of the proposition so that the result follows.
\end{proof}
It is clear that the polynomial equality in the previous proposition only depends on the Morita equivalence class of $G$. Therefore, we have actually obtained Morse-like inequalities for the differentiable stack $[M/G]$.

\subsection{Stratified Morse theory}\label{S:7.2}
In this short subsection we establish a bridge between our approach to study Morse theory over the orbit space of a proper Lie groupoid and the approach provided by the well known stratified Morse theory in the sense of Goresky and MacPherson in \cite{GM}. Let $N$ be a smooth manifold and let $X\subseteq N$ be a subset with a Whitney stratification $\mathcal{S}$, visit \cite[s. 4.1]{CM}. For $\mathcal{S}_j,\mathcal{S}_k\in \mathcal{S}$ we write $\mathcal{S}_j\leq \mathcal{S}_k$ to denote the usual ordering given by $\mathcal{S}_j\subseteq \overline{\mathcal{S}_k}$. For each $x\in X$ we denote by $\mathcal{S}_x$ the stratum of $\mathcal{S}$ containing $x$. Fix a smooth function $\tilde{f}:N\to \mathbb{R}$ and set $f:=\tilde{f}|_{X}$. Such an $f$ is simply referred to as a smooth function on $X$. The \textbf{stratified critical point set} of $f$ is by definition $\textnormal{Crit}_\mathcal{S}(f):=\bigcup_j\textnormal{Crit}_{\mathcal{S}_j}(f)$ where $\textnormal{Crit}_{\mathcal{S}_j}(f)$ stands to denote the usual critical point set of $f$ on $\mathcal{S}_j$. For $\mathcal{S}_j$ we define the \textbf{conormal space to $\mathcal{S}_j$ in $N$} as the vector bundle $T^\ast_{\mathcal{S}_j}N$ over $\mathcal{S}_j$ whose fiber at $x\in \mathcal{S}_j$ consist of the covector in $T^\ast _xN$ which vanish on the tangent space to $\mathcal{S}_j$ at $x$. Note that $x\in \textnormal{Crit}_\mathcal{S}(f)$ if and only if $d\tilde{f}(x)\in T^\ast_{\mathcal{S}_x}N$. The set of \textbf{degenerate conormal covectors} to a stratum $\mathcal{S}_j$ is defined to be 
$$D^\ast_{\mathcal{S}_j}N:=T^\ast_{\mathcal{S}_j}N\cap \bigcup_{\mathcal{S}_j< \mathcal{S}_k} \overline{T^\ast_{\mathcal{S}_k}N}=\left( \bigcup_{\mathcal{S}_j< \mathcal{S}_k} \overline{T^\ast_{\mathcal{S}_k}N}\right)|_{\mathcal{S}_j}.$$
That is, the fiber $(D^\ast_{\mathcal{S}_x}N)_x$ consists of limits at $x$ of conormal covectors to larger strata or, equivalently, conormal covectors to $\mathcal{S}_x$ at $x$ which vanish on limiting tangent spaces from larger strata.
\begin{definition}\cite{GM}\label{StratifiedMorse}
A point $x\in X$ is a said to be a \textbf{nondegenerate critical point} of $f$ if and only if $x$ is a nondegenerate critical point of $f|_{\mathcal{S}_x}$ and $d\tilde{f}(x)\notin D^\ast_{\mathcal{S}_x}N$. Accordingly, the function $f$ is called a \textbf{stratified Morse function} if and only if all of the critical points of $f$ are nondegenerate.
\end{definition}

\begin{remark}
It is worth mentioning that the classical definition of stratified Morse function also assumes $f$ to be proper and to have distinct critical values. However, because of our purposes in this subsection such additional conditions will not be required.
\end{remark}

Let $G\rr M$ be a proper Lie groupoid. It is well known that the orbit space $M/G$ of $G$ has several natural Whitney stratifications, compare \cite{CM,PTW}. In particular, in \cite{CM} were given canonical stratifications for both $M$ and $M/G$ by Morita types which generalize the canonical stratifications induced by a proper Lie group action. Namely, the \textbf{Morita type equivalence} is the equivalence relation on $M$ given by $x\sim_{\mathcal{M}} y$ if and only if the normal representations $G_x\curvearrowright \nu_x(\mathcal{O}_x)$ and $G_y\curvearrowright \nu_y(\mathcal{O}_y)$ are isomorphic. The \textbf{partition by Morita types}, denoted by $\mathcal{P}_\mathcal{M}(M)$, is defined to be the resulting
partition. Each member of $\mathcal{P}_\mathcal{M}(M)$ is called a \textbf{Morita type}. If $[G_x, \nu_x(\mathcal{O}_x)]=\alpha$ denotes the isomorphism class of the normal representation $G_x\curvearrowright \nu_x(\mathcal{O}_x)$ then the element in $\mathcal{P}_\mathcal{M}(M)$ corresponding to $\alpha$ will be denoted by $M_{(\alpha)}=\lbrace x\in M: [G_x, \nu_x(\mathcal{O}_x)]=\alpha\rbrace$. We also denote by $M_{(x)}$ the Morita type of a point $x\in M$. It is simple to see that points in the same orbit belong to the same Morita type and hence we also obtain a partition by Morita types on the orbit space, $\mathcal{P}_\mathcal{M}(M/G)$. The orbit projection map $\pi: M\to M/G$ takes Morita types $M_{(\alpha)}$ in $M$ onto Morita types $X_{(\alpha)}=\pi(M_{(\alpha)})$ in $M/G$. Observe that by its very
definition, the Morita type of $\mathcal{O}\in M/G$ only depends on Morita invariant information. The \textbf{canonical Whitney stratification} on $M$, denoted by $\mathcal{S}_{G}(M)$, is the partition on $M$ obtained by passing to connected components of $\mathcal{P}_\mathcal{M}(M)$. The canonical
Whitney stratification on the orbit space $M/G$, denoted by $\mathcal{S}(M/G)$, is the partition on $M/G$ obtained
by passing to connected components of  $\mathcal{P}_\mathcal{M}(M/G)$, see \cite[s. 4.6]{CM}. As important features of these canonical Whitney stratifications we have that if our Lie groupoid has only one Morita type then the orbit space $M/G$ is a smooth manifold and the canonical projection $\pi : M \to M/G$ becomes a submersion whose fibers are the orbits. More importantly, let $G_{(\alpha)}\rr M_{(\alpha)}$ denote the groupoid $G_{(\alpha)}=s^{-1}(M_{(\alpha)})$ over a fixed Morita type $M_{(\alpha)}$. It follows that $G_{(\alpha)}\rr M_{(\alpha)}$ is a Lie groupoid, $X_{(\alpha)}$ is a smooth manifold and the  the canonical projection $\pi : M_{(\alpha)}  \to X_{(\alpha)}$ is a submersion. Furthermore, any Morita equivalence between two proper Lie groupoids induces an isomorphism of differentiable stratified spaces between their orbit spaces, visit \cite{CM}.

Suppose either that the orbit space $M/G$ is compact or has a finite number of Morita types. In these cases it follows that $M/G$ can be embedded into an affine space $\mathbb{R}^d$, compare \cite{FaSea} and \cite{CM}, respectively. We are now in conditions to establish a simple correspondence between our approach to study Morse theory over the orbit space $M/G$ and the approach provided by stratified Morse theory. Indeed:

\begin{proposition}\label{StratifiedComparison}
Let $G\rr M$ be a proper Lie groupoid having either compact orbit space $M/G$ or else a finite number of Morita types. There is a natural correspondence between Morse Lie groupoid morphisms on $G$ and stratified Morse functions on $M/G$ as those defined in Definition \ref{StratifiedMorse}.
\end{proposition}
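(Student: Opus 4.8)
The plan is to make the assignment $f\mapsto\overline f$ explicit and then match, one condition at a time, the two notions: the underlying functions, the critical sets, and finally nondegeneracy; throughout I reduce to a local model around an orbit and use the Morita invariance already established. A basic function $f\in C^{\infty}(M)^G$ descends to $\overline f:M/G\to\mathbb{R}$, constant on orbits. Under the compactness/finiteness hypothesis the orbit space embeds in some $\mathbb{R}^d$ (as recalled via \cite{FaSea,CM}), and since over each Morita type $X_{(\alpha)}$ the projection $\pi:M_{(\alpha)}\to X_{(\alpha)}$ is a submersion onto a smooth manifold \cite{CM}, each $\overline f|_{X_{(\alpha)}}$ is smooth. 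First I would choose a smooth extension $\tilde f$ of $\overline f$ to $\mathbb{R}^d$ (guaranteed by the embedding), which presents $\overline f$ as a smooth function on the stratified space $M/G$ in the sense of \cite{GM}; conversely every such function pulls back to a basic function on $M$. Since the Goresky--MacPherson nondegeneracy conditions depend only on $\overline f$ and not on the chosen extension, the assignment is well defined, and by the isomorphism of stratified spaces induced by any Morita equivalence together with Proposition \ref{MoritaInvariance} it is compatible with Morita equivalence; this is the sense in which the correspondence is \emph{natural}.

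Next I would carry out the local reduction. Near a critical orbit $\mathcal{O}$ through $x$ the slice theorem presents $G_U\rr U$ as $G_x\ltimes V\rr V$ with $V=\nu_x(\mathcal{O})$ and $G_x$ compact, and the Morse lemma (Theorem \ref{LieGroupoidMorseLemma}) lets me assume $f=c+Q$ with $Q$ the $G_x$-invariant fibre quadratic form $Q_f$. Decompose $V=V^{G_x}\oplus W$ into the $G_x$-fixed part and its invariant complement. Two facts then drive the argument. The Morita type through $x$ is locally $V^{G_x}$, so $T_{[x]}X_{(\alpha)}\cong\nu_x(\mathcal{O})^{G_x}=V^{G_x}$; and, for any basic function, $G_x$-invariance forces $df(x)$ to annihilate both $T_x\mathcal{O}$ and $W$, because a $G_x$-invariant covector at a $G_x$-fixed point lies in $(V^{G_x})^{\ast}$. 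Hence $x\in\textnormal{Crit}(f)$ iff $df(x)|_{V^{G_x}}=0$ iff $[x]$ is a critical point of $\overline f|_{X_{(\alpha)}}$, i.e. $[x]\in\textnormal{Crit}_{\mathcal S}(\overline f)$, so critical orbits correspond exactly to stratified critical points. Moreover $G_x$-invariance of $\mathcal H_x(f)$ annihilates the cross terms between the trivial isotype $V^{G_x}$ and $W$, so $\mathcal H_x(f)$ splits as $\mathcal H|_{V^{G_x}}\oplus\mathcal H|_{W}$; thus $\mathcal{O}$ is a nondegenerate critical orbit iff both $Q|_{V^{G_x}}$ and $Q|_{W}$ are nondegenerate, and the first summand is precisely nondegeneracy of $[x]$ as a critical point of $\overline f|_{X_{(\alpha)}}$.

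The hard part will be to identify the remaining summand with the conormal condition, namely that $Q|_{W}$ is nondegenerate if and only if $d\tilde f([x])\notin D^{\ast}_{\mathcal S_{[x]}}(\mathbb{R}^d)$. On the normal slice to the stratum, modelled by the cone $W/G_x$, the function $\overline f$ equals $c+\overline{Q|_{W}}$, which is \emph{linear} in the degree-two invariant coordinates used for the Hilbert embedding of $W/G_x$; the degenerate conormal space $D^{\ast}$ at the cone point is the cone swept out by limits of conormals to the larger orbit-type strata, cut out by the relations among these invariants. I would show that $d\tilde f([x])$, whose relevant components are the coefficients of $Q|_{W}$, avoids this conormal cone exactly when $Q|_{W}$ is nondegenerate. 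This is transparent in the model cases: for $G_x=\mathbb{Z}/2$ acting by $-1$ on $W=\mathbb{R}^2$ the invariants $(v_1^2,v_1v_2,v_2^2)$ embed $W/G_x$ as a quadric cone, the limiting conormals fill the quadric $\xi_2^2=4\xi_1\xi_3$, and $d\tilde f([x])=(b_{11},2b_{12},b_{22})$ lies off it iff $b_{11}b_{22}-b_{12}^2\neq 0$, i.e. iff $\det(Q|_{W})\neq 0$; the general case follows from the fact that the limiting conormals encode exactly the singular locus of the quadric defined by $Q|_{W}$. Translating the Goresky--MacPherson conormal nondegeneracy into nondegeneracy of the transverse Hessian of an invariant quadratic form is the technical heart of the proof, and once it is in place the remaining step is the bookkeeping that assembling these equivalences over all critical orbits makes $f$ a Morse Lie groupoid morphism precisely when $\overline f$ is a stratified Morse function.
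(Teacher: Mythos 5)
Your proposal follows the same route as the paper's own proof: descend $f$ to $\overline f$, use the embedding $M/G\hookrightarrow\mathbb{R}^d$ and a smooth extension $\tilde f$, compare the two notions in the linear local model $G_x\ltimes \nu_x(\mathcal{O})$ coming from the slice theorem and Theorem \ref{LieGroupoidMorseLemma}, and obtain the converse by pulling back (the paper produces the basic function by averaging against a proper Haar system, which is the same step made precise). The difference is one of depth, in your favor. Where the paper handles condition (a) of Definition \ref{StratifiedMorse} in one sentence, your splitting $V=V^{G_x}\oplus W$ with vanishing cross terms, together with the identification $T_{[x]}X_{(\alpha)}\cong V^{G_x}$, turns that sentence into an argument; and your observation that critical orbits of $f$ coincide with stratified critical points of $\overline f$ (an invariant covector at a fixed point lies in $(V^{G_x})^\ast$) is needed for the statement to be a genuine correspondence, but is left implicit in the paper. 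Most importantly, the paper dismisses the conormal condition (b) with the remark that it follows because a basic $f$ ``preserves the stratification'', which is not an argument; you correctly isolate this as the technical heart.

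That heart is also where your genuine gap lies. The equivalence ``$Q|_W$ nondegenerate $\iff d\tilde f([x])\notin D^{\ast}_{\mathcal S_{[x]}}\mathbb{R}^d$'' is verified only for $\mathbb{Z}/2$ acting on $\mathbb{R}^2$, and the sentence offered for the general case (``the limiting conormals encode exactly the singular locus of the quadric defined by $Q|_W$'') cannot be right as stated: $D^{\ast}$ depends only on the stratification of $W/G_x$, not on $Q$. The two implications need different arguments. If $Q|_W$ is degenerate, a vector $w_0$ in the kernel of its bilinear form gives a curve $tw_0$ of critical points of $f$ lying in larger strata (the cross-term vanishing makes $w_0$ a kernel vector of the full normal Hessian); since $\tilde f\circ\pi=f$ near $\mathcal{O}$, the covectors $d\tilde f([tw_0])$ are conormal to those strata and converge to $d\tilde f([x])$, so the conormal condition fails. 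The converse--which is what the forward half of the correspondence actually requires--must exclude limits of conormals along \emph{arbitrary} sequences, not only the radial ones visible in your model. It is true, but the proof needs: that the generating invariants have degree $\geq 2$, with $\tilde f$ affine-linear in the degree-two ones with coefficients given by $Q|_W$; the identity $T_wW_{(\beta)}=\mathfrak{g}\cdot w+W^{G_w}$ for orbit-type strata; upper semicontinuity of isotropy; and the fact that invariance forces the cross terms of $Q$ between $W^{H}$ and its complement to vanish for \emph{every} isotropy subgroup $H\subseteq G_x$, so nondegeneracy of $Q|_W$ passes to every restriction $Q|_{W^{H}}$. With these, any limiting conormal killing $d\tilde f([x])$ produces a unit vector $w_\infty$ with $B_{Q}(w_\infty,\cdot)|_{W^{G_{w_\infty}}}=0$, a contradiction. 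To be fair, the paper supplies no argument at all at this point, so your proposal is, if anything, the more complete of the two; but as written it asserts the decisive equivalence rather than proving it.
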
 
\begin{proof}
Let $F:G\to \mathbb{R}$ be a Morse Lie groupoid morphism induced by a basic function $f:M\to \mathbb{R}$ and let $\overline{f}:M/G\to \mathbb{R}$ denote the corresponding function over the orbit space $M/G$. By using the embedding $M/G\hookrightarrow \mathbb{R}^{d}$ we can think of $\overline{f}$ as a smooth function on $M/G$ (see \cite[p. 827]{CM}), so that we may use standard partitions of unity on $\mathbb{R}^d$ to construct a smooth function $\tilde{f}:\mathbb{R}^d\to \mathbb{R}$ such that $\tilde{f}|_{M/G}=\overline{f}$. Every critical point $[x]$ of $\overline{f}$ in $M/G$ belongs to some stratum $X_{(\alpha)}$ and we have the relation $\overline{f}|_{X_{(\alpha)}}=f|_{M_{(\alpha)}}\circ \pi$. This automatically guarantees that $[x]$ is a stratified nondegenerate critical point of $\overline{f}$ since $f$ is Morse--Bott with nondegenerate critical orbits, $\pi|_{M_{(\alpha)}}$ is a surjective submersion, and the normal Hessian of $f$ is invariant under the normal representation. The condition $d\tilde{f}(x)\notin D^\ast_{X_{(\alpha)}}\mathbb{R}^d$ follows from the fact that $f:M\to \mathbb{R}$ is basic which implies it preserves the stratification $\mathcal{S}(M/G)$. Conversely, if $\overline{f}:M/G\to \mathbb{R}$ is a stratified Morse function induced by $\tilde{f}:\mathbb{R}^d\to \mathbb{R}$ then by taking average with respect to some proper Haar measure system over $G$ as applied in \cite[s. 3]{CM} we can construct a basic smooth function $f:M\to \mathbb{R}$ such that $\overline{f}=f\circ \pi$. So, the results follow by arguing similarly as above.
\end{proof}

The following observations are some of the advantages that our approach to study Morse theory over $M/G$ have.
\begin{remark}
Firstly, our focus aims at extending classical Morse theory to the context of differentiable stacks, so that its study passing through Lie groupoids is more suitable for our purposes. Also, the latter provides a cleaner way to extract several fundamental results of Morse theory without assuming properness, compactness or finiteness on the Morita types equivalences, which are requirements that seems to be necessary in order to guarantee that the orbit space admits some kind of embedding into an affine space. Evidences of this fact are mentioned in Remark \ref{MorseLemmaOtherWay}, Subsection \ref{S:4.2}, Proposition \ref{NormalActionFeatures}. Even though sometimes we further assume properness for the Lie groupoids we are working with, results as those provided in Proposition \ref{NonCriticalLevel} and Theorem \ref{CriticalLevel} in the groupoid category are clearly natural and easier to obtain by following the procedure we propose. Secondly, as we will see in next sections, our approach also provides a way to recover some algebraic/topological constructions underlying the Lie groupoid structure in a natural way. For instance, the total cohomology of the Bott-Shulman-Stasheff double complex as well as its $2$-equivariant version can be obtained by using our techniques. The latter construction can be applied to compute the equivariant cohomology of certain toric symplectic stacks, see Example \ref{EToricStack}. Thirdly, our approach seems to be applicable to satisfactorily extend other topological and geometrical constructions derived from classical Morse theory to more general contexts, e.g. Novikov type inequalities for certain separated differentiable stacks as done in \cite{Va}. 
\end{remark}

In \cite{MT} it was shown that proper Lie groupoids are real analytic. Hence, after adapting the results obtained in \cite{CM} to the real analytic world we may get as an application of Proposition \ref{StratifiedComparison} together with the Pignoni's density result proved in \cite{Pi} that:
\begin{corollary}
Under the same hypothesis of Proposition \ref{StratifiedComparison} it follows that the set of smooth functions $\mathbb{R}^d\to \mathbb{R}$ which restrict to stratified Morse functions on $M/G$ and induce Morse Lie groupoid morphisms on $G$ form an open and dense subset with respect to the strong topology on $C^\infty(\mathbb{R}^d)$.
\end{corollary}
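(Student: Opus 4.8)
The plan is to reduce the statement to a single density-and-stability assertion about stratified Morse functions on the orbit space, and then to invoke Pignoni's theorem \cite{Pi}. First I would observe that, thanks to Proposition \ref{StratifiedComparison} and its proof, the two conditions appearing in the statement actually cut out the \emph{same} subset of $C^\infty(\mathbb{R}^d)$. Indeed, the embedding $M/G\hookrightarrow \mathbb{R}^d$ supplied by \cite{CM,FaSea} (available precisely because $M/G$ is compact or has finitely many Morita types) is built from $G$-invariant smooth functions, so that the composite $\pi_d:=(M/G\hookrightarrow \mathbb{R}^d)\circ \pi:M\to \mathbb{R}^d$ is smooth and $G$-invariant. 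Hence every $\tilde f\in C^\infty(\mathbb{R}^d)$ produces a basic function $f:=\tilde f\circ \pi_d\in C^\infty(M)^G$, and therefore a Lie groupoid morphism $F:G\to \mathbb{R}$ with underlying continuous function $\overline f=\tilde f|_{M/G}$. The proof of Proposition \ref{StratifiedComparison} shows that $F$ is a Morse Lie groupoid morphism if and only if $\overline f$ is a stratified Morse function in the sense of Definition \ref{StratifiedMorse}. Consequently the set in the statement equals
$$\mathcal{M}:=\{\tilde f\in C^\infty(\mathbb{R}^d):\ \tilde f|_{M/G}\ \text{is a stratified Morse function}\},$$
so it suffices to prove that $\mathcal{M}$ is open and dense in the strong topology.

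Next I would arrange the hypotheses under which the density and stability results of \cite{Pi} can be applied. The stratification of $M/G$ by connected components of Morita types constructed in \cite{CM} is a priori only a smooth Whitney stratification, whereas Pignoni's analysis is most naturally phrased in the (sub)analytic category. Here I would use \cite{MT}, which asserts that a proper Lie groupoid is real analytic; re-running the constructions of \cite{CM} in the real analytic setting then upgrades the Morita-type stratification of $M/G$ to a subanalytic Whitney stratification and the embedding $M/G\hookrightarrow \mathbb{R}^d$ to a subanalytic one with locally finitely many strata. In the compact case $M/G$ is thereby a compact subanalytic Whitney stratified subset of $\mathbb{R}^d$, while in the finite-Morita-types case it is a subanalytic set with finitely many strata; both are exactly the situations covered by Pignoni's framework.

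Finally I would quote \cite{Pi}: for such a subanalytic Whitney stratified subset $X\subseteq \mathbb{R}^d$, the functions $\tilde f\in C^\infty(\mathbb{R}^d)$ whose restriction to $X$ is stratified Morse form a dense subset, and they form an open subset once $X$ is compact or the restriction is proper (so that stratified Morse functions are stable). Applying this to $X=M/G$ shows that $\mathcal{M}$ is open and dense, which by the identification of the first paragraph proves the corollary. The main obstacle I anticipate is the second step: faithfully transporting the smooth Morita-type stratification and the invariant embedding of \cite{CM} into the real analytic/subanalytic world via \cite{MT}, and in particular establishing \emph{openness} in the strong topology in the non-compact finite-Morita-types case, where the nondegeneracy condition $d\tilde f(x)\notin D^\ast_{X_{(\alpha)}}\mathbb{R}^d$ must be controlled uniformly over a noncompact stratified set rather than merely stratum by stratum.
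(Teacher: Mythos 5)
Your proposal follows essentially the same route as the paper: the paper derives this corollary precisely by combining Proposition \ref{StratifiedComparison} with the real-analyticity of proper Lie groupoids from \cite{MT} (to adapt the Morita-type stratification of \cite{CM} to the analytic setting) and then invoking Pignoni's density result \cite{Pi}. Your write-up is a more detailed elaboration of that same argument, including a sensible flag of the openness issue in the non-compact finite-Morita-types case, which the paper itself leaves implicit.
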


 
 \section{Morse--Smale dynamics}\label{S:8}

 Let us now start by adapting some notions of the Morse--Smale dynamics to the Lie groupoid setting. Our goal here is to define the stable and unstable Lie groupoids of a Morse Lie groupoid morphism as well as to study some of their elementary properties. For more details regarding the classical notions see \cite{AB,BH2,Ni} and the references therein. Let $F:(G\rightrightarrows M)\to (\mathbb{R}\rightrightarrows \mathbb{R})$ be a Morse Lie groupoid morphism induced by a smooth basic function $f:M\to \mathbb{R}$. Throughout this section, apart from assuming that $G\rightrightarrows M$ is proper, we will also assume that one of either $G$ or $M$ is compact. This automatically implies that the other one is compact as well. Under these assumptions it is clear that the multiplicative gradient vector field $\nabla F$ is given by a pair of complete vector fields.

 \begin{remark}\label{CompactRelaxed}
 	 
 As consequence of Theorems 4.15 and 5.12 in \cite{H2} together with the Dictionary Lemmas from \cite[s. 2.6]{BX}, it follows that the compactness assumption for either $M$ or $G$ may be relaxed by requiring only that $M/G$ is compact. This is enough to have globally defined gradient flows since $\nabla F$ is a multiplicative vector field. It is worth mentioning that from Lemma 4.4 in \cite{CMS} it follows that assuming the completeness of $\nabla f$ would be also sufficient to have that $\nabla F$ is complete, again because it is multiplicative.
 \end{remark}
 
  Let $\widetilde{\Phi_{\tau}}: G\to G$ and $\Phi_{\tau}:M\to M$ denote the flow of the vector fields $-\nabla(s^\ast f)$ and $-\nabla f$, respectively (descending flows). If $x\in \textnormal{Crit}(f)$, then the \textbf{stable manifold} $W^{s}(x)$ and the \textbf{unstable manifold} $W^{u}(x)$ of $f$ at $x$ are respectively defined as
 
 \begin{equation}\label{eq:stableunstable}
 W^{s}(x)=\lbrace y\in M: \lim_{\tau\to \infty}\Phi_{\tau}(y)=x\rbrace\qquad\textnormal{and}\qquad W^{u}(x)=\lbrace y\in M: \lim_{\tau\to -\infty}\Phi_{\tau}(y)=x\rbrace.
 \end{equation}

 The stable and unstable manifolds of a critical arrow $g\in\textnormal{Crit}(F)$ are similarly defined by using the descending flow $\widetilde{\Phi_{\tau}}$. Let $S_\lambda \subseteq \textnormal{Crit}(f)$ denote the set formed by the orbits in $M$ with same index $\lambda$. We may assume that $S_{\lambda}$ consists of orbits with the same dimension, otherwise we split $S_{\lambda}$ into components consisting of orbits with the same dimension. Hence, we may assume that $S_{\lambda}$ is a manifold which, being saturated, yields a well-defined Lie groupoid $G_{S_\lambda}\rightrightarrows S_\lambda$ defined by the restriction of $G\rr M$ to $S_{\lambda}$. It is important to observe that $S_\lambda$ is a non-degenerate critical submanifold for $f$ of index $\lambda$ and as consequence of what we did before we may conclude that $G_{S_\lambda}$ is also a non-degenerate critical submanifold for $F$ with same index $\lambda$. Thus, as we have that every point in $S_\lambda$ is a critical point for $f$ we define the stable and unstable submanifolds of $S_\lambda$ as the respective disjoint unions
 $$W^{s}(S_\lambda)=\bigcup_{x\in S_\lambda}W^{s}(x)\qquad\textnormal{and}\qquad W^{u}(S_\lambda)=\bigcup_{x\in S_\lambda}W^{u}(x).$$
 The stable and unstable submanifolds $W^{s}(G_{S_\lambda})$ are $W^{u}(G_{S_\lambda})$ are defined in the same way since every arrow in $G_{S_\lambda}$ is a critical arrow for $F$. As a consequence of the multiplicativity of $-\nabla F=(-\nabla(s^\ast f),-\nabla f)$ we obtain the following expected result.
 \begin{lemma}\label{Stable/UnstableGroupoids}
 	The Lie groupoid structure of $G\rightrightarrows M$ can be naturally restricted  to define two Lie groupoids $W^{s}(G_{S_\lambda})\rightrightarrows W^{s}(S_\lambda)$ and $W^{u}(G_{S_\lambda})\rightrightarrows W^{u}(S_\lambda)$.
 \end{lemma}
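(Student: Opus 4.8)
The plan is to deduce the statement from the fact, recorded in Remark \ref{Multiplicative-Gradient-Remark}, that the multiplicativity of $\nabla F$ (Proposition \ref{Multiplicative-Gradient}) makes the pair of descending flows $(\widetilde{\Phi_\tau},\Phi_\tau)$ into a one-parameter family of Lie groupoid automorphisms of $G\rr M$. Concretely, for every $\tau$ one has $s\circ\widetilde{\Phi_\tau}=\Phi_\tau\circ s$, $t\circ\widetilde{\Phi_\tau}=\Phi_\tau\circ t$ and $\widetilde{\Phi_\tau}\circ m=m\circ(\widetilde{\Phi_\tau}\times\widetilde{\Phi_\tau})$, together with the analogous identities for $u$ and $i$. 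In particular each $\Phi_\tau$ carries $G$-orbits to $G$-orbits.

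First I would work at the level of objects. Since $S_\lambda\subset M$ is a non-degenerate critical submanifold for $f$, the classical Morse--Bott stable/unstable manifold theorem guarantees that $W^s(S_\lambda)$ and $W^u(S_\lambda)$ are injectively immersed submanifolds of $M$, diffeomorphic to the total spaces of the corresponding positive/negative normal bundles over $S_\lambda$ and fibered over $S_\lambda$ by the limit map $e_\infty\colon y\mapsto\lim_{\tau\to+\infty}\Phi_\tau(y)$ (resp. $\tau\to-\infty$). The crucial point is that these submanifolds are \emph{saturated}, i.e.\ unions of $G$-orbits. Locally around $S_\lambda$ this is exactly the content of Lemmas \ref{NormalInvariance} and \ref{NegativeNormalLG}: the splitting $\nu(\mathcal{O})=\nu_+(\mathcal{O})\oplus\nu_-(\mathcal{O})$ is invariant under the normal representation \eqref{eq:normalrep}, so the local stable/unstable sets, identified with $\nu_\pm$ via the Morse lemma, are saturated; globally one propagates this using that each orbit-preserving diffeomorphism $\Phi_\tau$ maps the local stable set into $W^s(S_\lambda)$.

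Granting saturation, I would then identify the arrow-level sets with groupoid restrictions, namely $W^s(G_{S_\lambda})=s^{-1}(W^s(S_\lambda))=t^{-1}(W^s(S_\lambda))$, and likewise for the unstable case. The inclusion ``$\subseteq$'' is immediate from $s\circ\widetilde{\Phi_\tau}=\Phi_\tau\circ s$ and continuity of $s$: if $\widetilde{\Phi_\tau}(g)\to g_0\in G_{S_\lambda}$ then $\Phi_\tau(s(g))=s(\widetilde{\Phi_\tau}(g))\to s(g_0)\in S_\lambda$. For the reverse inclusion one uses that $s(g)$ and $t(g)$ lie on a common orbit, so by saturation both flow into $S_\lambda$; properness of $(s,t)\colon G\to M\times M$ then confines the trajectory $\widetilde{\Phi_\tau}(g)$ to a compact set, and convergence of gradient trajectories forces $\widetilde{\Phi_\tau}(g)$ itself to converge to an arrow of $G_{S_\lambda}$. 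With this identification the conclusion follows at once: $W^s(S_\lambda)$ and $W^u(S_\lambda)$ are saturated submanifolds, so the general restriction principle for saturated submanifolds recalled in the Preliminaries yields the Lie subgroupoids $W^s(G_{S_\lambda})=G_{W^s(S_\lambda)}\rr W^s(S_\lambda)$ and $W^u(G_{S_\lambda})=G_{W^u(S_\lambda)}\rr W^u(S_\lambda)$, with $s$ and $t$ restricting to surjective submersions. The unstable case is identical after replacing $\tau\to+\infty$ by $\tau\to-\infty$.

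The main obstacle I anticipate is precisely the saturation step together with the identification $W^s(G_{S_\lambda})=G_{W^s(S_\lambda)}$: showing that whenever one endpoint of an arrow flows into $S_\lambda$ the whole arrow converges into $G_{S_\lambda}$ requires both the properness hypothesis and the convergence of gradient trajectories of a Morse--Bott function, rather than following formally from the automorphism identities. Alternatively, one could bypass the restriction principle and verify the subgroupoid axioms directly---that $s,t$ land in $W^s(S_\lambda)$, that units lift, that $i$ preserves $W^s(G_{S_\lambda})$, and that the composite of composable arrows in $W^s(G_{S_\lambda})$ again lies in $W^s(G_{S_\lambda})$---by passing to the limit $\tau\to\pm\infty$ in the structural identities above and invoking continuity of $m,u,i$ and the fact that $G_{S_\lambda}$ is already a subgroupoid; the smooth (immersed) structure is then supplied by the Morse--Bott stable manifold theorem as before.
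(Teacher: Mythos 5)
Your proposal is correct in substance, but your primary route is genuinely different from the paper's. The paper proves Lemma \ref{Stable/UnstableGroupoids} by the direct, element-wise verification that you only mention in passing as an ``alternative'' at the end: using the automorphism identities $s\circ\widetilde{\Phi_\tau}=\Phi_\tau\circ s$, $t\circ\widetilde{\Phi_\tau}=\Phi_\tau\circ t$, $\widetilde{\Phi_\tau}\circ m=m\circ(\widetilde{\Phi_\tau}\times\widetilde{\Phi_\tau})$, $\widetilde{\Phi_\tau}\circ i=i\circ\widetilde{\Phi_\tau}$, $\widetilde{\Phi_\tau}\circ u=u\circ\Phi_\tau$ together with continuity of the structural maps, it passes the limits $\tau\to\pm\infty$ through each structural map: if $h\in W^s(g)$ then $s(h)\in W^s(s(g))$, composable arrows have composable limits so $h_1h_2\in W^s(g_1g_2)$, and units and inverses are handled by the fact that criticality is preserved under $u$ and $i$. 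Note that this only requires the \emph{forward} containments, never the identification $W^s(G_{S_\lambda})=s^{-1}(W^s(S_\lambda))$ nor saturation of $W^s(S_\lambda)$, so it avoids entirely the step you correctly single out as the crux of your approach: upgrading convergence of one endpoint of an arrow to convergence of the whole arrow, which does require the standing properness/compactness hypotheses of this section plus convergence of Morse--Bott gradient trajectories. Your route buys a strictly stronger structural statement --- the stable/unstable groupoids are full restrictions of $G\rr M$ over saturated (immersed) submanifolds, so the groupoid structure comes for free from the restriction principle --- but at the cost of that compactness argument, and with one technical mismatch you should address: the restriction principle recalled in the Preliminaries is stated for saturated \emph{embedded} submanifolds, whereas $W^s(S_\lambda)$ and $W^u(S_\lambda)$ are in general only injectively immersed, so the restriction of the smooth structure needs the stable manifold theorem rather than a citation of that principle verbatim. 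Also, your sketch of saturation via local models and Lemmas \ref{NormalInvariance} and \ref{NegativeNormalLG} is the weakest link (the local stable set is only tangent to, not equal to, $\nu_+(\mathcal{O})$ in the Morse chart); the honest argument is the one you give for the reverse inclusion, applied to an arrow $g\colon y\to y'$: compactness of $G$ and Morse--Bott convergence force $\widetilde{\Phi_\tau}(g)$ to (sub)converge to a critical arrow over $S_\lambda$, whence $y'\in W^s(S_\lambda)$.
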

 
 \begin{proof}

 	We will only show why the Lie groupoid structure may be well restricted for defining $W^{s}(G_{S_\lambda})\rightrightarrows W^{s}(S_\lambda)$ since the other case follows analogously. This is carried out in the following steps. First, if $h\in W^{s}(G_{S_\lambda})$, then $h\in W^{s}(g)$ for some critical arrow $g\in G_{S_\lambda}$. Thus, the identity $s\circ \widetilde{\Phi_{\tau}}=\Phi_{\tau}\circ s$ implies that
 	$$\lim_{\tau\to \infty}\Phi_{\tau}(s(h))=\lim_{\tau\to \infty}s(\widetilde{\Phi_{\tau}}(h))=s\left(\lim_{\tau\to \infty}\widetilde{\Phi_{\tau}}(h) \right)=s(g).$$
 	This means that $s(h)\in W^{s}(s(g))\subset W^{s}(S_\lambda)$ and therefore $s: W^{s}(G_{S_\lambda})\to W^{s}(S_\lambda)$ is well restricted. As for each critical arrow $g\in G_{S_\lambda}$ we have that $t(g)\in S_\lambda$ is a critical point, then by arguing in the exactly same way with the identity  $t\circ \widetilde{\Phi_{\tau}}=\Phi_{\tau}\circ t$ we get that $t: W^{s}(G_{S_\lambda})\to W^{s}(S_\lambda)$ is well restricted. Now, let us consider the fibered product space $(W^{s}(G_{S_\lambda}))^{(2)}$ defined through $s$ and $t$. If $(h_1,h_2)\in (W^{s}(G_{S_\lambda}))^{(2)}$ then there exists a pair of critical arrows $(g_1,g_2)\in G_{S_\lambda}\times G_{S_\lambda}$ such that $h_1\in W^{s}(g_1)$ and $h_2\in W^{s}(g_2)$. On the one hand, observe that $(g_1,g_2)\in G_{S_\lambda}^{(2)}$. Indeed, since $s(h_1)=t(h_2)$ and $(\widetilde{\Phi_{\tau}},\Phi_{\tau})$ is a Lie groupoid morphism we obtain
 	$$s(g_1)=s\left(\lim_{\tau\to \infty}\widetilde{\Phi_{\tau}}(h_1)\right)=\lim_{\tau\to \infty}\Phi_{\tau}(s(h_1))=\lim_{{\tau}\to \infty}\Phi_{\tau}(t(h_2))=t\left(\lim_{\tau\to \infty}\widetilde{\Phi_{\tau}}(h_2)\right)=t(g_2).$$
	
 	On the other hand, from the identity $\widetilde{\Phi_{\tau}}\circ m=m\circ(\widetilde{\Phi_{\tau}}\times \widetilde{\Phi_{\tau}})$ we get that
 	$$\lim_{\tau\to \infty}\widetilde{\Phi_{\tau}}(h_1h_1)=\lim_{\tau\to \infty}m(\widetilde{\Phi_{\tau}}(h_1),\widetilde{\Phi_{\tau}}(h_2))=m\left(\lim_{\tau\to \infty}\widetilde{\Phi_{\tau}}(h_1),\lim_{\tau\to \infty}\widetilde{\Phi_{\tau}}(h_2)\right)=g_1g_2.$$
	
 	Given that we know that the composition of two composable critical arrows produces again a critical arrow we have that $h_1h_2\in W^{s}(g_1g_2)\subset W^{s}(G_{S_\lambda})$. Hence, the composition map $m:(W^{s}(G_{S_\lambda}))^{(2)}\to W^{s}(G_{S_\lambda})$ is also well restricted. Finally, let us now consider the inversion map $i:G\to G$ and the unit map $u:M\to G$ of the Lie groupoid $G\rightrightarrows M$. As  $(\widetilde{\Phi_{\tau}},\Phi_{\tau})$ is a Lie groupoid morphism we have that $\widetilde{\Phi_{\tau}}\circ i=i\circ \widetilde{\Phi_{\tau}}$ and $\widetilde{\Phi_{\tau}}\circ u=u\circ \Phi_{\tau}$. Recall that if $g$ is a critical arrow, then $g^{-1}$ is also a critical arrow and if $x$ is a critical point, then $1_x$ is a critical arrow. So, by arguing as in the previous items we obtain that $i:W^{s}(G_{S_\lambda})\to W^{s}(G_{S_\lambda})$ and $u:W^{s}(S_\lambda)\to W^{s}(G_{S_\lambda})$ are well restricted as well.

 \end{proof}
 The Lie groupoids introduced in Lemma \ref{Stable/UnstableGroupoids} will be respectively called \textbf{stable} and \textbf{unstable Lie groupoids} associated to the critical submanifold $S_\lambda$. Consider now the smooth \textbf{endpoint maps} $l_0: W^{s}(S_\lambda)\to S_\lambda$ and $u_0: W^{u}(S_\lambda)\to S_\lambda$ which are respectively defined by
 
 \begin{equation}\label{eq:endpointmaps}
 l_0(x)=\lim_{\tau\to \infty}\Phi_{\tau}(x)\qquad\textnormal{and}\qquad u_0(x)=\lim_{\tau\to -\infty}\Phi_{\tau}(x).
 \end{equation}

 It was shown in \cite[Prop. 3,2]{AB} that the endpoint maps \eqref{eq:endpointmaps} are smooth locally trivial fibrations. In order to state the groupoid analogue of this property, we need the following definition.
 
 \begin{definition}\cite{mackbook}
 A \textbf{Lie groupoid fibration} is a Lie groupoid morphism $\phi^1:G\to G'$ covering a surjective submersion $\phi^0:M\to N$ with the property that

\begin{equation}\label{eq:gpdfibration}
\hat{\phi}:G\to G'\times_NM; g\mapsto (\phi^1(g),s(g)),
\end{equation}
is a surjective submersion. 


 \end{definition}

At the level of arrows, we can similarly define smooth endpoint maps  $l_1: W^{s}(G_{S_\lambda})\to G_{S_\lambda}$ and $u_1: W^{u}(G_{S_\lambda})\to G_{S_\lambda}$ by using the descending flow $\widetilde{\Phi_{\tau}}$. Recall that we have the equality of indexes $\lambda(G_{S_\lambda},F)=\lambda(S_\lambda,f)=\lambda$. Thus, as consequence of \cite[Prop. 3,2]{AB} we obtain that $u_1$ and $l_1$ are smooth fiber bundles with fibers diffeomorphic to the disks $D^\lambda$ and $D^{k-\lambda}$, respectively. Here $k=\textnormal{codim}(G_{S_\lambda})=\textnormal{codim}(S_\lambda)$ seen as submanifolds of $G$ and $M$, respectively. So, motivated by this and Lemma \ref{Stable/UnstableGroupoids} one has the following result.

 \begin{proposition}\label{prop:multiplicativeendpoint}
 The endpoint maps $l=(l_1,l_0):(W^{s}(G_{S_\lambda})\rightrightarrows W^{s}(S_\lambda))\to (G_{S_\lambda}\rightrightarrows S_\lambda)$ and  $u=(u_1,u_0):(W^{u}(G_{S_\lambda})\rightrightarrows W^{u}(S_\lambda))\to (G_{S_\lambda}\rightrightarrows S_\lambda)$ are Lie groupoid fibrations.
 \end{proposition}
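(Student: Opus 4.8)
The plan is to verify the two conditions defining a Lie groupoid fibration for $l=(l_1,l_0)$, the argument for $u=(u_1,u_0)$ being verbatim the same with the ascending flow. First I would check that $l$ is a Lie groupoid morphism. Exactly as in the proof of Lemma \ref{Stable/UnstableGroupoids}, this follows from the fact that $(\widetilde{\Phi_\tau},\Phi_\tau)$ is a Lie groupoid automorphism for every $\tau$: the identities $s\circ\widetilde{\Phi_\tau}=\Phi_\tau\circ s$, $t\circ\widetilde{\Phi_\tau}=\Phi_\tau\circ t$ and $\widetilde{\Phi_\tau}\circ m=m\circ(\widetilde{\Phi_\tau}\times\widetilde{\Phi_\tau})$, together with the continuity of $s,t,m$, allow one to pass to the limit $\tau\to\infty$ in \eqref{eq:endpointmaps} and obtain $s\circ l_1=l_0\circ s$, $t\circ l_1=l_0\circ t$ and $l_1\circ m=m\circ(l_1\times l_1)$. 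That $l_0$ is a surjective submersion is precisely the content of \cite[Prop. 3.2]{AB}.

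It then remains to prove that
\[
\hat{l}\colon W^{s}(G_{S_\lambda})\longrightarrow G_{S_\lambda}\times_{S_\lambda}W^{s}(S_\lambda),\qquad h\longmapsto(l_1(h),s(h)),
\]
is a surjective submersion, where the fibred product is formed along $s\colon G_{S_\lambda}\to S_\lambda$ and $l_0\colon W^{s}(S_\lambda)\to S_\lambda$; the relation $s\circ l_1=l_0\circ s$ guarantees that $\hat{l}$ takes values in it. The key observation I would exploit is that $\hat{l}$ is a morphism of fibre bundles over $G_{S_\lambda}$: writing $\mathrm{pr}\colon G_{S_\lambda}\times_{S_\lambda}W^{s}(S_\lambda)\to G_{S_\lambda}$ for the first projection (a surjective submersion, since $l_0$ is one), one has $\mathrm{pr}\circ\hat{l}=l_1$, and by \cite[Prop. 3.2]{AB} the map $l_1$ is a locally trivial fibration with fibre the disk $D^{k-\lambda}$, where $k=\mathrm{codim}(G_{S_\lambda})=\mathrm{codim}(S_\lambda)$. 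Over a critical arrow $g\in G_{S_\lambda}$ the fibre $l_1^{-1}(g)$ is the stable disk $W^{s}(g)$, and $\hat{l}$ restricts there to the source map $s\colon W^{s}(g)\to W^{s}(s(g))$. Hence $\hat{l}$ will be a surjective submersion — in fact a diffeomorphism, by the dimension count $\dim W^{s}(G_{S_\lambda})=\dim G_{S_\lambda}+(k-\lambda)=\dim\bigl(G_{S_\lambda}\times_{S_\lambda}W^{s}(S_\lambda)\bigr)$ — as soon as this fibrewise source map is a diffeomorphism for every $g$.

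The heart of the matter, and the only nontrivial point, is therefore the fibrewise claim that $s\colon W^{s}(g)\to W^{s}(s(g))$ is a diffeomorphism; this is the step I expect to be the main obstacle. I would establish it with the stable manifold theorem applied to the two descending flows. Since $g$ and $s(g)$ are fixed points of $\widetilde{\Phi_\tau}$ and $\Phi_\tau$, the stable disks $W^{s}(g)$ and $W^{s}(s(g))$ are parametrized, via these flows, by the positive normal fibres $\nu_+(G_{S_\lambda})_g$ and $\nu_+(S_\lambda)_{s(g)}$, on which the linearized descending flow is a contraction. Because $s$ is $\widetilde{\Phi_\tau}$-related to $\Phi_\tau$, it carries the flow line of $h$ to that of $s(h)$ and so maps $W^{s}(g)$ into $W^{s}(s(g))$ compatibly with these parametrizations; at the linear level it becomes the restriction of $\overline{ds}$ to the positive normal bundle. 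By Lemma \ref{NegativeNormalLG}, together with the fact that $\overline{ds}$ is a fibrewise isomorphism, the restriction $\overline{ds}\colon\nu_+(G_{S_\lambda})\to\nu_+(S_\lambda)$ is a fibrewise isomorphism, so under the stable manifold parametrizations $s|_{W^{s}(g)}$ corresponds to a linear isomorphism and is thus a diffeomorphism. Feeding this back into the bundle-map argument above shows that $\hat{l}$ is a surjective submersion, so $l$ is a Lie groupoid fibration. The statement for $u$ follows identically, replacing $\nu_+$ by $\nu_-$, the descending flow by the ascending one, and $D^{k-\lambda}$ by $D^{\lambda}$.
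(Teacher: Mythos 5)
Your proposal is correct, and while its first half coincides with the paper's argument, the second half takes a genuinely different route. The paper proves that $l$ and $u$ are Lie groupoid morphisms exactly as you do (passing to the limit in $s\circ\widetilde{\Phi_\tau}=\Phi_\tau\circ s$, $t\circ\widetilde{\Phi_\tau}=\Phi_\tau\circ t$, $\widetilde{\Phi_\tau}\circ m=m\circ(\widetilde{\Phi_\tau}\times\widetilde{\Phi_\tau})$), but for the fibration property it is far terser: it only observes that $\pi_1\circ\hat u=u_1$ and $\pi_2\circ\hat u=s$ and concludes that $\hat u$ is a surjective submersion ``because $u_1$ and $s$ are so''. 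As stated, that inference elides exactly the point you isolate: having both compositions with the projections be surjective submersions does not in general force a map into a fibred product to be one (think of a diagonal map into a product). So your route --- viewing $\hat l$ as a fibre-bundle map over $G_{S_\lambda}$ covering the identity and reducing everything to the fibrewise claim that $s\colon W^{s}(g)\to W^{s}(s(g))$ is a diffeomorphism --- supplies content the paper leaves implicit, proves more than asked (that $\hat l$ is in fact a diffeomorphism onto the fibred product), and makes visible where the multiplicativity of $\nabla F$ and the behaviour of $\overline{ds}$ on $\nu_+$ actually enter; the price is an appeal to stable manifold theory. (In the bundle-map step, note that bijectivity plus the dimension count alone does not give a diffeomorphism; submersivity should be checked by lifting a tangent vector through the submersion $l_1$ and correcting by a vertical vector, which is where the fibrewise diffeomorphism is used.)

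The one step that needs tightening is the assertion that, under stable-manifold parametrizations, $s|_{W^{s}(g)}$ ``corresponds to a linear isomorphism''. The stable manifold theorem does not provide flow-equivariant smooth charts in which both flows and the source map become simultaneously linear (smooth linearization of contractions requires non-resonance hypotheses), so as literally stated this step is unjustified --- but it is repairable with the ingredients you already assembled. Since $T_gW^{s}(g)$ is the positive eigenspace at $g$ and $ds(g)$ restricts to the fibrewise isomorphism $\overline{ds}\colon\nu_+(G_{S_\lambda})_g\to\nu_+(S_\lambda)_{s(g)}$ (by the eigenvector computations of Lemma \ref{NegativeNormalLG}), the map $s|_{W^{s}(g)}$ is a local diffeomorphism at $g$, hence injective and open on a neighborhood $W_0$ of $g$ in $W^{s}(g)$. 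Stable manifolds are invariant under the complete flows, and $s\circ\widetilde{\Phi_\tau}=\Phi_\tau\circ s$ yields $s|_{W^{s}(g)}=\Phi_{-\tau}\circ s|_{W^{s}(g)}\circ\widetilde{\Phi_\tau}$, where $\widetilde{\Phi_\tau}$ and $\Phi_{-\tau}$ are diffeomorphisms of $W^{s}(g)$ and of $W^{s}(s(g))$ respectively. Since every point of $W^{s}(g)$ (and every pair of points with the same image under $s$) is carried by $\widetilde{\Phi_\tau}$, for $\tau$ large, into $W_0$, this identity propagates local diffeomorphy, injectivity and surjectivity from $W_0$ to all of $W^{s}(g)$. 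With that patch your argument is complete.
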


 \begin{proof}
 	On the one hand, the fact that both $l$ and $u$ define Lie groupoid morphisms follows by arguing exactly as we did in Lemma \ref{Stable/UnstableGroupoids}. Indeed, let us consider for instance the case of $u$. As the pair $(\widetilde{\Phi_{\tau}},\Phi_{\tau})$ determines a Lie groupoid morphism we have that
 	$$(s\circ u_1)(h)=s\left(\lim_{\tau\to -\infty} \widetilde{\Phi_{\tau}}(h)\right)=\lim_{\tau\to -\infty}s(\widetilde{\Phi_{\tau}}(h))=\lim_{\tau\to -\infty}\Phi_{\tau}(s(h))=(u_0\circ s)(h).$$
 	We can analogously get that $t\circ u_1=u_0\circ t$. If $(h_1,h_2)\in (W^{u}(G_{S_\lambda}))^{(2)}$ then
 	\begin{eqnarray*}
 		(m\circ (u_1\times u_1))(h_1,h_2) & = & m\left(\lim_{\tau\to -\infty}\widetilde{\Phi_{\tau}}(h_1),\lim_{\tau\to -\infty}\widetilde{\Phi_{\tau}}(h_2)\right) = \lim_{\tau\to -\infty} (m\circ (\widetilde{\Phi_{\tau}}\times \widetilde{\Phi_{\tau}}))(h_1,h_2)\\
 		& = &  \lim_{\tau\to -\infty} (\widetilde{\Phi_{\tau}}(m(h_1,h_2)) = (u_1\circ m)(h_1,h_2).
 	\end{eqnarray*}
 	On the other hand, let us consider the fibred product given by the diagram below 
 	$$\xymatrix{
 		G_{S_\lambda}\times_{S_\lambda}W^{u}(S_\lambda) \ar@<.5ex>[d]_{\pi_{1}}\ar[r]^{\qquad\pi_{2}} & W^{u}(S_\lambda) \ar@<-.5ex>[d]^{u_0}\\
 		G_{S_\lambda} \ar[r]_{s} & S_\lambda. 
 	}$$
 	
Now we show that $u$ has the fibration property \eqref{eq:gpdfibration}, that is, the map $\hat{u}:W^{u}(G_{S_\lambda})\to G_{S_\lambda}\times_{S_\lambda}W^{u}(S_\lambda), h\mapsto (u_1(h),s(h))$, is a surjective submersion. Indeed, since $\pi_1$ and $\pi_2$ are surjective submersions, $\pi_1\circ\hat{u}=u_1$  and $\pi_2\circ\hat{u}=s$, then $u$ is a surjective submersion because $u_1$ and $s$ are so. Hence, as $u_0:W^{u}(S_\lambda)\to S_\lambda$ is also a surjective submersion we conclude that $u$ is in fact a Lie groupoid fibration.
 \end{proof}

It is clear that the canonical projections $\pi^-_1:\nu_-(G_{S_\lambda})\to G_{S_\lambda}$ and $\pi^-_0:\nu_-(S_\lambda)\to S_\lambda$ allow us to define a groupoid fibration from $\nu_-(G_{S_\lambda})\rightrightarrows \nu_-(S_\lambda)$ onto $G_{S_\lambda}\rightrightarrows S_\lambda$. Obviously, for the positive normal groupoid a similar fibration $\pi^{+}$ onto $G_{S_\lambda}\rightrightarrows S_\lambda$ can be defined. Therefore, the naturality of the previous result is behind the following interesting fact, which can be thought of as the groupoid version of the so-called \textbf{stable/unstable manifold theorem} (see for instance \cite[Thm A.9]{AB} or else \cite[Thm 4,15]{BH3}).

\begin{theorem}[Stable/unstable groupoid theorem]\label{Stable/unstableGroupoidTheorem}
	
	There exists a Lie groupoid isomorphism from a full Lie groupoid open neighborhood of the zero section $G_{S_\lambda}\rightrightarrows S_\lambda$ in $\nu_-(G_{S_\lambda})\rightrightarrows \nu_-(S_\lambda)$ and a full Lie groupoid open neighborhood of $G_{S_\lambda}\rightrightarrows S_\lambda$ in $W^{u}(G_{S_\lambda})\rightrightarrows W^{u}(S_\lambda)$ which intertwines the groupoid fibrations $\pi^{-}$ and $u$. Such an isomorphism behaves as the identity over $G_{S_\lambda}\rightrightarrows S_\lambda$. Analogously, same assertion holds true between the groupoid fibrations $\pi^{+}$ and $l$ onto $G_{S_\lambda}\rightrightarrows S_\lambda$.
	
\end{theorem}
\begin{proof}
	First of all, it follows that around a base point $y\in S_\lambda$ there are coordinates in an open neighborhood $U\subset M$ such that $U\cong S_\lambda\times \nu_+(S_\lambda)_s\times \nu_-(S_\lambda)_s$ so that each element $z\in U$ may be rewritten as $z=(z_0,z_+,z_-)$ \cite[Appx. A.3]{AB}.  We focus on proving the statement for $\pi^{-}$ and $u$ since the other asserted case follows in a similar fashion. Pick $y\in S_\lambda$ and $v_-\in \nu_+(S_\lambda)_{y'}$. As consequence of \cite[Thm A.9]{AB} (see also \cite[Thm 4,15]{BH3}) we know that there exists a unique integral curve $c(\tau)=\Phi_{y'}(\tau):=\Phi_\tau(y')$ of $-\nabla f$ starting at $y'\in M$ such that $c(0)_-=v_-$ and $\displaystyle \lim_{\tau\to -\infty} c(\tau)= \lim_{\tau\to -\infty}\Phi_\tau(y')=y$. Furthermore, after shrinking $U$ if necessary, for varying $(y, v_-)$ the map $\psi^0$ sending $y\mapsto c(0)_0$ defines a diffeomorphism between an open neighborhood $U$ of the zero section $S_\lambda$ in $\nu_-(S_\lambda)$ and an open neighborhood $V$ of $S_\lambda$ in $W^u(S_\lambda)$ which intertwines the
	projection $\pi^{-}_0$ and endpoint map $u_0$. Again, after shrinking $U$ if necessary and by using instead the descending flow of $-\nabla F$, we may similarly define around $g\in G_{S_\lambda}$ an open neighborhood $\tilde{U}$ of the zero section $G_{S_\lambda}$ in $\nu_-(G_{S_\lambda})$ of the form $\tilde{U}=s^{-1}(U)\cap t^{-1}(U)\cong G_{S_\lambda}\times \nu_+(G_{S_\lambda})_g\times \nu_-(G_{S_\lambda})_g$ enjoying of the following properties. Each element $\tilde{z}\in \tilde{U}$ may be rewritten as $\tilde{z}=(\tilde{z}_0,\tilde{z}_+,\tilde{z}_-)$ and there is a (uniquely defined) map $\psi^1$ sending $g\mapsto \tilde{c}(0)_0$ which defines a diffeomorphism between $\tilde{U}$ and an open neighborhood $\tilde{V}$ of $G_{S_\lambda}$ in $W^u(G_{S_\lambda})$ that intertwines the
	projection $\pi^{-}_1$ and endpoint map $u_1$.
	
	Let us check that $\psi=(\psi^1,\psi^0)$ defines a Lie groupoid morphism over the open Lie groupoid neighborhood $\tilde{U}\rr U$ of $G_{S_\lambda}\rr S_\lambda$. This will be consequence of having that $\nabla F$ is a multiplicative vector field. Take $g\in \tilde{U}$, $\tilde{v}_-\in \nu_{-}(G_{S_\lambda})_{g'}$ and $\tilde{c}(\tau)=\widetilde{\Phi}_{g'}(\tau)=\widetilde{\Phi}_\tau(g')$ as described above. We know that $s\circ \widetilde{\Phi}_\tau=\Phi_\tau\circ s$ which in turn yields the identity $s\circ \widetilde{\Phi}_h=\Phi_{s(h)}$ for all $h\in G$. The latter formula implies that $\displaystyle s(g)= \lim_{\tau\to -\infty}\Phi_{s(g')}(\tau)=\lim_{\tau\to -\infty}\Phi_\tau(s(g'))$ and $\overline{ds}(g')(\tilde{v}_-)=c(0)_-$ where in this case we  are denoting $c(\tau)=\Phi_{s(g')}(\tau)=\Phi_\tau(s(g'))$. Thus, by the uniqueness in \cite[Thm A.9]{AB} it follows that
	$$(s\circ \psi^1)(g)=s(\tilde{c}(0)_0)=(s\circ \widetilde{\Phi}_{g'})(0)_0=\Phi_{s(g')}(0)_0=c(0)_0=(\psi^0\circ s)(g).$$
	The identity $t\circ \psi^1=\psi^0\circ t$ can be verified in a similar manner. Let us now consider pairs $g\in \tilde{U}$, $\tilde{v}_-\in \nu_{-}(G_{S_\lambda})_{g'}$ and $h\in \tilde{U}$, $\tilde{w}_-\in \nu_{-}(G_{S_\lambda})_{h'}$ such that $s(g)=t(h)$, $s(g')=t(h')$, $\overline{ds}(g')(\tilde{v}_-)=\overline{dt}(h')(\tilde{w}_-)$ and $\tilde{a}(\tau)=\widetilde{\Phi}_{g'}(\tau)=\widetilde{\Phi}_\tau(g')$ with $\tilde{b}(\tau)=\widetilde{\Phi}_{h'}(\tau)=\widetilde{\Phi}_\tau(h')$ as described above. It is simple to verify that the equality $\widetilde{\Phi}_\tau\circ m=m\circ (\widetilde{\Phi}_\tau\times \widetilde{\Phi}_\tau)$ implies the formula $m\circ (\widetilde{\Phi}_{h_1}\times\widetilde{\Phi}_{h_2})=\widetilde{\Phi}_{h_1h_2}$ for all $(h_1,h_2)\in G^{(2)}$. It follows that $\displaystyle gh= \lim_{\tau\to -\infty}\tilde{\Phi}_{g'h'}(\tau)=\lim_{\tau\to -\infty}\tilde{\Phi}_{\tau}(g'h')$ and $\overline{dm}(g',h')(\tilde{v}_-,\tilde{w}_-)=\tilde{a}(0)_-$ with $\tilde{c}(\tau)=\tilde{\Phi}_{g'h'}(\tau)=\tilde{\Phi}_{\tau}(g'h')$. Hence, again by the uniqueness in \cite[Thm A.9]{AB} we obtain that
	
	$$m(\psi^1(g),\psi^1(h))=m(\tilde{a}(0)_0,\tilde{b}(0)_0)=m(\widetilde{\Phi}_{g'}(0),\widetilde{\Phi}_{h'}(0))_0=\widetilde{\Phi}_{h_1h_2}(0)_0=\tilde{c}(0)_0=\psi^1(gh).$$
	
	Observe that $\psi$ is the identity over $G_{S_\lambda}\rightrightarrows S_\lambda$ since the descending flows fix critical points. The proof is completed by observing that in a proper groupoid every groupoid neighborhood contains a full groupoid neighborhood, see for instance \cite[Lem. 5.3]{dHF}.
	
\end{proof}
 
We consider now the notion of moduli space of gradient flow lines in the Lie groupoid setting. Some of the ideas stated below will be used to construct a double complex which in turn will allow us to recover the total cohomology of the Bott--Shulman--Stasheff double complex of the Lie groupoid we are working with. Note that there exists a natural action of $\mathbb{R}$ on $W^{u}(S_\lambda)$ (resp. on $W^{s}(S_\lambda)$) defined by $r\cdot y=\Phi_r(y)$ for all $r\in \mathbb{R}$. This is well defined since if $y\in W^{u}(S_\lambda)$ we have that
 \begin{equation}\label{FlowAction}
 	\lim_{\tau\to -\infty}\Phi_{\tau}(r\cdot y)=\lim_{\tau\to -\infty}\Phi_{\tau}(\Phi_r(y))=\lim_{\tau\to -\infty}\Phi_{\tau+r}(y)\in W^{u}(S_\lambda).
 \end{equation}
 In particular, it is simple to see that these actions induce a well defined free action of $\mathbb{R}$ on any intersection $W^{u}(S_{\lambda_i})\cap W^{s}(S_{\lambda_j})$ for $\lambda_i\neq \lambda_j$. We will refer to this action as \textbf{action by flow translation}. The \textbf{moduli space of gradient flow lines} in $M$ associated to the non-degenerate critical saturated submanifolds $S_{\lambda_i}$ and $S_{\lambda_j}$ is defined as the quotient space obtained from the action by flow translation:
 $$\mathcal{M}(S_{\lambda_i},S_{\lambda_j}):=(W^{u}(S_{\lambda_i})\cap W^{s}(S_{\lambda_j}))/\mathbb{R}.$$
 The moduli space of gradient flow lines $\mathcal{M}(G_{S_{\lambda_i}},G_{S_{\lambda_j}})$ is equally defined by using the action induced by the descending flow $\widetilde{\Phi_{\tau}}$. Thus, motivated by the Morse--Bott transversality condition (see \cite{AB,BH2}) we set up the following definition.

 \begin{definition}\label{MStransversalityDef}
 
 A Morse Lie groupoid morphism $F:(G\rightrightarrows M)\to (\mathbb{R}\rightrightarrows \mathbb{R})$ is said to satisfy the \textbf{Morse--Smale transversality} condition with respect to a $2$-metric $\eta^{(2)}$ on $G\rightrightarrows M$ if  for any two critical non-degenerate saturated submanifolds $S_{\lambda_i}$ and $S_{\lambda_j}$ with respect to $f$ we have that $W^u(y)\pitchfork W^s(S_{\lambda_j})$ for all $y\in S_{\lambda_i}$.
 \end{definition}

 It is important to notice that, unlike the classical Morse case, in the Morse--Bott case it is not always possible to perturb a Riemannian metric to make a given Morse-Bott function satisfy the Morse-Bott-Smale transversality condition. See \cite[Rmk. 2.4]{L} for an interesting counterexample. More importantly, such a condition is not always satisfied for $G$-invariant Morse functions; compare \cite[s. 5]{L}. As a consequence, not every Morse Lie groupoid morphism satisfies the Morse--Smale transversality condition in general. As it is argued in \cite{AB}, when assuming the Morse--Smale transversality condition from Definition \ref{MStransversalityDef}, we get that the spaces $W^{u}(S_{\lambda_i})\cap W^{s}(S_{\lambda_j})$ and $\mathcal{M}(S_{\lambda_i},S_{\lambda_j})$ are smooth manifolds since the action by flow translation is free and proper. The endpoint maps \eqref{eq:endpointmaps} descend to the moduli spaces yielding new endpoint maps $(l_0)_j^i:\mathcal{M}(S_{\lambda_i},S_{\lambda_j})\to S_{\lambda_j}$ and $(u_0)_j^i:\mathcal{M}(S_{\lambda_i},S_{\lambda_j})\to S_{\lambda_i}$ given respectively by
 
 \begin{equation}\label{eq:moduliendpointmaps}
 (l_0)_j^i([x])=l_0(x)\qquad\textnormal{and}\qquad(u_0)_j^i([x])=u_0(x),
 \end{equation}

\noindent One can see that these maps are well defined in a similar manner as in \eqref{FlowAction}. Also, the endpoint map $(u_0)_j^i$ on the moduli space $\mathcal{M}(S_{\lambda_i},S_{\lambda_j})$ is a locally trivial fibration, see \cite{AB}. 
 \begin{lemma}\label{MStransversal}
 	Let $F:G\to \mathbb{R}$ be a Morse Lie groupoid morphism having the Morse-Smale transversality property. Then the non-degenerate critical submanifolds $G_{S_{\lambda_i}}$ and $G_{S_{\lambda_j}}$ satisfy the Morse-Smale transversality condition with respect to $F$.
 \end{lemma}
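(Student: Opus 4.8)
The plan is to show that the object‑level transversality hypothesis of Definition \ref{MStransversalityDef} propagates to the arrow level through the source map $s\colon G\to M$, exploiting that $s$ is a Riemannian submersion whose restriction intertwines the descending flows, $s\circ\widetilde{\Phi_{\tau}}=\Phi_{\tau}\circ s$ (Proposition \ref{Multiplicative-Gradient} and the computation in Lemma \ref{Stable/UnstableGroupoids}). Fix $g\in G_{S_{\lambda_i}}$ and a point $h\in W^{u}(g)\cap W^{s}(G_{S_{\lambda_j}})$, and set $x:=s(h)$. Since transversality is pointwise, it suffices to prove $T_{h}W^{u}(g)+T_{h}W^{s}(G_{S_{\lambda_j}})=T_{h}G$, and the strategy is to reduce this to the known transversality $T_{x}W^{u}(s(g))+T_{x}W^{s}(S_{\lambda_j})=T_{x}M$, which holds by hypothesis with $y=s(g)\in S_{\lambda_i}$ once we note that $x\in W^{u}(s(g))\cap W^{s}(S_{\lambda_j})$.

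First I would record two structural identities describing how $s$ interacts with the invariant manifolds. On the stable side, using the flow intertwining together with the convergence of gradient flow lines of a Morse--Bott function on a compact space (valid here, since $M$ and hence $G$ is compact, or $M/G$ is compact as in Remark \ref{CompactRelaxed}), I would prove $W^{s}(G_{S_{\lambda_j}})=s^{-1}\big(W^{s}(S_{\lambda_j})\big)$: the inclusion $\subseteq$ is immediate from $\lim_{\tau}s(\widetilde{\Phi_{\tau}}(h))=s(\lim_{\tau}\widetilde{\Phi_{\tau}}(h))$, while for the reverse one, if $s(h)$ flows to $x_{\infty}\in S_{\lambda_j}$ then $\widetilde{\Phi_{\tau}}(h)$ converges to a critical arrow over $x_{\infty}$, which lies in $G_{S_{\lambda_j}}=s^{-1}(S_{\lambda_j})$ by saturation. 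Because $s$ is a submersion this gives $T_{h}W^{s}(G_{S_{\lambda_j}})=(d_{h}s)^{-1}\big(T_{x}W^{s}(S_{\lambda_j})\big)$, and in particular $\ker d_{h}s\subseteq T_{h}W^{s}(G_{S_{\lambda_j}})$. On the unstable side I would instead show that $s$ restricts to a local diffeomorphism $W^{u}(g)\to W^{u}(s(g))$, so that $d_{h}s$ maps $T_{h}W^{u}(g)$ isomorphically onto $T_{x}W^{u}(s(g))$. For this I would invoke Proposition \ref{prop:multiplicativeendpoint}: the map $\hat{u}=(u_{1},s)\colon W^{u}(G_{S_{\lambda_i}})\to G_{S_{\lambda_i}}\times_{S_{\lambda_i}}W^{u}(S_{\lambda_i})$ is a surjective submersion between manifolds of equal dimension $\dim G_{S_{\lambda_i}}+\lambda_i$ (both values read off from the rank‑$\lambda_i$ disk bundles $u_{1}$ and $u_{0}$), hence a local diffeomorphism; restricting $\hat{u}$ over the slice $\{g\}$ identifies $s|_{W^{u}(g)}$ with a local diffeomorphism onto $W^{u}(s(g))$.

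With these two facts the conclusion is short linear algebra. Write $A=T_{h}W^{u}(g)$, $B=T_{h}W^{s}(G_{S_{\lambda_j}})$, $A'=T_{x}W^{u}(s(g))$ and $B'=T_{x}W^{s}(S_{\lambda_j})$, so that $d_{h}s(A)=A'$, $B=(d_{h}s)^{-1}(B')$ and $A'+B'=T_{x}M$. Given $v\in T_{h}G$, decompose $d_{h}s(v)=a'+b'$ with $a'\in A'$, $b'\in B'$, choose $w\in A$ with $d_{h}s(w)=a'$, and note $d_{h}s(v-w)=b'\in B'$, whence $v-w\in(d_{h}s)^{-1}(B')=B$; therefore $v=w+(v-w)\in A+B$, giving $A+B=T_{h}G$, i.e. $W^{u}(g)\pitchfork W^{s}(G_{S_{\lambda_j}})$ at $h$. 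I expect the main obstacle to be the unstable‑side claim that $s|_{W^{u}(g)}$ is a local diffeomorphism: it is precisely the dimension bookkeeping feeding into Proposition \ref{prop:multiplicativeendpoint} that forces $d_{h}s$ to be an isomorphism on unstable tangent directions while remaining highly non‑injective (it contains $\ker d_{h}s$) on the stable ones, and this asymmetry is exactly what lets a single object‑level transversality imply the arrow‑level one.
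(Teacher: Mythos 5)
Your proposal is correct, and it follows the paper's broad strategy---push everything through the source submersion $s$ and quote the object-level transversality---but it departs from the paper's own proof at exactly the step that carries the difficulty, and there your version is the one that holds up. In your notation ($g$ the critical arrow, $h\in W^{u}(g)\cap W^{s}(G_{S_{\lambda_j}})$, $x=s(h)$; the paper's proof interchanges the roles of $g$ and $h$), the paper treats \emph{both} invariant manifolds as full $s$-preimages, writing
\[
T_{h}W^{u}(g)+T_{h}W^{s}(G_{S_{\lambda_j}})
=T_{h}\bigl(s^{-1}(W^{u}(s(g)))\bigr)+T_{h}\bigl(s^{-1}(W^{s}(S_{\lambda_j}))\bigr)
=(d_{h}s)^{-1}(T_{x}M)=T_{h}G.
\]
The stable half of this identification is correct (your set identity $W^{s}(G_{S_{\lambda_j}})=s^{-1}(W^{s}(S_{\lambda_j}))$, which needs the convergence of flow lines you invoke, or alternatively an equal-dimension argument), but the unstable half is not: $W^{u}(g)$ is a $\lambda_{i}$-disk, while $s^{-1}\bigl(W^{u}(s(g))\bigr)$ has dimension $\lambda_{i}+\dim G-\dim M$, being swept out by the unstable slices of all critical arrows in the $s$-fibre over $s(g)$. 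So except in the \'etale case the paper's first equality is only an inclusion $\subseteq$, and its computation proves transversality of the larger manifold $s^{-1}(W^{u}(s(g)))$ with $W^{s}(G_{S_{\lambda_j}})$, which is strictly weaker than the lemma. Your argument supplies exactly the missing piece: the index equality $\lambda(G_{S_{\lambda_i}},F)=\lambda(S_{\lambda_i},f)$ makes the map $\hat{u}=(u_{1},s)$ of Proposition \ref{prop:multiplicativeendpoint} a submersion between equal-dimensional manifolds, hence a local diffeomorphism, hence a fibrewise local diffeomorphism $W^{u}(g)\to W^{u}(s(g))$; this surjectivity of $d_{h}s$ on unstable tangent spaces (injectivity is never actually used), together with $\ker d_{h}s\subseteq T_{h}W^{s}(G_{S_{\lambda_j}})=(d_{h}s)^{-1}\bigl(T_{x}W^{s}(S_{\lambda_j})\bigr)$ and your closing linear algebra, genuinely yields $T_{h}W^{u}(g)+T_{h}W^{s}(G_{S_{\lambda_j}})=T_{h}G$. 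Equivalently, your local diffeomorphism is what justifies $T_{h}W^{u}(g)+\ker d_{h}s=(d_{h}s)^{-1}\bigl(T_{x}W^{u}(s(g))\bigr)$, which is precisely the identity needed to repair the paper's display. So the two proofs buy different things: the paper's symmetric shortcut is shorter but, as written, is justified only for \'etale groupoids; your asymmetric treatment---preimage on the stable side, fibrewise diffeomorphism on the unstable side---is the complete argument in general.
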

 
 \begin{proof}
 	Let $y\xleftarrow[]{\it g}x$ be an arrow in $W^u(G_{S_{\lambda_i}})$ such that $g\in W^{u}(h)\cap W^s(G_{S_{\lambda_j}})$. Given that $s:G\to M$ is a submersion we have that it is transverse to any submanifold in $M$ so that, in particular, we obtain that $s \pitchfork  W^{u}(s(h))$ and $s \pitchfork W^s(S_{\lambda_j})$. Since $x\in W^{u}(s(h))\cap  W^s(S_{\lambda_j})$ the Morse-Smale transversality condition with respect to $f$ gives us
 	\begin{eqnarray*}
 		T_g W^{u}(h) +T_gW^s(G_{S_{\lambda_j}}) & = & T_g(s^{-1}(W^u(s(h))))+T_g(s^{-1}(W^s(S_{\lambda_j})))\\
 		& = & ds_{g}^{-1}(T_x W^{u}(s(h)))+ds_{g}^{-1}(T_xW^s(S_{\lambda_j}))\\
 		& = & ds_{g}^{-1}(T_x W^{u}(s(h))+T_xW^s(S_{\lambda_j}))\\
 		& = & ds_{g}^{-1}(T_x M) =  T_g G.
 	\end{eqnarray*}
 	So, the result follows.
 \end{proof}
 As consequence of this elementary observation we can analogously define smooth manifolds $W^{u}(G_{S_{\lambda_i}})\cap W^{s}(G_{S_{\lambda_j}})$ and $\mathcal{M}(G_{S_{\lambda_i}},G_{S_{\lambda_j}})$ and smooth maps $(l_1)_j^i:\mathcal{M}(G_{S_{\lambda_i}},G_{S_{\lambda_j}})\to G_{S_{\lambda_j}}$ and $(u_1)_j^i:\mathcal{M}(G_{S_{\lambda_i}},G_{S_{\lambda_j}})\to G_{S_{\lambda_i}}$ so that $(u_1)_j^i$ has the structure of a locally trivial bundle.

 It is simple to check that by the nature of the structure of our stable and unstable Lie groupoids we may naturally define a Lie groupoid $W^{u}(G_{S_{\lambda_i}})\cap W^{s}(G_{S_{\lambda_j}})\rightrightarrows W^{u}(S_{\lambda_i})\cap W^{s}(S_{\lambda_j})$. More importantly:
 
 \begin{proposition}[Groupoid of gradient flow lines]\label{ModuliGroupoid}
 	There exists a unique structure of Lie groupoid $\mathcal{M}(G_{S_{\lambda_i}},G_{S_{\lambda_j}})\rightrightarrows \mathcal{M}(S_{\lambda_i},S_{\lambda_j})$ on the moduli spaces of gradient lines, making the canonical projection $W^{u}(G_{S_{\lambda_i}})\cap W^{s}(G_{S_{\lambda_j}})\to \mathcal{M}(G_{S_{\lambda_i}},G_{S_{\lambda_j}})$ a Lie groupoid fibration.
 \end{proposition}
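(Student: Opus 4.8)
The plan is to realize the sought groupoid as a quotient of the Lie groupoid $\mathcal{W}\rightrightarrows \mathcal{W}_0$, where I abbreviate $\mathcal{W}:=W^{u}(G_{S_{\lambda_i}})\cap W^{s}(G_{S_{\lambda_j}})$ and $\mathcal{W}_0:=W^{u}(S_{\lambda_i})\cap W^{s}(S_{\lambda_j})$, by the $\mathbb{R}$-action by flow translation. The structural fact I would exploit is that, since $\nabla F=(\nabla(s^\ast f),\nabla f)$ is multiplicative (Proposition \ref{Multiplicative-Gradient}), for each $\tau$ the pair $(\widetilde{\Phi_\tau},\Phi_\tau)$ is a Lie groupoid automorphism (Remark \ref{Multiplicative-Gradient-Remark}); hence flow translation is an action of the group $\mathbb{R}$ on $\mathcal{W}\rightrightarrows \mathcal{W}_0$ by automorphisms. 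By the freeness and properness of the flow-translation action established in \cite{AB}, which carries over to the arrow level by Lemma \ref{MStransversal}, this action is free and proper on both $\mathcal{W}$ and $\mathcal{W}_0$. Consequently $\mathcal{M}(G_{S_{\lambda_i}},G_{S_{\lambda_j}})=\mathcal{W}/\mathbb{R}$ and $\mathcal{M}(S_{\lambda_i},S_{\lambda_j})=\mathcal{W}_0/\mathbb{R}$ are smooth manifolds and the quotient maps $q\colon\mathcal{W}\to \mathcal{M}(G_{S_{\lambda_i}},G_{S_{\lambda_j}})$ and $q_0\colon\mathcal{W}_0\to \mathcal{M}(S_{\lambda_i},S_{\lambda_j})$ are principal $\mathbb{R}$-bundles, in particular surjective submersions.

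Next I would descend the structural maps. Because $(\widetilde{\Phi_\tau},\Phi_\tau)$ commutes with $s,t,m,u,i$, these maps are $\mathbb{R}$-equivariant, and an equivariant smooth map between free proper $\mathbb{R}$-manifolds descends to a unique smooth map on the quotients; this immediately produces smooth descended source, target, unit and inverse maps. The point requiring care is the composition, whose natural domain is $\mathcal{W}^{(2)}$, on which $\mathbb{R}$ acts diagonally by $\tau\cdot(g_1,g_2)=(\widetilde{\Phi_\tau}(g_1),\widetilde{\Phi_\tau}(g_2))$. I would first check that this diagonal action is free and proper (both properties follow from the corresponding ones on the first factor via the equivariant projection $\mathcal{W}^{(2)}\to\mathcal{W}$), so that $\mathcal{W}^{(2)}/\mathbb{R}$ is a manifold, and then prove that the canonical map $\mathcal{W}^{(2)}/\mathbb{R}\to (\mathcal{W}/\mathbb{R})^{(2)}$ is a diffeomorphism. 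Injectivity and surjectivity follow from freeness of the $\mathbb{R}$-action on $\mathcal{W}_0$: two composable arrows representing the same pair of classes must have equal translation parameters because their sources agree up to the free action, and any pair of classes with matching base classes is made composable by translating one representative; smoothness of the inverse uses the smooth translation function of a free proper action. With this identification the descended composition is smooth, yielding a Lie groupoid $\mathcal{M}(G_{S_{\lambda_i}},G_{S_{\lambda_j}})\rightrightarrows \mathcal{M}(S_{\lambda_i},S_{\lambda_j})$, and uniqueness is automatic since $q,q_0$ are surjective submersions.

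Finally I would verify the fibration property \eqref{eq:gpdfibration}. Writing $\widehat q\colon \mathcal{W}\to \mathcal{M}(G_{S_{\lambda_i}},G_{S_{\lambda_j}})\times_{\mathcal{M}(S_{\lambda_i},S_{\lambda_j})}\mathcal{W}_0$, $g\mapsto (q(g),s(g))$, I expect to show that $\widehat q$ is in fact a diffeomorphism. It is injective because $q(g)=q(g')$ and $s(g)=s(g')$ force $g'=\widetilde{\Phi_\tau}(g)$ with $s(g)=\Phi_\tau(s(g))$, whence $\tau=0$ by freeness; it is surjective because given $([g],y)$ with $q_0(s(g))=q_0(y)$ one writes $y=\Phi_\tau(s(g))$ and checks that $\widetilde{\Phi_\tau}(g)$ is a preimage. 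A dimension count—each quotient lowers dimension by one, so the fibred product has the same dimension as $\mathcal{W}$—together with smoothness of the inverse (again via the translation function) shows $\widehat q$ is a diffeomorphism, hence a surjective submersion, so $q$ is a Lie groupoid fibration covering $q_0$.

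I anticipate that the main obstacle is the smoothness of the quotient construction rather than the algebra: concretely, establishing that the diagonal $\mathbb{R}$-action on $\mathcal{W}^{(2)}$ is free and proper and that $\mathcal{W}^{(2)}/\mathbb{R}\cong(\mathcal{W}/\mathbb{R})^{(2)}$ as smooth manifolds, which is exactly what makes the descended composition smooth. Everything else reduces to the equivariance furnished by the multiplicativity of $\nabla F$ and to the standard slice/translation-function machinery for free proper group actions.
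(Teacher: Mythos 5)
Your proposal is correct and follows essentially the same route as the paper: the paper's (very brief) proof simply observes that the flow-translation $\mathbb{R}$-action on $W^{u}(G_{S_{\lambda_i}})\cap W^{s}(G_{S_{\lambda_j}})\rightrightarrows W^{u}(S_{\lambda_i})\cap W^{s}(S_{\lambda_j})$ is by Lie groupoid automorphisms (via Proposition \ref{Multiplicative-Gradient}) and is free and proper, so the quotient inherits the desired Lie groupoid structure. You have merely made explicit the details the paper leaves implicit --- the descent of the structural maps, the identification $\mathcal{W}^{(2)}/\mathbb{R}\cong(\mathcal{W}/\mathbb{R})^{(2)}$ underlying the descended composition (which matches the paper's explicit formula $\overline{m}([g],[h])=[\widetilde{\Phi_r}(g)h]$ given after the proposition), and the verification of the fibration property --- all of which are sound.
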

 \begin{proof}
 
 The action of $\mathbb{R}$ on $W^{u}(G_{S_{\lambda_i}})\cap W^{s}(G_{S_{\lambda_j}})\rightrightarrows W^{u}(S_{\lambda_i})\cap W^{s}(S_{\lambda_j})$ is given by Lie groupoid automorphisms. Since this action is free and proper, then the quotient inherits a Lie groupoid structure with the desired properties. See for instance the proof of Proposition 3.4 in \cite{HV}. 

\end{proof}
As an application of Proposition \ref{prop:multiplicativeendpoint} we easily get:
\begin{corollary}
The pair of endpoint maps $u_j^i=((u_1)_j^i,(u_0)_j^i):(\mathcal{M}(G_{S_{\lambda_i}},G_{S_{\lambda_j}})\rr \mathcal{M}(S_{\lambda_i},S_{\lambda_j}))\to (G_{S_{\lambda_i}}\rr S_{\lambda_i})$ defines a Lie groupoid fibration.
\end{corollary}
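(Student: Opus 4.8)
The plan is to exhibit $u_j^i$ as the descent of the endpoint fibration of Proposition \ref{prop:multiplicativeendpoint} along the flow-translation quotient, and then to transport the fibration property \eqref{eq:gpdfibration} across that quotient. I would first assemble the two inputs. Arguing verbatim as in Proposition \ref{prop:multiplicativeendpoint} --- using that $s,t,m$ are Riemannian submersions and that $(\widetilde{\Phi_{\tau}},\Phi_{\tau})$ is a one-parameter family of Lie groupoid automorphisms, now restricted to the intersections, which are smooth subgroupoids by Lemma \ref{MStransversal} --- the endpoint map
\[
u=(u_1,u_0):\bigl(W^{u}(G_{S_{\lambda_i}})\cap W^{s}(G_{S_{\lambda_j}})\rightrightarrows W^{u}(S_{\lambda_i})\cap W^{s}(S_{\lambda_j})\bigr)\to (G_{S_{\lambda_i}}\rightrightarrows S_{\lambda_i})
\]
is a Lie groupoid fibration; in particular $u_0$ is a surjective submersion and the associated map $\hat u$ of \eqref{eq:gpdfibration} is a surjective submersion. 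On the other hand, Proposition \ref{ModuliGroupoid} provides the flow-translation quotient $q=(q_1,q_0)$ onto $\mathcal{M}(G_{S_{\lambda_i}},G_{S_{\lambda_j}})\rightrightarrows\mathcal{M}(S_{\lambda_i},S_{\lambda_j})$ as a Lie groupoid fibration, so both $q_1$ and $q_0$ are surjective submersions.

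Next I would record that $u_j^i$ is exactly the descent of $u$ along $q$. Because the limit $\lim_{\tau\to-\infty}$ is unaffected by flow translation (compare \eqref{FlowAction}), the maps in \eqref{eq:moduliendpointmaps} satisfy $(u_1)_j^i\circ q_1=u_1$ and $(u_0)_j^i\circ q_0=u_0$, while the source map $\bar s$ of the moduli groupoid and the source map $s$ of the intersection groupoid satisfy $\bar s\circ q_1=q_0\circ s$. The remainder is then formal, via the descent lemma for surjective submersions: if $h=g\circ p$ with $p$ and $h$ surjective submersions, then $g$ is a surjective submersion. Taking $h=u_0$ and $p=q_0$ shows that $(u_0)_j^i$ is a surjective submersion, so $u_j^i$ covers a surjective submersion on objects. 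For the condition \eqref{eq:gpdfibration} I would compare the two fibred-product maps $\hat u$ and $\widehat{u_j^i}$ into $G_{S_{\lambda_i}}\times_{S_{\lambda_i}}(-)$: the identity $(\mathrm{id}\times q_0)\circ\hat u=\widehat{u_j^i}\circ q_1$ holds, its left-hand side is a surjective submersion (the surjective submersion $\hat u$ followed by $\mathrm{id}\times q_0$, which is a base change of $q_0$), and $q_1$ is a surjective submersion; the descent lemma then forces $\widehat{u_j^i}$ to be a surjective submersion, which is precisely the fibration property of $u_j^i$.

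The main obstacle I anticipate is the first input, namely confirming that Proposition \ref{prop:multiplicativeendpoint} survives restriction to the intersection $W^{u}(G_{S_{\lambda_i}})\cap W^{s}(G_{S_{\lambda_j}})$: one must use the Morse--Smale transversality of Lemma \ref{MStransversal}, together with the transversality of the submersions $s,t$ to all submanifolds, to guarantee that these intersections form smooth Lie subgroupoids and that the endpoint maps remain submersive with \eqref{eq:gpdfibration} still submersive. Once that restricted fibration is secured, the descent to the moduli quotient is purely formal, and the only remaining checks --- well-definedness of $u_j^i$ on $\mathbb{R}$-orbits and commutativity of the two displayed relations --- are routine.
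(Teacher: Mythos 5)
Your proposal is correct, and it reaches the corollary by a route that is organized differently from the paper's. The paper offers no separate argument: it states the corollary as an immediate application of Proposition \ref{prop:multiplicativeendpoint}, i.e.\ it re-runs that proof one level down, directly at the moduli spaces, using what it has already imported from Austin--Braam under the Morse--Smale hypothesis --- that $(u_1)_j^i$ and $(u_0)_j^i$ are locally trivial bundles (hence surjective submersions), that the pair is a groupoid morphism by the usual limit computations, and that $\overline{s}$ is a surjective submersion by Proposition \ref{ModuliGroupoid} --- so the submersivity of $\widehat{u_j^i}$ is read off downstairs. You instead factor the problem into two steps: the fibration property for the endpoint map of the intersection groupoid $W^{u}(G_{S_{\lambda_i}})\cap W^{s}(G_{S_{\lambda_j}})\rr W^{u}(S_{\lambda_i})\cap W^{s}(S_{\lambda_j})$, followed by descent along the flow-translation quotient $q=(q_1,q_0)$ of Proposition \ref{ModuliGroupoid}, via the (correct) lemma that in a factorization $h=g\circ p$ with $h$ and $p$ surjective submersions, $g$ is a surjective submersion. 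Your intermediate identities $(u_1)_j^i\circ q_1=u_1$, $(u_0)_j^i\circ q_0=u_0$, $\overline{s}\circ q_1=q_0\circ s$ and $(\mathrm{id}\times q_0)\circ\hat u=\widehat{u_j^i}\circ q_1$ all hold, and $\mathrm{id}\times q_0$ is indeed a surjective submersion, being a base change of $q_0$ (the fibred products are manifolds because $s:G_{S_{\lambda_i}}\to S_{\lambda_i}$ is a submersion). Both routes consume exactly the same geometric input, namely the Austin--Braam package, via Lemma \ref{MStransversal}, guaranteeing that the restricted endpoint maps are still submersive locally trivial bundles; this is also where your first step, which you rightly single out as the only real obstacle, gets discharged --- it does not follow ``verbatim'' from Proposition \ref{prop:multiplicativeendpoint} but from the same citation the paper itself invokes right before the corollary. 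What your factorization buys is robustness at the quotient step: your descent criterion is a valid general principle, whereas the argument literally written in the proof of Proposition \ref{prop:multiplicativeendpoint} --- that a map into a fibred product is a surjective submersion because its compositions with both projections are --- is not a valid implication in general (the diagonal $\mathbb{R}\to\mathbb{R}\times\mathbb{R}$ already violates it), so in either approach it is really the locally trivial bundle structure that carries the submersivity, and your write-up makes that dependence explicit rather than leaving it implicit.
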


Note that because of the well definition of index we have that $W^{u}(S_{\lambda_i})\cap W^{s}(S_{\lambda_j})$ does not contain critical points of $f$ since $S_{\lambda_i}\cap S_{\lambda_j}=\emptyset$ for $\lambda_i\neq \lambda_j$. This in particular implies that the action by flow translation on this space is free. Namely, the latter statement follows from the fact that a point is a singularity of a vector field if and only if its flow fixes such a point. In particular, the gradient vector field of $f$ is nonzero at the regular points of $f$ which implies that its flow does not fix such points. Same conclusion can be obtained at the arrow level. This key observation allows us to define the composition map of the Lie groupoid from Proposition \ref{ModuliGroupoid} explicitly. Indeed, the source and target maps are respectively defined by setting $\overline{s}:\mathcal{M}(G_{S_{\lambda_i}},G_{S_{\lambda_j}})\to \mathcal{M}(S_{\lambda_i},S_{\lambda_j})$ as $\overline{s}[g]=[s(g)]$ and $\overline{t}:\mathcal{M}(G_{S_{\lambda_i}},G_{S_{\lambda_j}})\to \mathcal{M}(S_{\lambda_i},S_{\lambda_j})$ as $\overline{t}[g]=[t(g)]$. They are well-defined since our gradient vector field is multiplicative. Furthermore, if we take $[g]$ and $[h]$ in $\mathcal{M}(G_{S_{\lambda_i}},G_{S_{\lambda_j}})$ such that $\overline{s}[g]=\overline{t}[h]$ then there exists $r\in \mathbb{R}$ such that $\Phi_r(s(g))=t(h)$ which is equivalent to have $s(\widetilde{\Phi_r}(g))=t(h)$ so that we can multiply $m(\widetilde{\Phi_r}(g),h)=\widetilde{\Phi_r}(g)h$. Thus, we define $\overline{m}:\mathcal{M}(G_{S_{\lambda_i}},G_{S_{\lambda_j}})^{(2)}\to \mathcal{M}(G_{S_{\lambda_i}},G_{S_{\lambda_j}})$ as $\overline{m}([g],[h]):=[\widetilde{\Phi_r}(g)h]$.

Having this in mind, an interesting consequence of Proposition \ref{ModuliGroupoid} is the following.
\begin{remark}
As it can be viewed for instance in \cite{AB}, the Morse--Smale transversality assumption allows to show that there exists a way for constructing a compactification of these moduli spaces of gradient flow lines. Without going into too much details, it can be done by applying a series of fibered products with respect to the endpoint maps $u$ and $\l$. Therefore, as consequence of Proposition \ref{ModuliGroupoid}, \cite[Lem. 3.3]{AB} and \cite[Prop. 4.4.1]{dH} we may extend the Lie groupoid structure previously described to the respective compactifications. Here it is important to have in mind that $u$ and $l$ define Lie groupoid morphisms where $u$ is composed by locally trivial fibrations.
\end{remark}


\section{The double complex of a Morse Lie groupoid morphism}\label{S:9}

Given a Morse function $f:M\to \mathbb{R}$, the Morse-Smale-Witten complex of the pair $(M,f)$ is a complex whose homology computes the singular homology of the manifold $M$. In the setting of Morse-Bott theory, there are several versions of the Morse-Smale-Witten complex which compute either the homology or the de Rham cohomology of the manifold. We end the paper by introducing the groupoid version of the Morse complex. Given a Morse Lie groupoid morphism $F:G\to \mathbb{R}$ we construct a double cochain complex whose total cohomology recovers the total cohomology of the Bott--Shulman--Stasheff double complex associated to $G$. Our construction relies on Austin and Braam version of the Morse-Bott complex \cite{AB}.


\subsection{The Austin-Braam complex}\label{sec:AustinBraam}

Let $f:M\to \mathbb{R}$ be a Morse-Bott function and pick a Riemannian metric on $M$. Any non-degenerate critical submanifold $S\subseteq M$ determines endpoint maps $l_S:W^s(S)\to S$ and $u_S:W^u(S)\to S$ defined as in \eqref{eq:endpointmaps}. Hence, for non-degenerate critical submanifolds $S,S'$ for $f$, one considers the endpoint maps $l_{S'}:\mathcal{M}(S,S')\to S'$ and $u_S:\mathcal{M}(S,S')\to S$ defined on the moduli spaces of gradient flow lines \eqref{eq:moduliendpointmaps}. It was shown in \cite{AB} that the endpoint maps are locally trivial fibrations. 

Denote by $S_{i}$ the set of critical points of index $i$ and by $C^{i,j}:=\Omega^j(S_i)$ the set of $j$-forms on $S_i$. The cochain complex defined in \cite{AB} is built out of certain maps $\partial_r:C^{i,j}\to C^{i+r, j-r+1}$ which are the composition of a pullback of forms followed by integration over the fibers of the endpoint maps on the moduli spaces. This requires the following:

\begin{assumption}[Austin-Braam]\label{ABAssumption}
	
	\vspace{.2cm} 
\begin{enumerate} 

\item $f$ is weakly self indexing in the sense that $\mathcal{M}(S_i,S_j)=\emptyset$ for $i\leq j$.
\item For all $i,j$ and $x\in S_i$ we have that $W_i^u(x):=u_i^{-1}(x)$ intersects $W^s(S_j)$ transversally.
 		
\item the critical submanifolds $S_i$ and the negative normal bundles $\nu_-(S_i)$ are orientable for all $i$.
 \item $M$ is compact and orientable.
 \end{enumerate}
  \end{assumption}

With this, Austin and Braam defined the maps $\partial_r:C^{i,j}\to C^{i+r, j-r+1}$ as

\begin{equation}\label{eq:ABmaps}
\partial_r(\omega):=\begin{cases} d\omega, \quad r=0\\
(-1)^j(u_{i}^{i+r})_\ast(l_{i}^{i+r})^{*}(\omega), \quad \textnormal{otherwise}
\end{cases}
\end{equation}
where $(u_{i}^{i+r})_\ast$ denotes the integration of forms over the fibers of the bundle \eqref{eq:moduliendpointmaps}. One of the main results of \cite{AB} establishes that for each $j$

$$\sum_{m=0}^{j}\partial_{j-m}\circ \partial_m=0,$$
hence the maps $\partial_r$ fit together into a cochain complex

\begin{equation}\label{eq:ABcomplex}
C^{k}:=\bigoplus_{i=0}^{k}C^{i,k-i}; \quad \partial:=\partial_1\oplus \cdots \oplus \partial_m
\end{equation}
whose cohomology is isomorphic to the de Rham cohomology of $M$. In the next subsection we will see the Lie groupoid version of this complex, yielding a double complex whose total cohomology is isomorphic to the Bott-Shulmann-Stasheff cohomology.


\subsection{The double complex}
Throughout this section a Lie groupoid is denoted as $G^{(1)} \rightrightarrows G^{(0)}$. Let $F_1:(G^{(1)}\rightrightarrows G^{(0)})\to (\mathbb{R}\rightrightarrows \mathbb{R})$ be a Morse Lie groupoid morphism which is induced by a smooth basic function $F_0:G^{(0)}\to \mathbb{R}$. We assume that $G^{(1)} \rightrightarrows G^{(0)}$ is proper and either $G^{(1)}$ or $G^{(0)}$ is compact (see Remark \ref{CompactRelaxed}).

Let us consider the nerve $G^{(\bullet)}$ of $G^{(1)} \rightrightarrows G^{(0)}$ that is depicted as 
 $$G^{(\bullet)}:\ \cdots \begin{tikzpicture}[scale=1.0,baseline=-0.1cm, inner sep=1mm,>=stealth]
 	\node (-2) at (-4,0) {$G^{(n)}$};
 	\node (-1) at (-2,0) {$\cdots$};
 	\node (0) at (0,0) {$G^{(2)}$};
 	\node (1) at (2,0)  {$G^{(1)}$};
 	\node (2) at (4,0) {$G^{(0)}.$};
 	\tikzset{every node/.style={fill=white}} 
 	\draw[->,  thick] (-5.5,0.2) to  (-4.5,0.2); 
 	\draw[->,  thick] (-5.5,0.1) to  (-4.5,0.1); 
 	\draw[->,  thick] (-5.5,0) to  (-4.5,0);
 	\draw[->,  thick] (-5.5,-0.1) to  (-4.5,-0.1);
 	\draw[->,  thick] (-5.5,-0.2) to  (-4.5,-0.2);
 	\draw[->,  thick] (-3.5,0.1) to  (-2.5,0.1); 
 	\draw[->,  thick] (-3.5,0) to  (-2.5,0);
 	\draw[->,  thick] (-3.5,-0.1) to  (-2.5,-0.1);
 	\draw[->,  thick] (-3.5,-0.2) to  (-2.5,-0.2);
 	\draw[->,  thick] (-1.5,0.1) to  (-0.5,0.1); 
 	\draw[->,  thick] (-1.5,0) to  (-0.5,0);
 	\draw[->,  thick] (-1.5,-0.1) to  (-0.5,-0.1);
 	\draw[->,  thick] (-1.5,-0.2) to  (-0.5,-0.2);
 	\draw[->,  thick] (0.5,0.1) to  (1.5,0.1);  
 	\draw[->,  thick] (0.5,0) to  (1.5,0);  
 	\draw[->,  thick] (0.5,-0.1) to  (1.5,-0.1);  
 	\draw[->,  thick] (2.5,0.1) to  (3.5,0.1);
 	\draw[->,  thick] (2.5,-0.1) to  (3.5,-0.1);
 \end{tikzpicture}$$
 The manifolds $G^{(n)}$ of $n$-\textbf{composable arrows} are given by
 $$G^{(n)}=G^{(1)}\times_{G^{(0)}}\cdots \times_{G^{(0)}}G^{(1)}=\lbrace (g_n,\cdots,g_1):\ s(g_j)=t(g_{j-1}); \ j=2,\cdots,n\rbrace,$$
 and the left arrows represent the \textbf{face maps} $d_k^n:G^{(n)}\to G^{(n-1)}$ for $k=0,\cdots,n$. These are defined as $d_0^1=t$, $d_1^1=s$ and
 $$d_k^n(g_n,\cdots,g_1)= \left\{ \begin{array}{lcc}
 	(g_n,\cdots,g_2) &   \textnormal{if}  & k=0 \\
 	(g_n,\cdots,g_{k+2},g_{k+1}g_{k},g_{k-1},\cdots,g_1) &  \textnormal{if} & k=1,\cdots,n-1 \\
 	(g_{n-1},\cdots,g_1) &  \textnormal{if}  & k=n.
 \end{array}
 \right.$$
 The face maps are surjective submersions and they satisfy the so-called \textbf{simplicial identities}
 \begin{equation}\label{DC1}
 	d_{k}^{n-1}\circ d_{k'}^n=d_{k'-1}^{n-1}\circ d_{k}^n,\qquad k<k'.
 \end{equation}
By our initial assumptions it follows that $G^{(n)}$ is compact since it is closed and is contained inside $G^{(1)}\times \cdots \times G^{(1)}$. From \cite{dHF} we know that the Lie groupoid $G^{(1)} \rightrightarrows G^{(0)}$ admits an $n$-metric $\eta^{(n)}$ on $G^{(n)}$. It is important to remember that we can push $\eta^{(n)}$ forward with the different face maps $d_k^n:G^{(n)}\to G^{(n-1)}$ to define an $(n-1)$-metric $\eta^{(n-1)}$ on $G^{(n-1)}$ where $\eta^{(n-1)}=(d_k^n)_\ast\eta^{(n)}=(d_{k'}^n)_\ast\eta^{(n)}$ for all $k,k'$ and every $d_k^n:(G^{(n)},\eta^{(n)})\to (G^{(n-1)},\eta^{(n-1)})$ becomes a Riemannian submersion. One can use this process to obtain $r$-metrics $\eta^{(r)}$ on $G^{(r)}$ in such a way that $d_k^r:(G^{(r)},\eta^{(r)})\to (G^{(r-1)},\eta^{(r-1)})$ is a Riemannian submersion for every $0\leq r\leq n-1$.

Applying the nerve functor to $F_1:G^{(1)}\to \mathbb{R}$ yields a simplicial function $F_\bullet=(F_n)_{n\in\mathbb{N}}$ between the nerves:
$$F_{\bullet}:\ \begin{tikzpicture}[scale=1.0,baseline=-0.1cm, inner sep=1mm,>=stealth]
	\node (-4) at (-6,0) {$\cdots$};
	\node (-3) at (-6,-2) {$\cdots$};
	\node (-2) at (-4,0) {$G^{(n)}$};
	\node (-1) at (-2,0) {$\cdots$};
	\node (0) at (0,0) {$G^{(2)}$};
	\node (1) at (2,0)  {$G^{(1)}$};
	\node (2) at (4,0) {$G^{(0)}$};
	\node (3) at (-4,-2) {$\mathbb{R}$};
	\node (4) at (-2,-2) {$\cdots$};
	\node (5) at (0,-2) {$\mathbb{R}$};
	\node (6) at (2,-2)  {$\mathbb{R}$};
	\node (7) at (4,-2) {$\mathbb{R},$};
	\tikzset{every node/.style={fill=white}} 
	\draw[->,  thick] (-5.5,0.2) to  (-4.5,0.2); 
	\draw[->,  thick] (-5.5,0.1) to  (-4.5,0.1); 
	\draw[->,  thick] (-5.5,0) to  (-4.5,0);
	\draw[->,  thick] (-5.5,-0.1) to  (-4.5,-0.1);
	\draw[->,  thick] (-5.5,-0.2) to  (-4.5,-0.2);
	\draw[->,  thick] (-3.5,0.1) to  (-2.5,0.1); 
	\draw[->,  thick] (-3.5,0) to  (-2.5,0);
	\draw[->,  thick] (-3.5,-0.1) to  (-2.5,-0.1);
	\draw[->,  thick] (-3.5,-0.2) to  (-2.5,-0.2);
	\draw[->,  thick] (-1.5,0.1) to  (-0.5,0.1); 
	\draw[->,  thick] (-1.5,0) to  (-0.5,0);
	\draw[->,  thick] (-1.5,-0.1) to  (-0.5,-0.1);
	\draw[->,  thick] (-1.5,-0.2) to  (-0.5,-0.2);
	\draw[->,  thick] (0.5,0.1) to  (1.5,0.1);  
	\draw[->,  thick] (0.5,0) to  (1.5,0);  
	\draw[->,  thick] (0.5,-0.1) to  (1.5,-0.1);  
	\draw[->,  thick] (2.5,0.1) to  (3.5,0.1);
	\draw[->,  thick] (2.5,-0.1) to  (3.5,-0.1);
	\draw[->,  thick]  (-2) to node[midway,left] {$F_{n}$} (3);
	\draw[->,  thick]  (-1) to (4);
	\draw[->,  thick]  (0) to node[midway,left] {$F_{2}$} (5);
	\draw[->,  thick]  (1) to node[midway,left] {$F_{1}$} (6);
	\draw[->,  thick]  (2) to node[midway,right] {$F_{0}$} (7);
	\draw[->,  thick]  (3) to (4);
	\draw[->,  thick]  (4) to (5);
	\draw[->,  thick]  (5) to (6);
	\draw[->,  thick]  (6) to (7);
	\draw[->,  thick]  (-5.5,-2) to (3);
\end{tikzpicture}$$
which, for $n\geq 2$, it is inductively defined as $F_n = (d_{k}^n)^\ast F_{n-1}=(d_{k'}^n)^\ast F_{n-1}$ for all $k,k'$. These smooth functions are well defined since $F_1$ is a Lie groupoid morphism and the simplicial identities \eqref{DC1} hold true. Recall that, in our case, the set of critical points of $F_0$ is given by a disjoint union of finite compact and connected Lie groupoid orbits $\textnormal{Crit}(F_0)=\dot{\bigcup}\mathcal{O}$ since $G_0$ is compact and $G^{(1)} \rightrightarrows G^{(0)}$ is proper.

\begin{lemma}\label{DCL1}
There exists a sub-nerve $G^{(\bullet)}_i$ of $G^{(\bullet)}$ formed by non-degenerate critical submanifolds of index $i$ for $F_\bullet$ of the form:
$$G^{(\bullet)}_i:\ \cdots \begin{tikzpicture}[scale=1.0,baseline=-0.1cm, inner sep=1mm,>=stealth]
	\node (-2) at (-4,0) {$G^{(n)}_i$};
	\node (-1) at (-2,0) {$\cdots$};
	\node (0) at (0,0) {$G^{(2)}_i$};
	\node (1) at (2,0)  {$G^{(1)}_i$};
	\node (2) at (4,0) {$G^{(0)}_i,$};
	\tikzset{every node/.style={fill=white}} 
	\draw[->,  thick] (-5.5,0.2) to  (-4.5,0.2); 
	\draw[->,  thick] (-5.5,0.1) to  (-4.5,0.1); 
	\draw[->,  thick] (-5.5,0) to  (-4.5,0);
	\draw[->,  thick] (-5.5,-0.1) to  (-4.5,-0.1);
	\draw[->,  thick] (-5.5,-0.2) to  (-4.5,-0.2);
	\draw[->,  thick] (-3.5,0.1) to  (-2.5,0.1); 
	\draw[->,  thick] (-3.5,0) to  (-2.5,0);
	\draw[->,  thick] (-3.5,-0.1) to  (-2.5,-0.1);
	\draw[->,  thick] (-3.5,-0.2) to  (-2.5,-0.2);
	\draw[->,  thick] (-1.5,0.1) to  (-0.5,0.1); 
	\draw[->,  thick] (-1.5,0) to  (-0.5,0);
	\draw[->,  thick] (-1.5,-0.1) to  (-0.5,-0.1);
	\draw[->,  thick] (-1.5,-0.2) to  (-0.5,-0.2);
	\draw[->,  thick] (0.5,0.1) to  (1.5,0.1);  
	\draw[->,  thick] (0.5,0) to  (1.5,0);  
	\draw[->,  thick] (0.5,-0.1) to  (1.5,-0.1);  
	\draw[->,  thick] (2.5,0.1) to  (3.5,0.1);
	\draw[->,  thick] (2.5,-0.1) to  (3.5,-0.1);
\end{tikzpicture}$$
where $G^{(n)}_i=\dot{\bigcup}  \lbrace G^{(n)}_{\mathcal{O}}\subset \textnormal{Crit}(F_n):\ \lambda(F_n,G^{(n)}_{\mathcal{O}})=i\rbrace$ and $G^{(n)}_{\mathcal{O}}$ denotes the manifold of $n$-composable arrows of $G_\mathcal{O}\rightrightarrows \mathcal{O}$.
\end{lemma}

\begin{proof}
This follows by applying simple arguments from the previous sections since $F_1$ is a Morse Lie groupoid morphism. Namely, $G^{(0)}_i$ is a saturated submanifold with $G^{(1)}_i=s^{-1}(G^{(0)}_i)=t^{-1}(G^{(0)}_i)$ and all the face maps of $G^{(\bullet)}$ are Riemannian submersions verifying the simplicial identities \eqref{DC1}. 
\end{proof}
Let us now consider the collection of vector fields $-\nabla F_\bullet=(-\nabla F_n)_{n\in\mathbb{N}}$ where $-\nabla F_n$ denotes the negative gradient vector field of $F_n$ on $G^{(n)}$ with respect to $\eta^{(n)}$. As the face maps $d_k^n:G^{(n)}\to G^{(n-1)}$ are Riemannian submersions we actually have that $-\nabla F_\bullet$ defines a simplicial vector field on the nerve $G^{(\bullet)}$ since $\eta^{(n-1)}=(d_k^n)_\ast\eta^{(n)}$ so that $d(d_k^n)\circ \nabla F_n=\nabla F_{n-1}\circ d_k^n$. It turns out that the collection of descending flows $\Phi^{n}_{\tau}:G^{(n)}\to G^{(n)}$, which are defined for all $\tau\in \mathbb{R}$ since $G^{(n)}$ is compact, verifies the relations
\begin{equation}\label{DC2}
	d_k^n\circ \Phi^{n}_{\tau}=\Phi^{n-1}_{\tau}\circ d_k^n,\qquad k=0,\cdots,n\quad \textnormal{and}\quad \forall \tau\in\mathbb{R},
\end{equation}
thus obtaining a simplicial automorphism $\Phi^{\bullet}_\tau:G^{(\bullet)}\to G^{(\bullet)}$ for all $\tau\in \mathbb{R}$. The collection of smooth endpoint maps associated to the collection of stable and unstable submanifolds $W^u(G^{(n)}_i)$ and $W^s(G^{(n)}_i)$ will be denoted by $u_i(n):W^u(G^{(n)}_i)\to G^{(n)}_i$ and $l_i(n):W^s(G^{(n)}_i)\to G^{(n)}_i$. Recall that these maps are locally trivial fiber bundles respectively defined by sending $A\mapsto \lim_{\tau\to -\infty} \Phi^{n}_{\tau}(A)$ and $A\mapsto \lim_{\tau\to \infty} \Phi^{n}_{\tau}(A)$. It follows directly from Identities \eqref{DC2} that
\begin{equation}\label{DC3}
	d_k^n\circ u_i(n)=u_i(n-1)\circ d_k^n\quad \textnormal{and}\quad d_k^n\circ l_i(n)=l_i(n-1)\circ d_k^n,
\end{equation}
for all $k=0,\cdots,n$. For indexes $i$ and $j$ we can consider again the moduli spaces of gradient flow lines
$$\mathcal{M}^n(G^{(n)}_i,G^{(n)}_j)=W^u(G^{(n)}_i)\cap W^s(G^{(n)}_j)/\mathbb{R},$$
which are the quotient spaces defined through the action by flow translation.
\begin{lemma}\label{DCL2}
The simplicial structure of the nerve $G^{(\bullet)}$ may be restricted to define the stable and unstable sub-nerves $W^s(G^{(\bullet)}_i)$ and $W^u(G^{(\bullet)}_i)$ of the sub-nerve $G^{(\bullet)}_i$ and a topological sub-nerve structure $\mathcal{M}^\bullet(G^{(\bullet)}_i,G^{(\bullet)}_j)$ between the moduli spaces of gradient flow lines.	
\end{lemma}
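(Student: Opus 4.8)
The plan is to reduce the entire statement to the flow-compatibility Identity \eqref{DC2}, namely $d_k^n\circ \Phi^n_\tau=\Phi^{n-1}_\tau\circ d_k^n$, together with the fact recorded in Lemma \ref{DCL1} that the critical sub-nerve $G^{(\bullet)}_i$ is closed under the face maps. All three structures are obtained simply by checking that the face maps of $G^{(\bullet)}$ restrict to the relevant loci; the simplicial identities \eqref{DC1} are then inherited automatically, since the restricted maps are literal restrictions of maps already known to satisfy them.

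First I would handle the unstable sub-nerve $W^u(G^{(\bullet)}_i)$, the stable case $W^s(G^{(\bullet)}_i)$ being identical with $\tau\to +\infty$. Fix $A\in W^u(G^{(n)}_i)$, so that $\lim_{\tau\to -\infty}\Phi^n_\tau(A)\in G^{(n)}_i$. As each $d_k^n$ is continuous, Identity \eqref{DC2} lets me commute it past the limit,
\[
\lim_{\tau\to -\infty}\Phi^{n-1}_\tau(d_k^n(A))=d_k^n\Big(\lim_{\tau\to -\infty}\Phi^n_\tau(A)\Big)\in d_k^n\big(G^{(n)}_i\big)\subseteq G^{(n-1)}_i,
\]
the last inclusion being exactly the content of Lemma \ref{DCL1}. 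Hence $d_k^n(A)\in W^u(G^{(n-1)}_i)$, so $d_k^n$ restricts to a map $W^u(G^{(n)}_i)\to W^u(G^{(n-1)}_i)$, which is the restriction property underlying Identity \eqref{DC3}. The same argument applied to the unit-insertion maps, using that $\Phi^\bullet$ is a full simplicial automorphism, shows the degeneracies restrict as well, so that $W^u(G^{(\bullet)}_i)$ is a genuine sub-nerve of $G^{(\bullet)}_i$ --- in fact the nerve of the unstable Lie groupoid of Lemma \ref{Stable/UnstableGroupoids}.

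For the moduli spaces I would first intersect: the previous step gives that $d_k^n$ restricts to $W^u(G^{(n)}_i)\cap W^s(G^{(n)}_j)\to W^u(G^{(n-1)}_i)\cap W^s(G^{(n-1)}_j)$. It then remains to descend these maps to the quotients by the flow-translation action $r\cdot A=\Phi^n_r(A)$, and this is immediate from \eqref{DC2}, which says exactly that $d_k^n$ is $\mathbb{R}$-equivariant:
\[
d_k^n(r\cdot A)=d_k^n(\Phi^n_r(A))=\Phi^{n-1}_r(d_k^n(A))=r\cdot d_k^n(A).
\]
Equivariance yields well-defined continuous maps $\overline{d_k^n}\colon \mathcal{M}^n(G^{(n)}_i,G^{(n)}_j)\to \mathcal{M}^{n-1}(G^{(n-1)}_i,G^{(n-1)}_j)$ for the quotient topologies, and these once more inherit the simplicial identities \eqref{DC1}, producing the topological sub-nerve $\mathcal{M}^\bullet(G^{(\bullet)}_i,G^{(\bullet)}_j)$, which one recognizes as the nerve of the moduli groupoid of Proposition \ref{ModuliGroupoid}.

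The computations are routine; the only point demanding care is the commutation of the face maps with the limits defining the endpoint maps, which is why I phrase the first step directly through \eqref{DC2} rather than invoking \eqref{DC3}, whose very formulation already presupposes that the face maps preserve the stable and unstable loci. I expect no genuine obstacle, only the mild subtlety of recording that the moduli structure maps $\overline{d_k^n}$ are merely continuous: this is why $\mathcal{M}^\bullet$ is asserted at the topological level. Upgrading them to smooth maps would require the Morse--Smale transversality hypothesis of Definition \ref{MStransversalityDef} in order to know the quotients are manifolds, which is not part of the present statement.
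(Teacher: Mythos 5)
Your proposal is correct and takes essentially the same route as the paper: the paper's proof likewise obtains the stable/unstable restrictions by flowing to $\infty$ and $-\infty$ on both sides of Identity \eqref{DC2}, and then descends the face maps to the moduli spaces via $[A]_n\mapsto[d_k^n(A)]_{n-1}$, which is well defined exactly by the $\mathbb{R}$-equivariance you spell out. The only cosmetic differences are that you make explicit the inclusion $d_k^n(G^{(n)}_i)\subseteq G^{(n-1)}_i$ from Lemma \ref{DCL1} and the continuity of the quotient maps, where the paper instead records that the maps $\overline{d_k^n}$ are open and surjective because the $d_k^n$ are.
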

\begin{proof}
Note that the first statement follows by flowing to $\infty$ and $-\infty$ at both sides of Equations \eqref{DC2} so that $d_k^n: W^s(G^{(n)}_i)\to W^s(G^{(n-1)}_i)$ and $d_k^n: W^u(G^{(n)}_i)\to W^u(G^{(n-1)}_i)$ are well restricted. Furthermore, the maps $\overline{d_k^n}:\mathcal{M}^n(G^{(n)}_i,G^{(n)}_j)\to \mathcal{M}^{n-1}(G^{(n-1)}_i,G^{(n-1)}_j)$ defined by sending $[A]_n\mapsto [d_k^n(A)]_{n-1}$ are well defined, open and surjective since $d_k^n$ are so.
\end{proof}
We are interested in inducing well behaved structures of smooth manifold and orientability over our moduli spaces of gradient flow lines. To obtain this we require the following analogue of the Austin-Braam assumption (see Subsection \ref{sec:AustinBraam}).

 \begin{assumption}\label{StrongAssumption}
\vspace{.2cm} 	
\begin{enumerate}
 \item[(a)] $\mathcal{M}^0(G^{(0)}_i,G^{(0)}_j)=\emptyset$ if $i\leq j$ ($F_0$ is weakly self-indexing).
 \item[(b)] For all $i,j$ and $x\in G^{(0)}_i$ we have that $W_i^u(x):=u_i(0)^{-1}(x)$ intersects $W^s(G^{(0)}_j)$ transversally:
 		$$W_i^u(x)\pitchfork W^s(G^{(0)}_j).$$
 \item[(c)] Both the critical submanifolds $G^{(0)}_i$ and the negative normal bundles $\nu_-(G^{(0)}_i)$ are orientable for all $i$.
 \item[(d)] $G^{(n)}$ is orientable for all $n\in \mathbb{N}$.
 \end{enumerate}
 \end{assumption}
 \begin{remark}
 Although condition (d) from previous assumption may seem to be somewhat restrictive there are many cases where such a requirement can be achieved. For instance, if $G^{(0)}$ is orientable then every manifold conforming the nerve of: the unit groupoid, the pair groupoid, the action groupoid defined through a smooth action of a Lie group on $G^{(0)}$, and \'etale Lie groupoids over $G^{(0)}$, is orientable.
 \end{remark}

\begin{lemma}
Conditions (a)-(b)-(c) from Assumption \ref{StrongAssumption} are satisfied at every level of the nerve configuration. 
\end{lemma}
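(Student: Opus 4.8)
The plan is to prove all three conditions by induction on the nerve level $n$, the base case $n=0$ being precisely Assumption \ref{StrongAssumption}. The whole argument rests on a single structural observation: for every face map one has $G^{(n)}_i=(d^n_k)^{-1}(G^{(n-1)}_i)$. Indeed, a composable string lies in $G^{(n)}_i$ exactly when all of its entries are critical arrows of index $i$, and since $G^{(1)}_i=s^{-1}(G^{(0)}_i)=t^{-1}(G^{(0)}_i)$ is saturated, requiring $d^n_k$ of a string to be critical of index $i$ forces every remaining entry to have source and target in $G^{(0)}_i$, hence to be itself a critical arrow of index $i$ (this includes the composed entry $g_{k+1}g_k$ occurring in the middle face maps). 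Because $F_n=(d^n_k)^\ast F_{n-1}$ and each $d^n_k$ is a Riemannian submersion, Formula \eqref{eq:Hessianpullback} shows, exactly as in Section \ref{S:5} for $s,t$, that $\overline{d(d^n_k)}$ identifies the negative and positive normal bundles fiberwise; consequently the index is preserved and $\nu_\pm(G^{(n)}_i)=(d^n_k)^\ast\nu_\pm(G^{(n-1)}_i)$. Finally, the compatibilities \eqref{DC2} and \eqref{DC3} of the descending flows and the endpoint maps give $W^{u}(G^{(n)}_i)=(d^n_k)^{-1}(W^{u}(G^{(n-1)}_i))$ and likewise for $W^{s}$, where compactness of $G^{(n)}$ guarantees convergence of the flows to the critical set.

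For condition (a), I would suppose $W^{u}(G^{(n)}_i)\cap W^{s}(G^{(n)}_j)\neq\emptyset$ for some $i\le j$. Applying a face map together with the two preimage identities above, its image lies in $W^{u}(G^{(n-1)}_i)\cap W^{s}(G^{(n-1)}_j)$; a noncritical point projects to a noncritical one because $\nabla F_n$ is the horizontal lift of $\nabla F_{n-1}$. Iterating down to level $0$ would make $\mathcal{M}^0(G^{(0)}_i,G^{(0)}_j)$ nonempty with $i\le j$, contradicting Assumption \ref{StrongAssumption}(a). Hence $\mathcal{M}^n(G^{(n)}_i,G^{(n)}_j)=\emptyset$ whenever $i\le j$.

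For condition (b), fix $A\in G^{(n)}_i$, set $W_i^u(A):=u_i(n)^{-1}(A)$ and $A'=d^n_k(A)\in G^{(n-1)}_i$, and let $B\in W_i^u(A)\cap W^s(G^{(n)}_j)$ with $B'=d^n_k(B)$. The compatibility of the flows gives $d^n_k(W_i^u(A))\subseteq W_i^u(A')$, and the crucial point, which I expect to be the main obstacle, is that this restriction $d^n_k\colon W_i^u(A)\to W_i^u(A')$ is in fact a diffeomorphism. This is where the Riemannian-submersion hypothesis is essential: since $\overline{d(d^n_k)}$ is a fiberwise isomorphism of the negative normal bundles (Lemma \ref{NegativeNormalLG} and the discussion of Section \ref{S:5}) and both unstable manifolds are $i$-dimensional disks tangent along the critical set to the negative normal directions, the map is a local diffeomorphism intertwining the flows, hence a global one. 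Granting this, $d(d^n_k)_B$ restricts to an isomorphism $T_BW_i^u(A)\xrightarrow{\cong}T_{B'}W_i^u(A')$, while $T_BW^s(G^{(n)}_j)=(d(d^n_k)_B)^{-1}(T_{B'}W^s(G^{(n-1)}_j))$. Given any $v\in T_BG^{(n)}$, transversality at level $n-1$ splits $d(d^n_k)_B(v)=a'+b'$ with $a'\in T_{B'}W_i^u(A')$ and $b'\in T_{B'}W^s(G^{(n-1)}_j)$; lifting $a'$ through the isomorphism to $a\in T_BW_i^u(A)$ forces $v-a\in T_BW^s(G^{(n)}_j)$, so $T_BW_i^u(A)+T_BW^s(G^{(n)}_j)=T_BG^{(n)}$. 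This is the levelwise analogue of Lemma \ref{MStransversal}, with the face submersion $d^n_k$ in the role of $s$.

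For condition (c), the orientability of $\nu_-(G^{(n)}_i)$ is immediate from $\nu_-(G^{(n)}_i)=(d^n_k)^\ast\nu_-(G^{(n-1)}_i)$ and Assumption \ref{StrongAssumption}(c), since a pullback of an orientable bundle is orientable. For the orientability of $G^{(n)}_i$ itself I would proceed as follows. At level $0$ the orientability of $G^{(0)}$ (Assumption \ref{StrongAssumption}(d) with $n=0$) and of $G^{(0)}_i$ forces the full normal bundle $\nu(G^{(0)}_i)$ to be orientable through $TG^{(0)}|_{G^{(0)}_i}=TG^{(0)}_i\oplus\nu(G^{(0)}_i)$; pulling back along the face maps then gives $\nu(G^{(n)}_i)=(d^n_k)^\ast\nu(G^{(n-1)}_i)$ orientable at every level. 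Combining this with the orientability of $G^{(n)}$ (Assumption \ref{StrongAssumption}(d)) and the splitting $TG^{(n)}|_{G^{(n)}_i}=TG^{(n)}_i\oplus\nu(G^{(n)}_i)$ shows $TG^{(n)}_i$ is orientable, which closes the induction.
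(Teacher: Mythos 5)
Your proof is correct and follows essentially the same route as the paper's: induction on the nerve level, using that the face maps are Riemannian submersions satisfying $G^{(n)}_i=(d_k^n)^{-1}(G^{(n-1)}_i)$, with descent of the moduli spaces along $\overline{d_k^n}$ for (a), the Lemma \ref{MStransversal}-type transversality transfer with $d_k^n$ in place of $s$ for (b), and orientability via pullback of (negative) normal bundles for (c). The only notable difference is that you make explicit the fiberwise isomorphism $T_BW^u_i(A)\cong T_{B'}W^u_i(A')$ obtained from flow conjugation (which is all your lifting argument actually needs -- the global diffeomorphism claim is stronger than necessary), a point the paper leaves implicit when it invokes the argument of Lemma \ref{MStransversal} verbatim for the submersions $d_k^n$.
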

\begin{proof}
First, by Lemma \ref{DCL2} and proceeding by induction over $n$ it is simple to check that $\mathcal{M}^{n-1}(G^{(n-1)}_i,G^{(n-1)}_j)=\emptyset$ if $i\leq j$ implies that $\mathcal{M}^n(G^{(n)}_i,G^{(n)}_j)=\emptyset$ if $i\leq j$. The step $n=1$ is consequence of (a) and the well definition of $\overline{d_k^1}$.

Proceeding again by induction over $n$, we may prove that if the Morse--Smale transversality condition (b) holds true at the level $n-1$ then it also holds true at the level $n$. The case $n=1$ was argued in Lemma \ref{MStransversal}. The inductive cases follow by arguing in the exactly same way but with the submersions $d_k^n$.

Finally, let us now look at the orientability requirements (c). On the one hand, by the Transverse Submanifold Theorem, it is well known that because of conditions (c) and (d) together the fact that $d_k^n$ are submersions with $G^{(n)}_i=(d_k^n)^{-1}(G^{(n-1)}_i)$, it follows that if $G^{(n-1)}_i$ is orientable then so is $G^{(n)}_i$. The induced orientation on $G^{(n)}_i$ is the one obtained from the orientability of $G^{(n)}$, $G^{(n-1)}$ and $G^{(n-1)}_i$. On the other hand, recall that in Proposition \ref{NegativeNormalLG} we proved that the Lie groupoid structure of $\nu(G^{(1)}_{i})\rightrightarrows \nu(G^{(0)}_i)$ can be well restricted to define a Lie groupoid between $\nu_-(G^{(1)}_{i})\rightrightarrows \nu_-(G^{(0)}_i)$. To do so, it was mainly used the fact that the structural maps of our Lie groupoid are Riemannian submersions, the simplicial identities \eqref{DC1}, and the relation between the Hessian forms of $F_1$ and $F_0$. In consequence, by using Lemma \ref{DCL1} we may easily extend some of these arguments to construct a sub-nerve $\nu_-(G^{(\bullet)}_{i})$ of $\nu_-(G^{(\bullet)})$ where the face maps at every level are the collection of fiberwise isomorphisms $\overline{d(d_k^n)}$. Therefore, proceeding again by  induction over $n$ we will have that if $\nu_-(G^{(n-1)}_{i})$ is orientable, then so is $\nu_-(G^{(n)}_{i})$. Indeed, by following \cite[c. VII]{GHV} and \cite[c. 8]{Li1}, if $\iota_x:\nu_-(G^{(0)}_{i})_x\hookrightarrow \nu_-(G^{(0)}_{i})$ denotes the canonical inclusion of each fiber and $\omega$ is an $i$-form on $\nu_-(G^{(0)}_{i})$ inducing orientations $\iota_x^\ast\omega_x$ at every fiber $\nu_-(G^{(0)}_{i})_x$, then when pulling $\iota_x^\ast\omega_x$ back by the isomorphism $\overline{ds}_{g}:\nu_-(G^{(1)}_{i})_g\to \nu_-(G^{(0)}_{i})_x$ we get orientations $(\iota_x\circ \overline{ds}_{g})^\ast\omega_x=(\overline{ds}\circ \iota_g)^\ast\omega_x$ for each fiber $\iota_g: \nu_-(G^{(1)}_{i})_g\hookrightarrow \nu_-(G^{(1)}_{i})$ so that $\overline{ds}^\ast \omega$ is an $i$-form inducing an orientation on $\nu_-(G^{(1)}_{i})$. Hence, the other orientations are inductively obtained by using any of the fiberwise isomorphisms $\overline{d(d_k^n)}$.
\end{proof}
The first important consequence of the previous result is that because of (b), for every $n\in \mathbb{N}$ the moduli of gradient flow lines $\mathcal{M}^n(G^{(n)}_i,G^{(n)}_j)$ is a smooth manifold where the new endpoint maps
 \begin{equation}\label{DC4}
 	u^i_j(n):\mathcal{M}^n(G^{(n)}_i,G^{(n)}_j)\to G^{(n)}_i\quad\textnormal{and}\quad l^i_j(n):\mathcal{M}^n(G^{(n)}_i,G^{(n)}_j)\to G^{(n)}_j
 \end{equation}
defined by $u^i_j(n)([A]_n):=u_i(n)(A)$ and $l^i_j(n)([A]_n):=l_j(n)(A)$, are well defined smooth maps such that $u^i_j(n)$ has the structure of locally trivial bundle; see \cite{AB}. Therefore, as the action by flow translations is free and proper we get that the maps $\overline{d_k^n}$ are smooth and from Identities \eqref{DC3} we obtain that the following diagrams are commutative
 $$\xymatrix{
 	\mathcal{M}^n(G^{(n)}_i,G^{(n)}_j) \ar@<.5ex>[d]_{u^i_j(n)}\ar[r]^{\overline{d_k^n}\quad} & \mathcal{M}^{n-1}(G^{(n-1)}_i,G^{(n-1)}_j)  \ar@<-.5ex>[d]^{u^i_j(n-1)}\\
 	G^{(n)}_i \ar[r]_{d_k^n} & G^{(n-1)}_i
 }\qquad \xymatrix{
 	\mathcal{M}^n(G^{(n)}_i,G^{(n)}_j) \ar@<.5ex>[d]_{l^i_j(n)}\ar[r]^{\overline{d_k^n}\quad} & \mathcal{M}^{n-1}(G^{(n-1)}_i,G^{(n-1)}_j)  \ar@<-.5ex>[d]^{l^i_j(n-1)}\\
 	G^{(n)}_j \ar[r]_{d_k^n} & G^{(n-1)}_j
 }$$
 so that we have the commutative identities
 \begin{equation}\label{DC5}
 	d_k^n\circ u^i_j(n)=u^i_j(n-1)\circ \overline{d_k^n}\quad \textnormal{and} \quad d_k^n\circ l^i_j(n)=l^i_j(n-1)\circ \overline{d_k^n}.
 \end{equation}
In other words, we have obtained simplicial smooth maps

\begin{equation}\label{eq:nerveendpointmaps}
u^i_j(\bullet):\mathcal{M}^\bullet(G^{(\bullet)}_i,G^{(\bullet)}_j)\to G^{(\bullet)}_i \text{ and } l^i_j(\bullet):\mathcal{M}^\bullet(G^{(\bullet)}_i,G^{(\bullet)}_j)\to G^{(\bullet)}_j
\end{equation}
with $u^i_j(\bullet)$ a locally trivial simplicial fibration.

Let us define our double cochain complex. For that, we will define maps which are the composition of the pullback operation followed by the integration along the fibers via \eqref{eq:nerveendpointmaps}.


For each $n\in \mathbb{N}$ we set $C^{i,j}(G^{(n)}):=\Omega^j(G^{(n)}_i)$ and define the operator 

$\partial_r^n:C^{i,j}(G^{(n)})\to C^{i+r,j-r+1}(G^{(n)})$ as
 $$\partial_r^n(\omega)= \left\{ \begin{array}{lcc}
 	d\omega &   \textnormal{if}  & r=0 \\
 	 (-1)^j(u^{i+r}_i(n))_\ast(l^{i+r}_i(n))^\ast(\omega) &  \textnormal{otherwise,} &  
 \end{array}
 \right.$$
 where $(u^{i+r}_i(n))_\ast$ is integration along the fiber of the bundle \eqref{DC4}. Let us now set 
 
 \begin{equation}\label{eq:groupoidAB}
 C^{p}(G^{(n)}):=\bigoplus_{i+j=p}C^{i,j}(G^{(n)})=\bigoplus_{i+j=p}\Omega^j(G^{(n)}_i)\quad\textnormal{with}\quad \partial^n=\sum \partial_i^n.
 \end{equation}
 
\noindent As it was shown for the classical case in \cite{AB}, the operator $\partial^n$ is a boundary operator, i.e., $(\partial^n)^2=0$. Let us now consider the simplicial differentials of the sub-nerve $G^{(\bullet)}_i$ from Lemma \ref{DCL1}:
 $$\delta^n_i=\sum(-1)^k(d_k^n)^\ast:C^{i,j}(G^{(n-1)})\to C^{i,j}(G^{(n)}).$$
 
\begin{lemma}\label{KeyLemmaDoubleComplex}
 The following diagram commutes for all $i$ and $r$
 $$\xymatrix{
 	\Omega^j(G^{(n-1)}_i) \ar@<.5ex>[d]_{\delta^n_i}\ar[r]^{\partial_r^{n-1}\quad} & \Omega^{j-r+1}(G^{(n-1)}_{i+r})  \ar@<-.5ex>[d]^{\delta^{n}_{i+r}}\\
 	\Omega^j(G^{(n)}_i) \ar[r]_{\partial_r^n\quad} & \Omega^{j-r+1}(G^{(n)}_{i+r}).
 }$$
 \end{lemma}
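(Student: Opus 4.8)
The plan is to prove the identity $\delta^n_{i+r}\circ\partial_r^{n-1}=\partial_r^n\circ\delta^n_i$ by separating the two families of operators according to whether $r=0$ or $r\neq0$. For $r=0$ one has $\partial_0^{n}=d$ and the target index is again $i$, so the claim collapses to $\delta^n_i\circ d=d\circ\delta^n_i$. Since $\delta^n_i=\sum_{k}(-1)^k(d_k^n)^\ast$ is an alternating sum of pullbacks along the face maps and the exterior derivative commutes with every smooth pullback, this case is immediate.

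For $r\neq0$ I would first record the two commuting squares coming from \eqref{DC5}, read with top index $i+r$ and bottom index $i$:
\[
d_k^n\circ u^{i+r}_i(n)=u^{i+r}_i(n-1)\circ\overline{d_k^n},\qquad
d_k^n\circ l^{i+r}_i(n)=l^{i+r}_i(n-1)\circ\overline{d_k^n}.
\]
Because the simplicial differential preserves form-degree, the prefactor $(-1)^j$ appearing in $\partial_r$ is the same on both composites, so it suffices to verify, for each face index $k$, the term-by-term identity
\[
(d_k^n)^\ast\,(u^{i+r}_i(n-1))_\ast\,(l^{i+r}_i(n-1))^\ast
=(u^{i+r}_i(n))_\ast\,(l^{i+r}_i(n))^\ast\,(d_k^n)^\ast .
\]
Applying contravariance of pullback to the second square gives $(l^{i+r}_i(n))^\ast(d_k^n)^\ast=(\overline{d_k^n})^\ast(l^{i+r}_i(n-1))^\ast$, so the right-hand side equals $(u^{i+r}_i(n))_\ast(\overline{d_k^n})^\ast$ applied to $(l^{i+r}_i(n-1))^\ast\omega$. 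Everything therefore reduces to the base-change identity
\[
(d_k^n)^\ast\,(u^{i+r}_i(n-1))_\ast=(u^{i+r}_i(n))_\ast\,(\overline{d_k^n})^\ast
\]
for integration along the fibres of the locally trivial bundles $u^{i+r}_i(n)$ and $u^{i+r}_i(n-1)$ of the first square.

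The heart of the argument, and the step I expect to be the main obstacle, is to show that this first square is cartesian, i.e. that the natural map identifies
\[
\mathcal{M}^n(G^{(n)}_{i+r},G^{(n)}_i)\;\cong\;G^{(n)}_{i+r}\times_{G^{(n-1)}_{i+r}}\mathcal{M}^{n-1}(G^{(n-1)}_{i+r},G^{(n-1)}_i),
\]
so that the projection/base-change theorem for fibre integration applies with matching fibres and compatible orientations. To establish this I would argue fibrewise over $G^{(n)}_{i+r}$: the fibre of the endpoint map $u^{i+r}_i(n)$ over a point $p$ is the unstable disk $u_{i+r}(n)^{-1}(p)$ intersected with $W^s(G^{(n)}_i)$ modulo the flow action, and $d_k^n$ carries it to the corresponding object over $d_k^n(p)$. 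Since the indices coincide (Lemma \ref{DCL1}) and $\overline{d(d_k^n)}$ restricts to a fibrewise isomorphism of the negative normal bundles $\nu_-(G^{(n)}_{i+r})\to\nu_-(G^{(n-1)}_{i+r})$ — exactly as in Lemma \ref{NegativeNormalLG} and the $\nu_-(G^{(\bullet)}_i)$ sub-nerve produced in the preceding lemma — the face map restricts to a diffeomorphism of the unstable disks upstairs and downstairs, which forces the square to be cartesian; the orientation compatibility needed for the signs is furnished by the orientations of $\nu_-(G^{(\bullet)}_i)$ built under Assumption \ref{StrongAssumption}. Granting this, the base-change formula yields the displayed term-by-term identity; multiplying by $(-1)^k$, summing over $k$, and reinstating the common factor $(-1)^j$ then gives $\delta^n_{i+r}\circ\partial_r^{n-1}=\partial_r^n\circ\delta^n_i$, completing the proof.
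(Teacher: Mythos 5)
Your proposal is correct and follows essentially the same route as the paper's proof: the same split into the cases $r=0$ and $r\neq 0$, the same reduction of the $r\neq 0$ case to a term-by-term identity for each face index $k$ via the commutation relations \eqref{DC5} and contravariance of pullback, and the same appeal to the base-change formula for integration along the fibers of the endpoint fibrations. The only difference is one of emphasis: where the paper simply invokes base-change together with \eqref{DC5}, you explicitly verify the hypothesis that makes base-change legitimate, namely that the square relating $u^{i+r}_i(n)$ and $u^{i+r}_i(n-1)$ is cartesian with orientation-compatible fibers, and your fibrewise argument (the face maps commute with the descending flows and restrict to isomorphisms of the negative normal bundles, hence to diffeomorphisms of unstable fibers, with orientations supplied by Assumption \ref{StrongAssumption}) is the correct way to fill in this detail that the paper leaves implicit.
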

\begin{proof}
Observe that if $r=0$ then
 $$\delta^n_{i+0}\circ \partial_0^{n-1} = \partial_0^{n}\circ\delta^n_{i}\Longleftrightarrow \delta^n_{i}\circ d = d\circ\delta^n_{i},$$
 since the simplicial differential already commute with the de Rham differentials. Otherwise,
 $$(\partial_r^n\circ \delta^n_{i})(\omega)=\partial_r^n\left(\sum(-1)^k(d_k^n)^\ast(\omega)\right)=\sum(-1)^k\partial_r^n((d_k^n)^\ast(\omega)),$$
 where, as consequence of the base-change formula of the integration along the fiber operation together with Identities \eqref{DC5}, we obtain
 \begin{eqnarray*}
 	\partial_r^n((d_k^n)^\ast(\omega)) &=& (-1)^j(u^{i+r}_i(n))_\ast(l^{i+r}_i(n))^\ast((d_k^n)^\ast(\omega))\\
 	&=& (-1)^j(u^{i+r}_i(n))_\ast(d_k^n\circ l^{i+r}_i(n))^\ast(\omega)\\
 	&=& (-1)^j(u^{i+r}_i(n))_\ast(l^{i+r}_i(n-1)\circ \overline{d_k^n})^\ast(\omega)\\
 	&=& (-1)^j(u^{i+r}_i(n))_\ast (\overline{d_k^n})^\ast ((l^{i+r}_i(n-1))^\ast(\omega))\\
 	&=& (-1)^j(d_k^n)^\ast(u^{i+r}_i(n-1))_\ast((l^{i+r}_i(n-1))^\ast(\omega))\\
 	&=& (d_k^n)^\ast((-1)^j(u^{i+r}_i(n-1))_\ast((l^{i+r}_i(n-1))^\ast(\omega)))\\
 	&=& (d_k^n)^\ast(\partial_r^{n-1}(\omega)).
 \end{eqnarray*}
 Thus, we have that
$$
 	(\partial_r^n\circ \delta^n_{i})(\omega)=\sum(-1)^k\partial_r^n((d_k^n)^\ast(\omega))=\sum(-1)^k(d_k^n)^\ast(\partial_r^{n-1}(\omega))=(\delta^n_{i+r}\circ \partial_r^{n-1})(\omega).
$$
\end{proof}

Having the previous fact in mind we define $\overline{\delta}^n:C^{p}(G^{(n-1)})\to C^{p}(G^{(n)})$ as
 $$\overline{\delta}^n(\omega)= \left\{ \begin{array}{lcc}
 	\delta^n_{i}(\omega) &   \textnormal{if}  & w\in 	\Omega^{j}(G^{(n-1)}_i) \\
 	 0 &  \textnormal{otherwise}. &  
 \end{array}
 \right.$$
 This operator may be thought of as $\displaystyle \overline{\delta}^n=\sum \delta^n_{i}$ verifying $\delta^n_{i}\circ \delta^n_{i'}=\delta^n_{i'}\circ \delta^n_{i}=0$ since if $i\neq i'$ then $\delta^n_{i}\circ \delta^n_{i'}=0$ by definition and if $i=i'$ then $(\delta^n_{i})^2=0$ also holds because $\delta^n_{i}$ is a simplicial differential of the sub-nerve $G^{(\bullet)}_i$. In particular, we have that $(\overline{\delta}^n)^2=0$. 
 
 Note that we have defined two boundary operators $\partial$ and $\overline{\delta}$ verifying $\partial^2=0$, $\overline{\delta}^2=0$ and, moreover, from the commutativity property stated in Lemma \ref{KeyLemmaDoubleComplex} we get that they also satisfy $\partial\circ \overline{\delta}=\overline{\delta}\circ \partial$.

Summing up, we have obtained that:
\begin{proposition}
The triple $(C^\bullet(G^{(\bullet)}),\partial,\overline{\delta})$ determines a double cochain complex which may be depicted as
 $$\begin{tikzpicture}[scale=1.0,baseline=-0.1cm, inner sep=1mm,>=stealth]
 	\node (1) at (0,0) {$C^0(G^{(0)})$};
 	\node (2) at (3,0) {$C^0(G^{(1)})$};
 	\node (3) at (6,0) {$C^0(G^{(2)})$};
 	\node (4) at (9,0) {$\cdots$};
 	\node (5) at (0,2) {$C^1(G^{(0)})$};
 	\node (6) at (3,2) {$C^1(G^{(1)})$};
 	\node (7) at (6,2) {$C^1(G^{(2)})$};
 	\node (8) at (9,2) {$\cdots$};
 	\node (9) at (0,4) {$C^2(G^{(0)})$};
 	\node (10) at (3,4) {$C^2(G^{(1)})$};
 	\node (11) at (6,4) {$C^2(G^{(2)})$};
 	\node (12) at (9,4) {$\cdots$};
 	\node (13) at (0,6) {$\vdots$};
 	\node (14) at (3,6) {$\vdots$};
 	\node (15) at (6,6)  {$\vdots$};
 	\tikzset{every node/.style={fill=white}} 
 	\draw[->,  thick] (1) to node[midway,above] {$\overline{\delta}$} (2); 
 	\draw[->,  thick] (2) to node[midway,above] {$\overline{\delta}$} (3);
 	\draw[->,  thick] (3) to node[midway,above] {$\overline{\delta}$} (4);
 	\draw[->,  thick] (5) to node[midway,above] {$\overline{\delta}$} (6); 
 	\draw[->,  thick] (6) to node[midway,above] {$\overline{\delta}$} (7);
 	\draw[->,  thick] (7) to node[midway,above] {$\overline{\delta}$} (8);
 	\draw[->,  thick] (9) to node[midway,above] {$\overline{\delta}$} (10); 
 	\draw[->,  thick] (10) to node[midway,above] {$\overline{\delta}$} (11);
 	\draw[->,  thick] (11) to node[midway,above] {$\overline{\delta}$} (12);
 	\draw[->,  thick] (1) to node[midway,left] {$\partial$} (5);
 	\draw[->,  thick] (5) to node[midway,left] {$\partial$} (9); 
 	\draw[->,  thick] (9) to node[midway,left] {$\partial$} (13);
 	\draw[->,  thick] (2) to node[midway,left] {$\partial$} (6);
 	\draw[->,  thick] (6) to node[midway,left] {$\partial$} (10); 
 	\draw[->,  thick] (10) to node[midway,left] {$\partial$} (14);
 	\draw[->,  thick] (3) to node[midway,left] {$\partial$} (7);
 	\draw[->,  thick] (7) to node[midway,left] {$\partial$} (11); 
 	\draw[->,  thick] (11) to node[midway,left] {$\partial$} (15);
 \end{tikzpicture}$$
\end{proposition}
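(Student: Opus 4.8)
The plan is to verify the three defining relations of a (first-quadrant) double cochain complex: $(\partial^n)^2=0$ for every $n$, $(\overline{\delta}^n)^2=0$ for every $n$, and the commutation rule $\partial\circ\overline{\delta}=\overline{\delta}\circ\partial$. Once these hold, the bigraded collection $C^p(G^{(n)})$, equipped with the vertical differential $\partial$ (raising the internal Morse degree $p$ by one) and the horizontal differential $\overline{\delta}$ (raising the nerve degree $n$ by one), is by definition a double cochain complex, and the displayed diagram is merely its unfolding. Each of the three relations is either classical or has already been isolated in the preceding lemmas, so the proof is essentially an assembly.

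First I would dispatch $(\partial^n)^2=0$. For each fixed $n$ the operator $\partial^n=\sum_r\partial^n_r$ defined in \eqref{eq:groupoidAB} is precisely the Austin--Braam differential attached to the Morse--Bott function $F_n$ on the compact oriented manifold $G^{(n)}$: its pieces are the de Rham differential for $r=0$ and the push--pull operators $(-1)^j(u^{i+r}_i(n))_\ast(l^{i+r}_i(n))^\ast$ otherwise. These are well defined because Assumption \ref{StrongAssumption} guarantees, at every nerve level, the smoothness, orientability and locally trivial fibration structure of the moduli spaces and endpoint maps \eqref{DC4}. Hence the classical identity $\sum_{m}\partial_{j-m}\circ\partial_m=0$ of \cite{AB} applies verbatim to each $G^{(n)}$ and yields $(\partial^n)^2=0$.

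Next I would record $(\overline{\delta}^n)^2=0$, which is already contained in the discussion preceding the statement: writing $\overline{\delta}^n=\sum_i\delta^n_i$, the off-diagonal terms $\delta_i\circ\delta_{i'}$ with $i\neq i'$ vanish by bidegree bookkeeping (the two operators involve complementary index blocks, so their composite is zero by construction), while each diagonal term $(\delta_i)^2$ vanishes because $\delta_i$ is the alternating-sum cosimplicial coboundary of the sub-nerve $G^{(\bullet)}_i$ of Lemma \ref{DCL1} and the simplicial identities \eqref{DC1} force $(\delta_i)^2=0$. Finally, the commutation $\partial\circ\overline{\delta}=\overline{\delta}\circ\partial$ is exactly the content of Lemma \ref{KeyLemmaDoubleComplex}, invoked termwise in $r$ and $i$; its proof rests on the base-change (projection) formula for integration along the fibre together with the compatibility identities \eqref{DC5} between the face maps $d^n_k$ and the endpoint maps $u^i_j(\bullet),l^i_j(\bullet)$.

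The step I expect to carry the real weight is not any of these three verifications individually, but rather the bookkeeping already performed in the inductive lemmas that makes them simultaneously available: the propagation of orientability and of the fibration property of the endpoint maps up the nerve, which is precisely why Assumption \ref{StrongAssumption} and Lemmas \ref{DCL1} and \ref{DCL2} were needed before one could even write down $\partial$ and $\overline{\delta}$ simplicially. I would also flag, with an eye toward the total complex of Theorem \ref{MorseCohomology}, that since $\partial$ and $\overline{\delta}$ \emph{commute} rather than anticommute, one must insert the customary sign $(-1)^{n}$ (or $(-1)^{p}$) when assembling the total differential, so that the total differential squares to zero and the total cohomology is well defined. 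Collecting the three relations then completes the proof.
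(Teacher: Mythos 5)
Your proposal is correct and follows essentially the same route as the paper: $(\partial^n)^2=0$ is imported from Austin--Braam at each nerve level (made available by Assumption \ref{StrongAssumption} and the lemmas propagating transversality and orientability up the nerve), $(\overline{\delta}^n)^2=0$ follows from the simplicial identities together with the vanishing of the off-diagonal compositions, and the commutation $\partial\circ\overline{\delta}=\overline{\delta}\circ\partial$ is exactly Lemma \ref{KeyLemmaDoubleComplex}. Your closing remark about inserting the sign $(-1)^q$ in the total differential also matches the paper's definition of $\partial_T$, so nothing is missing.
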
 

We can make a total complex out of this double cochain complex by setting  $\displaystyle C^n_{T}(G)=\bigoplus_{p+q=n}C^p(G^{(q)})$ and defining the total differential $\partial_{T}:C^n_{T}(G)\to C^{n+1}_{T}(G)$ as

$$\partial_{T}(\omega)=(\overline{\delta}+(-1)^q\partial)(\omega),\quad \omega\in C^p(G^{(q)}).$$
The sing change is introduced in order that $\partial_{T}^2=0$.

\begin{definition}
The \textbf{groupoid Morse cohomology} associated to the Morse Lie groupoid $F_1:(G^{(1)}\rightrightarrows G^{(0)})\to (\mathbb{R}\rightrightarrows \mathbb{R})$ and the $n$-metric $\eta^{(n)}$ on $G^{(n)}$ is defined to be the cohomology of $(C^\bullet_T(G),\partial_{T})$.
\end{definition}
 
Finally, we exhibit a morphism of double complexes between the Bott--Shulman-Stasheff double complex of $G^{(1)} \rightrightarrows G^{(0)}$ and the double complex constructed above. The \textbf{Bott-Shulman-Stasheff double complex} $(\Omega^\bullet(G^{(\bullet)}),d,\delta)$ may be depicted as
 	$$\begin{tikzpicture}[scale=1.0,baseline=-0.1cm, inner sep=1mm,>=stealth]
 		\node (1) at (0,0) {$\Omega^0(G^{(0)})$};
 		\node (2) at (3,0) {$\Omega^0(G^{(1)})$};
 		\node (3) at (6,0) {$\Omega^0(G^{(2)})$};
 		\node (4) at (9,0) {$\cdots$};
 		\node (5) at (0,2) {$\Omega^1(G^{(0)})$};
 		\node (6) at (3,2) {$\Omega^1(G^{(1)})$};
 		\node (7) at (6,2) {$\Omega^1(G^{(2)})$};
 		\node (8) at (9,2) {$\cdots$};
 		\node (9) at (0,4) {$\Omega^2(G^{(0)})$};
 		\node (10) at (3,4) {$\Omega^2(G^{(1)})$};
 		\node (11) at (6,4) {$\Omega^2(G^{(2)})$};
 		\node (12) at (9,4) {$\cdots$};
 		\node (13) at (0,6) {$\vdots$};
 		\node (14) at (3,6) {$\vdots$};
 		\node (15) at (6,6)  {$\vdots$};
 		\tikzset{every node/.style={fill=white}} 
 		\draw[->,  thick] (1) to node[midway,above] {$\delta$} (2); 
 		\draw[->,  thick] (2) to node[midway,above] {$\delta$} (3);
 		\draw[->,  thick] (3) to node[midway,above] {$\delta$} (4);
 		\draw[->,  thick] (5) to node[midway,above] {$\delta$} (6); 
 		\draw[->,  thick] (6) to node[midway,above] {$\delta$} (7);
 		\draw[->,  thick] (7) to node[midway,above] {$\delta$} (8);
 		\draw[->,  thick] (9) to node[midway,above] {$\delta$} (10); 
 		\draw[->,  thick] (10) to node[midway,above] {$\delta$} (11);
 		\draw[->,  thick] (11) to node[midway,above] {$\delta$} (12);
 		\draw[->,  thick] (1) to node[midway,left] {$d$} (5);
 		\draw[->,  thick] (5) to node[midway,left] {$d$} (9); 
 		\draw[->,  thick] (9) to node[midway,left] {$d$} (13);
 		\draw[->,  thick] (2) to node[midway,left] {$d$} (6);
 		\draw[->,  thick] (6) to node[midway,left] {$d$} (10); 
 		\draw[->,  thick] (10) to node[midway,left] {$d$} (14);
 		\draw[->,  thick] (3) to node[midway,left] {$d$} (7);
 		\draw[->,  thick] (7) to node[midway,left] {$d$} (11); 
 		\draw[->,  thick] (11) to node[midway,left] {$d$} (15);
 	\end{tikzpicture}$$
 	where the vertical differential is the de Rham differential and the horizontal differential is the simplicial differential associated to the nerve $G^{(\bullet)}$ of $G^{(1)} \rightrightarrows G^{(0)}$. We can similarly define a total cohomology for this double complex which is usually denoted by $H^{\bullet}_{dR}(G)$.
 	
 	Let us consider the collection of maps $\Psi^\bullet:=\lbrace \Psi^n \rbrace_{n\in\mathbb{N}}$ constructed as follows. Recall that for each $n\in\mathbb{N}$ the map $u_i(n):W^u(G^{(n)}_i)\to G^{(n)}_i$ has the structure of locally trivial bundle. Thus, by using again integration along the fiber, we define $\Psi^n_i:\Omega^{p}(G^{(n)})\to C^{i,p-i}(G^{(n)})$ as
 	$$\Psi^n_i(\omega):=(u_i(n))_\ast \left(\omega|_{W^u(G^{(n)}_i)}\right)\qquad \omega\in \Omega^{p}(G^{(n)}),$$
 	and $\Psi^n=\bigoplus\Psi^n_i:\Omega^{p}(G^{(n)})\to C^p(G^{(n)})$. As consequence of the results due to Austin and Braam in \cite{AB} we have that $\Psi^\bullet\circ d=\partial \circ\Psi^\bullet$ and, more importantly, this collection of maps induces isomorphisms between the cohomology groups
 	$$H^\bullet(C^\bullet(G^{(n)}),\partial)\cong H^\bullet_{dR}(\Omega^\bullet(G^{(n)}),d).$$
 	To see that the collection $\Psi^\bullet$ defines a morphism of double complexes it remains to check that $\Psi^\bullet\circ \delta=\overline{\delta} \circ\Psi^\bullet$. This will be consequence of showing the following identity.
 	\begin{lemma}
 	For all $i$ and $n$, the following holds
 	$$\Psi^n_i\circ \delta =\overline{\delta}\circ \Psi^{n-1}_i.$$
 	\end{lemma}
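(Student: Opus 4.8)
The plan is to verify the identity term by term in the simplicial differential, reducing everything to a single commutation of pullback with integration along the fiber — that is, a base-change (projection) formula — exactly as in the proof of Lemma \ref{KeyLemmaDoubleComplex}, but now for the unstable endpoint maps $u_i(\bullet)$ rather than the moduli endpoint maps. First I would unwind both sides on a form $\omega\in\Omega^p(G^{(n-1)})$. Since $\delta=\sum_k(-1)^k(d_k^n)^\ast$ and restriction to $W^u(G^{(n)}_i)$ is pullback along the inclusion $j_n:W^u(G^{(n)}_i)\hookrightarrow G^{(n)}$, and since by Lemma \ref{DCL2} each face map restricts to $d_k^n:W^u(G^{(n)}_i)\to W^u(G^{(n-1)}_i)$ compatibly with the inclusions (i.e.\ $d_k^n\circ j_n=j_{n-1}\circ d_k^n$), I get
\[
(\delta\omega)|_{W^u(G^{(n)}_i)}=\sum_k(-1)^k (d_k^n)^\ast\bigl(\omega|_{W^u(G^{(n-1)}_i)}\bigr).
\]
Applying $(u_i(n))_\ast$ and linearity turns $\Psi^n_i(\delta\omega)$ into $\sum_k(-1)^k (u_i(n))_\ast (d_k^n)^\ast(\omega|_{W^u(G^{(n-1)}_i)})$, whereas the right-hand side $\overline{\delta}(\Psi^{n-1}_i(\omega))=\delta^n_i(\Psi^{n-1}_i(\omega))$ equals $\sum_k(-1)^k(d_k^n)^\ast (u_i(n-1))_\ast(\omega|_{W^u(G^{(n-1)}_i)})$. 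Thus the lemma reduces to the single identity
\[
(u_i(n))_\ast\,(d_k^n)^\ast=(d_k^n)^\ast\,(u_i(n-1))_\ast,\qquad k=0,\dots,n.
\]

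The heart of the argument is to recognize this as the base-change formula for the commutative square given by \eqref{DC3}, namely $d_k^n\circ u_i(n)=u_i(n-1)\circ d_k^n$, and to show that this square is Cartesian, so that $u_i(n):W^u(G^{(n)}_i)\to G^{(n)}_i$ is the pullback of the locally trivial bundle $u_i(n-1):W^u(G^{(n-1)}_i)\to G^{(n-1)}_i$ along $d_k^n$. For this I would establish $W^u(G^{(n)}_i)=(d_k^n)^{-1}\bigl(W^u(G^{(n-1)}_i)\bigr)$: using $d_k^n\circ\Phi^n_\tau=\Phi^{n-1}_\tau\circ d_k^n$ from \eqref{DC2} together with the compactness of $G^{(n)}$, any flow line of $-\nabla F_n$ converges to a critical submanifold whose image under the face map is a critical submanifold of the \emph{same} index (the face maps being Riemannian submersions, as in Lemma \ref{DCL1}), so the index is detected downstairs and $A\in W^u(G^{(n)}_i)$ if and only if $d_k^n(A)\in W^u(G^{(n-1)}_i)$. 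Since $d_k^n$ is a submersion this realizes $W^u(G^{(n)}_i)$ as the fibered product $G^{(n)}_i\times_{G^{(n-1)}_i}W^u(G^{(n-1)}_i)$, and the fiberwise isomorphisms $\overline{d(d_k^n)}$ on the negative normal bundles identify the disk fibers $W^u(x)\cong D^i$ compatibly with the orientations provided by Assumption \ref{StrongAssumption}(c)-(d). The base-change formula then applies verbatim, and summing over $k$ with the signs $(-1)^k$ yields $\Psi^n_i\circ\delta=\overline{\delta}\circ\Psi^{n-1}_i$.

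I expect the main obstacle to be precisely the verification that the square is Cartesian, together with the bookkeeping of orientations required for integration along the fiber. Establishing $W^u(G^{(n)}_i)=(d_k^n)^{-1}(W^u(G^{(n-1)}_i))$ rigorously amounts to controlling the limiting behaviour of the descending flow under the face maps; the commutation \eqref{DC2} reduces this to the index-matching statement above, but one must still invoke compactness (Remark \ref{CompactRelaxed} suffices when only $M/G$ is compact) to guarantee convergence of flow lines. Once the pullback description and the compatibility of the induced orientations under the $\overline{d(d_k^n)}$ are in place, the remaining manipulation is the same projection-formula computation already carried out in Lemma \ref{KeyLemmaDoubleComplex}, so no genuinely new analytic input is needed beyond the Cartesian property.
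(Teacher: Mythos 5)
Your proof is correct and follows essentially the same route as the paper's: unwind $\Psi^n_i\circ\delta$ term by term, use that the face maps restrict to the unstable manifolds (Identities \eqref{DC3}, Lemma \ref{DCL2}), and conclude by the base-change formula for integration along the fiber applied to the commutative square $d_k^n\circ u_i(n)=u_i(n-1)\circ d_k^n$. The only difference is that you explicitly verify the Cartesian (fibered product) property of that square, which is what licenses base-change and which the paper invokes without proof; this is a worthwhile justification of the same step rather than a different argument.
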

 \begin{proof}
 Because of Identities \eqref{DC3}, which in turn allow us to conclude that every face map $d_k^n$ is well restricted to the stable/unstable manifolds, and by using the base-change property of the integration along the fiber operation, we obtain that 
 \begin{eqnarray*}
 	(\Psi^n_i\circ \delta)(\omega) &=& \Psi^n_i\left(\sum(-1)^k (d_k^n)^\ast(\omega)\right)= \sum(-1)^k \Psi^n_i((d_k^n)^\ast(\omega))\\
 	&=& \sum(-1)^k (u_i(n))_\ast \left((d_k^n)^\ast(\omega)|_{W^u(G^{(n)}_i)}\right)\\
 	&=& \sum(-1)^k (d_k^n)^\ast(u_i(n-1))_\ast \left(\omega|_{W^u(G^{(n-1)}_i)}\right)= (\overline{\delta}\circ \Psi^{n-1}_i)(\omega),
 \end{eqnarray*}
for all $\omega\in \Omega^{p}(G^{(n-1)})$ as desired.
 \end{proof}
So, $\Psi^\bullet$ defines a morphism of double complexes between $(\Omega^\bullet(G^{(\bullet)}),d,\delta)$ and $(C^\bullet(G^{(\bullet)}),\partial,\overline{\delta})$ inducing isomorphisms between the vertical cohomologies of the complexes. Therefore, as consequence of all the facts stated above, by using a usual argument of spectral sequences (see for instance \cite[p. 108]{BT}) we conclude:

 \begin{theorem}\label{MorseCohomology}
 	The total cohomology of the double complex $(C^\bullet(G^{(\bullet)}),\partial,\overline{\delta})$ is isomorphic to the total cohomology of the Bott--Shulman--Stasheff double complex of $G^{(1)} \rightrightarrows G^{(0)}$:
 	$$H_{T}^\bullet(G,\partial_{T})\cong H^{\bullet}_{dR}(G).$$
 \end{theorem}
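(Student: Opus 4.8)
The plan is to deduce the statement from a standard spectral sequence comparison argument applied to the morphism of double complexes $\Psi^\bullet$ constructed above. Both $(\Omega^\bullet(G^{(\bullet)}),d,\delta)$ and $(C^\bullet(G^{(\bullet)}),\partial,\overline{\delta})$ are first-quadrant double complexes: the simplicial degree $n$ is non-negative, while in each column the internal degree $p=i+j$ is non-negative and bounded, since the manifolds $G^{(n)}_i$ are finite dimensional and the set of Morse indices is finite (recall $\textnormal{Crit}(F_0)=\dot{\bigcup}\mathcal{O}$ is a finite union of compact orbits). Consequently, filtering each double complex by its columns produces spectral sequences that converge to the respective total cohomologies $H^{\bullet}_{dR}(G)$ and $H_{T}^\bullet(G,\partial_{T})$.

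First I would record that, having already verified $\Psi^\bullet\circ d=\partial\circ\Psi^\bullet$ and $\Psi^\bullet\circ\delta=\overline{\delta}\circ\Psi^\bullet$, the collection $\Psi^\bullet$ is a genuine morphism of double complexes, and therefore induces a morphism between the two column-filtration spectral sequences that is compatible with all the higher differentials $d_r$. The $E_1$-page of each spectral sequence is obtained by taking cohomology along the vertical differential: for the Bott--Shulman--Stasheff complex this yields $H^\bullet_{dR}(\Omega^\bullet(G^{(n)}),d)$ in the $n$-th column, while for the Morse double complex it yields $H^\bullet(C^\bullet(G^{(n)}),\partial)$.

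The key input is the Austin--Braam theorem recalled above, which guarantees that for each fixed $n$ the map $\Psi^n$ induces an isomorphism $H^\bullet(C^\bullet(G^{(n)}),\partial)\cong H^\bullet_{dR}(\Omega^\bullet(G^{(n)}),d)$. In other words, $\Psi^\bullet$ restricts to an isomorphism on the entire $E_1$-page. By the comparison theorem for spectral sequences (see \cite[p.~108]{BT}), a morphism of spectral sequences that is an isomorphism on some page $E_r$ remains an isomorphism on all later pages, and in particular on $E_\infty$. Since both spectral sequences converge and the $E_\infty$-isomorphism lifts from the associated graded to the filtered level, I would conclude $H_{T}^\bullet(G,\partial_{T})\cong H^{\bullet}_{dR}(G)$.

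The main obstacle I anticipate is not conceptual but bookkeeping: one must confirm that the column-filtered spectral sequences genuinely converge to the total cohomologies, which reduces to the first-quadrant boundedness noted above together with the finiteness of the Morse indices and the compactness of the critical orbits. Once these are in place, the comparison theorem propagates the $E_1$-isomorphism through every later page automatically, since the higher differentials $d_r$ are induced functorially, so no further compatibility check or analytic estimate is required; the entire substance has already been absorbed into the two commutation identities and the Austin--Braam vertical quasi-isomorphism.
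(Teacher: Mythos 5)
Your proposal is correct and takes essentially the same route as the paper: the paper likewise combines the two commutation identities $\Psi^\bullet\circ d=\partial\circ\Psi^\bullet$, $\Psi^\bullet\circ\delta=\overline{\delta}\circ\Psi^\bullet$ with the Austin--Braam columnwise quasi-isomorphism and then invokes the standard spectral sequence comparison argument for first-quadrant double complexes (citing \cite[p.~108]{BT}). The only difference is that you make explicit the convergence and comparison-theorem bookkeeping that the paper leaves implicit.
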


In particular, the total cohomology on the left hand side is Morita invariant, something that should be expected since the notion of Morse Lie groupoid morphism we are working with is also Morita invariant. Additionally, by arguing as before, it is simple to check that if $G\leftarrow H \rightarrow G'$ is a Morita equivalence between compact proper Lie groupoids such that $G^{(n)}$, $H^{(n)}$, and $G'^{(n)}$ are orientable for all $n\in \mathbb{N}$ then Assumption \ref{StrongAssumption} is preserved by Proposition \ref{MoritaInvariance}. This key fact can be used to compute the Bott--Shulman-Stasheff cohomology of certain Lie groupoids by using elementary examples of Morse Lie groupoid morphisms.

\begin{example}
Let $M$ be a smooth manifold and $f:M\to \mathbb{R}$ be a smooth function satisfying Assumption \ref{ABAssumption}. From \cite{AB} it follows that the groupoid Morse complex associated to the induced Morse Lie groupoid morphism $F:(M\rr M) \to (\mathbb{R}\rr \mathbb{R})$ is the standard Morse complex of $(M,f)$. Also, the Bott--Shulman--Stasheff complex of $M\rr M$ is the ordinary de Rham complex of $M$. Therefore, as explained above, the computation of the Morse complex can be used to obtain the Bott--Shulman-Stasheff cohomology of those compact and orientable Lie groupoids which are Morita equivalent to $M$. See \cite[s. 3.6]{AB} for explicit computations in the standard case.
\end{example}

\begin{example}
Suppose that $G$ is a compact Lie group acting on a compact oriented manifold $M$. Let $f:M\to \mathbb{R}$ be a $G$-invariant Morse function satisfying the Morse--Smale transversality condition. The Bott--Shulman-Stasheff cohomology of the action groupoid $G\ltimes M\rr M$ is isomorphic to $G$-equivariant cohomology associated to the Cartan complex, i.e. the equivariant cohomology of the action \cite{Behrend1}. Once again, from \cite{AB} we get that the groupoid Morse complex of the induced Morse Lie groupoid morphism $F:(G\ltimes M\rr M) \to (\mathbb{R}\rr \mathbb{R})$ recovers the standard $G$-equivariant cohomology of the action. This fact can be viewed as a particular instance of the construction in Subsection \ref{SubEquivariant}. Consult \cite[s. 5.4]{AB} and \cite[s. 5]{L} for explicit computations. 
\end{example}

\begin{example}
If $G\rr M$ is a proper and \'etale Lie groupoid then its Bott--Shulman-Stasheff cohomology agrees with the basic cohomology of $G$, which turns out to be isomorphic to the singular cohomology $H^\bullet(M/G,\mathbb{R})$, compare \cite{PPT,TuX}. Thus, Morse Lie groupoid morphisms on $G\rr M$ verifying Assumption \ref{StrongAssumption} allow us to recover the singular cohomology of the orbit space $M/G$. A particularly interesting instance of this fact was recently checked in \cite{ChoHong,A-liDu} for the case of effective orientable orbifolds by providing an explicit isomorphism. Let $G$ be a compact connected Lie group acting on a compact oriented manifold $M$ by orientation preserving diffeomorphisms. If the action is effective and locally free then the quotient space $M/G$ has the structure of an effective orbifold, so that the differentiable stack $[M/G\ltimes M]$ represented by the action groupoid $G\ltimes M\rr M$ can be thought of as an orientable orbifold. Hence, the groupoid Morse cohomology associated to a Morse Lie groupoid morphism $(G\ltimes M\rr M) \to (\mathbb{R}\rr \mathbb{R})$ verifying the Morse--Smale transversality condition equals the singular cohomology $H^\bullet(M/G,\mathbb{R})$ since the action is locally free, see \cite{A-liDu}. 
\end{example}

\begin{remark}
Fukaya in \cite{Fu} gave a construction which is similar to Austin--Braam's model, but using singular chains instead of differential forms. One of the main features of his construction is that it can be directly generalized to the infinite-dimensional case so that it allowed him to address problems in Floer homology. We plan to apply Fukaya's approach to our setting with the hope of starting the study of Floer homology in the context of Lie groupoids and differentiable stacks.
\end{remark}

\subsection{2-Equivariant cohomology}\label{SubEquivariant}

Under certain some additional assumptions and motivated by the Austin--Braam's equivariant Morse complex (see \cite[s. 5.1]{AB}), we apply the constructions from the previous section to recover the equivariant cohomology associated to a $2$-action on a Lie groupoid as defined in \cite{OBT}. Let us consider a Lie groupoid $G^{(1)} \rightrightarrows G^{(0)}$ and a Lie groupoid morphism $F_1:(G^{(1)} \rightrightarrows G^{(0)})\to (\mathbb{R}\rightrightarrows \mathbb{R})$ induced by a basic function $F_0:G^{(0)}\to \mathbb{R}$. Suppose that $K^{(1)} \rightrightarrows K^{(0)}$ is a Lie $2$-group, with $K^{(1)}$ compact, determining a $2$-action on $G^{(1)} \rightrightarrows G^{(0)}$ such that $F_0$ is $K^{(0)}$-invariant. We further assume that $F_1$ is a Morse--Bott Lie groupoid morphism (see Remark \ref{EquiRemark}), satisfying the assumptions we considered in the previous section. The set of $n$-composable arrows $K^{(n)}$ inherits a canonical Lie group structure from the direct product $(K^{(1)})^n$ and the $2$-action above allows us to define a smooth left action of $K^{(n)}$ on $G^{(n)}$ in a canonical way. In other words, we have a well defined simplicial left action of the nerve $K^{(\bullet)}$ on the nerve $G^{(\bullet)}$. It is simple to check that the face maps $(d_k^n)_K:K^{(n)}\to K^{(n-1)}$ and $(d_k^n)_G:G^{(n)}\to G^{(n-1)}$ satisfy the equivariant relations

\begin{equation}\label{EquiRelations}
(d_k^n)_G(k\cdot g)=(d_k^n)_K(k)\cdot (d_k^n)_G(g),
\end{equation}
for all $k\in K^{(n)}$ and $g\in G^{(n)}$. This is consequence of the simplicial identities on $G^{(\bullet)}$ and the fact that we are working with a $2$-action. Therefore, Formula \eqref{EquiRelations} implies that the simplicial function $F_\bullet$ is $K^{(\bullet)}$-invariant in the sense that $F_n$ is $K^{(n)}$-invariant for all $n$ since $F_0$ is $K^{(0)}$-invariant. Let us now consider an $n$-metric $\eta^{(n)}$ on $G^{(n)}$.

We need to use a notion of isometric Lie $2$-group action on a Riemannian groupoid which was introduced in \cite{HV}.

\begin{definition}\label{invariantmetric}
The $2$-action of $K^{(1)} \rightrightarrows K^{(0)}$ on $G^{(1)} \rightrightarrows G^{(0)}$ is said to be \textbf{isometric} if the action of $K^{(n)}$ on $(G^{(n)},\eta^{(n)})$ is by isometries.
\end{definition}

A detailed study of several geometric aspects concerning isometric $2$-actions as well as the structure of Riemannian groupoids (for instance, existence, 2-equivariant groupoid linearizations, 2-equivariant tubular neighborhoods, transversal isometries, Lie $2$-algebra of weak Killing vector fields, among other things), are provided in \cite{HV}. As it was proven therein, the fact that $K^{(n)}$ acts on $(G^{(n)},\eta^{(n)})$ isometrically is enough to ensure that the action of $K^{(r)}$ on $(G^{(r)},\eta^{(r)})$ is isometric for all $0\leq r\leq n-1$. For our purposes, it is important to mention that when $K^{(1)}$ is compact and $G^{(1)} \rightrightarrows G^{(0)}$ is proper then invariant $n$-metrics in the sense of Definition \ref{invariantmetric} there always exist \cite{HV}. So, let us assume that our $2$-action is by isometries. Given that the functions $F_n$ are $K^{(n)}$-invariant it follows that their negative gradient vector fields $-\nabla F_n$ are $K^{(n)}$-invariant so that their descending flows $\Phi^{n}_\tau$ are $K^{(n)}$-equivariant. Note that the saturated submanifold $G^{(0)}_i$ which is formed by the non-degenerate groupoid orbits of $G^{(1)}\rightrightarrows G^{(0)}$ having index $i$ is $K^{(0)}$-invariant since the fact that $F_0$ is $K^{(0)}$-invariant implies that $\textnormal{Crit}(F_0)$ is formed by Lie group $K^{(0)}$-orbits. In consequence, it is simple to check that for each $n\in\mathbb{N}$ it holds that $G^{(n)}_i$ is $K^{(n)}$-invariant, thus obtaining that the endpoint maps $u_i(n)$ and $l_i(n)$ are $K^{(n)}$-equivariant since our descending flows are $K^{(n)}$-equivariant. More importantly, if we consider the left action of $K^{(n)}$ on $\mathcal{M}^n(G^{(n)}_i,G^{(n)}_j)$ defined by $k\cdot [A]_n=[k\cdot A]_n$ then we get two new $K^{(n)}$-equivariant endpoint maps $u^i_j(n)$ and $l^i_j(n)$.

\begin{remark}
As it was commented in \cite{AB}, it follows that due to the $K^{(0)}$-equivariance of $F_0$ our weakly self-indexing requirement from Assumption \ref{StrongAssumption} is implied by the transversality assumption. Similarly, the assumption asking the endpoint maps to induce fibrations is an immediate consequence of the presence of a transitive $K^{(0)}$-action on the components of the critical point set.
\end{remark}

Let us briefly introduce the notion of equivariant cohomology associated to a Lie $2$-group action on a Lie groupoid as defined in \cite{OBT}. The $2$-action naturally allows us to define a \textbf{double Lie groupoid}
\begin{eqnarray*}\label{eq:doublegroupoid}
	\xymatrix{
		K^{(1)}\times G^{(1)} \ar@<-0.5ex>[d]\ar@<0.5ex>[d]\ar@<0.5ex>[r] \ar@<-0.5ex>[r]& G^{(1)} \ar@<-0.5ex>[d]\ar@<0.5ex>[d] \\
		K^{(0)}\times G^{(0)} \ar@<0.5ex>[r]\ar@<-0.5ex>[r]                               & G^{(0)},
	}
\end{eqnarray*}
where the horizontals are given by action groupoids and the verticals are Lie groupoid products. If we consider the nerve configuration associated to this double Lie groupoid then we obtain a \textbf{bisimplicial smooth manifold} so that we may work with the triple complex $C^{\bullet,\bullet,\bullet}$ where
$$C^{n,p,q}=\Omega^p((K^{(n)})^q\times G^{(n)}), $$
with differentials given by the de Rham differential, the simplicial differential associated to the actions groupoids, and the simplicial differential associated to the product groupoids.

\begin{definition}\cite{OBT}
The \textbf{equivariant cohomology} of the $2$-action of $K^{(1)}\rightrightarrows K^{(0)}$ on $G^{(1)}\rightrightarrows G^{(0)}$ is defined to be the total cohomology determined by the triple complex mentioned above. This will be denoted by $H^\bullet_{K}(G)$.
\end{definition}

Two important features of this cohomology is that it is Morita invariant and can be recovered by the \textbf{Cartan model} as follows. Let us consider the nerve configuration $\mathfrak{k}^{(\bullet)}$ associated to the Lie $2$-algebra $\mathfrak{k}^{(1)} \rightrightarrows \mathfrak{k}^{(0)}$ of $K^{(1)}\rightrightarrows K^{(0)}$. The notion of simplicial equivariant forms introduced in \cite[Appx. C]{Me} comes from a double complex $(C^{\bullet,\bullet}_{CM}, d_K,\delta_K)$ given by

\begin{equation}\label{eq:simplicialCartan}
C^{n,k}_{CM}=\Omega^k_{K^{(n)}}(G^{(n)})= \bigoplus_{k=2p+q}(S^p((\mathfrak{k}^{(n)})^\ast)\otimes \Omega^q(G^{(n)}))^{K^{(n)}},
\end{equation}

\noindent where $(\mathfrak{k}^{(n)})^\ast$ denotes the dual vector space of the Lie algebra $\mathfrak{k}^{(n)}$, $d_K$ is the Cartan differential, and $\delta_K: \Omega^k_{K^{(n)}}(G^{(n)})\to \Omega^k_{K^{(n+1)}}(G^{(n+1)})$ is defined by $\delta_K=\delta_{\mathfrak{k}}\otimes \delta_{G}$ with $\delta_{\mathfrak{k}}$ the simplicial differential of $\mathfrak{k}^{(\bullet)}$ and $\delta_{G}$ the simplicial differential of $G^{(\bullet)}$. As it was proven in \cite{OBT}, the total cohomology of this double complex is isomorphic to the equivariant cohomology $H^\bullet_{K}(G)$ since $K^{(1)}$ is assumed to be compact.

We claim that it is possible to recover the equivariant cohomology defined above by following the ideas from \cite{AB} together with what we did in the previous section. For that, we consider a Morse--Bott Lie groupoid morphism $F_1:G^{(1)}\to \mathbb{R}$ covering a basic function $F_0:G^{(0)}\to \mathbb{R}$ which is $K^{(0)}$-invariant.  Using the Cartan model we define the equivariant version of the double complex \eqref{eq:groupoidAB}:

\begin{equation}\label{eq:equivariantgroupoidAB}
C^{p}(G^{(n)})=\bigoplus_{i+r=p}\Omega^r_{K^{(n)}}(G_i^{(n)})=\bigoplus_{i+j+2v=p}(\Omega^j(G_i^{(n)})\otimes S^v((\mathfrak{k}^{(n)})^\ast))^{K^{(n)}},
\end{equation}
with differential operators $\partial_K^n$ and $\overline{\delta}_K^n$ defined as follows. On the one hand, as before we split $\partial_K^n:C^{p}(G^{(n)})\to C^{p+1}(G^{(n)})$ as the sum $\partial_K^n=\sum_v(\partial_K^n
)_v$ where, for $\omega\otimes \phi\in (\Omega^j(G_i^{(n)})\otimes S^v((\mathfrak{k}^{(n)})^\ast))^{K^{(n)}}$, we have that $(\partial_K^n
)_0(\omega\otimes \phi)=d_K(\omega\otimes \phi)$ is the Cartan differential and for $v>0$ we set $(\partial_K^n
)_v(\omega\otimes \phi)=\partial_v^n\omega\otimes \phi$. On the other hand, we define $\overline{\delta}_K^n:C^{p}(G^{(n)})\to C^{p}(G^{(n+1)})$ as $\overline{\delta}_K^n(\omega\otimes \phi)=\overline{\delta}^n\omega\otimes \delta_{\mathfrak{g}}\phi$. It is simple to check that these two operators $\partial_K$ and $\overline{\delta}_K$ commute and that $\overline{\delta}_K^2=0$. Moreover, from \cite{AB} we also get that $\partial_K^2=0$. Therefore, we have actually obtained a double cochain complex $(C^\bullet(G^{(\bullet)}),\partial_K,\overline{\delta}_K)$, as claimed above. 

Finally, let us now exhibit a morphism of double complexes between the double complex $(C^{\bullet,\bullet}_{CM}, d_K,\delta_K)$ which is obtained by using the Cartan model \eqref{eq:simplicialCartan} and $(C^\bullet(G^{(\bullet)}),\partial_K,\overline{\delta}_K)$ defined in \eqref{eq:equivariantgroupoidAB}. Let $\Theta^\bullet:C^\bullet(G^{(\bullet)})\to C^{\bullet,\bullet}_{CM}$ be formed by a collection of maps $\lbrace \Theta^n \rbrace_{n\in\mathbb{N}}$ defined by $\Theta^n(\omega\otimes \phi)=\Psi^n(\omega)\otimes \phi$. From a straightforward computation it follows that $\Theta^\bullet\circ \delta_K=\overline{\delta}_K \circ\Theta^\bullet$. Also, as a consequence of what it was proven in \cite{AB} we have that $\Theta^\bullet\circ d_K=\partial_K \circ\Theta^\bullet$, and, more importantly, this collection of maps induces isomorphisms between the cohomology groups
$$H^\bullet(C^\bullet(G^{(n)}),\partial_K)\cong H^\bullet(\Omega_{K^{(n)}}^\bullet(G^{(n)}),d_K)=H^\bullet_{K^{(n)}}(G^{(n)}),\qquad n\in \mathbb{N}.$$
Here $H^\bullet_{K^{(n)}}(G^{(n)})$ denotes the equivariant cohomology obtained through the Cartan model associated to the action of $K^{(n)}$ on $G^{(n)}$. So, $\Theta^\bullet$ defines a morphism of double complexes between $(C^{\bullet,\bullet}_{CM}, d_K,\delta_K)$ and $(C^\bullet(G^{(\bullet)}),\partial_K,\overline{\delta}_K)$ inducing isomorphisms between the vertical cohomologies of the complexes. Just as in the non-equivariant case, by means of a spectral sequence argument, we conclude that:

\begin{proposition}\label{EquivariantMorseCohomology}
	The total cohomology $H_{T}^\bullet(G,\partial_{TK})$ of the double complex $(C^\bullet(G^{(\bullet)}),\partial_K,\overline{\delta}_K)$ is isomorphic to the total cohomology of double complex determined by the Cartan model associated to the $2$-action of $K^{(1)} \rightrightarrows K^{(0)}$ on $G^{(1)} \rightrightarrows G^{(0)}$. That is,
	$$H_{T}^\bullet(G,\partial_{TK})\cong H^\bullet_{K}(G).$$
\end{proposition}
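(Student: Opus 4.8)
The plan is to follow \emph{verbatim} the strategy used to prove Theorem \ref{MorseCohomology}, now carried out at the level of the equivariant double complexes. All the structural ingredients have already been assembled in the discussion preceding the statement: on the one hand the Cartan-model double complex $(C^{\bullet,\bullet}_{CM},d_K,\delta_K)$ computing $H^\bullet_K(G)$ by \cite{OBT}, and on the other hand the equivariant groupoid Morse double complex $(C^\bullet(G^{(\bullet)}),\partial_K,\overline{\delta}_K)$ from \eqref{eq:equivariantgroupoidAB}, together with the comparison morphism $\Theta^\bullet=\lbrace \Theta^n\rbrace$ given by $\Theta^n(\omega\otimes \phi)=\Psi^n(\omega)\otimes \phi$. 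Since the double-complex identities $\partial_K^2=0$, $\overline{\delta}_K^2=0$ and $\partial_K\circ\overline{\delta}_K=\overline{\delta}_K\circ\partial_K$ have already been recorded, the proof reduces to checking that $\Theta^\bullet$ is a morphism of double complexes which induces an isomorphism on the two total cohomologies.

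First I would verify the two compatibility relations $\Theta^\bullet\circ d_K=\partial_K\circ\Theta^\bullet$ and $\Theta^\bullet\circ\delta_K=\overline{\delta}_K\circ\Theta^\bullet$. The first is inherited directly from the equivariant Austin--Braam construction in \cite[s. 5.1]{AB}, since tensoring with the polynomial factor $\phi\in S^v((\mathfrak{k}^{(n)})^\ast)$ is transparent to the Cartan differential acting on the $G^{(n)}$-component, while $\Psi^n$ already intertwines the differentials levelwise. The second follows from the identical base-change computation for integration along the fibre used in the non-equivariant lemma just before Theorem \ref{MorseCohomology}, after observing that the simplicial face maps are $K^{(\bullet)}$-equivariant by \eqref{EquiRelations} and that $\delta_K=\delta_{\mathfrak{k}}\otimes\delta_G$ acts on the form part only through $\delta_G$, which is precisely the map governing $\overline{\delta}^n$.

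The key step is then the columnwise comparison: for each fixed simplicial degree $n$, the map $\Theta^n$ induces the isomorphism $H^\bullet(C^\bullet(G^{(n)}),\partial_K)\cong H^\bullet_{K^{(n)}}(G^{(n)})$, which is exactly the equivariant Austin--Braam theorem applied to the action of the compact group $K^{(n)}$ on the compact manifold $G^{(n)}$. Having an isomorphism on all vertical cohomologies, I would conclude by the standard spectral sequence comparison argument for a morphism of first-quadrant double complexes (see \cite[p. 108]{BT}): filtering both complexes by the nerve degree yields spectral sequences converging to the respective total cohomologies, and an isomorphism on the $E_1$-pages propagates to the desired isomorphism $H_T^\bullet(G,\partial_{TK})\cong H^\bullet_K(G)$. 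The main obstacle I anticipate is purely one of bookkeeping: confirming that the filtrations are bounded in each total degree, so that the spectral sequences converge and the $E_1$-isomorphism forces the $E_\infty$-isomorphism. This boundedness is guaranteed by the compactness hypotheses ($K^{(1)}$ compact and each $G^{(n)}$ compact), which make the relevant equivariant cohomology groups finite-dimensional and confine each total degree to finitely many contributing bidegrees.
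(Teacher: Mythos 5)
Your proposal follows essentially the same route as the paper: it uses the comparison morphism $\Theta^\bullet(\omega\otimes\phi)=\Psi^n(\omega)\otimes\phi$, verifies compatibility with $d_K$ (via the equivariant Austin--Braam results) and with $\delta_K$ (via the base-change computation and the equivariance \eqref{EquiRelations}), obtains the columnwise isomorphisms $H^\bullet(C^\bullet(G^{(n)}),\partial_K)\cong H^\bullet_{K^{(n)}}(G^{(n)})$, and concludes by the standard spectral-sequence comparison argument, exactly as in the paper's proof modeled on Theorem \ref{MorseCohomology}. The only cosmetic difference is your appeal to finite-dimensionality of the equivariant cohomology groups for convergence, which is not needed: the first-quadrant structure of both double complexes already bounds the filtration in each total degree.
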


Toric symplectic stacks \cite{hoffman} are presented by $0$-symplectic groupoids with a Hamiltonian action of a 2-torus \cite{hsz}. Hence, as an application of Proposition \ref{EquivariantMorseCohomology} one may compute the equivariant cohomology of a toric symplectic stack by means of groupoid Morse theory.

\begin{example}[\textbf{Equivariant cohomology of toric symplectic stacks}]\label{EToricStack}

Let  $G^{(1)} \rightrightarrows G^{(0)}$ be a 0-symplectic groupoid equipped with a Hamiltonian $(K^{(1)} \rightrightarrows K^{(0)})$-groupoid 2-action with moment map $\mu$ verifying Proposition \ref{MomentMapsMorse}. For every $\xi\in \mathfrak{k}/\mathfrak{h}$ we have a Morse Lie groupoid morphism $\mu^\xi:(G^{(1)} \rightrightarrows G^{(0)})\to (\mathbb{R} \rightrightarrows \mathbb{R})$ for which $\mu_0^\xi$ is $K^{(0)}$-invariant since $K^{(0)}$ is abelian. Therefore, if there exists $\xi\in \mathfrak{k}/\mathfrak{h}$ such that $\mu^\xi$ verifies Assumption \ref{StrongAssumption}, then Proposition \ref{EquivariantMorseCohomology} allows to compute the equivariant cohomology associated to the $2$-action of the foliation Lie $2$-group $(K^{(1)} \rightrightarrows K^{(0)})$ on the $0$-symplectic groupoid $G^{(1)} \rightrightarrows G^{(0)}$ by using the equivariant version of the groupoid Morse cohomology. However, this is in general a difficult task since powerful tools as a 2-equivariant version of the Atiyah--Bott localization theorem are necessary to compute 2-equivariant cohomology groups \cite{OBT}. Let us visualize this situation in the simplest case. Let $(G^{(0)},\omega)$ be a compact $S^1$-Hamiltonian pre-symplectic manifold with moment map $\mu:G^{(0)}\to \mathbb{R}$ satisfying the Morse--Smale transversality condition, see \cite{LS,RatiZung}. Assume that the $S^1$ action is clean and denote by $\mathcal{F}$ the foliation of $G^{(0)}$ associated to $\textnormal{ker}(\omega)$. From \cite{LS} we know that the critical point set of $\mu$ equals the set of fixed leaves $\mathcal{L}$ of $\mathcal{F}$ by the $S^1$ action
$$\textnormal{Crit}(\mu):=G^{(0)}(\mathcal{F})^{S^1}=\lbrace \mathcal{L}: k\cdot \mathcal{L}=\mathcal{L}\ \textnormal{for all}\ k\in S^{1}\rbrace=\lbrace x\in G^{(0)}:\partial_\theta(x)\in T_x\mathcal{F}\rbrace.$$
Here $\partial_\theta$ stands for the fundamental vector field of the $S^1$ action. We denote the set of leaves of index $i$ by $G^{(0)}(\mathcal{F})^{S^1}_i$. Recall that $i$ is always even. Let us check that $(\partial_{S^1}^0)_v=0$ for all $v>0$, provided that the $S^1$ action fixes the points of the critical leaves. Firstly, the moduli space $\mathcal{M}^{0}(G^{(0)}(\mathcal{F})^{S^1}_{j+v},G^{(0)}(\mathcal{F})^{S^1}_j)$ inherits an $S^1$ action which commutes with the endpoint maps. But such an action fixes the endpoints, so that
$$\iota_{\partial_\theta}(l(0)^{j+v}_j)^\ast (\beta)=(l(0)^{j+v}_j)^\ast \iota_{\partial_\theta}(\beta)=0,\qquad \beta\in \Omega^\bullet(G^{(0)}(\mathcal{F})^{S^1}_j).$$
Secondly, the vector field $\partial_\theta$ is tangent to the fiber of $u(0)^{j+v}_j:\mathcal{M}^{0}(G^{(0)}(\mathcal{F})^{S^1}_{j+v},G^{(0)}(\mathcal{F})^{S^1}_j)\to G^{(0)}(\mathcal{F})^{S^1}_{j+v}$ and therefore
$$\partial_v^0(\beta)=(u(0)^{j+v}_j)_\ast (l(0)^{j+v}_j)^\ast(\beta)=0.$$
Hence, $\partial^{(0)}_{S^1}=(\partial^{(0)}_{S^1})_0$ and $H_{S^1}^\bullet(G^{0},\partial^{(0)}_{S^1})=\bigoplus_{i}H_{S^1}^{\bullet-i}(G^{(0)}(\mathcal{F})^{S^1}_i)$. This is exactly the equivariant Morse complex associated the unit Lie 2-group $S^1\rr S^1$ acting over the unit 0-symplectic groupoid $G^{(0)}\rr G^{(0)}$.

We can use the previous data to obtain a more interesting Hamiltonian Lie groupoid. Indeed, Theorem 7.2.1 in \cite{hsz} shows how to build a Hamiltonian Lie groupoid from a Hamiltonian pre-symplectic manifold. In the specific case described above we recover the unit Lie 2-group $S^1\rr S^1$ and any source-connected foliation Lie groupoid $G^{(1)}\rr G^{(0)}$ integrating $\mathcal{F}$. That is to say, a Lie groupoid whose orbits in $G^{(0)}$ agree with the leaves of $\mathcal{F}$ and the Lie 2-group action of $S^1\rr S^1$ on $G^{(1)}\rr G^{(0)}$ is obtained by extending that of $S^1$ on $G^{(0)}$. In particular, one may choose $G^{(1)}$ to be the Holonomy groupoid associated to the foliation $\mathcal{F}$. Additionally, we get an $S^1$-invariant Morse--Bott Lie groupoid morphism $\mu_1:G^{(1)}\to \mathbb{R}$ by setting $\mu_1=s^\ast \mu$. We endow $G^{(1)}\rr G^{(0)}$ with a groupoid Riemannian metric, so that $\mathcal{F}$ becomes a Riemannian foliation. This implies that $G^{(1)}$ is proper since each leaf has a finite holonomy group (see Theorem 2.6 and page 141 in \cite{MM}). Hence, under these assumptions our constructions apply and we can similarly check that $(\partial_{S^1}^n)_v=0$ for all $n\geq 1$ and $v>0$.

\end{example}

\end{document}